\documentclass[11pt,reqno]{amsart}
\usepackage[latin1]{inputenc}
\usepackage{amsfonts}
\usepackage{bbm}

\usepackage{mathrsfs}

\usepackage{amsfonts,amssymb,amsmath,amsthm}
\usepackage{url}
\usepackage{enumerate}
\usepackage[pdftex,bookmarksnumbered]{hyperref}
\usepackage{graphicx,xcolor}
\urlstyle{sf}

\newcommand{\ds}{\displaystyle}
\newtheorem{theorem}{Theorem}[section]
\newtheorem{lemma}[theorem]{Lemma}
\newtheorem{proposition}[theorem]{Proposition}
\newtheorem{corollary}[theorem]{Corollary}
\theoremstyle{definition}
\newtheorem{definition}[theorem]{Definition}
\newtheorem{remark}{Remark}
\numberwithin{equation}{section}

\newtheorem{example}{Example}

\newtheorem{assumption}{Assumption}

%\newtheorem{theorem}{Theorem}[section]
%\newtheorem{lemma}[theorem]{Lemma}
%\newtheorem{proposition}[theorem]{Proposition}
%\newtheorem{corollary}[theorem]{Corollary}
%\theoremstyle{definition}
%\newtheorem{definition}[theorem]{Definition}
%\newtheorem{remark}[theorem]{Remark}
%\numberwithin{equation}{section}
%\newtheorem{question}{Question}
%\newtheorem{example}{Example}
%\newtheorem{conclue}{Conclution}
%
%\newtheorem{Thrm}{Theorem}

\usepackage[left=2.0cm, right=2.0cm, top=2.0cm, bottom=2.0cm]{geometry}

\usepackage{graphicx}%
\usepackage{color}
\usepackage{cite}
\usepackage{hyperref}
\hypersetup{
	colorlinks = true,%
	citecolor = blue,%[rgb]{0.65,0.0,0.0},
	filecolor=red,%
	linkcolor = [rgb]{0.65,0.0,0.0},%
	anchorcolor = red,
	pagecolor = red,
	urlcolor= [rgb]{0.65,0.0,0.0}
	linktocpage=true,
	pdfpagelabels=true,
	bookmarksnumbered=true,
}

\DeclareMathOperator{\id}{Id}

\DeclareMathOperator{\vol}{vol}
\DeclareMathOperator{\Ca}{Cap}

\DeclareMathOperator{\di}{div}\DeclareMathOperator{\Exp}{Exp}

\DeclareMathOperator{\lo}{loc}

\DeclareMathOperator{\sgn}{sgn}

\DeclareMathOperator{\supp}{supp}

\DeclareMathOperator{\tr}{{tr}}

\DeclareMathOperator{\dvol}{dvol}

\DeclareMathOperator{\dmu}{d\mu}

\urlstyle{sf}

\author{Yunxia Chen}
\address{
Department of Mathematics\\
East China University of Science and Technology\\
Shanghai, China}
\email{yxchen76@163.com}

\author{Naichung Conan Leung}
\address{The Institute of Mathematical Sciences and Department of Mathematics\\
 The
Chinese University of Hong Kong\\
Shatin, N.T., Hong Kong}
\email{leung@math.cuhk.edu.hk}

\author{Wei Zhao}
\address{
Department of Mathematics\\
East China University of Science and Technology\\
Shanghai, China}
\email{szhao\underline{ }wei@yahoo.com}

%\thanks{This work was done while the third author was a visiting scholar at
%IUPUI}

\keywords{Hardy inequality, optimal constant,  minimal submanifold, weak mean convexity, Ricci curvature, Fermi coordinates}
\subjclass[2010]{Primary 26D10, Secondary  53C21, 53C40}
%% NB There should be only one primary classification, and zero or
%more secondary classifications.
\begin{document}

\title[Sharp Hardy inequalities via Riemannian submanifolds]{Sharp Hardy inequalities via Riemannian submanifolds}

\begin{abstract}
This paper is devoted to Hardy inequalities concerning distance functions from  submanifolds of arbitrary codimensions in the Riemannian setting. On a Riemannian manifold with non-negative curvature, we establish several sharp  weighted Hardy inequalities in the cases when  the submanifold is compact as well as non-compact. In particular, these inequalities remain valid even if the ambient manifold is compact, in which case we find an optimal  space of smooth functions to study Hardy inequalities. Further examples are also provided. Our results complement in several aspects those obtained recently in the Euclidean and Riemannian
settings.
\end{abstract}
\maketitle

\section{Introduction} \label{sect1}
It is well-known that Hardy inequalities  play a prominent role in the theory of linear and nonlinear partial differential equations (cf. \cite{BCC,BV,CM2,DA,V,VZ,FKV}, etc.).
The classical Hardy inequality states that for any natural number $m\geq 2$ and real number $p>1$,
\[
\int_{\Omega}|\nabla u|^p dx\geq \left| \frac{p-m}{p} \right|^p \int_{\Omega}\frac{|u|^p}{|x|^p}dx,\ \forall\,u\in C^\infty_0(\Omega\backslash\{\textbf{0}\}),\tag{1.1}\label{basisHARDYFORECUL}
\]
where $\Omega\subset \mathbb{R}^m$ is a (possibly unbounded) domain containing the origin $\mathbf{0}$. In particular,
  the constant $\left|  {(p-m)}/{p} \right|^p$ is sharp (see for instance   Hardy et al. \cite{HPL}). On the other hand, for a (possibly unbounded) {\it convex} domain $\Omega\subset \mathbb{R}^m$ with smooth  compact boundary, the Hardy inequality asserts
  \[
  \int_{\Omega}|\nabla u|^p dx\geq \left( \frac{p-1}{p} \right)^p\int_\Omega \frac{|u|^p}{r^p}dx,\ \forall\,u\in C^\infty_0(\Omega),\tag{1.2}\label{distancebasisHardyforEcu}
  \]
where $r(x):=d(\partial\Omega,x)$ is the distance between  $\partial\Omega$ and $x$. Particularly, the constant $( ({p-1})/{p} )^p$ is still sharp (see Marcus et al. \cite{MMP}  for example). Moreover, several authors showed that (\ref{distancebasisHardyforEcu}) remains valid  for  more general domains if certain analytic conditions hold. Refer to \cite{AK,F,FMT,FMT2,MS} and the bibliography therein. However, the convexity assumption is restrictive while
 analytic conditions are hard to verify. Thus a mild geometric assumption, {\it weak mean convexity}, was introduced in Lewis, Li and Li \cite{LLL}, in which case (\ref{distancebasisHardyforEcu}) remains optimal.
The convexity requires the non-negativity of all the principal curvatures of the boundary, whereas the weakly mean convexity only needs the non-negativity of their trace, the mean curvature. Furthermore,  Psaradakis in \cite{Ps}  showed that weakly mean convexity is equivalent to $\Delta r\leq 0$.
 These facts lead to a further study of the relation between boundary curvature and the Hardy inequality, cf. Balinsky et al. \cite{BEL} and Filippas et al. \cite{FMT3}, etc.

From a point of view of submanifold geometry,  both (\ref{basisHARDYFORECUL}) and (\ref{distancebasisHardyforEcu}) are the Hardy inequalities concerned with the distance from closed submanifolds of special codimensions. For the general case, a sharp Hardy inequality was established in Barbatis, Filippas and Tertikas \cite{BFT}. More precisely, let $\Omega$ be a domain in $\mathbb{R}^m$ and let $N$ be a surface/closed submanifold of codimension $k$ satisfying one of the following conditions:

\smallskip

(a) $k=m$ and $N=\{\mathbf{0}\}\subset \Omega$;\ \ (b) $k=1$ and  $N=\partial\Omega$;\ \ (c) $2\leq k\leq m-1$ and $\Omega \cap N\neq\emptyset$.

\smallskip

\noindent Given $p>1$ with $p\neq k$, suppose  that $r(x):=d(N,x)$ satisfies
\[
 \Delta_p \,r^{\frac{p-k}{p-1}}\leq 0 \ \ \ \text{ in }\Omega\backslash N,\tag{1.3}\label{pLaplacecondition}
\]
where $\Delta_p$ is the $p$-Laplace operator. Then there holds
\[
\int_\Omega |\nabla u|^pdx\geq \left|\frac{p-k}{p}\right|^p\int_{\Omega}\frac{|u|^p}{r^p}dx,\ \forall u\in C^\infty_0(\Omega\backslash N).\tag{1.4}\label{unversHardyinequlityforEuc}
\]
In particular, the constant $\left| {(p-k)}/{p}\right|^p$ is best providing $\sup_{x\in \Omega}r(x)<+\infty$. Moreover, there are several optimal improved versions of (\ref{unversHardyinequlityforEuc}) in Barbatis et al. \cite{BFT}. Also refer to \cite{CM,DD,FMT}, etc., for further results.

Although the general Hardy inequality has been established,  there are a number of issues which are unclear yet. For example, as (\ref{unversHardyinequlityforEuc}) becomes (\ref{basisHARDYFORECUL}) and (\ref{distancebasisHardyforEcu}) when $N=\{\mathbf{0}\}$ and $N=\partial\Omega$ respectively, in both cases
 the finiteness of $\sup_{x\in \Omega}r(x)$ is unnecessary for the sharpness.  So this raises a nature question as to whether  (\ref{unversHardyinequlityforEuc}) remains optimal  even if $r(x)$ is unbounded.
 Besides, since (\ref{pLaplacecondition}) is not easy to check,  another interesting question is how to replace this assumption  by a geometric characterization.
 However, up to now, limited work has been done to study  these problems.

On the other hand,    there has been recently tremendous interest in developing the Hardy inequalities in
the Riemannian framework.
 As far as we know, Carron \cite{Ca} was the first who studied weighted $L^2$-Hardy inequalities  on Riemannian manifolds. Inspired by \cite{Ca}, a systematic study is carried out by Berchio,  Ganguly and  Grillo \cite{BGD}, D'Ambrosio and Dipierro \cite{DD},   Kombe and \"Ozaydin \cite{KO,KO2}, Thiam \cite{T},  Yang, Su and Kong \cite{YSK}, etc. In particular, (\ref{basisHARDYFORECUL}) has been successfully generalized to complete non-compact Riemannian manifolds with non-positive
sectional curvature.

  Despite  a large quantity of contributions to the generalization of  (\ref{basisHARDYFORECUL}),  few results are available
 concerning the investigation of (\ref{distancebasisHardyforEcu}) in the category of Riemannian manifolds, not to mention  (\ref{unversHardyinequlityforEuc}).
Thus, a challenging question is to  identify  ambient manifolds as well as submanifolds on which  sharp Hardy inequalities like (\ref{unversHardyinequlityforEuc}) can be established? Until now there has been no answer  available in the literature.

The purpose of the present paper is to address all the aforementioned questions. In order to state our main results, we introduce some notations first (for details, see Section \ref{submanifold}). Let $(M,g)$ be an $m$-dimensional complete Riemannian manifold. Denote by $\mathbf{Ric}_M$ (resp., $\mathbf{K}_M$) the Ricci curvature (resp., the sectional curvature) of $M$.
Let $i:N\hookrightarrow M$ be a closed submanifold (without boundary) of codimension $k$. Particularly, $N$ is a  singleton $\{o\}$ when $k=m$.
Recall that  $N$ is said to be {\it minimal} if the mean curvature vanishes identically.  And
we define $N$  to be  {\it weakly mean convex}  in a similar way to the Euclidean case. Set $r(x):=d(N,x)$, i.e., the distance function from $N$  induced by the metric $g$.
Let  $\Omega\subset M$ be a non-empty domain (with piecewise smooth boundary or without boundary)  and set $C^\infty_0(\Omega,N):=\{u\in C^\infty_0(\Omega):u(N)=0\}$. Thus
our first main result reads as follows.

\begin{theorem}\label{keycompactHardy}
Let $(M,g)$ be an $m(\geq 2)$-dimensional complete (possibly compact) Riemannian manifold, let $i:N\hookrightarrow M$ be a  closed  submanifold of codimension $k(\geq 1)$ and let $\Omega$ be a non-empty domain in $M$.
 Suppose one of the following conditions holds:
\begin{itemize}

\item  $\mathbf{Ric}_M\geq 0$, $k=m$, $N=\{o\}\subset \Omega$;

\smallskip

\item  $\mathbf{Ric}_M\geq 0$, $k=1$ and $N=\partial\Omega$ is weakly mean convex;

\smallskip

\item  $\mathbf{K}_M\geq 0$, $1\leq k\leq m-1$, $N$ is minimal and $\Omega\cap N\neq\emptyset$.
\end{itemize}
Thus, for any $p,\beta\in \mathbb{R}$ with $1<p\neq k$ and $\beta< -k$, the following  statements are true:
 \begin{itemize}
\item[(a)]
 There always holds
 \begin{align*}
\int_{{\Omega}} |\nabla u|^pr^{p+\beta}{\dvol}_g\geq \left|\frac{\beta+ k }{p}\right|^p\int_{{\Omega}} |u|^pr^\beta {\dvol}_g, \ \forall\, u\in C^\infty_0({\Omega\backslash N}).\tag{1.5}\label{firsthardpructre}
\end{align*}
 In particular, if $N$ is compact, then (\ref{firsthardpructre}) is optimal in the following sense
 \[
\left|\frac{\beta+ k }{p}\right|^p=\inf_{u\in C^\infty_0(\Omega\backslash N)\backslash\{0\}}\frac{\int_{{\Omega}} |\nabla u|^pr^{p+\beta}{\dvol}_g}{\int_{{\Omega}} |u|^pr^\beta {\dvol}_g}.%\tag{1.2}\label{firssharpconsnta}
\]
However, this optimal constant cannot be achieved.

\smallskip

 \item[(b)]
 If $p,\beta$ satisfy  $ p+\beta>-k$, then
\begin{align*}
\int_{{\Omega}} |\nabla u|^pr^{p+\beta}{\dvol}_g\geq \left|\frac{\beta+ k }{p}\right|^p\int_{{\Omega}} |u|^pr^\beta {\dvol}_g, \ \forall\, u\in C^\infty_0(\Omega,N),\tag{1.6}\label{sendhardnonprct}
\end{align*}
In particular, if $N$ is compact, then (\ref{sendhardnonprct}) is optimal in the following sense
\[
\left|\frac{\beta+ k }{p}\right|^p=\inf_{u\in C^\infty_0(\Omega,N)\backslash\{0\}}\frac{\int_{{\Omega}} |\nabla u|^pr^{p+\beta}{\dvol}_g}{\int_{{\Omega}} |u|^pr^\beta {\dvol}_g}.\tag{1.7}\label{sharpnessoncup}
\]
However, this optimal constant cannot be achieved.

\end{itemize}
\end{theorem}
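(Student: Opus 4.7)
The strategy is to prove both (a) and (b) by a single Picone-style argument built on the borderline supersolution $\phi := r^\alpha$ with $\alpha := -(\beta+k)/p > 0$ (well-defined since $\beta+k < 0$), and then to deduce sharpness and non-attainment separately.

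The geometric input shared by all three cases is the Laplace comparison $\Delta r \le (k-1)/r$ on $\Omega \setminus (N \cup \Cut(N))$. This is the Bishop--Gromov comparison in case (i) ($k=m$, $\mathbf{Ric}_M \ge 0$), Psaradakis's characterization of weak mean convexity in case (ii) ($k=1$, $N=\partial\Omega$), and a Heintze--Karcher-type Jacobi-field computation in Fermi coordinates in case (iii) ($1 \le k \le m-1$, $N$ minimal, $\mathbf{K}_M \ge 0$). In each case, the bound persists distributionally across $\Cut(N)$ because the cut-locus contributes with the correct sign, so a Calabi-style smooth approximation of $r$ justifies the integration by parts globally.

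For the main inequality, I would reduce to $u \ge 0$ via Kato's inequality, set $v := u/\phi$, and apply the tangent-plane inequality $|A+B|^p \ge |A|^p + p|A|^{p-2}\langle A, B\rangle$ with $A := v\nabla\phi$, $B := \phi\nabla v$. Multiplying by $r^{p+\beta}$ and integrating, the $|A|^p$ term collapses to exactly $\alpha^p \int |u|^p r^\beta \dvol_g = |(\beta+k)/p|^p \int |u|^p r^\beta \dvol_g$, which is the target constant. The cross term, after integration by parts, becomes $-\int v^p\,\di W\,\dvol_g$ with $W := \alpha^{p-1} r^{p\alpha + \beta + 1}\nabla r$; the algebraic identity $p\alpha + \beta = -k$ reduces $W$ to $\alpha^{p-1} r^{1-k}\nabla r$, whence the Laplace comparison yields $\di W \le \alpha^{p-1}[(1-k) + (k-1)]r^{-k} = 0$, the exact cancellation that makes the Hardy constant sharp. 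The sole distinction between (a) and (b) is the boundary contribution on the inner tube $\{r=\epsilon\}$: in (a) it vanishes automatically since $u$ is supported away from $N$, while in (b) the assumption $u|_N = 0$ forces $|u| \lesssim r$ near $N$, so the Fermi-coordinate estimate on normal spheres yields a boundary integral of order $\epsilon^{p+\beta+k}$, which tends to zero exactly under the hypothesis $p+\beta+k > 0$.

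For sharpness when $N$ is compact, it suffices to produce trial functions in $C^\infty_0(\Omega\setminus N) \subset C^\infty_0(\Omega, N)$ whose Rayleigh quotient approaches $\alpha^p$: take $u_\epsilon(x) := \psi_\epsilon(r(x))\, r(x)^\alpha\, \xi(r(x))$, where $\xi$ is a fixed cutoff localizing to a fixed tube about $N$ and $\psi_\epsilon$ is a smooth inner cutoff supported in $\{r \ge \epsilon\}$. Compactness of $N$ provides uniform Jacobian control in Fermi coordinates, reducing the Rayleigh quotient to a one-dimensional integral whose numerator and denominator both diverge logarithmically in $1/\epsilon$ (since the resulting radial exponent is exactly $-1$), so $R(u_\epsilon) \to \alpha^p$. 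Non-attainment follows from the rigidity of the tangent-plane step: equality would force $\nabla v \equiv 0$, hence $u \equiv c\, r^\alpha$, which cannot lie in $C^\infty_0(\Omega\setminus N)$ and, for generic $\alpha$, cannot lie in $C^\infty_0(\Omega, N)$ either since $r^\alpha$ fails to be smooth on $N$. The principal technical obstacle is the joint rigorous handling of the distributional Laplace comparison across $\Cut(N)$ together with the small-tube boundary terms, all controlled uniformly via the Fermi-coordinate/coarea framework that compactness of $N$ makes available.
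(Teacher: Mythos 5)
Your core argument for (1.5) and (1.6) is essentially the paper's. Both are vector-field/Picone arguments whose only geometric input is the comparison $\Delta r\le (k-1)/r$ (Bishop--Gromov for $k=m$, weak mean convexity for $k=1$, Heintze--Karcher plus Newton's inequality for minimal $N$ with $\mathbf{K}_M\ge 0$), and your cancellation $\di\bigl(\alpha^{p-1}r^{1-k}\nabla r\bigr)\le 0$ with $p\alpha+\beta=-k$ is the same computation by which the paper verifies $-\Delta_p\bigl(c\,r^{(p-k)/(p-1)}\bigr)\ge 0$ in Lemma \ref{mainlemmforcr} and Theorem \ref{firsthadyonpunctured}. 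Your logarithmically divergent trial functions give sharpness just as the paper's $v_\epsilon$ do, though you still owe the density step showing that these Lipschitz functions of $r$ lie in the closure of $C^\infty_0(\Omega\backslash N)$ in the relevant weighted norm (the paper's space $D^{1,p}$ and the truncation $\max\{v_\epsilon-\iota,0\}$). For part (b) your route is genuinely different: you estimate the inner boundary term directly as $O(\epsilon^{p+\beta+k})$ using $|u|\lesssim r$ near $N$, whereas the paper builds a weighted Sobolev/capacity theory (Appendix \ref{Soblevspace}, Theorem \ref{maintheoreminaapendix}) to prove $C^\infty_0(\Omega,N)\subset W^{1,p}_0(\Omega\backslash N,r^{p+\beta})$ and then passes to the limit by Fatou. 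Your version is more elementary, at the price of having to justify the divergence theorem across $\Cut(N)$ and on a.e.\ level set $\{r=\epsilon\}$ simultaneously; the exponent bookkeeping is correct and the hypothesis $p+\beta>-k$ enters exactly where you say it does.

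The genuine gap is non-attainment. The rigidity step correctly forces $u=C\,r^{\alpha}$ on each connected component of $\Omega\backslash N$, but your proposed obstruction --- that $r^{\alpha}$ ``fails to be smooth on $N$'', valid only ``for generic $\alpha$'' --- does not prove the theorem, which asserts non-attainment for \emph{every} admissible $(p,\beta)$. Moreover the obstruction can simply be false: for $k=1$, $N=\partial\Omega$ and $\alpha=-(\beta+1)/p$ a positive integer, $r^{\alpha}$ is smooth up to $N$ near the boundary and vanishes there, so regularity at $N$ rules out nothing, and one would be pushed into delicate case analysis involving the cut locus or compact support (problematic when $\Omega=M$ is compact). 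The paper's obstruction is integrability, not regularity: if $u=C\,r^{\alpha}$ with $C\neq 0$ on a component whose closure meets $N$, then
\begin{align*}
\int |u|^p r^{\beta}\,{\dvol}_g \;=\; |C|^p\int r^{p\alpha+\beta}\,{\dvol}_g \;=\; |C|^p\int r^{-k}\,{\dvol}_g \;=\;+\infty
\end{align*}
near $N$ by Lemma \ref{centerzeroinfite}/(ii), contradicting the finiteness of the denominator of the Rayleigh quotient, which holds for every $u\in C^\infty_0(\Omega,N)$ because $|u|\lesssim r$ and $p+\beta>-k$. This argument is uniform in $\alpha$ and treats both function classes at once; you should replace the smoothness argument with it. You should also note explicitly that every component of $\Omega\backslash N$ has boundary points in $N\cup\partial\Omega$ (so $C=0$ on each component, either by the divergence above or because $u$ vanishes near that boundary), since $\Omega\backslash N$ need not be connected.
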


Now we give two remarks on this theorem:
\begin{itemize}

\item Theorem \ref{keycompactHardy} presents geometric descriptions of ambient manifolds as well as submanifolds on which Hardy inequalities hold. In particular,
the curvature here is {\it non-negative}.
Although this assumption is totally different from the usual one that the manifold has   non-positive curvature, it is quite natural because
Soul theorem (cf. Cheeger and Gromoll \cite{CG}) implies that every complete non-compact Riemannian manifold with non-negative sectional curvature always has a  minimal submanifold, which provides the existence of $N$ in Theorem \ref{keycompactHardy}. By
contrast, there can be no compact minimal submanifolds in   a simply-connected complete Riemannian manifold of {\it non-positive} sectional curvature, i.e.,  Cartan-Hadamard manifold (cf. Frankel \cite{Fr});

\smallskip

\item
Another distinct assumption here is that the ambient manifold $M$ can be {\it compact},  which is also natural due to the Bonnet-Myers theorem. And
 Theorem \ref{keycompactHardy} presents optimal Hardy inequalities in this case, which is genuinely new compared with the aforementioned work. Besides, Theorem \ref{keycompactHardy}/(b)
indicates that Hardy inequality can be extended to a larger space   $C^\infty_0(\Omega, N)$ than $C^\infty_0(\Omega\backslash N)$.
  In particular,  the requirement $p+\beta>-k$ is very weak; indeed, it is necessary for the local integrability of $r^{p+\beta}$ over $\Omega$ (see Lemma \ref{centerzeroinfite} below). The following example shows that $C^\infty_0(\Omega, N)$ is an ``optimal" space to study Hardy inequalities if  $M$ is compact.
\end{itemize}

\begin{example}\label{compactexampletorus}
Let $(M,g)$ be a flat torus $\mathbb{T}=\mathbb{S}^1\times \mathbb{S}^1$ and  set $N=\mathbb{S}^1\times \{o\}$, where $o$ is a fixed point in $\mathbb{S}^1$. Thus,   $\mathbf{K}_M\equiv0$ and $N$ is a minimal submanifold. Hence, Theorem \ref{keycompactHardy}/(b) yields for any $p>1$,
\[
\int_{M} |\nabla u|^p {\dvol}_g\geq \left(\frac{p-1 }{p}\right)^p\int_{M} \frac{|u|^p}{r^p} {\dvol}_g, \ \forall \,u\in C_0^\infty(M,N).
\]
On the one hand, this inequality is invalid for $C^\infty_0(M)$ because the ambient manifold $M$ is compact, in which case constant functions belong to $C^\infty_0(M)$. On the other hand, although the inequality holds for $C^\infty_0(M\backslash N)$, we have $C^\infty_0(M\backslash N)\subset C_0^\infty(M,N)=\{u\in C^\infty(M):\,u(N)=0\}$.
\end{example}

The compactness of submanifolds plays an important role in Theorem \ref{keycompactHardy}. In particular,
 even though $r(x)$ is unbounded, the sharpness still holds when $N$ is compact. Hence, Theorem \ref{keycompactHardy} covers
 (\ref{distancebasisHardyforEcu}) and Lewis, Li and Li \cite[Theorem 1.2]{LLL}.
Alternatively, for non-compact submanifolds, inspired by Barbatis et al. \cite{BFT}, we obtain the following result by requiring the finiteness of $\sup_{x\in M}r(x)$.

\begin{theorem}\label{non-compactoptimalmainthe}
Let $(M,g)$ be an $m(\geq 2)$-dimensional complete (possibly compact) Riemannian manifold, let $i:N\hookrightarrow M$ be a  closed  submanifold of codimension $k(\geq 1)$ and let $\Omega$ be a non-empty domain in $M$ with $\sup_{x\in \Omega}r(x)<+\infty$.   Suppose one of the following conditions holds:
\begin{itemize}

\item  $\mathbf{Ric}_M\geq 0$, $k=m$, $N=\{o\}\subset \Omega$;

\smallskip

\item  $\mathbf{Ric}_M\geq 0$, $k=1$ and $N=\partial\Omega$ is weakly mean convex;

\smallskip

\item  $\mathbf{K}_M\geq 0$, $1\leq k\leq m-1$, $N$ is minimal and $\Omega\cap N\neq\emptyset$.
\end{itemize} Thus,
for any $p,\beta\in \mathbb{R}$ with $1<p\neq k$ and $\beta<-k$, the following statements hold:
\begin{itemize}
\item[(a')] There always holds
 \begin{align*}
\int_{{\Omega}} |\nabla u|^pr^{p+\beta} {\dvol}_g\geq \left|\frac{\beta+ k }{p}\right|^p\int_{{\Omega}}  {|u|^p}{r^\beta} {\dvol}_g,   \ \forall\, u\in C^\infty_0({\Omega\backslash N}).\tag{1.8}\label{strongimproveHardy}
\end{align*}
In particular, (\ref{strongimproveHardy}) is optimal in the following sense
\begin{align*}
\left|\frac{\beta+ k }{p}\right|^p&=\inf_{u\in C^\infty_0(\Omega\backslash N)\backslash\{0\}}\frac{\int_{{\Omega}} |\nabla u|^pr^{p+\beta}{\dvol}_g}{\int_{{\Omega}} |u|^pr^\beta {\dvol}_g}.\tag{1.9}\label{cosntantsharpnoimpH}
\end{align*}

\item[(b')] There exists a constant $\mathcal {T}=\mathcal {T}(p,\beta,k)> 1$ such that for any $D\geq \mathcal {T}\sup_{x\in \Omega}r(x)$,
 \begin{align*}
\int_{{\Omega}} |\nabla u|^pr^{p+\beta} {\dvol}_g\geq \left|\frac{\beta+ k }{p}\right|^p\int_{{\Omega}}  {|u|^p}{r^\beta} {\dvol}_g+\frac{p-1}{2p}\left|\frac{\beta+ k }{p}\right|^{p-2}\int_\Omega  {|u|^p}{r^\beta}\log^{-2}\left( \frac{D}{r} \right){\dvol}_g,\tag{1.10}\label{strongimproveHardy2}
\end{align*}
for any $u\in C^\infty_0({\Omega\backslash N})$. In particular, (\ref{strongimproveHardy2}) is optimal in the   sense of  (\ref{cosntantsharpnoimpH}) as well as
\begin{align*}
\frac{p-1}{2p}\left|\frac{\beta+ k }{p}\right|^{p-2}&=\inf_{u\in C^\infty_0(\Omega\backslash N)\backslash\{0\}}\frac{\int_{{\Omega}} |\nabla u|^pr^{p+\beta}{\dvol}_g-\left|\frac{\beta+ k }{p}\right|^p\int_{{\Omega}} |u|^pr^\beta {\dvol}_g}{\int_\Omega  {|u|^p}{r^\beta}\log^{-2}\left( \frac{D}{r} \right){\dvol}_g}.\tag{1.11}\label{strongnessconstantH}
\end{align*}

\smallskip

\item[(c')] If  $p,\beta$  satisfy  $p+\beta>-k$, then  both (\ref{strongimproveHardy}) and  (\ref{strongimproveHardy2}) are valid for
  $u\in C^\infty_0({\Omega, N})$. In particular, these inequalities are still optimal in the following sense
\begin{align*}
\left|\frac{\beta+ k }{p}\right|^p&=\inf_{u\in C^\infty_0(\Omega, N)\backslash\{0\}}\frac{\int_{{\Omega}} |\nabla u|^pr^{p+\beta}{\dvol}_g}{\int_{{\Omega}} |u|^pr^\beta {\dvol}_g},\\%\tag{1.2}\label{firssharpconsnta}\\
\frac{p-1}{2p}\left|\frac{\beta+ k }{p}\right|^{p-2}&=\inf_{u\in C^\infty_0(\Omega, N)\backslash\{0\}}\frac{\int_{{\Omega}} |\nabla u|^pr^{p+\beta}{\dvol}_g-\left|\frac{\beta+ k }{p}\right|^p\int_{{\Omega}} |u|^pr^\beta {\dvol}_g}{\int_\Omega  {|u|^p}{r^\beta}\log^{-2}\left( \frac{D}{r} \right){\dvol}_g}.
\end{align*}

\end{itemize}

\end{theorem}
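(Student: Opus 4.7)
The plan is to combine the Laplace comparison that already underlies Theorem~\ref{keycompactHardy} with the \emph{ground-state substitution} familiar from the Euclidean arguments of Barbatis--Filippas--Tertikas, exploiting the finiteness of $R_{0}:=\sup_{x\in\Omega}r(x)$ to extract a logarithmic remainder. The only geometric input needed is the weak Laplace comparison
$$\Delta r\leq\frac{k-1}{r}\quad\text{in }\Omega\setminus N,$$
which in each of the three listed configurations follows from Fermi coordinates together with $\mathbf{Ric}_{M}\geq 0$ (cases $k=1,m$) or $\mathbf{K}_{M}\geq 0$ combined with the minimality of $N$ (intermediate $k$); I would extract this as a preliminary lemma, as is implicit in the proof of Theorem~\ref{keycompactHardy}.

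For part (a'), the candidate ground state is $f:=r^{-(\beta+k)/p}$, which is well-defined and positive on $\Omega\setminus N$ because $\beta+k<0$. A direct computation of the weighted $p$-divergence, combined with the comparison above, yields the pointwise inequality
$$-\di\bigl(r^{p+\beta}\,|\nabla f|^{p-2}\nabla f\bigr)\geq\left|\frac{\beta+k}{p}\right|^{p}r^{\beta}f^{p-1}\quad\text{on }\Omega\setminus N;$$
the weighted Picone inequality (equivalently, multiplying by $|u|^{p}/f^{p-1}$ and integrating by parts, which is legitimate for $u\in C_{0}^{\infty}(\Omega\setminus N)$ since $f$ is smooth there) then produces (\ref{strongimproveHardy}). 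For the matching lower bound (\ref{cosntantsharpnoimpH}) I would test the Rayleigh quotient against
$$u_{\epsilon}(x)=\varphi(r(x))\,r(x)^{-(\beta+k)/p+\epsilon},$$
with $\varphi$ a cutoff supported in $(0,R_{0}]$ away from $0$, and carry out the two integrals in Fermi coordinates using the volume element $r^{k-1}\,\dd r\,\dd\sigma\,(1+O(r))$; the limit $\epsilon\downarrow 0$ saturates the constant exactly as in the Euclidean model.

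Part (b') is the main technical obstacle. Here I would replace $f$ by the refined ground state
$$f_{D}(x):=r(x)^{-(\beta+k)/p}\,\bigl(\log(D/r(x))\bigr)^{1/2},$$
which is positive once $D>R_{0}$. The factor $(\log(D/r))^{1/2}$ perturbs the gradient by an $O(\log^{-1}(D/r))$ term; a careful expansion of $|\nabla f_{D}|^{p-2}\nabla f_{D}$ followed by the Laplace comparison upgrades the pointwise bound to
$$-\di\bigl(r^{p+\beta}\,|\nabla f_{D}|^{p-2}\nabla f_{D}\bigr)\geq\left|\frac{\beta+k}{p}\right|^{p}r^{\beta}f_{D}^{p-1}+\frac{p-1}{2p}\left|\frac{\beta+k}{p}\right|^{p-2}r^{\beta}\log^{-2}(D/r)\,f_{D}^{p-1}.$$
The explicit threshold $\mathcal{T}$ is exactly what makes the higher-order $O(\log^{-3}(D/r))$ Taylor remainder absorbable into the $\log^{-2}$ principal correction. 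Picone again converts this into (\ref{strongimproveHardy2}), and the sharpness (\ref{strongnessconstantH}) is obtained with the refined test family $u_{\epsilon}(x)=\varphi(r(x))\,r(x)^{-(\beta+k)/p}\bigl(\log(D/r(x))\bigr)^{-1/2+\epsilon}$, whose logarithmic factor is calibrated to expose precisely the second-order term in the Rayleigh expansion.

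For (c'), given $u\in C_{0}^{\infty}(\Omega,N)$ I would set $u_{\epsilon}:=u\cdot\chi(r/\epsilon)$ with $\chi$ smooth, vanishing on $[0,1]$ and equal to $1$ on $[2,\infty)$; since $u|_{N}=0$ one has the pointwise bound $|u|\leq C\,r$ near $N$, and the hypothesis $p+\beta>-k$ is precisely the borderline condition forcing both weighted norms of $u_{\epsilon}-u$ to tend to $0$, so that (\ref{strongimproveHardy}) and (\ref{strongimproveHardy2}) pass to the limit. The sharpness statements in (c') are then automatic from the inclusion $C_{0}^{\infty}(\Omega\setminus N)\subset C_{0}^{\infty}(\Omega,N)$, which sandwiches the two Rayleigh infima between identical lower and upper bounds. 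The real difficulty of the whole argument lies in the quantitative expansion underlying (b'): for general $p>1$ with $p\neq k$, the factor $|\cdot|^{p-2}$ makes $|\nabla f_{D}|^{p-2}\nabla f_{D}$ non-polynomial in the perturbation, so fixing $\mathcal{T}$ requires a genuine Taylor estimate with explicit remainder rather than an algebraic identity, and all signs and positivity of residual terms must be monitored across the regimes $p>k$ and $p<k$.
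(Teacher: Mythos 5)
Your overall strategy (a supersolution/vector-field argument for the inequalities, a BFT-type test family for the sharpness, a density argument for (c')) is the same as the paper's, and parts (a') and (c') are essentially sound: the paper proves (a') via Lemmas \ref{divlemf}--\ref{mainlemmforcr} with $\rho=r$, establishes sharpness with the localized family $u_\epsilon=\phi\, r^{-\delta+\epsilon}\Psi^{-\theta}(r)$, and handles (c') by showing $C_0^\infty(\Omega,N)\subset W_0^{1,p}(\Omega_N,r^{p+\beta})$; your direct cutoff $u\cdot\chi(r/\epsilon)$ exploiting $|u|\le Cr$ near $N$ is a legitimate and more elementary route to that inclusion than the capacity machinery of Appendix \ref{Soblevspace}. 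One caveat on (a'): a radial cutoff ``supported away from $0$'' cannot be taken literally, because the Rayleigh quotient only degenerates to the sharp constant when the test function behaves like $r^{-\delta+\epsilon}$ all the way down to $r=0$; such functions are not in $C^\infty_0(\Omega\setminus N)$, and the paper spends Corollary \ref{basissolutionfornoncaompcat} and the $D^{1,p}$-completion argument precisely to justify testing with them (a two-parameter limit, or an approximation by $\max\{u_\epsilon-\iota,0\}$, is needed).

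The genuine error is in (b'). The refined ground state must carry the logarithm to the power $1/p$, not $1/2$. Write $f_\theta:=r^{-\delta}\bigl(\log(D/r)\bigr)^{\theta}$ and $\Psi:=\bigl(\log(D/r)\bigr)^{-1}$; already in the model case $\Delta r=(k-1)/r$ one computes
\[
\frac{-\di\bigl(r^{p+\beta}|\nabla f_\theta|^{p-2}\nabla f_\theta\bigr)}{r^{\beta}f_\theta^{p-1}}
=|\delta|^{p-2}\Bigl[\delta^2+(p-1)\theta\Bigl(1-\tfrac{p\theta}{2}\Bigr)\Psi^2+O(\Psi^3)\Bigr],
\]
and $(p-1)\theta\bigl(1-\tfrac{p\theta}{2}\bigr)$ attains its maximum $\tfrac{p-1}{2p}$ exactly at $\theta=1/p$. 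Your choice $\theta=1/2$ gives $\tfrac{(p-1)(4-p)}{8}$, which equals $\tfrac{p-1}{2p}$ only when $p=2$, is strictly smaller for every $p\neq2$, and is $\le 0$ for $p\ge 4$ (no logarithmic improvement at all). Hence the pointwise supersolution inequality you assert for $f_D=r^{-\delta}(\log(D/r))^{1/2}$ is false for $p\neq 2$, and no choice of $\mathcal{T}$ repairs it. This is exactly why the paper's vector field in the proof of Theorem \ref{mainnon-compacttheroem} carries the first-order coefficient $\tfrac{p-1}{p\delta}\Psi$ --- corresponding to $\theta=1/p$ --- plus a free $a\Psi^2$ term used to control the Taylor remainder, and why the sharpness family in Lemmas \ref{basisestima}--\ref{sharpnessnon-compact} is calibrated as $r^{-\delta+\epsilon}(\log(D/r))^{\theta}$ with $\theta\downarrow (1/p)^{+}$ rather than with a logarithmic exponent near $-1/2$.
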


Theorem \ref{non-compactoptimalmainthe}   presents a weighted-version as well as a compact-ambient-space-version of   Barbatis et al. \cite[Theorem A]{BFT}  in the context of Riemannian manifolds.
  In view of Theorem \ref{non-compactoptimalmainthe}/(c'), it is quite surprising that both of these constants remain best for $C^\infty_0(\Omega,N)$, which together with Theorem \ref{keycompactHardy}/(b)  implies that $C^\infty_0({\Omega, N})$ is an excellent candidate for studying Hardy inequalities.

We remark that recently the special case $k=m$ (i.e., $N$ is a  singleton)
has also been  studied by the third author on Finsler manifolds of non-negative  weighted Ricci curvature \cite{Z3} as well as by Meng, Wang and the third author on Riemannian metric measure manifolds of non-negative weighted Ricci curvature \cite{MWZ}, in which case the comparison theory in the context of polar coordinates plays an important role. Alternatively, in the present paper we mainly take advantage of {\it Fermi coordinates} and the corresponding comparison theorems (see Section \ref{submanifold} below), which provide an efficient way of dealing with the  case $k<m$. Moreover, in order to investigate the sharpness of the Hardy inequality   over the larger space $C^\infty_0(\Omega,N)$, we extend the theory of  Sobolev capacity in a weighted Riemannian setting (see Appendix \ref{Soblevspace} below).

At the end of this section, we discuss briefly the influence of curvature on the validity of Hardy inequalities in Theorems \ref{keycompactHardy}$\&$\ref{non-compactoptimalmainthe}.
It is well-known that Hardy inequalities are the limiting cases of Caffarelli-Kohn-Nirenberg (CKN) inequalities. For a  manifold of  non-negative Ricci curvature,
   the validity of   CKN inequalities  is equivalent to that the manifold is {\it trivial/flat}, i.e., the sectional curvature is identically zero (cf. Carmo and Xia \cite{CW}, Krist\'aly \cite{K} and Xia \cite{X}, etc.). However, this is another story for Hardy inequalities.
For instance, although a $2$-dimensional unit sphere $\mathbb{S}^2$ has strictly positive Ricci/sectional curvature $1$,  one can derive a sphere version of (\ref{distancebasisHardyforEcu}) from a direct calculation (see  Section \ref{examplesection} below), i.e.,
   \[
\int_{\mathbb{S}^2_+}|\nabla u|^p\dvol_g\geq \left( \frac{p-1}{p} \right)^p\int_{\mathbb{S}^2_+}  \frac{|u|^p}{{r}^p} \dvol_g, \ \forall\,u\in C_0^\infty(\mathbb{S}^2_+), \ \forall\,p>1,
\]
where $\mathbb{S}^2_+$ is the upper hemisphere of $\mathbb{S}^2$ and $r(x):=d(\partial \mathbb{S}^2_+,x)$ is the distance  between $\partial \mathbb{S}^2_+$ and $x$ on $\mathbb{S}^2$. This example indicates that Theorems \ref{keycompactHardy}$\&$\ref{non-compactoptimalmainthe} are correct as well as nontrivial.
On the other hand, the subtle differences do occur between the flat case and the curved case.  For example, consider the non-weighted Hardy inequalities (i.e., $p+\beta=0$) in Theorems \ref{keycompactHardy}$\&$\ref{non-compactoptimalmainthe}. Usually we have to require $p>k$ because of the impact of the Ricci/sectional curvature and the mean curvature. However, if the ambient manifold is flat while the submanifold is minimal, then  this requirement can be eliminated, which covers (\ref{basisHARDYFORECUL}). See Theorem \ref{flatcaseThe1112} below for the
formulation of such statements.

\medskip

The paper is organized as follows. Section \ref{submanifold} is devoted to  Fermi coordinates and  comparison theorems in submanifold geometry. In Section \ref{basistheorem} we prove Theorem \ref{keycompactHardy} and study the Hardy inequalities with   logarithmic weights.
The proof of Theorem \ref{non-compactoptimalmainthe} is given in Section \ref{non-compacthary}.   In Section \ref{flathardy} we deal with the flat case.  Some interesting examples are presented in Section \ref{examplesection}, which show the validity of Theorems \ref{keycompactHardy}, \ref{non-compactoptimalmainthe} and \ref{flatcaseThe1112}.
 We devote Appendix \ref{Soblevspace} to weighted Sobolev spaces over Riemannian manifolds, which provides  some necessary tools used throughout the previous
sections.

\section{Riemannian submanifolds}\label{submanifold}
Throughout this paper, we always assume that $(M,g)$ is an $m(\geq 2)$-dimensional complete (possibly compact) Riemannian manifold and $i:N\hookrightarrow M$ is an $n$-dimensional (connected) {\it closed submanifold} (cf. Chern et al. \cite[p.\,21]{CCL})  without boundary, where $0\leq n\leq m-1$. Briefly speaking, $(i,N)$ is a regular submanifold of $M$ and $i(N)$ is a closed set of $M$.
  In  particular, $N$ is a pole (i.e., singleton)  if $n=0$.  For simplicity of presentation, we prefer to use the dimension $n$ of $N$ rather than the codimension $k=m-n$.

\subsection{Fermi coordinates}

We  recall some definitions and properties of submanifold geometry, especially Fermi coordinates. See Chavel \cite[Section 3.6, Section 7.3]{IC} and Heintze-Karcher \cite{HK} for  details.

Let $(M,g)$ and $N$ be the ambient manifold and the submanifold  described above.
Let $\mathcal {V}N$ (resp.,  $\mathcal {V}SN$) denote the normal bundle (resp., the unit normal bundle) of $N$ in $M$. The fiber of $\mathcal {V}N$ (resp., $\mathcal {V}SN$) at $x\in N$ is denoted by $\mathcal {V}_xN$ (resp., $\mathcal {V}S_xN$). %Let $c_{m-n-1}$ denotes the volume of the unit Euclidean $(m-n-1)$-sphere $\mathbb{S}^{m-n-1}$.
%The rules that govern our index gymnastics are as follows: $i,j$ run from $1$ to $m$; $\alpha,\beta$ run from $1$ to $n$; $\mathfrak{g}, \mathfrak{h}$ run from $n+1$ to $m-1$ and $\mathbbm{a},\mathbbm{b}$ run from $1$ to $m-1$.

The {\it second fundamental
form} $\mathfrak{B}$ of $N$ in $M$, for every $x\in N$,  is a vector-valued symmetric form $\mathfrak{B}:T_xN\times T_xN\rightarrow \mathcal {V}_xN$, given by
\[
\mathfrak{B}(X,Y):=(\nabla_X\overline{Y})^\bot,
\]
where $\overline{Y}$ is any extension of $Y$ to a tangent vector field on $N$, $\nabla$ is the
Levi-Civita connection of $(M,g)$, and the superscript $\bot$ denotes the projection onto $\mathcal {V}_xN$.
Given $\mathbf{n}\in \mathcal {V}S_xN$, the {\it Weingarten map} $\mathfrak{A}^{\mathbf{n}}:T_xN\rightarrow T_xN$ is defined by
\[
\mathfrak{A}^{\mathbf{n}}(X):=-(\nabla_X\overline{\mathbf{n}})^\top,
\]
where $\overline{\mathbf{n}}$ is any extension of $\mathbf{n}$ to a normal vector field on $N$, and the superscript
$\top$ denotes the  projection onto $T_xN$.
The {\it mean curvature} is defined by
\[
H:=\tr_{i^*g}\mathfrak{B}.
\]
For any $\mathbf{n}\in \mathcal {V}S_xN$, there holds $\tr\mathfrak{A}^{\mathbf{n}}=g(H,\mathbf{n})$.  And $N$ is said to be {\it a minimal submanifold}  if $H$ vanishes identically.
Inspired by Lewis, Li and Li\cite{LLL}, we introduce the following definition.
\begin{definition} Let $i:N\hookrightarrow M$ be  an $(m-1)$-dimensional submanifold.
 $N$ is said to be {\it  weakly mean convex}, if there exists a domain (i.e., a connected open subset) $\Omega\subset M$ such that $\partial \Omega=N$ and
$\tr \mathfrak{A}^\mathbf{n}\geq 0$
 for any inward normal vector $\mathbf{n}$  with respect to $\Omega$.
\end{definition}
\begin{remark}
The definition of mean curvature in Chavel \cite{IC} is  different from the one in  Lewis et al. \cite{LLL} by a factor $-1$. Hence, here the weakly mean convexity needs the non-negativity of the mean curvature with respect to the {\it inward} normal vectors.
\end{remark}

\begin{remark}\label{otherdefinaboutB}
A  totally geodesic submanifold  is always   minimal  while the smooth boundary of a convex domain is always weakly mean convex.
\end{remark}

%\begin{example}
%Let $M=\mathbb{R}^n$ and let $\Omega$ be a unit Euclidean ball $\mathbb{B}^n$.  Choose a frame field $\{e_\alpha,\mathbf{n}\}$ along $\partial\Omega=\mathbb{S}^{n-1}$, where $\{e_\alpha\}_{\alpha=1}^{n-1}$ is an arbitrary orthonormal basis of $T\mathbb{S}^{n-1}$ while %$\mathbf{n}$ is the unit inward normal vector field of $\mathbb{S}^n$. A direct calculation yields $\tr\mathfrak{A}^{\mathbf{n}}=(n-1)$ and $g(\mathfrak{B},\mathbf{n})=\id_{(n-1)\times(n-1)}$.
%\end{example}

The {\it exponential map  of normal bundle $\Exp:\mathcal {V}N\rightarrow M$}  is defined as
 \[
 \Exp(\mathbf{n}):=\exp_{\pi(\mathbf{n})}(\mathbf{n}),
 \]
 where
  $\pi:\mathcal {V}N\rightarrow N$ is the bundle projection and $\exp_{\pi(\mathbf{n})}:T_{\pi(\mathbf{n})}M\rightarrow M$ is the exponential map.
  According to Gray \cite{G} and O'Neill \cite{ON}, given $x\in N$, there exists a neighborhood  $U\subset N$ of $x$ and $\epsilon_x>0$ such that $\Exp|_{\pi^{-1}[U]\cap \mathbb{B}(U,\epsilon_x)}$ is a diffeomorphism of ${\pi^{-1}[U]\cap \mathbb{B}(U,\epsilon_x)}$, where $\mathbb{B}(U,\epsilon_x):=\{y\in \cup_{z\in U}T_zM:\, 0<|y|<\epsilon_x  \}$.
   In particular, if $N$ is compact,
there exists a small $\epsilon>0$ such that $\Exp_{*t\mathbf{n}}$ is nonsingular for all $t\in [0, \epsilon)$ and $\mathbf{n}\in \mathcal {V}SN$.

The so-called {\it Fermi coordinates} $(t,\mathbf{n})\in (0,+\infty)\times \mathcal {V}SN$ are defined by the map $E:[0,+\infty)\times \mathcal {V}SN\rightarrow M$, $(t,\mathbf{n})\mapsto\Exp(t\mathbf{n})$ (also see Lemma \ref{Fermiseconlemma}/(iii) below).
In particular, we have the following fact (cf. Chavel \cite[p.133]{IC}).

\begin{lemma}\label{Fermifirstlemma} Let $E:[0,\infty)\times \mathcal {V}SN\rightarrow M$ be defined as $(t,\mathbf{n})\mapsto\Exp(t\mathbf{n})$. If $E(t,\mathbf{n})$ is well-defined,  then
\begin{itemize}
\item[(i)] $\gamma_\mathbf{n}(t):=E(t,\mathbf{n})$, $t\geq 0$ is a normal geodesic from $N$ with the initial velocity $\mathbf{n}$;

\smallskip

	\item[(ii)] $\frac{\partial}{\partial t}=E_{*(t,\mathbf{n})}\mathbf{n}=\dot{\gamma}_\mathbf{n}(t)$, i.e., the tangent vector of  $\gamma_\mathbf{n}(t)$ at $t$;

\smallskip

\item[(iii)] given $X\in T_{\pi(\mathbf{n})}N$, $J_X(t):=E_{*(t,\mathbf{n})}X$ is a Jacobi field along $\gamma_\mathbf{n}(t)$, orthogonal to $\gamma_{\mathbf{n}}$, with
\[
J_X(0)=X,\quad (\nabla_tJ_X)(0)=-\mathfrak{A}^{\mathbf{n}}(X);
\]

 \item[(iv)] given $y\in \mathcal {V}_{\pi(\mathbf{n})}N\cap \mathbf{n}^\bot$, $J_y(t):=E_{*(t,\mathbf{n})}y$ is a Jacobi field along $\gamma_\mathbf{n}(t)$,  orthogonal to $\gamma_{\mathbf{n}}$,  with
 \[
 J_y(0)=0,\quad (\nabla_tJ_y)(0)=y,
 \]
where $\mathbf{n}^\bot:=\{y\in T_{\pi(\mathbf{n})}M:\,g(\mathbf{n},y)=0 \}$.

\end{itemize}
\end{lemma}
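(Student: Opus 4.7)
The plan is to prove the four assertions by constructing explicit geodesic variations of $\gamma_{\mathbf{n}}$, after which the Jacobi-field conclusions reduce to standard variational calculus and the boundary data fall out from differentiating a carefully chosen extension of $\mathbf{n}$ along $N$. Parts (i) and (ii) are immediate: by definition $\Exp(t\mathbf{n})=\exp_{\pi(\mathbf{n})}(t\mathbf{n})$, so for fixed $\mathbf{n}$ the curve $t\mapsto E(t,\mathbf{n})$ is precisely the $(M,g)$-geodesic issuing from $\pi(\mathbf{n})\in N$ with initial velocity $\mathbf{n}\in\mathcal{V}S_{\pi(\mathbf{n})}N$, and the identity $E_{*(t,\mathbf{n})}\mathbf{n}=\dot{\gamma}_{\mathbf{n}}(t)$ is the chain rule applied to $E$ in the $t$-direction.

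For part (iii), I would lift $X\in T_{\pi(\mathbf{n})}N$ to a variation of geodesics. First, pick a smooth curve $c:(-\varepsilon,\varepsilon)\to N$ with $c(0)=\pi(\mathbf{n})$ and $\dot{c}(0)=X$, and parallel-transport $\mathbf{n}$ along $c$ with respect to the \emph{normal} connection to obtain a unit normal vector field $\mathbf{n}(s)$ along $c$. The variation $F(s,t):=\exp_{c(s)}(t\,\mathbf{n}(s))$ is a one-parameter family of geodesics in $t$, so its variation field $J_X(t):=(\partial F/\partial s)(0,t)=E_{*(t,\mathbf{n})}X$ (with $X$ understood via its horizontal lift into $T\mathcal{V}SN$) is automatically a Jacobi field along $\gamma_{\mathbf{n}}$. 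The initial values are read off at $s=t=0$: $J_X(0)=\dot{c}(0)=X$, while, using the symmetry $\nabla_t\partial_sF=\nabla_s\partial_tF$ and $\partial_tF(s,0)=\mathbf{n}(s)$, one gets $(\nabla_tJ_X)(0)=\nabla_X\mathbf{n}$. Decomposing $\nabla_X\mathbf{n}=(\nabla_X\mathbf{n})^{\top}+(\nabla_X\mathbf{n})^{\bot}$, the tangential part equals $-\mathfrak{A}^{\mathbf{n}}(X)$ by definition of the Weingarten map, while the normal part vanishes precisely because $\mathbf{n}(s)$ was chosen parallel in the normal bundle at $s=0$. Orthogonality of $J_X$ to $\dot{\gamma}_{\mathbf{n}}$ then follows from the standard fact that $g(J_X,\dot{\gamma}_{\mathbf{n}})$ is affine in $t$, combined with $J_X(0)=X\perp\mathbf{n}$ and $g(-\mathfrak{A}^{\mathbf{n}}(X),\mathbf{n})=0$ (as $\mathfrak{A}^{\mathbf{n}}(X)\in T_{\pi(\mathbf{n})}N$).

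Part (iv) is parallel but simpler because the base point is held fixed. For $y\in\mathcal{V}_{\pi(\mathbf{n})}N\cap\mathbf{n}^{\bot}$, I would take the curve of unit normal vectors $\mathbf{n}(s):=(\mathbf{n}+sy)/|\mathbf{n}+sy|$ in $\mathcal{V}S_{\pi(\mathbf{n})}N$, which satisfies $\mathbf{n}(0)=\mathbf{n}$ and $\mathbf{n}'(0)=y$ thanks to $y\perp\mathbf{n}$. The variation $F(s,t):=\exp_{\pi(\mathbf{n})}(t\,\mathbf{n}(s))$ once again produces a Jacobi field $J_y$; since $F(s,0)\equiv\pi(\mathbf{n})$ one has $J_y(0)=0$, and the same swap $\nabla_t\partial_sF=\nabla_s\partial_tF$ delivers $(\nabla_tJ_y)(0)=\mathbf{n}'(0)=y$. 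Orthogonality to $\dot{\gamma}_{\mathbf{n}}$ then reduces to $g(J_y,\dot{\gamma}_{\mathbf{n}})=t\,g(y,\mathbf{n})\equiv0$.

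The one subtle point to flag is the choice of extension of $\mathbf{n}$ in part (iii): the identity $(\nabla_tJ_X)(0)=-\mathfrak{A}^{\mathbf{n}}(X)$ is correct precisely because $\mathbf{n}(s)$ is parallel in the normal bundle at $s=0$, which kills the normal component of $\nabla_X\mathbf{n}$; any other extension would contaminate this initial velocity with an additional normal term. Beyond this, all the assertions reduce to the familiar bookkeeping of geodesic variations, so I anticipate no further obstacles.
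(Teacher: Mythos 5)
Your proof is correct. The paper does not actually prove this lemma -- it simply cites Chavel -- and your geodesic-variation argument (horizontal lift via normal-parallel transport for (iii), radial variation in the fibre for (iv), with the initial data read off from $\nabla_t\partial_sF=\nabla_s\partial_tF$ and orthogonality from the affinity of $g(J,\dot{\gamma}_{\mathbf{n}})$) is precisely the standard proof found in that reference; your remark that the normal-parallel choice of extension is what kills the normal component of $\nabla_X\mathbf{n}$ correctly identifies the only delicate point.
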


Let $R$ denote the curvature tensor of $(M,g)$, i.e.,
\[
R(X,Y)Z=\nabla_X\nabla_YZ-\nabla_Y\nabla_XZ-\nabla_{[X,Y]}Z,
\]
where $X,Y,Z$ are three smooth vector fields on $M$. Given $\mathbf{n}\in \mathcal {V}S_xN$,
denote by $\tau_{t;\mathbf{n}}$ the parallel transport along
$\gamma_{\mathbf{n}}$ from $T_{\gamma_\mathbf{n}(0)}M$ to $T_{\gamma_\mathbf{n}(t)}M$  for all $t\geq 0$. Set
$R:=R\left(\,\cdot\,,\dot{\gamma}_\mathbf{n}(t)\right)\dot{\gamma}_\mathbf{n}(t)$ and
\[
\mathcal {R}(t,\mathbf{n} ):=\tau^{-1}_{t;\mathbf{n}}\circ R \circ \tau_{t;\mathbf{n}}:
\mathbf{n}^{\bot}\rightarrow \mathbf{n}^{\bot}.\]
Let $\mathcal {A}(t,\mathbf{n})$ be the solution of the matrix (or linear
transformation) ordinary differential equation on $\mathbf{n}^\bot$:
\[
\left \{
\begin{array}{lll}
\mathcal {A}{''}(t,\mathbf{n})+\mathcal {R}(t,\mathbf{n})\,\mathcal {A}(t,\mathbf{n})=0,\\
\\
\mathcal {A}(0,\mathbf{n})|_{T_xN}=\id|_{T_xN},\ \mathcal {A}'(0,\mathbf{n})|_{T_xN}=-\mathfrak{A}^{\mathbf{n}},\tag{2.1}\label{defineJacbi}\\
\\
 \mathcal {A}(0,\mathbf{n})|_{\mathcal {V}_xN\cap \mathbf{n}^\bot}=0,\ \mathcal {A}'(0,\mathbf{n})|_{\mathcal {V}_xN\cap \mathbf{n}^\bot}=\id|_{\mathcal {V}_xN\cap \mathbf{n}^\bot},
\end{array}
\right.
\]
where $\mathcal {A}'=\frac{d}{d t}\mathcal {A}$ and $\mathcal {A}''=\frac{d^2}{d t^2}\mathcal {A}$. Thus, Lemma \ref{Fermifirstlemma} indicates
\[
\left \{
\begin{array}{lll}
\frac{\partial}{\partial t}=\dot{\gamma}_{\mathbf{n}}(t)=\tau_{t;\mathbf{n}}\mathbf{n},\\
\\
J_X(t)=E_{*(t,\mathbf{n})}X=\tau_{t;\mathbf{n}}\mathcal {A}(t,\mathbf{n})X, \ \forall\,X\in T_xN,\\
\\
J_y(t)=E_{*(t,\mathbf{n})}y=\tau_{t;\mathbf{n}}\mathcal {A}(t,\mathbf{n})y,\ \forall\,y\in \mathcal {V}_xN\cap \mathbf{n}^\bot.
\end{array}
\right.
\]
Moreover,   (\ref{defineJacbi}) yields
\begin{align*}
\underset{t\rightarrow 0^+}{\lim}\frac{\det \mathcal {A}(t,\mathbf{n})}{t^{m-n-1}}=1,\ \frac{(\det \mathcal {A}(t,\mathbf{n}))' }{\det \mathcal {A}(t,\mathbf{n})}=\frac{(m-n-1)}t- \tr\mathfrak{A}^{\mathbf{n}}+O(t).\tag{2.2}\label{smallsestimatedatA}
\end{align*}

Given $\mathbf{n}\in \mathcal {V}SN$,   the {\it distance to the focal cut point}
of $N$ along $\gamma_\mathbf{n}$ is defined as
\[
c_{\mathcal {V}}(\mathbf{n}):=\sup\{t>0:\,d(N,\gamma_\mathbf{n}(t))=t\}.
\]
Thus $\det\mathcal {A}(t,\mathbf{n})>0$ for any $t\in (0,c_\mathcal {V}(\mathbf{n}))$. In addition, one can introduce
\begin{align*}
&\mathcal {V}C(N):=\{ c_{\mathcal {V}}(\mathbf{n})\mathbf{n}:\,c_{\mathcal {V}}(\mathbf{n})<+\infty,\mathbf{n}\in \mathcal {V}SN\},&\mathcal {V}\mathscr{C}(N):=\Exp\mathcal {V}C(N),\\
&\mathcal {V}D(N):=\left\{t\mathbf{n}:\,0< t<c_{\mathcal {V}}(\mathbf{n}),\, \mathbf{n}\in \mathcal {V}SN\right\}, &\mathcal {V}\mathscr{D}(N):=\Exp\mathcal {V}D(N).
\end{align*}
According to Chavel \cite[p.\,105ff, p.\,134]{IC}, the following result holds.
\begin{lemma}\label{Fermiseconlemma} Let $(M,g)$ be an $m$-dimensional complete Riemannian manifold and let $i:N\hookrightarrow M$ be an $n$-dimensional closed submanifold. Then the following statements  are true:
\begin{itemize}
\item[(i)] $\gamma_\mathbf{n}(t)$, $t\in [0,c_{\mathcal {V}}(\mathbf{n})]$ is a minimal geodesic from $N$ to $\gamma_{\mathbf{n}}(c_{\mathcal {V}}(\mathbf{n}))$. In particular, for any $0<t<c_{\mathcal {V}}(\mathbf{n})$, $\gamma_\mathbf{n}$ is the unique minimal geodesic from $N$ to $\gamma_{\mathbf{n}}(t)$.

    \smallskip

    \item[(ii)] $c_{\mathcal {V}}:\mathcal {V}SN\rightarrow (0,+\infty]$ is a continuous function. Hence, $\mathcal {V}\mathscr{C}(N)$ is a set of Lebesgue measure $0$.

     \smallskip

    \item[(iii)] The map $\Exp:\mathcal {V}D(N)\rightarrow\mathcal {V}\mathscr{D}(N)$ is a diffeomorphism. Particularly, $\mathcal {V}\mathscr{D}(N)=M\backslash \left(\mathcal {V}\mathscr{C}(N) \cup N\right)$.
\end{itemize}
\end{lemma}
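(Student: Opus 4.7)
The plan is to adapt the standard cut-locus machinery of Riemannian geometry to the setting where one replaces a point by the submanifold $N$, using the Jacobi-field data provided by Lemma \ref{Fermifirstlemma} and equation (\ref{defineJacbi}).

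For part (i), I would first observe that by the very definition of $c_{\mathcal{V}}(\mathbf{n})$, for each $0<t<c_{\mathcal{V}}(\mathbf{n})$ one has $d(N,\gamma_\mathbf{n}(t))=t=L(\gamma_\mathbf{n}|_{[0,t]})$, so $\gamma_\mathbf{n}|_{[0,t]}$ realizes the distance. Continuity of $d(N,\cdot)$ then upgrades this to the closed interval $[0,c_{\mathcal{V}}(\mathbf{n})]$. For uniqueness on $(0,c_{\mathcal{V}}(\mathbf{n}))$, I would argue by contradiction: if a second minimizing geodesic $\sigma$ from $N$ to $\gamma_\mathbf{n}(t_0)$ existed for some $t_0<c_{\mathcal{V}}(\mathbf{n})$, then both $\sigma$ and $\gamma_\mathbf{n}$ must meet $N$ orthogonally (by the first variation formula, since $N$ has no boundary), and a standard broken-geodesic/corner-rounding argument would produce a curve from $N$ to $\gamma_\mathbf{n}(t_0+\varepsilon)$ strictly shorter than $t_0+\varepsilon$, forcing $c_{\mathcal{V}}(\mathbf{n})\le t_0$.

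For (ii), continuity of $c_{\mathcal{V}}$ would be split into upper and lower semicontinuity. Lower semicontinuity uses part (i): if $t<c_{\mathcal{V}}(\mathbf{n})$, then by (i) $\gamma_\mathbf{n}|_{[0,t]}$ is the unique minimizer and $\det\mathcal{A}(s,\mathbf{n})>0$ for $s\in(0,t]$, and both conditions survive $C^1$-small perturbations of $\mathbf{n}$, which yields $c_{\mathcal{V}}(\mathbf{n}_j)>t-\varepsilon$ eventually. Upper semicontinuity is the delicate case: if $\mathbf{n}_j\to\mathbf{n}$ and $c_{\mathcal{V}}(\mathbf{n}_j)\to t_\infty$, then at each $\mathbf{n}_j$ one of two alternatives holds, either $\gamma_{\mathbf{n}_j}(c_{\mathcal{V}}(\mathbf{n}_j))$ is the endpoint of two distinct minimizing geodesics from $N$, or $\det\mathcal{A}(c_{\mathcal{V}}(\mathbf{n}_j),\mathbf{n}_j)=0$, and a compactness/limit argument on both alternatives shows $c_{\mathcal{V}}(\mathbf{n})\le t_\infty$. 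Once continuity is in hand, $\mathcal{V}C(N)$ is the graph of $c_{\mathcal{V}}$ (restricted to its finite part) inside the smooth manifold $\mathcal{V}SN\times\mathbb{R}$, so it is a Lipschitz image of an $(m-1)$-dimensional set, hence of Lebesgue measure zero in $M$ after pushing forward by the smooth map $\Exp$.

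For (iii), the initial conditions in (\ref{defineJacbi}) together with the expansion (\ref{smallsestimatedatA}) show $\det\mathcal{A}(t,\mathbf{n})>0$ on $\mathcal{V}D(N)$, so by the identification $J_X(t)=\tau_{t;\mathbf{n}}\mathcal{A}(t,\mathbf{n})X$ the differential $\Exp_{*(t\mathbf{n})}$ is nonsingular on $\mathcal{V}D(N)$, making $\Exp$ a local diffeomorphism there. Global injectivity is immediate from the uniqueness statement in (i). The equality $\mathcal{V}\mathscr{D}(N)=M\setminus(\mathcal{V}\mathscr{C}(N)\cup N)$ then follows because completeness of $M$ and closedness of $N$ guarantee that every $x\in M\setminus N$ is reached by at least one minimizing $N$-geodesic $\gamma_\mathbf{n}$ at some $t=d(N,x)\le c_{\mathcal{V}}(\mathbf{n})$, and the endpoint is a focal cut point precisely when $t=c_{\mathcal{V}}(\mathbf{n})$.

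The hard step will be the upper semicontinuity half of (ii): in contrast to the pointwise cut-locus case, the two failure modes (focal vs. cut) for $N$-geodesics interact with the geometry of $N$ through the initial data $-\mathfrak{A}^{\mathbf{n}}$ in (\ref{defineJacbi}), so the limiting argument requires knowing that a pair of minimizers from $N$ passes to a pair in the limit without collapsing into the normal bundle fiber direction. Handling this cleanly is what distinguishes the submanifold Fermi-coordinate statement from its classical pointwise analogue.
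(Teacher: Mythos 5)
The paper offers no proof of this lemma at all --- it is quoted from Chavel \cite[p.\,105ff, p.\,134]{IC} --- so you are reconstructing the standard focal-cut-locus argument, which is the right plan; part (i) and the architecture of (iii) are sound. But two steps are justified incorrectly.

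In (ii) you have interchanged the two semicontinuity directions. Upper semicontinuity is the easy half: if $c_{\mathcal V}(\mathbf n_j)>t$ for infinitely many $j$, then $d(N,\Exp(t\mathbf n_j))=t$, and letting $j\to\infty$ gives $d(N,\Exp(t\mathbf n))=t$, hence $c_{\mathcal V}(\mathbf n)\geq t$; this uses only the continuity of $d(N,\cdot)$ and of $\Exp$, plus the fact that $\{t:\,d(N,\gamma_{\mathbf n}(t))=t\}$ is an interval. The delicate half is \emph{lower} semicontinuity, and it is exactly the direction your ``two alternatives'' compactness argument addresses --- note that your stated conclusion there, $c_{\mathcal V}(\mathbf n)\leq t_\infty=\lim c_{\mathcal V}(\mathbf n_j)$, \emph{is} the lower-semicontinuity inequality. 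Meanwhile the paragraph you label ``lower semicontinuity'' asserts that uniqueness of the minimizer and nonsingularity of $\mathcal A$ ``survive $C^1$-small perturbations''; that stability is precisely what lower semicontinuity claims and cannot be assumed --- it must come from the dichotomy (first focal point, or a second minimizer) at a finite cut value, an Arzel\`a--Ascoli limit of the competing minimizers, and the collapsing case you correctly flag at the end. Separately, continuity of $c_{\mathcal V}$ does not make $\mathcal VC(N)$ a Lipschitz image; the measure-zero claim should instead go through Fubini (the graph of a continuous function over $\mathcal VSN$ is null in $\mathcal VSN\times(0,\infty)$) followed by the fact that the smooth map $\Exp$ carries null sets to null sets.

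In (iii), the claim that the initial conditions (\ref{defineJacbi}) together with (\ref{smallsestimatedatA}) show $\det\mathcal A(t,\mathbf n)>0$ on all of $\mathcal VD(N)$ is not a proof: (\ref{smallsestimatedatA}) is only a $t\to0^+$ asymptotic and controls nothing up to $c_{\mathcal V}(\mathbf n)$. Positivity of $\det\mathcal A$ on $(0,c_{\mathcal V}(\mathbf n))$ is the statement that a minimizing normal geodesic contains no focal point of $N$ in its interior, which requires the index-form/second-variation argument with the boundary term governed by $-\mathfrak A^{\mathbf n}$. This fact is the crux both of the local-diffeomorphism claim in (iii) and of the focal alternative you invoke in (ii), so it cannot be waved through via the short-time expansion.
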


In the sequel, we recall the further properties of $\det\mathcal {A}(t,\mathbf{n})$. Firstly,
the proof of the Heintze-Karcher theorem \cite{HK} (also see Chavel \cite[Theorem 7.5]{IC}) furnishes the following result.
\begin{theorem}[Heintze-Karcher theorem]\label{importantlemmahk} Let $(M,g)$ be an $m$-dimensional complete Riemannian manifold with $\mathbf{K}_M\geq K$ and let $i:N\hookrightarrow M$ be an $n(\geq 1)$-dimensional  closed submanifold.
For any $\mathbf{n}\in \mathcal {V}SN$, we have
\[
\frac{(\det \mathcal {A}(t,\mathbf{n}))'}{\det \mathcal {A}(t,\mathbf{n})}\leq \frac{(\det \mathcal {A}_K(t))'}{\det \mathcal {A}_K(t)},\text{ for }0<t<c_\mathcal {V}(\mathbf{n}),
\]
where
\[
\det \mathcal {A}_K(t):=\mathfrak{s}^{m-n-1}_K(t)\cdot \overset{n}{\underset{\alpha=1}{\prod}}\left(  \mathfrak{s}'_K(t)-\lambda_\alpha \,\mathfrak{s}_K(t) \right),
\]
and $\{\lambda_\alpha\}_{\alpha=1}^n$ are the eigenvalues of $\mathfrak{A}^{\mathbf{n}}$ with respect to some orthonormal basis of $T_{\pi(\mathbf{n})}N$. Moreover,
\[
\det \mathcal {A}(t,\mathbf{n})\leq \left[ \mathfrak{s}'_K(t)-\frac{\tr \mathfrak{A}^\mathbf{n}}{n}\mathfrak{s}_K(t) \right]^n \mathfrak{s}^{m-n-1}_K(t), \ \text{ for }t\in [0,c_{\mathcal {V}}(\mathbf{n})],
\]
where $\mathfrak{s}_K(t)$  is the unique solution to $\mathfrak{s}''_K(t)+K \mathfrak{s}_K(t)=0$ with $\mathfrak{s}_K(0)=0$ and $\mathfrak{s}'_K(0)=1$.
\end{theorem}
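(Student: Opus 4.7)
The plan is to reduce the claim to a matrix Riccati comparison, following the original Heintze-Karcher argument. On $(0, c_\mathcal {V}(\mathbf{n}))$, set $U(t) := \mathcal {A}'(t,\mathbf{n})\mathcal {A}(t,\mathbf{n})^{-1}$, which is well-defined by Lemma \ref{Fermiseconlemma}(iii) since $\mathcal {A}$ is non-singular there. From $\mathcal {A}'' + \mathcal {R}\mathcal {A} = 0$ I obtain the Riccati equation
$$U' + U^2 + \mathcal {R}(t,\mathbf{n}) = 0.$$
A Wronskian computation (the derivative of $\mathcal {A}^{T}\mathcal {A}' - (\mathcal {A}')^{T}\mathcal {A}$ vanishes by symmetry of $\mathcal {R}$, and the quantity itself vanishes at $t=0$ by the self-adjointness of $\mathfrak{A}^\mathbf{n}$ recorded in (\ref{defineJacbi})) shows that $U(t)$ is symmetric on $\mathbf{n}^\bot$ for every $t$. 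Since $(\log \det \mathcal {A})' = \tr U$, the first inequality amounts to a trace comparison between $U$ and a model solution.

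For the model, I take the diagonal matrix $\mathcal {A}_K(t)$ from the statement, written in the orthonormal basis of $\mathbf{n}^\bot$ obtained by combining a diagonalizing basis of $\mathfrak{A}^\mathbf{n}$ on $T_xN$ with any orthonormal basis of $\mathcal {V}_xN \cap \mathbf{n}^\bot$. A direct calculation using $\mathfrak{s}_K'' + K\mathfrak{s}_K = 0$ confirms that $\mathcal {A}_K'' + K\cdot \id \cdot \mathcal {A}_K = 0$ with the same initial data as in (\ref{defineJacbi}); consequently $U_K := \mathcal {A}_K'\mathcal {A}_K^{-1}$ satisfies the model Riccati equation $U_K' + U_K^2 + K\cdot\id = 0$. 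The curvature hypothesis $\mathbf{K}_M \geq K$ translates to $\mathcal {R}(t,\mathbf{n}) \geq K\cdot\id$ as self-adjoint operators on $\mathbf{n}^\bot$, so subtracting the two Riccati equations and applying the standard matrix Riccati comparison (an integrating-factor argument in the spirit of Eschenburg) yields $U(t) \leq U_K(t)$ pointwise; taking traces then gives the logarithmic-derivative inequality.

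The main obstacle will be handling the singular initial behaviour, since $\mathcal {A}(0,\mathbf{n})$ has rank $n$ only, and hence $U(t)$ blows up like $(m-n-1)/t$ along $\mathcal {V}_xN \cap \mathbf{n}^\bot$ as $t \to 0^+$. The asymptotics (\ref{smallsestimatedatA}), together with the identical blow-up of $U_K$ coming from the $\mathfrak{s}_K$-factors of $\mathcal {A}_K$, show that the singular parts cancel in $U_K - U$; this lets the comparison be propagated from an arbitrarily small $\epsilon > 0$ down to $0^+$, bypassing the lack of a genuine initial condition at $t=0$.

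Finally, for the $\det$-bound I integrate $(\log \det \mathcal {A})' \leq (\log \det \mathcal {A}_K)'$ on $[\epsilon, t]$ and let $\epsilon \to 0^+$; the asymptotic (\ref{smallsestimatedatA}), applied to both $\det \mathcal {A}$ and (analogously) to $\det \mathcal {A}_K$, gives $\det \mathcal {A}(\epsilon, \mathbf{n})/\det \mathcal {A}_K(\epsilon) \to 1$, so $\det \mathcal {A}(t,\mathbf{n}) \leq \det \mathcal {A}_K(t)$. The $n$ tangential factors $\mathfrak{s}_K'(t) - \lambda_\alpha \mathfrak{s}_K(t)$ are positive on $(0, c_\mathcal {V}(\mathbf{n}))$ because $\det \mathcal {A}_K(t) > 0$ there, so AM-GM produces
$$\prod_{\alpha=1}^{n}\bigl(\mathfrak{s}_K'(t) - \lambda_\alpha \mathfrak{s}_K(t)\bigr) \leq \left(\mathfrak{s}_K'(t) - \tfrac{\tr \mathfrak{A}^\mathbf{n}}{n}\,\mathfrak{s}_K(t)\right)^{n},$$
which combined with the $\mathfrak{s}_K^{m-n-1}(t)$ factor delivers the stated upper bound on $\det \mathcal {A}(t,\mathbf{n})$.
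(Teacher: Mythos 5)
Your argument is correct in outline and, with the standard details written out, it proves the theorem. Note, though, that the paper contains no proof of this statement: it is imported from the literature, with pointers to Heintze--Karcher \cite{HK} and Chavel \cite[Theorem 7.5]{IC}. Those sources argue variationally rather than through the Riccati equation: for fixed $t_0<c_{\mathcal{V}}(\mathbf{n})$ one writes $(\log\det\mathcal{A})'(t_0)$ as a sum of index forms $I_{t_0}(J_j,J_j)$ of Jacobi fields adapted to the focal boundary conditions (\ref{defineJacbi}), invokes the focal index lemma to replace each $J_j$ by the explicit test field $f_jE_j$ with $f_j$ built from $\mathfrak{s}_K$ and the eigenvalues $\lambda_\alpha$, and uses $\mathbf{K}_M\geq K$ to estimate the resulting integral; the determinant bound then follows by integration and the AM--GM inequality exactly as in your last step. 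Your route via $U=\mathcal{A}'\mathcal{A}^{-1}$, the equation $U'+U^2+\mathcal{R}=0$, and the Eschenburg-type matrix comparison is the standard equivalent reformulation: it trades the index lemma for an ODE comparison, which makes the treatment of the singular initial condition more transparent, at the price of having to establish the self-adjointness of $U$ (your Wronskian step) and the cancellation of the $t^{-1}$ poles of $U$ and $U_K$ along $\mathcal{V}_xN\cap\mathbf{n}^\bot$ --- both of which you correctly single out as the delicate points. Two places deserve one more sentence each in a full write-up. First, the positivity of $\det\mathcal{A}_K$ (hence of each tangential factor $\mathfrak{s}_K'-\lambda_\alpha\mathfrak{s}_K$, needed both for the first display to make sense and for AM--GM) on all of $(0,c_{\mathcal{V}}(\mathbf{n}))$ is not an a priori fact but itself a consequence of the comparison: if $\det\mathcal{A}_K$ had a first zero $t_K<c_{\mathcal{V}}(\mathbf{n})$, the smallest eigenvalue of $U_K$ would tend to $-\infty$ as $t\to t_K^-$, forcing the same for $U\leq U_K$ and contradicting the smoothness of $\mathcal{A}^{-1}$ on the focal-cut-free interval. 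Second, to launch the comparison from $t=0^+$ you need the matrix analogue of (\ref{smallsestimatedatA}), namely $U(t)-U_K(t)\to 0$, not merely the trace asymptotics the paper records. Neither point is a gap; both are routine.
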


Secondly, Chavel \cite[Theorem 3.14]{IC} together with the argument in \cite[p.\,121f]{IC} furnishes the following result.
\begin{theorem}\label{hypernonlaplac}
Let $(M,g)$ be an $m$-dimensional complete Riemannian manifold with $\mathbf{Ric}_M\geq (m-1)K$ and let $i:N\hookrightarrow M$ be an $(m-1)$-dimensional closed  submanifold.   Given $\mathbf{n}\in \mathcal {V}SN$, if  $\tr  \mathfrak{A}^{\mathbf{n}}\geq (m-1)\lambda$,
then
\[
\frac{(\det \mathcal {A}(t,\mathbf{n}))'}{\det \mathcal {A}(t,\mathbf{n})}\leq (m-1)\left[\frac{\mathfrak{s}''_K(t)-\lambda \mathfrak{s}'_K(t)}{\mathfrak{s}'_K(t)-\lambda \mathfrak{s}_K(t)}\right], \ \det \mathcal {A}(t,\mathbf{n})\leq \left(  \mathfrak{s}'_K(t)-\lambda\mathfrak{s}_K(t) \right)^{m-1},  \ \forall\, t\in(0,c_\mathcal {V}(\mathbf{n})).
\]
\end{theorem}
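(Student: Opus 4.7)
My plan is to adapt the classical matrix Riccati / Bishop--Gromov argument to the codimension-one setting. Since $N$ is a hypersurface, the restriction $\mathcal{A}(t,\mathbf{n})|_{T_{\pi(\mathbf{n})}N}$ is the full endomorphism on $\mathbf{n}^{\bot}$, so $\mathcal{A}(t,\mathbf{n})$ is an $(m-1)\times(m-1)$ matrix with $\mathcal{A}(0,\mathbf{n}) = \mathrm{Id}$ and $\mathcal{A}'(0,\mathbf{n}) = -\mathfrak{A}^{\mathbf{n}}$; moreover (\ref{smallsestimatedatA}) gives $\det\mathcal{A}(0,\mathbf{n}) = 1$. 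On $(0, c_{\mathcal{V}}(\mathbf{n}))$ the matrix $\mathcal{A}(t,\mathbf{n})$ is invertible by Lemma \ref{Fermiseconlemma}, so I introduce the ``shape'' endomorphism $\mathcal{U}(t) := \mathcal{A}'(t,\mathbf{n})\,\mathcal{A}(t,\mathbf{n})^{-1}$.

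The first step is to verify that $\mathcal{U}(t)$ is self-adjoint. Because $\mathcal{R}(t,\mathbf{n})$ is symmetric, the Wronskian $\mathcal{A}^{T}\mathcal{A}' - (\mathcal{A}^{T})'\mathcal{A}$ is constant along the Jacobi equation, and it vanishes at $t=0$ since $\mathfrak{A}^{\mathbf{n}}$ is symmetric; hence $\mathcal{A}^{T}\mathcal{A}'$ is symmetric for all $t$, and so is $\mathcal{U}$. Differentiating $\mathcal{U}$ and using $\mathcal{A}'' = -\mathcal{R}\mathcal{A}$ yields the matrix Riccati identity $\mathcal{U}' + \mathcal{U}^{2} + \mathcal{R}(t,\mathbf{n}) = 0$. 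Taking trace and applying Cauchy--Schwarz $\mathrm{tr}(\mathcal{U}^{2}) \geq (\mathrm{tr}\,\mathcal{U})^{2}/(m-1)$ together with $\mathrm{tr}\,\mathcal{R}(t,\mathbf{n}) = \mathbf{Ric}_{M}(\dot{\gamma}_{\mathbf{n}}, \dot{\gamma}_{\mathbf{n}}) \geq (m-1)K$, I obtain the scalar differential inequality
\[
\phi'(t) + \frac{\phi(t)^{2}}{m-1} + (m-1)K \leq 0, \qquad \phi(t) := \bigl(\ln\det\mathcal{A}(t,\mathbf{n})\bigr)', \qquad \phi(0) = -\mathrm{tr}\,\mathfrak{A}^{\mathbf{n}} \leq -(m-1)\lambda.
\]

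Next I set $f(t) := \mathfrak{s}'_{K}(t) - \lambda \mathfrak{s}_{K}(t)$. A direct computation using $\mathfrak{s}''_{K} + K\mathfrak{s}_{K} = 0$ gives $f'' = -Kf$, from which $\Phi(t) := (m-1)f'(t)/f(t)$ satisfies the equality version of the same Riccati ODE with $\Phi(0) = -(m-1)\lambda$. A standard scalar ODE comparison (valid wherever $f > 0$) then yields $\phi(t) \leq \Phi(t)$, which is precisely the first displayed inequality of the theorem. Integrating $(\ln\det\mathcal{A})'(t) \leq (m-1)(\ln f)'(t)$ from $0^{+}$ to $t$ and using $\det\mathcal{A}(0,\mathbf{n}) = 1$ and $f(0) = 1$ gives $\det\mathcal{A}(t,\mathbf{n}) \leq f(t)^{m-1}$, i.e.\ the second inequality.

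The delicate point is ensuring $f > 0$ throughout $(0, c_{\mathcal{V}}(\mathbf{n}))$, so that $\Phi$ and the comparison step remain well-defined on the whole interval. If $f$ had a first zero $t_{*} \in (0, c_{\mathcal{V}}(\mathbf{n}))$, then $\Phi(t) \to -\infty$ as $t \uparrow t_{*}$, forcing $\phi(t) \to -\infty$ by comparison, hence $\det\mathcal{A}(t_{*},\mathbf{n}) = 0$, contradicting the positivity of $\det\mathcal{A}(\cdot,\mathbf{n})$ on $(0, c_{\mathcal{V}}(\mathbf{n}))$ granted by Lemma \ref{Fermiseconlemma}. Therefore $f$ remains positive on the entire interval and both inequalities hold there as stated.
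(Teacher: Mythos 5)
Your proof is correct and takes essentially the same route as the paper, which for this statement simply cites Chavel's Theorem 3.14 together with the argument on p.\,121f --- precisely the trace-of-the-matrix-Riccati argument with Cauchy--Schwarz and scalar ODE comparison that you carry out. The two points that are often glossed over (self-adjointness of $\mathcal{U}=\mathcal{A}'\mathcal{A}^{-1}$ via the constant Wronskian, and the positivity of $\mathfrak{s}'_K-\lambda\mathfrak{s}_K$ on all of $(0,c_{\mathcal{V}}(\mathbf{n}))$ via the positivity of $\det\mathcal{A}$) are both handled correctly.
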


\subsection{Estimates of volume and Laplacian}
In this subsection, we present several estimates of volume and the Laplacian by Fermi coordinates. In what follows, $\mathbb{S}^n$ denotes an $n$-dimensional unit sphere in $\mathbb{R}^{n+1}$ and $c_{n}:=\vol(\mathbb{S}^n)$.

 Let $(u^\alpha)$ be a local coordinate system of $N$ and let $(\theta^\mathfrak{g})$ be a local coordinate system of $\mathbb{S}^{m-n-1}\approx\mathcal {V}S_xN$. Thus, $(t,\mathbf{n})=(t,u^\alpha,\theta^\mathfrak{g})$ is a Fermi coordinate system of $M$.
For $x\in N$, define $e_\mathbbm{a}\in \mathbf{n}^\bot\cap T_xM$ as
\[
e_\mathbbm{a}:=\left \{
\begin{array}{lll}
\frac{\partial}{\partial u^\alpha},& \text{ for }\mathbbm{a}=\alpha, \\
\tag{2.3}\label{defofea}\\
\frac{\partial}{\partial\theta^\mathfrak{g}},& \text{ for }\mathbbm{a}=\mathfrak{g}.
\end{array}
\right.
\]
Let $J_\mathbbm{a}(t):=\tau_{t;\mathbf{n}}\mathcal {A}(t,\mathbf{n})e_{\mathbbm{a}}$. Since $g(e_\alpha,e_\mathfrak{g})=0$, a direct calculation yields
\begin{align*}
\det\left[g(J_\mathbbm{a}(t),J_\mathbbm{b}(t))\right]=(\det\mathcal {A})^2\cdot\det g(e_\alpha,e_\beta)\cdot\det g(e_\mathfrak{g},e_\mathfrak{h}).
\end{align*}
In view of Lemma \ref{Fermifirstlemma}, the Riemannian measure $\dvol_g$ of $(M,g)$ can be written as
\begin{align*}
{\dvol}_g(t,\mathbf{n})&=\det \mathcal {A}(t,\mathbf{n}) dt \cdot \sqrt{\det g(e_\alpha,e_\beta)} \,du^1\cdots du^{n}\cdot \sqrt{\det g(e_\mathfrak{g},e_\mathfrak{h})} \,d\theta^1\cdots d\theta^{m-n-1}\\
&=\det \mathcal {A}(t,\mathbf{n}) dt \cdot {\dvol}_{i^*g}(x) \cdot d\nu_x(\mathbf{n}),\tag{2.4}\label{fermicoordinvolumform}
\end{align*}
where ${\dvol}_{i^*g}$ is the induced Riemannian measure of $(N,i^*g)$ and $d\nu_x$ is the Riemannian measure of $\mathcal {V}S_xN$.
Furthermore, by Lemma
\ref{Fermiseconlemma}, for $f\in L^1(M)$, we have
\[
\int_M f {\dvol}_g=\int_{N}{\dvol}_{i^*g}(x)\int_{\mathcal {V}S_xN}d\nu_x(\mathbf{n})\int_0^{c_\mathcal {V}(\mathbf{n})}f(\Exp(t\mathbf{n}))\, \det \mathcal {A}(t,\mathbf{n})dt.\tag{2.5}\label{2.0'fermico}
\]

Let $d:M\times M\rightarrow [0,\infty)$ denote the distance function induced by the Riemannian metric $g$. The {\it distance function from $N$} is defined by
\[
r(x):=d(N,x),\ \forall\,x\in M.
\]
 The gradient vector field $(\nabla r)(x)$ has unit length when $x\in \mathcal {V}\mathscr{D}(N)$. More precisely,
 for any $x=(t,\mathbf{n})\in \mathcal {V}\mathscr{D}(N)$,  Lemmas \ref{Fermifirstlemma} and \ref{Fermiseconlemma} yield
 \[
 r(x)=t,\ (\nabla r)(x)=\dot{\gamma}_{\mathbf{n}}(t)=\left.\frac{\partial}{\partial t}\right|_{(t,\mathbf{n})},\tag{2.6}\label{distanceandt}
 \]
 where $\gamma_\mathbf{n}(s)$, $0\leq s\leq d(N,x)$, is the unique minimal geodesic from $N$ to $x$ with the initial velocity $\mathbf{n}$.

Throughout this paper, we always use  $T_s$ to denote the {\it $s$-tuber neighborhood of $N$}, i.e.,
\[
T_s:=\{x\in M:\ r(x)<s\}.
\]
\begin{lemma}\label{centerzeroinfite}Let $(M,g)$ be an $m$-dimensional complete Riemannian manifold and let $i:N\hookrightarrow M$ be an $n$-dimensional closed submanifold. For any bounded domain $\Omega$ with $\overline{\Omega} \cap N\neq \emptyset$, we have
\begin{itemize}
\item[(i)] if $l<m-n$, then $\ds\ds\lim_{\epsilon\rightarrow 0^+}\int_{\Omega \cap T_\epsilon} r^{-l} {\dvol}_g=0$;

 \smallskip

\item[(ii)] if $l\geq m-n$, then $\ds\ds \lim_{\epsilon\rightarrow 0^+}\int_{\Omega\backslash T_\epsilon} r^{-l} {\dvol}_g=+\infty$.

\end{itemize}
\end{lemma}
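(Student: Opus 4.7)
The plan is to change variables to Fermi coordinates around the compact set $K:=\overline{\Omega}\cap N$ (compact because $N$ is closed in the complete manifold $M$ and $\overline{\Omega}$ is compact) and reduce both parts to the elementary radial integral $\int_0^\epsilon t^{m-n-1-l}\,dt$. By Lemma~\ref{Fermiseconlemma}(ii), the focal-cut function $c_{\mathcal{V}}$ is continuous and positive on the compact unit normal bundle $\mathcal{V}SN|_K$, hence attains a strictly positive minimum, so Fermi coordinates are valid on a uniform tubular neighborhood of $K$. The asymptotic (\ref{smallsestimatedatA}), uniformized over the slightly enlarged compact base $K_0:=\{x\in N:d(x,\overline{\Omega})\le\epsilon_0\}$, supplies a radius $\epsilon_0>0$ and constants $0<c_1<c_2$ with
\[
c_1\,t^{m-n-1}\le \det\mathcal{A}(t,\mathbf{n})\le c_2\,t^{m-n-1},\qquad 0<t<\epsilon_0,\ \pi(\mathbf{n})\in K_0.
\]

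For part (i), every $y\in\Omega\cap T_\epsilon$ with $\epsilon<\epsilon_0$ has its nearest $N$-point inside $K_0$, so inserting the upper bound on $\det\mathcal{A}$ into (\ref{2.0'fermico}) gives
\[
\int_{\Omega\cap T_\epsilon} r^{-l}\,\dvol_g \le c_2\,\vol_{i^*g}(K_0)\,c_{m-n-1}\int_0^\epsilon t^{m-n-1-l}\,dt=\frac{C\,\epsilon^{m-n-l}}{m-n-l},
\]
which vanishes as $\epsilon\to 0^+$ because $l<m-n$.

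For part (ii), fix $x_0\in K$; since $\Omega$ is open and $x_0\in\overline{\Omega}$, there exists a sequence $y_k\in\Omega$ with $y_k\to x_0$, whose Fermi coordinates $(t_k,\mathbf{n}_k)$ satisfy $t_k\to 0^+$ and, after passing to a subsequence, $\mathbf{n}_k\to\mathbf{n}_0\in \mathcal{V}S_{x_0}N$. Openness of $\Omega$ combined with the local diffeomorphism property of $\Exp$ near $(0,\mathbf{n}_0)$ (Lemma~\ref{Fermiseconlemma}(iii)) yields an open Fermi box $W=\Exp\{t\mathbf{n}:0<t<\delta,\,\mathbf{n}\in\mathcal{W}\}\subset\Omega$, where $\delta\in(0,\epsilon_0)$ and $\mathcal{W}\subset\mathcal{V}SN$ is an open set over a small $N$-neighborhood $U$ of $x_0$. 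Using the lower bound on $\det\mathcal{A}$ and (\ref{2.0'fermico}),
\[
\int_{\Omega\setminus T_\epsilon} r^{-l}\,\dvol_g\ge\int_{W\setminus T_\epsilon} r^{-l}\,\dvol_g\ge C'\int_\epsilon^\delta t^{m-n-1-l}\,dt,
\]
and the right-hand side tends to $+\infty$ because $m-n-1-l\le -1$.

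The main obstacle is the step in part (ii): extracting an open Fermi box $W\subset\Omega$ anchored at the boundary-contact point $x_0\in\overline{\Omega}\cap N$, which need not itself lie in $\Omega$. Its existence has to be obtained through a limit-and-openness argument along $y_k\to x_0$, using compactness of $\mathcal{V}S_{x_0}N$ and continuity of the Fermi chart. Once this box is secured, both claims reduce to the one-dimensional radial integrals displayed above.
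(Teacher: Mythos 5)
Your part (i) is the paper's argument essentially verbatim: compactify the relevant piece of $N$, use Lemma \ref{Fermiseconlemma}/(ii) together with (\ref{smallsestimatedatA}) to get two-sided bounds $c_1t^{m-n-1}\le\det\mathcal{A}(t,\mathbf{n})\le c_2t^{m-n-1}$ on a uniform tube, and reduce via (\ref{2.0'fermico}) to $\int_0^\epsilon t^{m-n-1-l}\,dt$. Nothing to add there.

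Part (ii) is where you diverge from the paper and where your argument has a genuine gap. The paper simply integrates over the whole annular tube $T_\eta\setminus T_\epsilon$ as if it lay in $\Omega$; you correctly sense that this needs justification when the contact point $x_0\in\overline{\Omega}\cap N$ lies only on $\partial\Omega$, and you try to manufacture an open Fermi box $W=\Exp\{t\mathbf{n}:0<t<\delta,\ \mathbf{n}\in\mathcal{W}\}\subset\Omega$ by a limit-and-openness argument along $y_k\to x_0$. That step does not go through: openness of $\Omega$ at $y_k$ only gives a ball $B_{\rho_k}(y_k)\subset\Omega$ whose radius may shrink much faster than $t_k$, so no fixed angular set $\mathcal{W}$ survives down to $t=0$. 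Worse, no such box can exist in general. Take $M=\mathbb{R}^2$, $N=\{0\}$ and the cusp $\Omega=\{(x,y):0<x<1,\ |y|<e^{-1/x}\}$; then $\overline{\Omega}\cap N=\{(0,0)\}\neq\emptyset$, yet $r\sim x$ on $\Omega$ near the origin and $\int_\Omega r^{-2}\,{\dvol}_g\approx 2\int_0^1x^{-2}e^{-1/x}\,dx<\infty$, so statement (ii) itself fails for $l=m-n=2$ under the literal hypothesis. The statement (and both proofs) are rescued only in the situations where the lemma is actually invoked: either $\Omega\cap N\neq\emptyset$, in which case one picks $x_0\in\Omega\cap N$ and a metric ball $B_\rho(x_0)\subset\Omega$ already contains a full Fermi box over $N\cap B_{\rho/2}(x_0)$ by the triangle inequality (no limit argument needed), or $N=\partial\Omega$ is a compact smooth boundary, in which case a one-sided collar $T_\eta\cap\Omega$ does the job. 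You should restrict part (ii) to these cases; once the box is legitimate, your radial computation $\int_\epsilon^\delta t^{m-n-1-l}\,dt\to+\infty$ is exactly the paper's.
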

\begin{proof} Without loss of generality, we may assume that $N\subset \overline{\Omega}$ and hence, $N$ is compact. Thus, Lemma \ref{Fermiseconlemma}/(ii) together with (\ref{smallsestimatedatA}) yields
a small $\eta\in (0,  \min_{\mathbf{n}\in \mathcal {V}SN}c_\mathcal {V}(\mathbf{n}))$ such that
\[
\frac{t^{m-n-1}}2\leq {\det \mathcal {A}(t,\mathbf{n})}\leq 2{t^{m-n-1}},\ \forall (t,\mathbf{n})\in(0,\eta)\times \mathcal {V}SN.\tag{2.7}\label{estimtevoluem}
\]
Provided $l<m-n$, it follows from
 (\ref{2.0'fermico})  that
\begin{align*}
 \int_{\Omega\cap T_\epsilon} r^{-l} {\dvol}_g\leq 2c_{m-n-1}\vol_{i^*g}(N) \int^{\epsilon}_0 t^{m-n-1-l}dt \rightarrow 0 \text{ as }\epsilon\rightarrow 0^+,
\end{align*}
which implies Statement (i).
Alternatively, if $l\geq m-n$,
then (\ref{2.0'fermico}) together with (\ref{estimtevoluem}) furnishes
\begin{align*}
\int_{\Omega\backslash T_\epsilon} r^{-l} {\dvol}_g\geq \frac12c_{m-n-1}\vol_{i^*g}(N)\int_\epsilon^{\eta} t^{m-n-l-1}dt\rightarrow +\infty \text{ as }\epsilon\rightarrow 0^+.
\end{align*}
Consequently, Statement (ii) follows.
\end{proof}

A smooth measure ${\dmu}$ can be viewed as an $m$-form on $M$. Hence, given any vector field $X$ on $M$, one can define an $(m-1)$-form $X\rfloor {\dmu}$ by
\[
X\rfloor {\dmu}\,(Y_1,\ldots,Y_{m-1}):={\dmu}(X,Y_1,\ldots,Y_{m-1}).\tag{2.8}\label{innerexterproduct}
\]
In particular, $d(X\rfloor \dvol_g)=\di(X) \dvol_g$.
Thus, it follows from  (\ref{distanceandt}) and (\ref{fermicoordinvolumform}) that
\begin{align*}
\Delta r \,{\dvol}_g=\Delta t \,{\dvol}_g=\left(\di \circ \nabla t \right)\,{\dvol}_g=d\left( \nabla t\rfloor  {\dvol}_g\right)=\frac{\left( \det \mathcal {A}(t,\mathbf{n}) \right)'}{\det \mathcal {A}(t,\mathbf{n}) }{\dvol}_g,
\end{align*}
which implies
\[
 (\Delta r)|_{(t,\mathbf{n})}=\Delta t=\frac{\left( \det \mathcal {A}(t,\mathbf{n}) \right)'}{\det \mathcal {A}(t,\mathbf{n}) }, \text{ for }0<t<c_\mathcal {V}(\mathbf{n}).\tag{2.9}\label{lapacianmeaning}
\]
Furthermore, (\ref{lapacianmeaning}) together with (\ref{smallsestimatedatA}) yields
\[
\left[r\Delta r+1-(m-n)\right]|_{(t,\mathbf{n})}= -t \cdot\tr \mathfrak{A}^{\mathbf{n}}+o(t).\tag{2.10}\label{smallestimatelaplace}
\]

Inspired by  Lewis et al. \cite{LLL}, we use the following  inequality to estimate $\Delta r$.

\begin{lemma}[Newton's inequality \cite{Ne}]\label{newtonlemma} Let $\lambda=(\lambda_1,\ldots,\lambda_n)$ be a vector with $\lambda_i>0$ for all $1\leq i\leq n$. Denote by $\sigma_s(\lambda)$  the $s$-th elementary symmetric function of the vector $\lambda$, i.e.,
\[
\sigma_s(\lambda)=\sum_{1\leq i_1<\cdots<i_s\leq n}\lambda_{i_1}\cdots\lambda_{i_s}.
\]
Then
\[
\frac{\sigma_{n-1}(\lambda)}{\sigma_{n}(\lambda)}\geq \cdots\geq c(n,s)\frac{\sigma_{s-1}(\lambda)}{\sigma_{s}(\lambda)}\geq \cdots \geq n^2\frac{1}{\sigma_1(\lambda)},
\]
where $c(n,s)=\frac{n(n-s+1)}{s}$. The above equalities hold if and only if $\lambda_1=\cdots=\lambda_n$.
\end{lemma}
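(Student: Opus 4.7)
The target chain is a monotonicity statement: I need to show that the map $s \mapsto c(n,s)\,\sigma_{s-1}/\sigma_s$ is non-decreasing in $s \in \{1,\ldots,n\}$. Since all $\lambda_i>0$ forces $\sigma_s>0$ for every $s$, this reduces to verifying the pairwise inequality $c(n,s)\,\sigma_{s-1}^{2} \geq c(n,s-1)\,\sigma_{s-2}\,\sigma_s$ for $2 \leq s \leq n$. A direct computation with $c(n,s)=n(n-s+1)/s$ yields $c(n,s-1)/c(n,s) = s(n-s+2)/\bigl((s-1)(n-s+1)\bigr)$, so the required inequality is exactly the index-shifted form of the classical Newton's inequality
\[
\sigma_s^{2} \;\geq\; \frac{(s+1)(n-s+1)}{s(n-s)}\,\sigma_{s-1}\sigma_{s+1}, \qquad 1\leq s\leq n-1,
\]
which in normalized form reads $p_s^{2}\geq p_{s-1}p_{s+1}$ with $p_s := \sigma_s/\binom{n}{s}$. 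The first step of the plan is thus this algebraic reduction, which is routine.

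The substantive step is to prove the normalized Newton's inequality itself. My plan is the standard Rolle-type argument. Consider the polynomial $P(x) := \prod_{i=1}^{n}(1+\lambda_i x) = \sum_{s=0}^{n} \sigma_s x^{s}$. Since each $\lambda_i>0$, $P$ has $n$ distinct (negative) real roots. By Rolle's theorem, every derivative $P^{(j)}$ retains only real roots, and the coefficient-reversing operation $Q(x) \mapsto x^{\deg Q}Q(1/x)$ preserves real-rootedness as well. Applying a suitable sequence of $s-1$ differentiations, a reversal, and $n-s-1$ further differentiations collapses $P$ to a quadratic of the form $A x^{2}+2Bx+C$ whose coefficients $A,B,C$ are positive multiples of $\sigma_{s+1},\sigma_s,\sigma_{s-1}$, with the normalization producing exactly the factor $(s+1)(n-s+1)/(s(n-s))$. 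Real-rootedness of this quadratic forces $B^{2}\geq AC$, which is the desired Newton inequality.

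For the equality characterization, sufficiency is trivial: if $\lambda_1=\cdots=\lambda_n=\lambda$, then $\sigma_s=\binom{n}{s}\lambda^{s}$ and every inequality in the chain becomes an equality by direct substitution. Necessity follows because equality anywhere in the chain forces some Newton equality $p_s^{2}=p_{s-1}p_{s+1}$; tracing this back through the Rolle/reversal sequence forces the final quadratic to have a double root, which in turn propagates back to force all roots of $P$ to coincide, i.e.\ $\lambda_1=\cdots=\lambda_n$. The only real obstacle in this proof is the combinatorial bookkeeping of binomial factors needed to identify $c(n,s-1)/c(n,s)$ with the normalization arising from Newton; the analytic content (Rolle plus a discriminant sign) is entirely classical.
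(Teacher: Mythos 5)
The paper offers no proof of this lemma at all --- it is quoted as a classical result with a citation to Newton's \emph{Arithmetica Universalis} --- so there is no in-paper argument to compare against; your proposal supplies the standard textbook proof (essentially the one in Hardy--Littlewood--P\'olya), and it is correct. The key reduction checks out exactly: with $c(n,s)=n(n-s+1)/s$ one has $c(n,s-1)/c(n,s)=s(n-s+2)/\bigl((s-1)(n-s+1)\bigr)$, which is precisely the Newton constant $\binom{n}{s-1}^2/\bigl(\binom{n}{s-2}\binom{n}{s}\bigr)$, so each step of the paper's chain is \emph{equivalent to} (not merely implied by) the corresponding normalized Newton inequality $p_{s-1}^2\geq p_{s-2}p_s$; I verified the differentiate/reverse/differentiate bookkeeping and the surviving quadratic does produce exactly the factor $(s+1)(n-s+1)/(s(n-s))$ from its discriminant. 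Two small points to tighten: (i) $P(x)=\prod_i(1+\lambda_i x)$ has $n$ real negative roots but they need not be \emph{distinct} (the $\lambda_i$ may coincide); Rolle's theorem with multiplicities still preserves real-rootedness, so nothing breaks, but the word ``distinct'' should be dropped. (ii) In the equality analysis, the clean way to phrase ``propagates back'' is the contrapositive: if the $\lambda_i$ are not all equal then $P$ has at least two distinct roots, and one checks that differentiation (while the degree stays $\geq 2$) and reversal preserve the property of having at least two distinct roots, so the terminal quadratic has negative-free discriminant strictly, i.e.\ $B^2>AC$. With these touches the argument is complete and self-contained, which is arguably an improvement on the paper's bare citation.
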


\begin{theorem}\label{huperseacedata}Let $(M,g)$ be an $m$-dimensional complete Riemannian manifold with $\mathbf{K}_M\geq 0$ and let $i:N\hookrightarrow M$ be an $n(\geq 1)$-dimensional  minimal  closed submanifold. Then
$ \Delta r\leq \frac{m-n-1}{r}  \text{ a.e. in }M$.

\end{theorem}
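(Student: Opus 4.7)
The plan is to reduce the claim to a pointwise inequality on the open dense subset $\mathcal{V}\mathscr{D}(N) = M\setminus(\mathcal{V}\mathscr{C}(N)\cup N)$, where Fermi coordinates are a diffeomorphism. By Lemma \ref{Fermiseconlemma}/(ii)--(iii) together with the fact that $N$ has positive codimension, the complement of $\mathcal{V}\mathscr{D}(N)$ has Lebesgue measure zero, so a pointwise bound on $\mathcal{V}\mathscr{D}(N)$ yields the ``a.e.'' statement. On $\mathcal{V}\mathscr{D}(N)$, formula $(\ref{lapacianmeaning})$ rewrites $(\Delta r)|_{(t,\mathbf{n})} = (\det\mathcal{A}(t,\mathbf{n}))'/\det\mathcal{A}(t,\mathbf{n})$, while $(\ref{distanceandt})$ identifies the Fermi parameter $t$ with $r$. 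Hence the task becomes an estimate on $(\det\mathcal{A})'/\det\mathcal{A}$.

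Next I invoke the Heintze--Karcher comparison (Theorem \ref{importantlemmahk}) with $K=0$, which is allowed by the assumption $\mathbf{K}_M\geq 0$. Since $\mathfrak{s}_0(t)=t$ and $\mathfrak{s}_0'(t)=1$, one has
\[
\det\mathcal{A}_0(t) = t^{m-n-1}\prod_{\alpha=1}^{n}\bigl(1-\lambda_\alpha t\bigr),
\]
where $\{\lambda_\alpha\}_{\alpha=1}^{n}$ are the eigenvalues of $\mathfrak{A}^{\mathbf{n}}$. A direct logarithmic-derivative computation gives
\[
\frac{(\det\mathcal{A}_0(t))'}{\det\mathcal{A}_0(t)} = \frac{m-n-1}{t} - \sum_{\alpha=1}^{n}\frac{\lambda_\alpha}{1-\lambda_\alpha t},
\]
so it suffices to establish $\sum_\alpha \lambda_\alpha/(1-\lambda_\alpha t)\geq 0$ for every $t\in(0,c_\mathcal{V}(\mathbf{n}))$.

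This is where the minimality of $N$ enters: $H\equiv 0$ forces $\sum_\alpha \lambda_\alpha = \operatorname{tr}\mathfrak{A}^{\mathbf{n}} = g(H,\mathbf{n}) = 0$. I would then apply Jensen's inequality to the function $f(x) = x/(1-xt)$, which satisfies $f''(x) = 2t/(1-xt)^{3} > 0$ and hence is convex on the interval where $1-xt>0$; with uniform weights $1/n$ and vanishing mean, this yields
\[
\frac{1}{n}\sum_{\alpha=1}^{n}\frac{\lambda_\alpha}{1-\lambda_\alpha t} \;\geq\; f\!\left(\tfrac{1}{n}\sum_\alpha \lambda_\alpha\right) = f(0) = 0,
\]
which is exactly the required inequality, and combining the two displayed bounds gives $\Delta r \leq (m-n-1)/r$ on $\mathcal{V}\mathscr{D}(N)$. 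The main technical obstacle is verifying that $1-\lambda_\alpha t>0$ for every $\alpha$ throughout $t\in(0,c_\mathcal{V}(\mathbf{n}))$, so that the Jensen step is legitimate; this should follow from the standard fact that for nonnegative sectional curvature the focal cut distance $c_\mathcal{V}(\mathbf{n})$ cannot exceed its flat-model counterpart $1/\max_\alpha\lambda_\alpha^{+}$, which is precisely the smallest $t$ making some factor $1-\lambda_\alpha t$ vanish. (Alternatively, one could argue by continuity: both sides of the desired inequality agree in the limit $t\to 0^{+}$ by $(\ref{smallestimatelaplace})$, and one can extend through $t$ by monitoring the positive factors in $\det\mathcal{A}_0(t)$, which must remain positive as long as $\det\mathcal{A}(t,\mathbf{n})>0$.) I expect Newton's inequality (Lemma \ref{newtonlemma}) to be invoked as a backup if one wants to bypass Jensen and work directly with elementary symmetric functions of $(\lambda_\alpha)$.
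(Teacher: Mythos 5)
Your proposal is correct and follows essentially the same route as the paper: reduce to Fermi coordinates via (\ref{lapacianmeaning}), apply the Heintze--Karcher bound of Theorem \ref{importantlemmahk} with $K=0$, establish $1-\lambda_\alpha t>0$ on $(0,c_{\mathcal V}(\mathbf{n}))$ by exactly the continuity/contradiction argument you sketch as your second alternative (a vanishing factor would force $\det\mathcal A(\tilde t,\mathbf{n})\leq\det\mathcal A_0(\tilde t)=0$), and then exploit $\tr\mathfrak{A}^{\mathbf{n}}=0$. Your Jensen step for $x\mapsto x/(1-xt)$ is just a repackaging of the paper's use of Newton's inequality, which in this case reduces to the AM--HM inequality $\sum_\alpha \xi_\alpha^{-1}\geq n^2/\sum_\alpha\xi_\alpha$ applied to $\xi_\alpha=1-t\lambda_\alpha$, so the two final steps are equivalent.
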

\begin{proof} Let $(t,\mathbf{n})$ denote  Fermi coordinates. Given  $\mathbf{n}\in \mathcal {V}SN$,
Theorem \ref{importantlemmahk} together with (\ref{lapacianmeaning}) yields
\begin{align*}
r\Delta r=t\Delta t\leq t\left[\frac{m-n-1}{t}+\overset{n}{\underset{\alpha=1}{\sum}}\left(\frac{-\lambda_\alpha}{1-t\lambda_\alpha}\right)\right],\ \forall\,t\in(0,c_\mathcal {V}(\mathbf{n})),\tag{2.11}\label{middellapalcian}
\end{align*}
where  $\{\lambda_\alpha\}_{\alpha=1}^n$ are the eigenvalues of $\mathfrak{A}^{\mathbf{n}}$ with respect to some orthonormal basis of $T_{\pi(\mathbf{n})}N$.
Obviously, for each $\alpha$,
$\xi_\alpha:=1-t\lambda_\alpha>0$ for $t\in(0,c_\mathcal {V}(\mathbf{n}))$, otherwise there would be   $\tilde{t}\in (0,c_\mathcal {V}(\mathbf{n}))$ and $\tilde{\alpha}\in\{1,\ldots,n\}$ such that $1-\tilde{t}\,\lambda_{\tilde{\alpha}}=0$, which together with  Theorem \ref{importantlemmahk} yields
 $\det \mathcal {A}(\tilde{t},\mathbf{n})\leq \det \mathcal {A}_0(\tilde{t})=0$, which is a contradiction.
Since $\sigma_1(\lambda)=\tr{\mathfrak{A}^{\mathbf{n}}}=0$, Lemma \ref{newtonlemma} yields
\begin{align*}
\overset{n}{\underset{\alpha=1}{\sum}}\left(\frac{t\lambda_\alpha}{1-t\lambda_\alpha}\right)=\overset{n}{\underset{\alpha=1}{\sum}}\frac{1}{\xi_\alpha}-n=\frac{\sigma_{n-1}(\xi)}{\sigma_{n}(\xi)}-n\geq \frac{n^2}{\sigma_1(\xi)}-n=\frac{tn\,{\tr \mathfrak{A}^{\mathbf{n}}}}{n-t\,{\tr \mathfrak{A}^{\mathbf{n}}}}=0,
\end{align*}
which together with (\ref{middellapalcian}) concludes the proof.
\end{proof}

%It also should be remarked that if $n=0$ (i.e., $N$ is a pole $o$), then Fermi coordinate system degenerate into polar coordinate system about $o$, in which case Lemma \ref{huperseacedata} can be replaced by Bishop's Laplacian comparison theorem (cf.  Chavel \cite[Theorem 3.8]{IC}).

The following corollary is a direct consequence of Theorem \ref{hypernonlaplac} combined with (\ref{lapacianmeaning}).
\begin{corollary}\label{inlalpace}
Let $(M,g)$ be a complete Riemannian manifold with $\mathbf{Ric}_M\geq 0$ and let $\Omega\subset M$ be a domain  with non-empty  piecewise smooth weakly mean convex boundary $N$. Then $\Delta r\leq 0$ a.e. in $\Omega$.
\end{corollary}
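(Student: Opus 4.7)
The plan is to apply Theorem \ref{hypernonlaplac} in the specific regime $K=0$, $\lambda=0$, and then translate the resulting upper bound for $(\det \mathcal{A})'/\det \mathcal{A}$ into a pointwise upper bound for $\Delta r$ via (\ref{lapacianmeaning}).

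First I would reduce the problem to points lying in the image of Fermi coordinates. Because $\partial\Omega = N$ is piecewise smooth and weakly mean convex, $N$ is an $(m-1)$-dimensional closed submanifold (in the smooth part), and any $x\in \Omega\setminus N$ admits a distance-realizing minimal geodesic from $\partial\Omega$ whose initial velocity is an \emph{inward} unit normal $\mathbf{n}\in\mathcal{V}SN$. By Lemma \ref{Fermiseconlemma}/(ii)-(iii), the set $\mathcal{V}\mathscr{C}(N)$ of focal-cut points has Lebesgue measure zero, so almost every $x\in\Omega$ is of the form $x=(t,\mathbf{n})$ with $0<t<c_{\mathcal{V}}(\mathbf{n})$ and $\mathbf{n}$ inward; the singular part of $\partial\Omega$ also contributes only a measure-zero set and can be discarded.

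Next I would fix such an inward $\mathbf{n}$ and invoke weak mean convexity, which by definition gives $\tr \mathfrak{A}^{\mathbf{n}}\geq 0$, i.e.\ $\lambda=0$ is an admissible lower bound in the hypothesis of Theorem \ref{hypernonlaplac}. The Ricci hypothesis $\mathbf{Ric}_M\geq 0$ corresponds to $K=0$. For $K=0$ one has $\mathfrak{s}_0(t)=t$, $\mathfrak{s}_0'(t)=1$, $\mathfrak{s}_0''(t)=0$, so Theorem \ref{hypernonlaplac} reduces to
\[
\frac{(\det\mathcal{A}(t,\mathbf{n}))'}{\det\mathcal{A}(t,\mathbf{n})}\leq (m-1)\left[\frac{0-0\cdot 1}{1-0\cdot t}\right]=0,\qquad 0<t<c_\mathcal{V}(\mathbf{n}).
\]

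Finally I would conclude by (\ref{lapacianmeaning}): at $x=(t,\mathbf{n})\in\mathcal{V}\mathscr{D}(N)\cap\Omega$,
\[
(\Delta r)(x)=\frac{(\det\mathcal{A}(t,\mathbf{n}))'}{\det\mathcal{A}(t,\mathbf{n})}\leq 0.
\]
Since this holds on the full-measure subset $\mathcal{V}\mathscr{D}(N)\cap\Omega$, we obtain $\Delta r\leq 0$ a.e.\ in $\Omega$. The only real subtlety—hardly a genuine obstacle—is the bookkeeping at the piecewise smooth locus of $\partial\Omega$ and at focal-cut points; both are handled by discarding measure-zero sets and working only with inward normals, for which weak mean convexity supplies the crucial sign condition $\tr\mathfrak{A}^{\mathbf{n}}\geq 0$.
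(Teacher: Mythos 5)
Your proposal is correct and is essentially the paper's argument: the authors state the corollary as a direct consequence of Theorem \ref{hypernonlaplac} (with $K=0$, $\lambda=0$, so that $\mathfrak{s}_0(t)=t$ makes the right-hand side vanish) combined with (\ref{lapacianmeaning}), which is exactly the computation you carry out. Your additional bookkeeping about inward normals, the focal-cut locus, and the measure-zero singular set of the piecewise smooth boundary just makes explicit what the paper leaves implicit.
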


\begin{remark}\label{zerodimensinocase}
If
 $n=0$ (i.e., $N$ is a pole), then  Fermi coordinates are exactly polar coordinates while $c_{\mathcal {V}}(\mathbf{n})$ is  the cut value of $\mathbf{n}$, in which case  Lemmas \ref{Fermifirstlemma} and \ref{Fermiseconlemma} (and hence, (\ref{defineJacbi})-(\ref{smallsestimatedatA})) remain valid, whereas Theorems \ref{importantlemmahk} and \ref{huperseacedata} are replaced by the Bishop-Gromov comparison theorem and the Laplacian comparison theorem, respectively (cf. Chavel \cite[Theorem 3.8]{IC} and Chow et al. \cite[Theorem 1.128]{CLN}).
\end{remark}

\section{Sharp Hardy inequalities via compact submanifolds}\label{basistheorem}

\subsection{Theory of $p$-Laplacian}

For simplicity of presentation, we introduce some  conventions and assumptions, which will be used throughout this section.

 A connected non-empty open subset $\Omega\subset M$  (with piecewise smooth boundary or without boundary) is said to be  a {\it natural domain (with respect to $N$)} if   one of the following conditions holds:

\ \ \ \ \ (a)  $n=m-1$ and $N=\partial\Omega$; \ \ \ \ \ (b) $0\leq n\leq m-1$ and $N\cap\Omega\neq\emptyset$.

\noindent Given a natural domain $\Omega$, set $\Omega_N:=\Omega\backslash N$. Furthermore, we introduce the following assumption.
\begin{assumption}\label{assufreecur1}
Let $(M,g)$ be an $m$-dimensional complete Riemannian manifold, let $i:N\hookrightarrow M$ be an $n$-dimensional closed submanifold and let $\Omega$ be a natural domain in $M$.
 \end{assumption}

 Proceeding as in the proof of D'Ambrosio \cite[Theorem 2.5]{D}, one has the following result.
\begin{lemma}\label{divlemf} Let $(M,g)$, $N$ and $\Omega$ be as in Assumption \ref{assufreecur1}.
Given $p>1$, suppose that $X\in L^1_{\lo}(T{\Omega_N})$ is a vector field  and  $f_X\in L^1_{\lo}({\Omega_N})$ is a non-negative function satisfying

\begin{itemize}
 \item[(i)] $f_X\leq \di X$ in the weak sense, i.e.,
 \[
 \int_{\Omega_N} u f_X {\dvol}_g\leq -\int_{\Omega_N} g( \nabla u, X ) {\dvol}_g, \ \forall \,u\in C^\infty_0({\Omega_N}) \text{ with }u\geq 0;
 \]

  \item[(ii)] $|X|^p/f_X^{p-1}\in L^1_{\lo}({\Omega_N})$, i.e., $\ds\int_K \left||X|^p/f_X^{p-1}\right|{\dvol}_g<\infty$   for any compact subdomain $K\subset {\Omega_N}$.
\end{itemize}

\noindent Then  we have
\begin{align*}
p^p \ds\ds\int_{\Omega_N}|\nabla u|^p\frac{|X|^p}{f_X^{p-1}} {\dvol}_g    \geq \ds\ds\int_{\Omega_N} |u|^p f_X{\dvol}_g,\ \forall\,u\in C^\infty_0({\Omega_N}).
\end{align*}
\end{lemma}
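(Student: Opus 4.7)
My plan is to use the classical vector-field method for Hardy-type inequalities: test the weak divergence bound in (i) against $|u|^p$ and then apply H\"older's inequality. Fix $u \in C^\infty_0(\Omega_N)$. Since $|u|^p$ is in general only $C^1$ and hence not directly among the admissible test functions in (i), I would first approximate it by the non-negative smooth cutoffs
\[
\phi_\epsilon := (u^2 + \epsilon^2)^{p/2} - \epsilon^p \in C^\infty_0(\Omega_N), \qquad \epsilon > 0,
\]
whose gradients $p(u^2+\epsilon^2)^{(p-2)/2} u\, \nabla u$ converge uniformly on $\supp(u)$ to $\nabla |u|^p = p|u|^{p-2}u\nabla u$. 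Substituting $\phi_\epsilon$ into (i) and letting $\epsilon \to 0^+$ via dominated convergence (the dominating functions are locally integrable on $\supp(u)$ by (i), (ii) and $u \in C^\infty_0$) yields the pivotal estimate
\[
\int_{\Omega_N} |u|^p f_X \, \dvol_g \leq -p \int_{\Omega_N} |u|^{p-2} u\, g(\nabla u, X) \, \dvol_g \leq p \int_{\Omega_N} |u|^{p-1} |\nabla u|\, |X|\, \dvol_g.
\]

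Next I would write
\[
|u|^{p-1}|\nabla u||X| = \left(|u|^{p-1} f_X^{(p-1)/p}\right)\cdot\left(|\nabla u||X| f_X^{-(p-1)/p}\right)
\]
and apply H\"older's inequality with conjugate exponents $p/(p-1)$ and $p$ to the right-hand side, obtaining
\[
\int_{\Omega_N} |u|^p f_X \, \dvol_g \leq p \left(\int_{\Omega_N} |u|^p f_X \, \dvol_g\right)^{\frac{p-1}{p}} \left(\int_{\Omega_N} |\nabla u|^p \frac{|X|^p}{f_X^{p-1}} \, \dvol_g\right)^{\frac{1}{p}}.
\]
Both integrals on the right are finite, thanks to the compact support of $u$ together with conditions (i)-(ii). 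Dividing by the first factor on the right (the trivial case $\int |u|^p f_X = 0$ is handled directly) and raising the result to the $p$-th power produces the required inequality.

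The main technical obstacle is the rigorous approximation step. One must verify that $\phi_\epsilon$ is an admissible test function with compact support in $\Omega_N$, that the H\"older splitting is legitimate on the (possibly) degenerate set $\{f_X = 0\}$ (which can be bypassed by first truncating $f_X$ from below by $\delta > 0$ in the weight and then sending $\delta \to 0^+$ via monotone convergence), and that the dominated-convergence passage $\epsilon \to 0^+$ is justified by the local integrability hypotheses. Once these points are settled, the remainder of the argument is a direct application of H\"older.
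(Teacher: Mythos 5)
Your proposal is correct and follows essentially the same route as the paper, which simply defers to the proof of D'Ambrosio's Theorem 2.5 --- that proof is precisely this vector-field argument: test the weak inequality $f_X\leq \di X$ against a smooth approximation of $|u|^p$, pass to the limit to get $\int |u|^pf_X\,\dvol_g\leq p\int |u|^{p-1}|\nabla u||X|\,\dvol_g$, and conclude by H\"older with the weight split $f_X^{(p-1)/p}\cdot f_X^{-(p-1)/p}$ (regularized by $f_X+\delta$ where needed). The technical caveats you flag (admissibility of $\phi_\epsilon$, the degenerate set $\{f_X=0\}$, finiteness of $\int|u|^pf_X\,\dvol_g$ before dividing) are exactly the right ones and are all resolvable as you indicate.
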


Now we consider the $p$-Laplacian operator on $C^\infty({\Omega_N})$.
Given $p>1$, the {\it   $p$-Laplacian}  is defined as
\[
\Delta_{p}(f):=\di \left(  |\nabla f|^{p-2} \nabla f  \right),\ \forall\,f\in C^\infty({\Omega_N}).
\]
Following D'Ambrosio and Dipierro \cite{DD},  we say that a function $f(x)\in W^{1,p}_{\lo}({\Omega_N})$ satisfies {\it $-\Delta_{p}(cf)\geq 0$ in the weak sense} for some $c\in \mathbb{R}$ if
\[
c\int_{{\Omega_N}} |\nabla f|^{p-2}g(\nabla f,\nabla u) {\dvol}_g\geq 0, \ \forall \,u\in C^\infty_0({\Omega_N}) \text{ with }u\geq 0.%\tag{3.1}\label{weak2}
\]

\begin{lemma}\label{mainlemmforcr}Let $(M,g)$, $N$ and $\Omega$ be as in Assumption \ref{assufreecur1} and let $\rho\in C^1({\Omega_N})$ be a non-negative nonconstant function.
Suppose that $p\in(1,\infty)$, $\alpha,\beta\in \mathbb{R}$ and $\rho$  satisfy the following conditions:
\begin{itemize}

\item[(i)] $\rho^{(\alpha-1)(p-1)}|\nabla\rho|^{p-1},\rho^\beta|\nabla \rho|^p,\rho^{p+\beta}\in L^1_{\lo}({\Omega_N})$;

\item[(ii)] $-\Delta_{p}(c\rho^\alpha)\geq 0$ in the weak sense, where $c=\alpha[(\alpha-1)(p-1)-\beta-1]\neq 0$.

\end{itemize}

\noindent Then we have
\begin{align*}
\int_{{{\Omega_N}}} |\nabla u|^p\rho^{p+\beta}{\dvol}_g\geq (\vartheta_{\alpha,\beta,p})^p\int_{{{\Omega_N}}} |u|^p\rho^\beta |\nabla\rho|^p {\dvol}_g,\ \forall\,u\in C^\infty_0({{\Omega_N}}),\tag{3.1}\label{5.1.1}
\end{align*}
 where $\vartheta_{\alpha,\beta,p}:={|(\alpha-1)(p-1)-\beta-1|}/{p}$.
\end{lemma}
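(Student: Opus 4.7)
My plan is to apply Lemma \ref{divlemf} with a tailored choice of pair $(X,f_X)$. Motivated by the structure of the RHS of \eqref{5.1.1}, I set
$$f_X:=\vartheta_{\alpha,\beta,p}^{p}\,\rho^{\beta}|\nabla\rho|^{p},\qquad X:=\mu\,\rho^{\beta+1}|\nabla\rho|^{p-2}\nabla\rho,$$
with the constant $\mu\in\mathbb{R}$ to be fixed below. A direct algebraic computation shows that the identity $p^{p}|X|^{p}/f_X^{p-1}=\rho^{p+\beta}$ holds precisely when $|\mu|=\vartheta_{\alpha,\beta,p}^{p-1}/p$, so with this normalization Lemma \ref{divlemf} will yield \eqref{5.1.1} verbatim the moment its hypotheses are checked.

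The local integrability conditions are immediate from condition (i): the ratio $|X|^{p}/f_X^{p-1}$ is a constant multiple of $\rho^{p+\beta}\in L^{1}_{\loc}(\Omega_N)$, while the pointwise factorization
$$\rho^{\beta+1}|\nabla\rho|^{p-1}=\bigl(\rho^{\beta}|\nabla\rho|^{p}\bigr)^{(p-1)/p}\bigl(\rho^{p+\beta}\bigr)^{1/p}$$
combined with H\"older's inequality yields $X\in L^{1}_{\loc}(T\Omega_N)$.

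The substantive step is to verify $f_X\le\di X$ in the weak sense. Setting $T:=(\alpha-1)(p-1)$ and unpacking condition (ii) via $\nabla\rho^{\alpha}=\alpha\rho^{\alpha-1}\nabla\rho$, the hypothesis reads
$$\sgn(c\alpha)\int_{\Omega_N}\rho^{T}|\nabla\rho|^{p-2}g(\nabla\rho,\nabla v)\,\dvol_g\ge 0$$
for every non-negative $v\in C^{\infty}_{0}(\Omega_N)$. Given a test function $u\in C^{\infty}_{0}(\Omega_N)$ with $u\ge 0$, I test this against $v:=u\rho^{\beta+1-T}$, which is $C^{1}$ with compact support in $\Omega_N$ (a standard mollification argument, using the $L^{1}_{\loc}$-bound $\rho^{T}|\nabla\rho|^{p-1}\in L^{1}_{\loc}$ granted by condition (i), extends the weak hypothesis from $C^{\infty}_{0}$ to $C^{1}_{0}$ test functions). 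The product rule yields
$$g(\nabla\rho,\nabla v)=\rho^{\beta+1-T}\,g(\nabla\rho,\nabla u)+(\beta+1-T)\,u\rho^{\beta-T}|\nabla\rho|^{2},$$
and multiplication by $\rho^{T}|\nabla\rho|^{p-2}$ cancels the $T$-exponents. Using the identity $T-\beta-1=c/\alpha$ and choosing $\mu:=-\sgn(c\alpha)\,\vartheta_{\alpha,\beta,p}^{p-1}/p=-\vartheta_{\alpha,\beta,p}^{p}\,\alpha/c$, the resulting inequality rearranges into exactly $\int_{\Omega_N}u\,f_X\,\dvol_g\le-\int_{\Omega_N}g(\nabla u,X)\,\dvol_g$, and Lemma \ref{divlemf} produces \eqref{5.1.1}. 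The principal technical nuisance is the sign-bookkeeping: one must confirm that this single prescription for $\mu$ delivers the correct direction of inequality in both subcases $c\alpha>0$ and $c\alpha<0$, which is a brief but careful case analysis.
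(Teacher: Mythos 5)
Your construction is, up to an overall positive constant, identical to the paper's: the paper takes $X=-\alpha\rho^{\beta+1}|\nabla\rho|^{p-2}\nabla\rho$ and $f_X=c\,\rho^{\beta}|\nabla\rho|^{p}$ when $c>0$ (signs reversed when $c<0$), which is exactly your pair rescaled by the positive factor $c/\vartheta_{\alpha,\beta,p}^{p}$; both proofs then test condition (ii) against $v=u\rho^{-c/\alpha}$ and invoke Lemma \ref{divlemf}. Your normalization $|\mu|=\vartheta_{\alpha,\beta,p}^{p-1}/p$, the unified sign choice $\mu=-\vartheta_{\alpha,\beta,p}^{p}\alpha/c$, and the H\"older factorization giving $X\in L^{1}_{\lo}(T\Omega_N)$ all check out.

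The one genuine gap is the assertion that $v=u\rho^{\beta+1-T}=u\rho^{-c/\alpha}$ is $C^{1}$ with compact support in $\Omega_N$. The lemma only assumes $\rho\ge 0$, so $\rho$ may vanish on $\supp u$; if $-c/\alpha<0$ the function $v$ is then unbounded near the zero set of $\rho$, and if $0<-c/\alpha<1$ its gradient can blow up there, so $v$ need not be an admissible test function. Your mollification remark only covers the harmless passage from $C^{\infty}_{0}$ to $C^{1}_{0}$ test functions and does not repair this regularity failure of $v$ itself. The paper closes precisely this gap by testing against $(\rho+\epsilon)^{-c/\alpha}u$ whenever $-c/\alpha<1$ (which is $C^{1}_{0}$ for every $\epsilon>0$) and letting $\epsilon\to 0^{+}$ via the dominated convergence theorem, the dominating functions being supplied by the integrability hypotheses in condition (i). With that $\epsilon$-regularization inserted, your argument coincides with the paper's and is complete; in the concrete applications where $\rho=r>0$ on $\Omega_N$ the issue does not arise, but the lemma as stated requires the extra step.
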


\begin{proof}
Provided that $-\Delta_{p} (\rho^\alpha)\geq 0$ and $c>0$, we set
\[
X:=-\alpha \rho^{\beta+1}|\nabla\rho|^{p-2}\nabla \rho,\ \ \ f_X:=c |\nabla\rho|^p \rho^\beta.
\]
Clearly, $f_X\in L^1_{\lo}({{\Omega_N}})$. The H\"older inequality together with Condition (i) implies $\rho^{\beta+1}|\nabla\rho|^{p-1}\in L^1_{\lo}({{\Omega_N}})$ and hence,
$X\in L^1_{\lo}(T{{\Omega_N}})$.
Moreover,  $|X|^p/f^{p-1}_X\in L^1_{\lo}({{\Omega_N}})$ because $\rho^{p+\beta}\in L^1_{\lo}({{\Omega_N}})$.

 Given $\epsilon>0$ and $u\in C^\infty_0({{\Omega_N}})$ with $u\geq 0$, set $ v:=(\rho+\epsilon)^{-{c}/\alpha}u$ if $-c/\alpha-1<0$, otherwise set $v:=\rho^{-{c}/\alpha}u$.
 Condition (ii) then furnishes $\int_{{{\Omega_N}}}   |\nabla \rho^\alpha|^{p-2}g(\nabla \rho^\alpha, \nabla v) {\dvol}_g\geq 0$, which together with Lebesgue's  dominated convergence theorem  yields
\[
\int_{{{\Omega_N}}} c\rho^\beta|\nabla\rho|^pu{\dvol}_g\leq\int_{{{\Omega_N}}}\alpha \rho^{\beta+1}|\nabla\rho|^{p-2}g( \nabla \rho,\nabla u){\dvol}_g,
\]
that is, $f_X\leq \di X$ in the weak sense. Now (\ref{5.1.1}) follows from Lemma \ref{divlemf}.

If $-\Delta_{p} (\rho^\alpha)\leq 0$ and $c<0$,  set $X:=\alpha \rho^{\beta+1}|\nabla\rho|^{p-2}\nabla \rho$ and $f_X:=-c |\nabla\rho|^p \rho^\beta$. Then  the proof of this part follows in a similar manner.
\end{proof}

The same argument as the one in D'Ambrosio \cite[p.\,458]{D} furnishes the
following result.

\begin{lemma}\label{impsharplem}
Let $(M,g)$, $N$, $\Omega$, $\rho$ and $p,\alpha,\beta$ be as in  Lemma \ref{mainlemmforcr}.  Additionally suppose
\begin{itemize}

\item[(i)] there exists some $s>0$ such that ${\Omega_N}^-_s:=\rho^{-1}(-\infty,s]$ and ${{\Omega_N}}^+_s:=\rho^{-1}(s,\infty)$ are non-empty subsets of $M$ with piecewise regular boundaries;

\item[(ii)] there exists some $\epsilon_0>0$ such that
 \[
 0<\int_{{{\Omega_N}}^-_s} \rho^{c(\epsilon)p+\beta} |\nabla \rho|^p {\dvol}_g<+\infty,\ 0<\int_{{{\Omega_N}}^+_s} \rho^{-c(\epsilon)p+\beta} |\nabla \rho|^p {\dvol}_g<+\infty,\ \forall\,\epsilon\in (0,\epsilon_0),
 \]
where $c(\epsilon):= \frac{|(\alpha-1)(p-1)-\beta-1|+\epsilon}{p}$.
\end{itemize}
Then we have
\[
c(\epsilon)^p\int_{\Omega_N} |v_\epsilon|^p\rho^\beta |\nabla\rho|^p   {\dvol}_g>\int_{\Omega_N} |\nabla v_\epsilon|^p\rho^{\beta+p}{\dvol}_g, \ \forall\,\epsilon\in (0,\epsilon_0),\tag{3.2}\label{fakeinq4.4}
\]
where  \begin{align*}v_\epsilon(x):=\left\{
\begin{array}{lll}
\left(\frac{\rho(x)}{s}\right)^{c(\epsilon)},&& \text{ if  }x\in{\Omega_N}^-_s,\\
\tag{3.3}\label{defnitionvepsi}\\
\left(\frac{\rho(x)}{s}\right)^{-c(\epsilon/2)},&& \text{ if  }x\in {\Omega_N}^+_s.
\end{array}
\right.
\end{align*}
\end{lemma}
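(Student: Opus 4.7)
This is essentially a direct computation that exploits the special power-like form of $v_\epsilon$ on each of the two pieces ${\Omega_N}^-_s$ and ${\Omega_N}^+_s$. My first step would be to compute $\nabla v_\epsilon$ by the chain rule on each piece separately. On ${\Omega_N}^-_s$ one obtains $\nabla v_\epsilon = (c(\epsilon)/\rho)\,v_\epsilon\,\nabla\rho$, whence
\[
|\nabla v_\epsilon|^p\,\rho^{p+\beta} \;=\; c(\epsilon)^p\,|v_\epsilon|^p\,\rho^\beta\,|\nabla\rho|^p;
\]
the analogous computation on ${\Omega_N}^+_s$ (with exponent $-c(\epsilon/2)$ in the definition of $v_\epsilon$) yields
\[
|\nabla v_\epsilon|^p\,\rho^{p+\beta} \;=\; c(\epsilon/2)^p\,|v_\epsilon|^p\,\rho^\beta\,|\nabla\rho|^p.
\]
Observe that $v_\epsilon$ is globally continuous, since both branches take the value $1$ on $\rho^{-1}(\{s\})$, and by hypothesis (i) this interface is a piecewise regular hypersurface, hence has measure zero; so the pointwise a.e.\ identities above are all that is required.

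Next I would substitute these two identities into the right-hand side of (\ref{fakeinq4.4}) and split the left-hand side along the partition ${\Omega_N} = {\Omega_N}^-_s \cup \rho^{-1}(\{s\}) \cup {\Omega_N}^+_s$. The contribution coming from ${\Omega_N}^-_s$ cancels exactly because the prefactors $c(\epsilon)^p$ match, and what remains is a single, clean difference:
\[
c(\epsilon)^p \int_{{\Omega_N}} |v_\epsilon|^p\,\rho^\beta\,|\nabla\rho|^p\,\dvol_g \;-\; \int_{{\Omega_N}} |\nabla v_\epsilon|^p\,\rho^{p+\beta}\,\dvol_g \;=\; \bigl(c(\epsilon)^p - c(\epsilon/2)^p\bigr)\int_{{\Omega_N}^+_s} |v_\epsilon|^p\,\rho^\beta\,|\nabla\rho|^p\,\dvol_g.
\]

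Finally, since $\epsilon \mapsto c(\epsilon) = (|(\alpha-1)(p-1)-\beta-1|+\epsilon)/p$ is strictly increasing and positive, one has $c(\epsilon)^p > c(\epsilon/2)^p$. On ${\Omega_N}^+_s$ the integrand on the right unfolds as $|v_\epsilon|^p\,\rho^\beta\,|\nabla\rho|^p = s^{p\,c(\epsilon/2)}\,\rho^{-c(\epsilon/2)p+\beta}\,|\nabla\rho|^p$; since $\epsilon/2 \in (0,\epsilon_0)$, hypothesis (ii) (applied with $\epsilon/2$ in place of $\epsilon$) guarantees that this integral is strictly positive and finite. Multiplying the two strictly positive quantities and rearranging yields exactly (\ref{fakeinq4.4}). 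There is no serious analytic obstacle here: the entire argument is bookkeeping around the partition ${\Omega_N}^-_s \cup {\Omega_N}^+_s$, with the measure-zero interface handled by (i) and the integrability/positivity handled by (ii); the mildly subtle point is recognizing that hypothesis (ii) must be applied at $\epsilon/2$ rather than $\epsilon$ to give the second factor on the right-hand side above.
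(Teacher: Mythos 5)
Your computation is correct and is precisely the argument the paper has in mind: the paper does not write out a proof but defers to D'Ambrosio's sharpness computation, which is exactly this pointwise identity $|\nabla v_\epsilon|^p\rho^{p+\beta}=c(\epsilon)^p|v_\epsilon|^p\rho^\beta|\nabla\rho|^p$ on ${\Omega_N}^-_s$ versus $c(\epsilon/2)^p|v_\epsilon|^p\rho^\beta|\nabla\rho|^p$ on ${\Omega_N}^+_s$, followed by the observation that the surplus $\left(c(\epsilon)^p-c(\epsilon/2)^p\right)\int_{{\Omega_N}^+_s}|v_\epsilon|^p\rho^\beta|\nabla\rho|^p\,\dvol_g$ is strictly positive and finite by hypothesis (ii) applied at $\epsilon/2$. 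You also correctly handle the two genuinely delicate points (the measure-zero interface via (i), and the finiteness of both integrals so the rearrangement is legitimate), so nothing is missing.
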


\subsection{Hardy inequalities with distance weights}
In this subsection, we study distance-weighted Hardy inequalities and prove Theorem \ref{keycompactHardy}. We begin by introducing the following convention.

\medskip

\noindent\textbf{Convention.} Let $(M,g)$, $N$ and $\Omega$ be as in Assumption \ref{assufreecur1}. We say that {\it $(M,g)$, $N$ and $\Omega$ satisfy ``Condition (\textbf{C})"} if one of the following conditions holds:
\begin{itemize}

\item  $\mathbf{Ric}_M\geq 0$, $n=0$, $N=\{o\}\subset \Omega$;

\smallskip

\item  $\mathbf{Ric}_M\geq 0$, $n=m-1$ and $N=\partial\Omega$ is weakly mean convex;

\smallskip

\item  $\mathbf{K}_M\geq 0$, $1\leq n\leq m-1$, $N$ is minimal and $\Omega\cap N\neq\emptyset$.
\end{itemize}

\medskip

\begin{definition}\label{Dsdef} Let $(M,g)$, $N$ and $\Omega$ be as in Assumption \ref{assufreecur1}.
Given $p>1$ and $ \beta\in \mathbb{R}$,   define  a norm of $u\in C^\infty(\Omega_N)$ as
\begin{align*}
\|u\|_{D}&:=\left( \int_{\Omega_N}|u|^p{r}^{\beta} {\dvol}_g+\int_{\Omega_N} |\nabla u|^p {r}^{p+\beta} {\dvol}_g\right)^{\frac1p}.
\end{align*}
The Banach space $D^{1,p}(\Omega_N,r^{\beta+p})$  is the completion of   $C^\infty_0({\Omega_N})$ with respect to the norm $\|\cdot\|_D$.
\end{definition}

Now we present the following result.
\begin{theorem}\label{firsthadyonpunctured}
Let $(M,g)$, $N$ and
$\Omega$ be as in Assumption \ref{assufreecur1} and satisfy Condition (\textbf{C}).
 Thus,  for any $1<p\neq (m-n)$ and $\beta< -(m-n)$, we have
 \begin{align*}
\int_{{\Omega}} |\nabla u|^pr^{p+\beta}{\dvol}_g\geq \left(\frac{\left|m-n+\beta\right|}{p}\right)^p\int_{{\Omega}} |u|^pr^\beta {\dvol}_g,\ \forall\,u\in C^\infty_0({\Omega_N}).\tag{3.4}\label{fristHadynonnegitavecurvature}
\end{align*}
 In particular,   (\ref{fristHadynonnegitavecurvature}) is optimal if $N$ is compact.
\end{theorem}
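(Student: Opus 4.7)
The strategy is to apply Lemma~\ref{mainlemmforcr} with $\rho = r$, and to match the constant by choosing the exponent $\alpha$ carefully. Setting
\[
\alpha := \frac{p-(m-n)}{p-1}
\]
(which is well defined and nonzero since $p\neq m-n$), one computes $(\alpha-1)(p-1) = 1-(m-n)$, so that
\[
\vartheta_{\alpha,\beta,p} = \frac{|(\alpha-1)(p-1)-\beta-1|}{p} = \frac{|m-n+\beta|}{p},
\]
and $c := \alpha[(\alpha-1)(p-1)-\beta-1] = -\alpha(m-n+\beta)$. Since $\beta < -(m-n)$, the factors $c$ and $\alpha$ have the same sign, and this sign agrees with that of $p-(m-n)$. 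The integrability hypothesis (i) of Lemma~\ref{mainlemmforcr} is immediate because every $u\in C^\infty_0(\Omega_N)$ has compact support bounded away from $N$, so $r$ is comparable to a positive constant on $\supp u$.

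The substantive step is to verify hypothesis (ii), namely $-\Delta_{p}(c r^\alpha)\geq 0$ weakly. A direct calculation using $|\nabla r|=1$ a.e.\ gives
\[
\Delta_p(r^\alpha) = \alpha|\alpha|^{p-2}\, r^{(\alpha-1)(p-1)-1}\bigl[(\alpha-1)(p-1) + r\,\Delta r\bigr]
\]
in $\mathcal{V}\mathscr{D}(N)$. Under Condition~(\textbf{C}), Theorem~\ref{huperseacedata}, Corollary~\ref{inlalpace}, and Remark~\ref{zerodimensinocase} (for $n=m-1$, $1\le n\le m-1$, and $n=0$ respectively) all yield the a.e.\ bound $r\Delta r \leq m-n-1$. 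With our choice of $\alpha$,
\[
(\alpha-1)(p-1) + r\Delta r \leq (1-(m-n)) + (m-n-1) = 0 \quad \text{a.e.,}
\]
so $\Delta_p(r^\alpha)$ has the sign of $-\alpha|\alpha|^{p-2}$, and therefore $\Delta_p(cr^\alpha) = c|c|^{p-2}\Delta_p(r^\alpha)$ satisfies $\Delta_p(cr^\alpha)\leq 0$ pointwise a.e.\ in $\mathcal{V}\mathscr{D}(N)$. The remaining task is to upgrade this to the weak formulation across the cut locus $\mathcal{V}\mathscr{C}(N)$; I would do this by testing against $u\in C^\infty_0(\Omega_N)$, $u\ge 0$, integrating by parts on the open set $\mathcal{V}\mathscr{D}(N)$ in Fermi coordinates via~\eqref{2.0'fermico}, and observing that the boundary contributions on $\partial \mathcal{V}\mathscr{D}(N)$ have the correct sign---this is the usual Calabi-type trick that the Fermi Jacobian $\det\mathcal{A}(t,\mathbf n)$ cannot increase across a focal/cut point. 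Once hypothesis (ii) is secured, Lemma~\ref{mainlemmforcr} together with $|\nabla r|=1$ a.e.\ delivers inequality~\eqref{fristHadynonnegitavecurvature} with constant $(|m-n+\beta|/p)^p$.

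For sharpness when $N$ is compact, the plan is to apply Lemma~\ref{impsharplem} with the same $\rho = r$ and $\alpha$, exhibiting a family whose Rayleigh quotient approaches $(|m-n+\beta|/p)^p$ from above. Compactness of $N$ makes $T_s$ precompact for small $s>0$, and the volume estimate~\eqref{estimtevoluem} together with the Fermi formula~\eqref{2.0'fermico} allows one to verify hypothesis (ii) of Lemma~\ref{impsharplem} for all sufficiently small $\epsilon$: the integrals over $\Omega_s^-$ (inside the tube) and $\Omega_s^+$ (outside the tube, where the weight $r^{-c(\epsilon)p+\beta}$ decays since $\beta<-(m-n)$) are both finite and positive. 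Then Lemma~\ref{impsharplem} yields the reverse Hardy inequality (3.2) for $v_\epsilon$. Since $v_\epsilon\notin C^\infty_0(\Omega_N)$, the final step is a standard density argument: truncate $v_\epsilon$ by a cutoff $\chi_\delta$ vanishing in $T_\delta$ and, if needed, outside a large ball, and use Lemma~\ref{centerzeroinfite} together with Lebesgue's dominated convergence to show that the perturbation of both sides of (3.2) vanishes as $\delta\to 0$. Letting first $\delta\to 0$ and then $\epsilon\to 0$ proves that the constant $(|m-n+\beta|/p)^p$ is optimal.

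The principal obstacle is the weak-sense verification of $-\Delta_p(cr^\alpha)\geq 0$: the distance function $r$ is only Lipschitz globally, and the cut locus, though Lebesgue-null, is in general neither smooth nor transversally well-behaved. Bypassing this with Calabi-style smooth upper approximations of $r$, or with a direct Fermi-coordinate integration coupled with the focal-point estimate~\eqref{smallsestimatedatA} and Lemma~\ref{Fermiseconlemma}, is the key technical ingredient that the rest of the proof rests upon.
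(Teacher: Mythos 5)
Your proposal is correct and follows essentially the same route as the paper: the same choice $\rho=r$, $\alpha=[p-(m-n)]/(p-1)$ in Lemma \ref{mainlemmforcr}, with $-\Delta_p(c r^\alpha)\ge 0$ deduced from $r\Delta r\le m-n-1$ (Theorem \ref{huperseacedata}, Corollary \ref{inlalpace}, Remark \ref{zerodimensinocase}), and the same sharpness scheme via Lemma \ref{impsharplem}, the test functions $v_\epsilon$, the Jacobian bound $\det\mathcal{A}(t,\mathbf{n})\le t^{m-n-1}$, and a truncation-plus-density argument. The only differences are cosmetic: you flag the cut-locus/weak-formulation issue more explicitly than the paper does (the paper's pointwise computation implicitly relies on the same Calabi-type argument you describe), and you truncate $v_\epsilon$ by a spatial cutoff near $N$ where the paper uses the value-truncation $\max\{v_\epsilon-\iota,0\}$; both work since $c(\epsilon)p+\beta+(m-n)>0$.
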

\begin{proof}We first consider the case when $n\geq 1$. Set
\[
\rho:=r|_{\Omega_N},\ \alpha:= [{p-(m-n)}]/({p-1}),\ c:=\alpha[(\alpha-1)(p-1)-\beta-1].\tag{3.5}\label{notation}
 \] Thus,
Theorem \ref{huperseacedata} and Corollary \ref{inlalpace} yield
\begin{align*}
-\Delta_p(c\rho^\alpha)=-|c|^{p-2}|\alpha|^p\left[ (\alpha-1)(p-1)-\beta-1 \right]r^{(\alpha-1)(p-1)-1}\left[ r\Delta r+(\alpha-1)(p-1) \right]\geq 0.
\end{align*}
Obviously, $\rho^{(\alpha-1)(p-1)}, \rho^\beta, \rho^{p+\beta}\in L^1_{\lo}(\Omega_N)$.
Hence, (\ref{fristHadynonnegitavecurvature}) follows from Lemma \ref{mainlemmforcr} immediately.

Now we show that (\ref{fristHadynonnegitavecurvature}) is sharp if  $N$ is compact, in which case $\min_{\mathbf{n}\in \mathcal {V}SN}c_\mathcal {V}(\mathbf{n})>0$. Without loss of generality, we may assume $N\subset \overline{\Omega}$ (otherwise consider $\widetilde{N}:=N\cap \overline{\Omega}$). Choose a small $s
\in (0, \min_{\mathbf{n}\in \mathcal {V}SN}c_\mathcal {V}(\mathbf{n}))$ such that both $T_s\cap \Omega$ and $\Omega\setminus T_s$ are non-empty, where $T_s$ is the $s$-tuber neighborhood of $N$. Clearly,
 $N\subset T_s\cap \overline{\Omega}\subset \overline{\Omega}$.
For any $\epsilon\in (0,1/2)$, we define    $v_\epsilon$ as in (\ref{defnitionvepsi}), where $\rho$ and $\alpha$ are given by (\ref{notation}).
It is not hard to check that
 \[
 c(\epsilon)p+\beta+(m-n-1)\geq -1+\epsilon,\ -c(\epsilon/2)p+\beta+(m-n-1)\leq -1- {\epsilon}/{2},\tag{3.6}\label{indexforfinite}
 \]
 where $c(\epsilon)=(|m-n+\beta|+\epsilon)/p$.
Let $(t,\mathbf{n})$ denote Fermi coordinates. Thus, Theorems \ref{importantlemmahk} and  \ref{hypernonlaplac} yield $\det \mathcal {A}(t,\mathbf{n})\leq t^{m-n-1}$, which together with (\ref{2.0'fermico}) and (\ref{indexforfinite}) furnishes
\begin{align*}
&\int_{\Omega_N} |v_\epsilon|^p\rho^\beta |\nabla\rho|^p   {\dvol}_g=\int_{\Omega_N} |v_\epsilon|^pr^\beta {\dvol}_g
=\frac{1}{s^{c(\epsilon)p}}\int_{T_s \cap \Omega} r^{c(\epsilon)p+\beta}{\dvol}_g+\frac{1}{s^{-c(\epsilon/2)p}}\int_{\Omega\backslash T_s}r^{-c(\epsilon/2)p+\beta}{\dvol}_g\\
\leq & c_{m-n-1}\vol_{i^*g}(N)\left[\frac{1}{s^{c(\epsilon)p}}\int_0^s t^{c(\epsilon)p+\beta+(m-n-1)}dt+\frac{1}{s^{-c(\epsilon/2)p}}\int_s^\infty t^{-c(\epsilon/2)p+\beta+(m-n-1)}dt \right]<+\infty.\tag{3.7}\label{veprfinite}
\end{align*}
By using (\ref{veprfinite}), one can easily show that  Lemma \ref{impsharplem} holds (for $\rho=r|_{\Omega_N},\ \alpha= [{p-(m-n)}]/({p-1}) $) and hence,
  $\|v_\epsilon\|_D<+\infty$.

On the other hand, set $v_{\epsilon,\iota}:=\max\{v_\epsilon-\iota,0 \}$ for small $\iota>0$. Because $v_{\epsilon,\iota}$ is a globally Lipschitz function with compact
support in ${\Omega_N}$, a standard argument (cf.  Hebey \cite[Lemma 2.5]{H} or Meng et al. \cite[Lemma A.1]{MWZ}) yields  $v_{\epsilon,\iota}\in D^{1,p}({\Omega_N},r^{\beta+p})$.
Since $v_\epsilon\geq 0$, $\|v_\epsilon\|_D<+\infty$ and  $\chi_{\{  v_\epsilon\geq \iota\}} |\iota|^pr^{\beta}\leq |v_\epsilon|^pr^{\beta}\in L^1(\Omega_N)$, the dominate convergence theorem yields
\begin{align*}
&\|v_{\epsilon,\iota}-v_\epsilon\|_D^p=\int_{{\Omega_N}}|v_{\epsilon,\iota}-v_\epsilon|^p{r}^{\beta} \dvol_g + \int_{{\Omega_N}} |\nabla (v_{\epsilon,\iota}-v_\epsilon)|^p r^{p+\beta} \dvol_g\\
=&\int_{\Omega_N} \chi_{\{0\leq v_\epsilon\leq \iota\}} |v_\epsilon|^p r^\beta \dvol_g+\int_{\Omega_N} \chi_{\{  v_\epsilon\geq \iota\}} |\iota|^p r^{\beta}\dvol_g+\int_{\Omega_N} \chi_{\{0\leq v_\epsilon\leq \iota\}} |\nabla v_\epsilon|^p r^{p+\beta}\dvol_g\rightarrow 0,\tag{3.8}\label{postivefunctionconvergence}
\end{align*}
as $\iota\rightarrow 0^+$.
Hence, $v_\epsilon\in D^{1,p}({\Omega_N},r^{\beta+p})$, which furnishes
a sequence $u_j\in C^\infty_0({\Omega_N})$ such that
\[
\int_{{\Omega}} |u_j|^p r^\beta {\dvol}_g\rightarrow \int_{{\Omega}} |v_\epsilon|^p r^\beta  {\dvol}_g,\ \ \ \int_{{\Omega}} |\nabla u_j|^pr^{p+\beta}{\dvol}_g\rightarrow \int_{{\Omega}} |\nabla v_\epsilon|^pr^{p+\beta}{\dvol}_g, \text{ as }j\rightarrow \infty.
\]
On account  of (\ref{fristHadynonnegitavecurvature}), (\ref{fakeinq4.4}) and (\ref{notation}),  we get
\begin{align*}
\left(\frac{\left|m-n+\beta\right|}{p}\right)^p\leq \inf_{u\in C^\infty_0({{\Omega_N}})\backslash\{0\}}\frac{\int_{{{\Omega}}}|\nabla u|^p  r^{p+\beta}{\dvol}_g}{\int_{{{\Omega}}}|u|^p r^\beta  {\dvol}_g} \leq \lim_{j\rightarrow\infty}\frac{\int_{{{\Omega}}} |\nabla u_j|^pr^{p+\beta}{\dvol}_g}{\int_{{{\Omega}}}|u_j|^p r^\beta {\dvol}_g}< c(\epsilon)^p\rightarrow \left(\frac{\left|m-n+\beta\right|}{p}\right)^p,
\end{align*}
as $\epsilon\rightarrow 0^+$.
So the sharpness of (\ref{fristHadynonnegitavecurvature}) follows.

In view of Remark \ref{zerodimensinocase}, the proof of the case when $n=0$ follows in a similar manner, which is omitted.
\end{proof}

In some cases it is impossible to  extend (\ref{fristHadynonnegitavecurvature})  to   $C^\infty_0(\Omega)$. For example, $\Omega=M$ is compact.
However, we will see that (\ref{fristHadynonnegitavecurvature}) remains valid   for the following space
\[
C_0^\infty(\Omega,N):=\{u\in C_0^\infty(\Omega): \,u(N)=0 \}.%\tag{3.11}\label{specialclosedspce}
\]

\begin{definition}\label{Hanshukongj} Let $(M,g)$, $N$ and
$\Omega$ be as in Assumption \ref{assufreecur1} and let $U\subset M$ be an open set.
Given $p>1$ and $ \beta\in \mathbb{R}$ with $p+\beta>-(m-n)$,   define  a norm of $u\in C^\infty(U)$ as
\begin{align*}
\|u\|_{p,\beta}&:=\left( \int_{U}|u|^p{r}^{p+\beta} {\dvol}_g+\int_{U} |\nabla u|^p {r}^{p+\beta} {\dvol}_g\right)^{\frac1p}.\tag{3.9}\label{weightednorm}
\end{align*}
The {\it weighted Sobolev space} ${W^{1,p}_0}(U, {r}^{p+\beta})$ (resp., ${W^{1,p}}(U, {r}^{p+\beta})$) is the completion of  $C^{\infty}_{0}(U)$ (resp., $C^{\infty}_{p,\beta}(U):=\{ u\in C^\infty(U):\, \|u\|_{p,\beta}<\infty\}$) under the norm $\|\cdot\|_{p,\beta}$. In particulary, if $p+\beta=0$, the $W^{1,p}_0(U,r^{p+\beta})$  is the standard Sobolev space $W^{1,p}_0(U)$.
\end{definition}
It follows from Lemma \ref{centerzeroinfite}  that (\ref{weightednorm}) is well-defined.
In Sections \ref{basistheorem}-\ref{non-compacthary} we mainly consider $W^{1,p}_0(\Omega_N,r^{p+\beta})$ while in Appendix \ref{Soblevspace} we investigate the relation between $W^{1,p}(M,r^{p+\beta})$ and $W^{1,p}_0(\Omega_N,r^{p+\beta})$.
The properties of weighted Sobolev spaces are studied in Appendix \ref{Soblevspace}, which furnish the following result.
\begin{lemma}\label{continuouslemma}
Let $(M,g),N,\Omega$ and $m,n,p,\beta$ be as in Definition \ref{Hanshukongj}.
Thus
\[
C^\infty_0(\Omega,N)\subset W^{1,p}_0(\Omega_N,r^{p+\beta}), \text{ that is}, \ u\in C^\infty_0(\Omega,N)\Longrightarrow u|_{\Omega_N}\in  W^{1,p}_0(\Omega_N,r^{p+\beta}).\tag{3.10}\label{functionsspacessubset}
\]
\end{lemma}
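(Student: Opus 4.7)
The plan is to construct, for any given $u \in C^\infty_0(\Omega, N)$, a sequence $\{u_j\} \subset C^\infty_0(\Omega_N)$ converging to $u|_{\Omega_N}$ in the norm $\|\cdot\|_{p,\beta}$. I will proceed in two stages: first cut off $u$ in a shrinking tubular neighborhood of $N$ to produce Lipschitz functions with compact support in $\Omega_N$, then smooth each of these Lipschitz functions by mollification, and conclude via a diagonal extraction.

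For the cutoff stage, fix smooth $\eta_\epsilon : [0, \infty) \to [0,1]$ with $\eta_\epsilon \equiv 0$ on $[0,\epsilon]$, $\eta_\epsilon \equiv 1$ on $[2\epsilon, \infty)$, and $|\eta'_\epsilon| \le C/\epsilon$, and set $u_\epsilon := \eta_\epsilon(r)\, u$. Since the distance function $r$ is globally Lipschitz on $M$ and smooth on $\mathcal{V}\mathscr{D}(N)$ by Lemma~\ref{Fermiseconlemma}, $u_\epsilon$ is Lipschitz with $\supp(u_\epsilon) \subset \supp(u) \cap \{r \ge \epsilon\}$, which is compactly contained in $\Omega_N$.

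The heart of the argument is the estimate $\|u - u_\epsilon\|_{p,\beta} \to 0$. The $L^p$-part $\int_\Omega |(1-\eta_\epsilon(r))u|^p r^{p+\beta}\, \dvol_g$ is dominated by $\|u\|_\infty^p \int_{T_{2\epsilon}\cap \supp(u)} r^{p+\beta}\, \dvol_g$, which tends to $0$ by Lemma~\ref{centerzeroinfite} (whose hypothesis $p+\beta > -(m-n)$ is exactly what we assume). For the gradient I would decompose $\nabla(u-u_\epsilon) = (1-\eta_\epsilon(r))\nabla u - \eta_\epsilon'(r)\, u\, \nabla r$; the first piece is handled as above, while the delicate one is
\[
\int_{T_{2\epsilon}\setminus T_\epsilon} |\eta_\epsilon'(r)|^p\, |u|^p\, |\nabla r|^p\, r^{p+\beta}\, \dvol_g \;\le\; \frac{C^p}{\epsilon^p} \int_{T_{2\epsilon}\setminus T_\epsilon} |u|^p\, r^{p+\beta}\, \dvol_g.
\]
The key observation here is that $u|_N = 0$ combined with smoothness of $u$ gives, via the fundamental theorem of calculus along normal geodesics in Fermi coordinates, $|u(x)| \le C'\, r(x)$ on a neighborhood of $N \cap \supp(u)$. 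Inserting this bound and invoking the volume comparison~(\ref{estimtevoluem}) together with the polar-type formula~(\ref{2.0'fermico}) would control the right-hand side by $C''\, \epsilon^{-p}\int_\epsilon^{2\epsilon} t^{2p+\beta+m-n-1}\, dt \le C'''\, \epsilon^{p+\beta+m-n}$, which vanishes precisely because $p+\beta+m-n > 0$.

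For the second stage, each $u_\epsilon$ is Lipschitz with compact support in $\Omega_N$, where the weight $r^{p+\beta}$ is bounded above and below by positive constants, so $\|\cdot\|_{p,\beta}$ is equivalent to the ordinary $W^{1,p}$ norm on $\supp(u_\epsilon)$. A standard mollification argument (a local version of Hebey~\cite[Lemma 2.5]{H} or Meng et al.~\cite[Lemma A.1]{MWZ}) will then approximate each $u_\epsilon$ by elements of $C^\infty_0(\Omega_N)$ in $\|\cdot\|_{p,\beta}$, and a diagonal extraction produces the desired sequence. The main obstacle is the term carrying $\eta_\epsilon'(r)$, which scales like $\epsilon^{-p}$; the rescue is that the vanishing $u|_N = 0$ supplies a compensating factor of $r^p$, and the integrability threshold $p+\beta > -(m-n)$ leaves behind a strictly positive power of $\epsilon$.
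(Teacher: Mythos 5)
Your argument is correct, but it follows a genuinely different route from the paper. The paper disposes of this lemma in three lines by extending $u$ by zero to all of $M$, observing that $u$ is continuous and vanishes on $M\setminus\Omega_N$, and then invoking the capacity-theoretic Theorem \ref{maintheoreminaapendix} from Appendix \ref{Soblevspace} (which rests on the whole machinery of $(p,\beta)$-quasicontinuity and the Sobolev $(p,\beta)$-capacity). You instead give a direct, self-contained construction: cut off in a shrinking tubular neighborhood of $N$, use the first-order vanishing $|u|\leq C'r$ near $N$ (available because $u$ is smooth and $u|_N=0$) to absorb the $\epsilon^{-p}$ blow-up of $\eta_\epsilon'$, and land on the exponent $p+\beta+m-n>0$, which is exactly the standing integrability hypothesis. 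Your computation is sound; the only cosmetic points are that you should quote the localized volume bound (\ref{detestimate}) rather than (\ref{estimtevoluem}) when $N$ is non-compact (harmless, since $\supp u$ is compact), and that your final mollification step is precisely Lemma \ref{lipshccompact}, so you may as well cite it. What each approach buys: yours is more elementary and makes transparent why $C^\infty_0(\Omega,N)$ is the right class (the smooth vanishing on $N$ supplies the extra factor of $r$ that a generic Sobolev function would not), whereas the paper's argument, though it front-loads the capacity theory, applies verbatim to arbitrary quasicontinuous $W^{1,p}$ functions vanishing quasieverywhere off $\Omega_N$ -- a generality the authors need elsewhere (e.g., in Proposition \ref{notachivedconstant} and Corollary \ref{basissolutionfornoncaompcat}) and that your cutoff argument, which leans on the pointwise bound $|u|\leq C'r$, would not reach.
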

\begin{proof}
Since the proof is trivial if $N= \partial\Omega$, it is sufficient to show the lemma when
$N\cap \Omega\neq\emptyset$. Given $u\in C^\infty_0(\Omega,N)$,  the zero extension furnishes $u\in W^{1,p}(M,r^{p+\beta})$.  Since $u$ is continuous in $M$ and $u=0$ in $M\backslash\Omega_N$, Theorem \ref{maintheoreminaapendix} yields $u|_{\Omega_N}\in   W^{1,p}_0(\Omega_N,r^{p+\beta})$.
\end{proof}

Lemma \ref{continuouslemma} together with Theorem \ref{firsthadyonpunctured} yields the following result.
\begin{proposition}\label{reverRicinfty}
Let $(M,g)$, $N$ and
$\Omega$ be as in Assumption \ref{assufreecur1} and satisfy Condition (\textbf{C}).
Given $1<p\neq (m-n)$ and $\beta<-(m-n)$ with $p+\beta>-(m-n)$,
we have
 \[
\int_{\Omega}|\nabla u|^p r^{\beta+p} {\dvol}_g\geq \left( \frac{|m-n+\beta|}{p} \right)^p\int_{\Omega} {|u|^p}r^{\beta}{\dvol}_g, \ \forall\, u\in C_0^\infty(\Omega,N).\tag{3.11}\label{new3.26re}
\]
In particular, if $N$ is compact, then $\left(  {|m-n+\beta|}/{p} \right)^p$ is best with respect to $C_0^\infty(\Omega,N)$.
\end{proposition}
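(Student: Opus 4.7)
The plan is to promote Theorem \ref{firsthadyonpunctured} from the test class $C^\infty_0(\Omega_N)$ to the larger class $C^\infty_0(\Omega,N)$ by density combined with Fatou's lemma, and then to read off sharpness from the evident inclusion between these two classes.

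To establish (\ref{new3.26re}), I would fix $u\in C^\infty_0(\Omega,N)$ and invoke Lemma \ref{continuouslemma} to produce a sequence $u_j\in C^\infty_0(\Omega_N)$ with $\|u_j-u\|_{p,\beta}\to 0$. This forces
\[
\int_\Omega|\nabla u_j|^p r^{p+\beta}\dvol_g\longrightarrow \int_\Omega|\nabla u|^p r^{p+\beta}\dvol_g,
\]
and, after passing to a subsequence, $u_j\to u$ pointwise a.e.\ on $\Omega$ (the weight $r^{p+\beta}$ is positive off $N$, and $N$ has Lebesgue measure zero). Applying Theorem \ref{firsthadyonpunctured} to each $u_j$ and passing to the limit via Fatou's lemma on the right-hand side, whose integrand involves the heavier weight $r^\beta$, then yields (\ref{new3.26re}).

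For the sharpness statement when $N$ is compact, I would observe that every $v\in C^\infty_0(\Omega_N)$ extends by zero across $N$ to a function in $C^\infty_0(\Omega,N)$, so $C^\infty_0(\Omega_N)\subset C^\infty_0(\Omega,N)$. Consequently the infimum of the Rayleigh-type quotient over $C^\infty_0(\Omega,N)\setminus\{0\}$ is no greater than the infimum over $C^\infty_0(\Omega_N)\setminus\{0\}$, which equals $(|m-n+\beta|/p)^p$ by the sharpness part of Theorem \ref{firsthadyonpunctured}; the matching lower bound is (\ref{new3.26re}) itself, so the two infima coincide.

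The main technical input is Lemma \ref{continuouslemma}: it asserts that a function which is merely known to vanish \emph{on} $N$ (rather than in a full neighborhood of $N$) can be approximated in $\|\cdot\|_{p,\beta}$ by functions compactly supported away from $N$. The hypothesis $p+\beta>-(m-n)$ enters precisely here---through Lemma \ref{centerzeroinfite} it guarantees local integrability of $r^{p+\beta}$ near $N$, which is needed both for the weighted Sobolev norm to make sense and for the capacity-theoretic machinery of Appendix \ref{Soblevspace} to yield the approximating sequence. Once this approximation step is granted, the rest of the argument is essentially lower semicontinuity, so the genuine obstacle lies in the weighted capacity/approximation result encapsulated in Lemma \ref{continuouslemma} rather than in the present proposition.
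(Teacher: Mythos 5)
Your proposal is correct and follows essentially the same route as the paper: approximate $u$ by functions in $C^\infty_0(\Omega_N)$ via Lemma \ref{continuouslemma}, use norm convergence for the gradient term and Fatou's lemma for the $r^\beta$-weighted term after passing to a pointwise a.e.\ convergent subsequence, and deduce sharpness from the inclusion $C^\infty_0(\Omega_N)\subset C^\infty_0(\Omega,N)$. The only cosmetic difference is that the paper states the limiting inequality for all of $W^{1,p}_0(\Omega_N,r^{p+\beta})$ before specializing, whereas you work directly with $u\in C^\infty_0(\Omega,N)$.
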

\begin{proof}
Given $u\in W^{1,p}_0(\Omega_N, r^{p+\beta})$, there is a sequence $u_i\in C^\infty_0(\Omega_N)$ such that $u_i$ converges to $u$ under $\|\cdot\|_{p,\beta}$.  By passing a subsequence and using Lemma  \ref{nullmeaure}, we can assume that $u_i$ converges to $u$ pointwise  a.e. (w.r.t. $\vol_g$) in $M$.   Thus,  Fatou's lemma together with (\ref{fristHadynonnegitavecurvature}) yields
\begin{align*}
&\int_{\Omega}|\nabla u|^p r^{\beta+p} {\dvol}_g=\underset{i\rightarrow \infty}{\lim\inf}\int_{\Omega}|\nabla u_i|^p r^{\beta+p} {\dvol}_g\geq \left( \frac{|m-n+\beta|}{p} \right)^p \underset{i\rightarrow \infty}{\lim\inf}\int_{\Omega} {|u_i|^p}r^{\beta}{\dvol}_g\\
\geq & \left( \frac{|m-n+\beta|}{p} \right)^p\int_{\Omega} \underset{i\rightarrow \infty}{\lim\inf}{|u_i|^p}r^{\beta}{\dvol}_g=\left( \frac{|m-n+\beta|}{p} \right)^p\int_{\Omega} {|u|^p}r^{\beta}{\dvol}_g,
\end{align*}
which combined with Lemma \ref{continuouslemma} yields
 (\ref{new3.26re}).
It remains to show that  (\ref{new3.26re}) is sharp when $N$ is compact. Since $C^\infty_0(\Omega_N)\subset C^\infty_0(\Omega,N)$, the sharpness follows from (\ref{new3.26re}) and  Theorem \ref{firsthadyonpunctured} directly.
\end{proof}

In the following, we study the improved Hardy inequality.
 For any  $p>1$ and $\beta<-(m-n)$, set
 \begin{align*}
\mathscr{I}[u]:=\int_{{\Omega}} |\nabla u|^pr^{p+\beta} {\dvol}_g-\left(\frac{|m-n+\beta|}{p}\right)^p\int_{{\Omega}} {|u|^p}{r^\beta}  {\dvol}_g,\ \forall\,u\in C^\infty_0({\Omega_N}).\tag{3.12}\label{FunctionalI}
\end{align*}

Following Barbatis et al. \cite{BFT}, we have the following estimate.
\begin{lemma}\label{refinedsequence}
Let $(M,g)$, $N$ and
$\Omega$ be as in Assumption \ref{assufreecur1} and satisfy Condition (\textbf{C}). Given $p>1$ and $\beta<-(m-n)$, there exists a constant $C=C(p)>0$ such that  the following estimates hold:

\begin{itemize}
\item[(a)] if $p\in (1,2)$, then
$\ds\mathscr{I}[u]\geq C\int_\Omega \frac{|\nabla v|^2 r^{2-(m-n)}}{\left( |\delta v| +|r\nabla v| \right)^{2-p}}{\dvol}_g,\ \forall\,u\in C^\infty_0(\Omega_N)$;

\item[(b)] if $p\in [2,\infty)$, then
$\ds\mathscr{I}[u]\geq C\int_\Omega |\nabla v|^p r^{p-(m-n)}{\dvol}_g,\ \forall\,u\in C^\infty_0(\Omega_N).$
\end{itemize}

\smallskip

\noindent Here, $\delta:=(m-n+\beta)/p$ and $v:=u\, r^\delta$.
\end{lemma}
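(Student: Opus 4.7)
The plan is to perform the change of variable $v = u\, r^\delta$ with $\delta := (m-n+\beta)/p$ (so $\delta < 0$ since $\beta < -(m-n)$) and then apply a Lindqvist-type pointwise lower bound to the resulting integrand. A direct computation using $|\nabla r|=1$ a.e.\ together with $p\delta = m-n+\beta$ gives $r^{\delta+1}\nabla u = r\nabla v-\delta v\nabla r$, whence
\begin{align*}
r^{p+\beta}|\nabla u|^p = r^{-(m-n)}\,|r\nabla v-\delta v\nabla r|^p, \qquad r^\beta|u|^p = r^{-(m-n)}|v|^p.
\end{align*}
Since $(|m-n+\beta|/p)^p=|\delta|^p$, the functional takes the clean form
\begin{align*}
\mathscr{I}[u] = \int_\Omega r^{-(m-n)}\bigl\{|r\nabla v-\delta v\nabla r|^p - |\delta v|^p\bigr\}\,\dvol_g.
\end{align*}
Because $u\in C^\infty_0(\Omega_N)$, the new variable $v$ is Lipschitz, compactly supported in $\Omega_N$, and smooth off the cut locus $\mathcal{V}\mathscr{C}(N)$, which has measure zero by Lemma \ref{Fermiseconlemma}(ii).

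Next, setting $a := -\delta v\nabla r$ and $b := r\nabla v$, I would invoke the standard pointwise inequalities from the $p$-Laplace literature (cf.\ Barbatis--Filippas--Tertikas): there exists $C = C(p) > 0$ with
\begin{align*}
|a+b|^p - |a|^p - p|a|^{p-2}\langle a,b\rangle \geq \begin{cases} C\,|b|^p, & p\geq 2,\\[3pt] C\,\dfrac{|b|^2}{(|a|+|b|)^{2-p}}, & 1<p<2.\end{cases}
\end{align*}
Since $|a| = |\delta v|$ and $|b| = r|\nabla v|$, the $|a|^p$ term exactly cancels the $|\delta v|^p$ piece in $\mathscr{I}[u]$, and the remainder on the right-hand side, multiplied by $r^{-(m-n)}$ and integrated, reproduces precisely the targets in (a) and (b).

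It then remains to show that the linear cross-term is non-negative. Using $p|v|^{p-2}v\,\nabla v=\nabla|v|^p$, this cross-term reads
\begin{align*}
I := -\delta|\delta|^{p-2}\int_\Omega r^{1-(m-n)}\langle\nabla r,\nabla|v|^p\rangle\,\dvol_g.
\end{align*}
Integration by parts on $\mathcal{V}\mathscr{D}(N)$ (where $r$ is smooth), followed by an exhaustion argument, gives $I = \delta|\delta|^{p-2}\int_\Omega |v|^p\,\di(r^{1-(m-n)}\nabla r)\,\dvol_g$, and
\begin{align*}
\di(r^{1-(m-n)}\nabla r) = (1-(m-n))\,r^{-(m-n)} + r^{1-(m-n)}\Delta r.
\end{align*}
Under Condition (\textbf{C}) one has $\Delta r \leq (m-n-1)/r$ in the smooth region: by Theorem \ref{huperseacedata} when $1\leq n \leq m-1$ with $N$ minimal and $\mathbf{K}_M\geq 0$; by Corollary \ref{inlalpace} when $n = m-1$ with $N$ weakly mean convex and $\mathbf{Ric}_M\geq 0$; and by the Bishop--Gromov Laplacian comparison recalled in Remark \ref{zerodimensinocase} when $n = 0$ with $\mathbf{Ric}_M\geq 0$. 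Hence $\di(r^{1-(m-n)}\nabla r)\leq 0$, and since $\delta<0$ gives $\delta|\delta|^{p-2}<0$, we conclude $I\geq 0$, which completes the proof.

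The principal obstacle is making the integration by parts rigorous across the cut locus, since $r$ is only Lipschitz there and $\Delta r$ must be read distributionally. The standard remedy is to apply Stokes' theorem on $\mathcal{V}\mathscr{D}(N)\cap\{r>\varepsilon\}$, where $r$ is smooth, and let $\varepsilon\to 0^+$, invoking Lemma \ref{Fermiseconlemma}(ii) together with the local integrability of Lemma \ref{centerzeroinfite} (applied on the compact support of $v$, where $r$ is bounded away from zero so no singularity at $N$ interferes) to kill the boundary terms. The singular contribution of $\Delta r$ across the cut locus, viewed as an Aleksandrov-type distribution, is a non-positive measure and hence only improves the desired sign.
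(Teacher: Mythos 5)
Your proposal is correct and follows essentially the same route as the paper: the substitution $v=u\,r^{\delta}$ rewrites $\mathscr{I}[u]$ as $\int_\Omega r^{-(m-n)}\{|X-Y|^p-|X|^p\}\dvol_g$ with $X=\delta v\nabla r$, $Y=r\nabla v$, the Barbatis--Filippas--Tertikas pointwise inequalities produce the claimed lower-order terms, and the cross-term is shown non-negative via the divergence theorem combined with $r\Delta r+1-(m-n)\le 0$, which under Condition (\textbf{C}) follows from Theorem \ref{huperseacedata}, Corollary \ref{inlalpace} and Remark \ref{zerodimensinocase} exactly as you indicate. Your extra care about the cut locus and the exhaustion argument is a welcome refinement of a step the paper leaves implicit, but it does not change the substance of the argument.
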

\begin{proof}
Set $X:= \delta v \nabla r$ and $Y:=r\nabla v$. The divergence theorem together with Theorem \ref{huperseacedata}, Corollary \ref{inlalpace} and Remark \ref{zerodimensinocase} yields
\begin{align*}
\int_\Omega \frac{|X|^{p-2}}{r^{m-n}}\, g( X,Y) {\dvol}_g
=\frac{-|\delta|^{p-2}\delta}{p}\int_\Omega \frac{|v|^p}{r^{m-n}}\left[ r\Delta r+1-(m-n)\right]{\dvol}_g\leq 0.\tag{3.13}\label{basisdivergence}
\end{align*}
On the other hand, it is straightforward to show
\begin{align*}
\mathscr{I}[u]=\int_\Omega \frac{|X-Y|^p-|X|^p}{r^{m-n}}{\dvol}_g.\tag{3.14}\label{middelI}
\end{align*}

Providing $p\in (1,2)$,  a standard inequality (cf.  Barbatis et al. \cite[Lemma 3.1]{BFT}) yields
\[
|X-Y|^p-|X|^p\geq C \frac{|Y|^2}{\left( |X|+|Y| \right)^{2-p}}-p|X|^{p-2}\, g(X,Y),\ \  \forall\,X,Y\in TM,
\]
which combined with (\ref{middelI}) and (\ref{basisdivergence}) furnishes  Statement (a). Alternatively, provided $p\geq 2$, one has
\[
|X-Y|^p-|X|^p\geq C|Y|^p-p|X|^{p-2}\,g(X,Y),\ \  \forall\,X,Y\in TM.
\]
Now Statement (b) can be proved in the same  way as shown
above.
\end{proof}

\begin{proposition}\label{notachivedconstant}
Let $(M,g)$, $N$ and
$\Omega$ be as in Assumption \ref{assufreecur1} and satisfy Condition (\textbf{C}). Additionally suppose that $N$ is compact. Given $1<p\neq(m-n)$ and $\beta<-(m-n)$, the following statements are true:

\begin{itemize}

\item[(i)] there holds
\begin{align*}
\inf_{u\in C^\infty_0(\Omega_N)\backslash\{0\}}\frac{\int_{{\Omega}} |\nabla u|^pr^{p+\beta} {\dvol}_g}{\int_{{\Omega}}  {|u|^p}{r^\beta} {\dvol}_g}=\left(\frac{\left|m-n+\beta\right|}{p}\right)^p,
\end{align*}
but the constant cannot be achieved;

\item[(ii)] if $p+\beta>-(m-n)$, then
\begin{align*}
\inf_{u\in C^\infty_0(\Omega,N)\backslash\{0\}}\frac{\int_{{\Omega}} |\nabla u|^p r^{p+\beta}{\dvol}_g}{\int_{{\Omega}}  {|u|^p}{r^\beta} {\dvol}_g}= \left(\frac{\left|m-n+\beta\right|}{p}\right)^p, \tag{3.15}\label{sharpclsoedconstant}
\end{align*}
but the constant cannot be achieved.
\end{itemize}

\end{proposition}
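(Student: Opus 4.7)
The sharpness assertions in (i) and (ii)---that both infima equal $\bigl(\tfrac{|m-n+\beta|}{p}\bigr)^{p}$---are already contained in Theorem~\ref{firsthadyonpunctured} and Proposition~\ref{reverRicinfty}, respectively, so the only substantive task is to rule out attainment. My plan is to extract a rigidity statement from the improved estimate in Lemma~\ref{refinedsequence} and then kill the resulting candidate minimizer using the divergence provided by Lemma~\ref{centerzeroinfite}.

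For (i), suppose $u^\ast\in C_0^\infty(\Omega_N)\setminus\{0\}$ realizes $\mathscr{I}[u^\ast]=0$. Set $\delta:=(m-n+\beta)/p$ and $v^\ast:=u^\ast r^\delta\in C_0^\infty(\Omega_N)$. Lemma~\ref{refinedsequence} then forces its nonnegative right-hand-side integrand to vanish a.e.\ on $\Omega_N$; a short pointwise case analysis covering both $p\geq 2$ and $p\in(1,2)$, and treating the loci $\{v^\ast=0\}$ and $\{v^\ast\neq 0\}$ separately, shows this is equivalent to $\nabla v^\ast\equiv 0$ a.e. Hence $v^\ast$ is locally constant on $\Omega_N$; since $\supp(u^\ast)$ is compact inside the open set $\Omega_N$, every connected component of $\Omega_N$ meets $\{u^\ast=0\}$, so the constant is zero. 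This gives $u^\ast\equiv 0$, a contradiction.

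For (ii), let $u^\ast\in C_0^\infty(\Omega,N)$ be a putative minimizer. By Lemma~\ref{continuouslemma}, $u^\ast|_{\Omega_N}\in W^{1,p}_0(\Omega_N,r^{p+\beta})$, so there is a sequence $u_j\in C_0^\infty(\Omega_N)$ with $u_j\to u^\ast$ in $\|\cdot\|_{p,\beta}$ and thus $\mathscr{I}[u_j]\to\mathscr{I}[u^\ast]=0$. On any compact $K\Subset\Omega_N$ the weights $r^{p+\beta}$ and $r^{\beta}$ are bounded above and below by positive constants, whence $u_j\to u^\ast$ in $W^{1,p}(K)$; passing to a subsequence yields a.e.\ convergence of $v_j:=u_j r^\delta$ and $\nabla v_j$ on $K$, so Fatou applied to Lemma~\ref{refinedsequence} gives $\nabla v^\ast=0$ a.e. on $K$. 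Exhausting $\Omega_N$ by such $K$, one concludes $u^\ast=c_i\,r^{-\delta}$ on each connected component $\Omega'_i$ of $\Omega_N$. Since $-\delta p+\beta=-(m-n)$, Lemma~\ref{centerzeroinfite}/(ii) applied to any small bounded sub-domain of $\Omega'_i$ touching $N$ gives $\int_{\Omega'_i}|u^\ast|^p r^\beta\,\dvol_g=+\infty$ whenever $c_i\neq 0$; Condition~\textbf{(C)} ensures that $\overline{\Omega'_i}$ indeed meets $N$ for every $i$. Finiteness of the Hardy denominator for a minimizer then forces every $c_i=0$, contradicting $u^\ast\not\equiv 0$.

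The main technical obstacle is the limit passage in (ii) in the subquadratic regime $p\in(1,2)$: Lemma~\ref{refinedsequence}/(a) features the singular denominator $(|\delta v_j|+|r\nabla v_j|)^{2-p}$, so one must first secure simultaneous a.e.\ convergence of $v_j$ and $\nabla v_j$ on compact subsets of $\Omega_N$ (via a diagonal subsequence through an exhaustion) and then verify that Fatou may be applied to the full nonlinear ratio. By comparison, the quadratic range $p\geq 2$ is straightforward, since the improved integrand reduces to $|\nabla v_j|^{p}r^{p-(m-n)}$.
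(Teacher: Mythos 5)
Your argument is correct and follows the paper's strategy: use the rigidity coming from Lemma \ref{refinedsequence} to force $\nabla(u\,r^{\delta})=0$, conclude that a minimizer must be of the form $c\,r^{-\delta}$ on each component of $\Omega_N$, and rule this out. The differences are in execution rather than in the route. In (i), the paper derives the contradiction from integrability: $u=C\,r^{-\delta}$ would put $r^{-\delta}$ in $D^{1,p}(\Omega_N,r^{p+\beta})$, which is impossible by Lemma \ref{centerzeroinfite}/(ii) since $-\delta p+\beta=-(m-n)$; you instead use the topological observation that every component of $\Omega_N$ meets $\{u^{\ast}=0\}$ (valid because no component of $\Omega_N$ can have compact closure contained in $\Omega_N$ when $M$ is connected and $N\neq\emptyset$), which is a clean alternative that avoids any integral computation. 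In (ii), the paper first proves the embedding $W^{1,p}_0(\Omega_N,r^{p+\beta})\subset D^{1,p}(\Omega_N,r^{p+\beta})$ and extends Lemma \ref{refinedsequence} to that space, then repeats the argument of (i); you instead pass to the limit directly along an approximating sequence $u_j\in C^{\infty}_0(\Omega_N)$ using local $W^{1,p}$-convergence on compacta and Fatou, and then invoke Lemma \ref{centerzeroinfite}/(ii) exactly as the paper does, so the two treatments are of comparable weight (the paper's isolates a reusable embedding; yours is more self-contained). One step you should make explicit: the assertion $\mathscr{I}[u_j]\to\mathscr{I}[u^{\ast}]$ requires converting $\|\cdot\|_{p,\beta}$-convergence of $u_j-u^{\ast}$ into convergence of $\int|u_j|^pr^{\beta}\dvol_g$, which is exactly an application of the Hardy inequality of Proposition \ref{reverRicinfty} (valid on $W^{1,p}_0(\Omega_N,r^{p+\beta})$) to the differences $u_j-u^{\ast}$; with that line added, the proof is complete.
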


\begin{proof}
Thanks to Theorem   \ref{firsthadyonpunctured} and Proposition \ref{reverRicinfty}, it suffices to show that the constants cannot be achieved.

(i) Suppose  otherwise, i.e., the constant could be achieved by some $u\in C^\infty_0(\Omega_N)$.
Thus, Lemma \ref{refinedsequence} implies $0=\nabla v=\nabla (u\, r^\delta)$ a.e. and hence, there exists a constant $C$ such that $r^{-\delta}=C u\in D^{1,p}(\Omega_N,r^{p+\beta})$ (see Definition \ref{Dsdef}).
However, this leads to a contradiction by Lemma \ref{centerzeroinfite}/(ii).

(ii)  By an argument similar to the one used in the proof of Proposition \ref{reverRicinfty}, one can show that
 Lemma \ref{refinedsequence} is valid for $u\in D^{1,p}(\Omega_N,r^{p+\beta})$.
Now we claim $W^{1,p}_0(\Omega_N,r^{p+\beta})\subset D^{1,p}(\Omega_N,r^{p+\beta})$. If the claim is true,
then one can conclude the proof by (\ref{functionsspacessubset}) and the same argument as above.

To prove the claim, given $u\in W^{1,p}_0(\Omega_N,r^{p+\beta})$,  choose a sequence $u_i\in C^\infty_0(\Omega_N)$ converging to $u$ under $\|\cdot\|_{p,\beta}$. By passing a subsequence and using Lemma \ref{nullmeaure}, we may assume that $u_i$ converges to $u$ pointwise a.e. in $\Omega$.
On the other hand, (\ref{new3.26re})  implies that $(u_i)$ is also a Cauchy sequence in $D^{1,p}(\Omega_N,r^{p+\beta})$, whose limit is denoted by $\tilde{u}\in D^{1,p}(\Omega_N,r^{p+\beta})$.
Obviously, $\int_{\Omega}|\nabla u-\nabla \tilde{u}|^pr^{p+\beta}\dvol_g=0$. Moreover, since (\ref{new3.26re}) holds for $ D^{1,p}(\Omega_N,r^{p+\beta})$,
 Fatou's lemma together with $\tilde{u}- u_i\in D^{1,p}(\Omega_N,r^{p+\beta})$ yields
\begin{align*}
\int_{{\Omega}}  {|\tilde{u}-u|^p}{r^\beta} {\dvol}_g=&\int_{{\Omega}} \underset{i\rightarrow \infty}{\lim\inf} {|\tilde{u}-u_i|^p}{r^\beta} {\dvol}_g\leq \underset{i\rightarrow \infty}{\lim\inf}\int_{{\Omega}} {|\tilde{u}-u_i|^p}{r^\beta} {\dvol}_g \\
\leq& \left(\frac{|m-n+\beta|}{p}\right)^{-p}\underset{i\rightarrow \infty}{\lim\inf}\int_{{\Omega}} |\nabla (\tilde{u}- u_i)|^p r^{p+\beta} {\dvol}_g=0,
\end{align*}
which implies $u=\tilde{u}\in D^{1,p}(\Omega_N,r^{p+\beta})$. So the claim is true.
\end{proof}

\begin{proof}[Proof of Theorem \ref{keycompactHardy}]Owing to $k=m-n$,
Theorem \ref{keycompactHardy} follows from Theorem \ref{firsthadyonpunctured}, Proposition \ref{reverRicinfty} and Proposition \ref{notachivedconstant} directly.
\end{proof}

\subsection{Hardy inequalities with logarithmic weights}
In the sequel, we study the  logarithm-weighted Hardy inequalities. First, we need the following result,
whose proof is trivial.

\begin{lemma}\label{interglemaGa}Given $0<L<D$,
for any $s_1,s_2\in \mathbb{R}$ and $l\in (L,D]$, set
\[
H_1(s_1,s_2):=\int^L_0 \left[ \log\left( \frac{D}{t} \right)  \right]^{s_1} t^{s_2} dt,\ H_2(l,s_1,s_2):=\int^l_L \left[ \log\left( \frac{D}{t} \right)  \right]^{s_1} t^{s_2} dt.
\]
Thus, we have
\begin{itemize}
\item $H_1$ is well-defined if either $s_1\in \mathbb{R},\,s_2>-1$ or $s_1<-1,\, s_2=-1$.

\item $H_2$ is well-defined if either $s_1,s_2\in \mathbb{R},\,l<D$ or $s_1>-1,\,s_2\in \mathbb{R},\,l=D$.
\end{itemize}
\end{lemma}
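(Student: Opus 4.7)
The plan is to treat this as a routine integrability check near the two potentially singular endpoints of each integral, reducing both cases to the standard convergence criteria for $\int u^{s_1}\,du$ near $0$ or $\infty$ via the substitution $u=\log(D/t)$.

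For $H_1$, the only point of concern is $t\to 0^+$, where both factors can be singular. I would apply the change of variables $t=De^{-u}$, so that $dt=-t\,du$ and
\[
H_1(s_1,s_2)=D^{s_2+1}\int_{\log(D/L)}^{\infty} u^{s_1}\,e^{-(s_2+1)u}\,du.
\]
Then convergence at $u=\infty$ splits into three regimes. When $s_2>-1$ the exponential factor decays, so the integral converges for every $s_1\in\mathbb{R}$. When $s_2=-1$ the exponential is trivial and the integral reduces to $\int_{\log(D/L)}^{\infty}u^{s_1}\,du$, which is finite exactly when $s_1<-1$. When $s_2<-1$ the exponential grows, so the integral diverges regardless of $s_1$. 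This gives exactly the two clauses stated for $H_1$.

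For $H_2$, no singularity is present at the left endpoint $t=L>0$. If $l<D$ then both $t$ and $\log(D/t)$ are bounded above and below by positive constants on $[L,l]$, so the integrand is bounded and continuous, and the integral is finite for all $s_1,s_2\in\mathbb{R}$. If $l=D$, the only issue is $t\to D^-$; the same substitution $u=\log(D/t)$ sends $t=D$ to $u=0$, converting the integral near that endpoint to
\[
D^{s_2+1}\int_{0}^{\log(D/L)} u^{s_1}\,e^{-(s_2+1)u}\,du,
\]
whose integrability at $u=0$ is controlled purely by $u^{s_1}$ (the exponential is bounded there), hence convergent iff $s_1>-1$. This yields the stated condition for $H_2$.

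There is no genuine obstacle beyond book-keeping; the key observation is just that the substitution $u=\log(D/t)$ simultaneously handles both the logarithmic and power factors, turning each case into a one-line test against $\int u^{s_1}\,du$ at $0$ or $\infty$ with a possible exponential weight. The only minor care needed is to confirm that, in the regimes where a factor is bounded rather than genuinely integrable (e.g.\ logarithmic factors when $s_2>-1$), one can dominate $[\log(D/t)]^{s_1}$ by an arbitrarily small power $t^{-\varepsilon}$ near $t=0$, which is standard.
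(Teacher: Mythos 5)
Your proof is correct, and the paper itself omits the argument entirely (it introduces the lemma with ``whose proof is trivial''), so your substitution $u=\log(D/t)$ is exactly the routine endpoint analysis the authors had in mind. Nothing further is needed.
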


\medskip

 %We say that a domain $\Omega$ in $M$ is   {\it starlike with respect to $N$} if for every $x\in \Omega$, there exists a minimal geodesic $\gamma(t)$, $0\leq t\leq 1$ from $N$ to $x$ such that $\gamma((0,1])\subset \Omega$ and $\dot{\gamma}(0)\in \mathcal {V}N$.

 \begin{theorem}\label{logriccnonclosed}
Let $(M,g)$, $N$ and
$\Omega$ be as in Assumption \ref{assufreecur1} and satisfy Condition (\textbf{C}).
If the constants  $D,p,\beta\in \mathbb{R}$ and $\alpha\in \mathbb{R}\backslash\{0\}$ satisfy
\[
 \sup_{x\in \Omega}r(x)\leq D, \ p\geq m-n>1, \  \log\left( \frac{D}{\sup_{x\in \Omega}r(x)} \right)(m-n-p)\leq (\alpha-1)(p-1)< \beta+1,%\tag{2.10}\label{logconditnum}
\]
then we have
\begin{align*}
\int_{\Omega} \left[\log\left(\frac{D}{r} \right)\right]^{p+\beta}|\nabla u|^p  {{\dvol}_g}\geq (\vartheta_{\alpha,\beta,p})^p\int_{\Omega} \left[\log\left(\frac{D}{r} \right)\right]^{\beta} \frac{|u|^p}{r^p} {{\dvol}_g},\ \forall\,u\in C^\infty_0({\Omega_N}),\tag{3.16}\label{loginnonclosed}
\end{align*}
 where $\vartheta_{\alpha,\beta,p}:= {[\beta+1-(\alpha-1)(p-1)]}/{p}$.
 In particular, $(\vartheta_{\alpha,\beta,p})^p$ is sharp if $N$ is compact and
 \begin{align*}
 p=m-n,\ \alpha=1, \ \Omega={T}_D,\ D=\sup_{x\in \Omega}r(x).\tag{3.17}\label{constequali}
 \end{align*}
 \end{theorem}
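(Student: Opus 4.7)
The idea is to apply Lemma \ref{mainlemmforcr} to $\rho:=\log(D/r)\big|_{\Omega_N}$. Since $|\nabla r|=1$ a.e., we have $|\nabla\rho|=1/r$ a.e., so $\rho^{p+\beta}=[\log(D/r)]^{p+\beta}$ and $\rho^\beta|\nabla\rho|^p=[\log(D/r)]^\beta r^{-p}$: these are exactly the weights on the two sides of (\ref{loginnonclosed}). The constant $|(\alpha-1)(p-1)-\beta-1|/p$ produced by the lemma coincides with the stated $\vartheta_{\alpha,\beta,p}$ thanks to the hypothesis $(\alpha-1)(p-1)<\beta+1$. The assumption $\sup_\Omega r\le D$ forces $\rho\ge 0$ on $\Omega_N$, and the integrability conditions (i) of Lemma \ref{mainlemmforcr} are routinely checked on compact subsets of $\Omega_N$ (all singularities at $r\to 0$ are tame powers of $t$ and $\log(D/t)$ handled by Lemma \ref{interglemaGa}).

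The heart of the argument is the verification of $-\Delta_p(c\rho^\alpha)\ge 0$ in the weak sense, where $c:=\alpha[(\alpha-1)(p-1)-\beta-1]$. A direct computation, valid a.e.\ in $\Omega_N$ via Fermi coordinates and Lemma \ref{Fermiseconlemma}, yields
\begin{align*}
-\Delta_p(c\rho^\alpha)=|c|^{p-2}c\,|\alpha|^{p-2}\alpha\,\frac{[\log(D/r)]^{(\alpha-1)(p-1)-1}}{r^p}\Bigl\{\log(D/r)\bigl[r\Delta r-(p-1)\bigr]-(\alpha-1)(p-1)\Bigr\}.
\end{align*}
The scalar prefactor has the sign of $c\alpha$, which is strictly negative since $(\alpha-1)(p-1)<\beta+1$, so one needs the braced quantity to be $\le 0$. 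Under Condition (\textbf{C}), Theorem \ref{huperseacedata}, Corollary \ref{inlalpace}, and Remark \ref{zerodimensinocase} together give $r\Delta r\le m-n-1$ a.e., whence $r\Delta r-(p-1)\le m-n-p\le 0$. Since $\log(D/r)\ge 0$, the braced quantity is at most $\log(D/r)(m-n-p)-(\alpha-1)(p-1)$; using $m-n-p\le 0$ and $\log(D/r)\ge\log(D/\sup_\Omega r)$ it is in turn at most $\log(D/\sup_\Omega r)(m-n-p)-(\alpha-1)(p-1)$, which the last hypothesis precisely forces to be $\le 0$. The passage from this pointwise inequality to the weak statement follows the integration-by-parts scheme already implicit in the proof of Lemma \ref{mainlemmforcr}.

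For sharpness under (\ref{constequali}), i.e., $p=m-n$, $\alpha=1$, $\Omega=T_D$, $D=\sup_\Omega r$, apply Lemma \ref{impsharplem} to the same $\rho=\log(D/r)$. Here $\vartheta_{1,\beta,m-n}=(\beta+1)/p$ and $c(\epsilon)=(\beta+1+\epsilon)/p$, so the test function $v_\epsilon$ from (\ref{defnitionvepsi}) takes the expected logarithmic form. Writing the integrals $\int_{\{\rho\le s\}}\rho^{c(\epsilon)p+\beta}|\nabla\rho|^p\,\dvol_g$ and $\int_{\{\rho>s\}}\rho^{-c(\epsilon/2)p+\beta}|\nabla\rho|^p\,\dvol_g$ in Fermi coordinates via (\ref{2.0'fermico}) and substituting $u=\log(D/t)$ reduces them to the model integrals of Lemma \ref{interglemaGa}: near $t=D$ one obtains $\int_0 u^{2\beta+1+\epsilon}\,du$ (finite since $\beta>-1$), and near $t=0$ one obtains $\int^\infty u^{-1-\epsilon/2}\,du$ (finite; here it is crucial that $p=m-n$, so the volume density $\det\mathcal{A}\sim t^{m-n-1}$ cancels $t^{-p}$ to yield the tame logarithmic integrand). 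Finally, an approximation argument analogous to the end of the proof of Theorem \ref{firsthadyonpunctured} passes from $v_\epsilon$ to a $C_0^\infty(\Omega_N)$ sequence, and letting $\epsilon\to 0^+$ gives $c(\epsilon)^p\to((\beta+1)/p)^p=\vartheta_{1,\beta,m-n}^p$, proving optimality. The main obstacles are the sign bookkeeping in the $p$-Laplacian formula (since $\alpha$ and $c$ can each have either sign, the identity above must be read through $|\,\cdot\,|^{p-2}$) and the recognition that the equality $p=m-n$ in (\ref{constequali}) is exactly what balances the Fermi-coordinate volume factor against the $1/r^p$ weight at $N$ to keep the sharpness integrals convergent.
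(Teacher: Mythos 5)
Your proposal is correct and follows essentially the same route as the paper: both parts hinge on Lemma \ref{mainlemmforcr} applied to $\rho=\log(D/r)$, with the sign of $-\Delta_p(c\rho^\alpha)$ controlled by $r\Delta r\le m-n-1$ (Theorem \ref{huperseacedata}, Corollary \ref{inlalpace}, Remark \ref{zerodimensinocase}) together with the hypothesis $\log(D/\sup_\Omega r)(m-n-p)\le(\alpha-1)(p-1)$, and sharpness obtained from Lemma \ref{impsharplem} after checking the Fermi-coordinate integrals via $\det\mathcal{A}(t,\mathbf{n})\le t^{m-n-1}$ and Lemma \ref{interglemaGa}, where $p=m-n$ reduces the density to $t^{-1}$ and the substitution $u=\log(D/t)$ gives the convergent model integrals. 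The sign bookkeeping in your $p$-Laplacian identity and the exponent computations ($2\beta+1+\epsilon>-1$, $-1-\epsilon/2<-1$) match the paper's.
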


 \begin{remark}
 Since $m-n>1$, the case when $N=\partial \Omega$ is eliminated.
 \end{remark}

 \begin{proof}
Set $\rho:=\log\left( \frac{D}{r} \right)$ and $c:=\alpha\left[(\alpha-1)(p-1)-\beta-1  \right]$.
Then Theorem \ref{huperseacedata} and Remark \ref{zerodimensinocase} yield
\begin{align*}
-\Delta_{p}(c\rho^\alpha)
=|c|^{p-2}|\alpha|^p        \frac{\rho^{(\alpha-1)(p-1)-1}}{r^p}       \left[(\alpha-1)(p-1)-\beta-1  \right]  \left[- (\alpha-1)(p-1) +\rho(-p+1+r\Delta  r) \right]\geq0.
\end{align*}
Since $\rho$ is continuous on any compact subset of $\Omega_N$, we have $\rho^{(\alpha-1)(p-1)}|\nabla\rho|^{p-1},\rho^{p+\beta}, \rho^\beta|\nabla \rho|^p\in L^1_{\lo}({{\Omega_N}})$.
 Then (\ref{loginnonclosed}) follows from Lemma \ref{mainlemmforcr} immediately.

In the sequel, we show that (\ref{loginnonclosed}) is optimal if $N$ is compact and (\ref{constequali}) holds. Set $L:=D/2>0$ and  $s:=\log\left(\frac{D}{L}\right)=\log2>0$.
Since $\Omega$ is a natural domain, it is easy to see that
\[
\Omega^-_{N_s}:=\rho^{-1}(-\infty,s]=\Omega_N\backslash T_{ {{L}}},\ \ {\Omega}^+_{N_s}:=\rho^{-1}(s,\infty)=\Omega_N \cap T_{L},
\]
 are non-empty subsets  with piecewise smooth boundaries. Moreover, we have
 \begin{align*}
 c(\epsilon)p+\beta=2\beta+1+\epsilon>-1,\
 -c(\epsilon/2)p+\beta= -1-\epsilon/2<-1,\ m-n-p-1=-1,
 \end{align*}
 where $c(\epsilon)=(\beta+1+\epsilon)/{p}$.
By using Theorem \ref{importantlemmahk},  Remark \ref{zerodimensinocase} and Lemma \ref{interglemaGa}, one has
\begin{align*}
0<\int_{\Omega^-_{N_s}} \rho^{c(\epsilon)p+\beta} |\nabla \rho|^p {\dvol}_g&\leq c_{m-n-1}\vol_{i^*g}(N)\int^{D}_{{L}}\log\left( \frac{D}{t} \right)^{c(\epsilon)p+\beta}t^{m-n-p-1}dt<+\infty,\\
0<\int_{{\Omega}^+_{N_s}} \rho^{-c(\epsilon)p+\beta} |\nabla \rho|^p {\dvol}_g&\leq c_{m-n-1}\vol_{i^*g}(N)\int_0^{{L}}\log\left( \frac{D}{t} \right)^{-c(\epsilon)p+\beta}t^{m-n-p-1}dt<+\infty.
\end{align*}
Given $\epsilon\in (0,1)$, let $v_\epsilon$ be defined as in (\ref{defnitionvepsi}). Thus, we obtain (\ref{fakeinq4.4}) by Lemma \ref{impsharplem}.
For any $\iota\in (0,1)$,  $v_{\epsilon,\iota}:=\max\{v_\epsilon-\iota,0\}$ is a globally Lipschitz function with compact support in ${\Omega_N}$.  The remainder of the proof is   analogous to that of Theorem \ref{firsthadyonpunctured} and hence, is omitted here.
 \end{proof}

By a suitable modification to the proof of Proposition \ref{reverRicinfty}, one can extend  (\ref{loginnonclosed}) to $u\in C^\infty_0(\Omega,N)$.
 We leave it to the interested readers.  Refer to  Meng et al. \cite[Theorem 3.3]{MWZ} for a weighted-Ricci-version in the case of  $n=0$.

\section{Sharp Hardy inequalities via general submanifolds}\label{non-compacthary}
In the previous section, we establish two kinds of weighted Hardy inequalities which are sharp when   submanifolds are compact. In this section, we will continue to investigate sharp Hardy inequalities of distance weights in the case when  submanifolds are possibly non-compact.

\subsection{Improved Hardy inequalities}
According to Lemma \ref{refinedsequence}, one can improve (\ref{fristHadynonnegitavecurvature}) by adding some nonnegative correction terms in the right-hand side.
Inspired by Barbatis et al. \cite{BFT}, we study the logarithmic correction.

\begin{definition}\label{generalizeddivergence}Let $(M,g)$ be a complete Riemannian manifold and let $i:N\hookrightarrow M$ be a closed submanifold. Suppose that ${\dmu}$ is a measure on $M$ and is smooth in $M\backslash N$.
The {\it divergence of a $C^1$-vector field $X$ with respect to ${\dmu}$ in $M\backslash N$} is defined by $\di_\mu X {\dmu}:=d(X\rfloor {\dmu})$ (cf.\,(\ref{innerexterproduct})).
\end{definition}

The definition above is natural. In fact,
 $\di_{\vol_g}=\di $, where $\di$ denotes the standard divergence. In the sequel, we always choose ${\dmu}:=r^{p+\beta}\dvol_g$. A direct calculation then yields
\[
\di_\mu X=\di X+(p+\beta)\frac{g(X,\nabla r)}{r}\   \text{ in }M\backslash N.\tag{4.1}\label{divergncedef}
\]
It is not hard to check  $\di_\mu(fX)=f\di_\mu X+g(X, \nabla f)$ for any $f\in C^\infty(M\backslash N)$.

\medskip

For convenience, we introduce the following assumption in this section.
\begin{assumption}\label{assufreecur}
Let $(M,g)$ be an $m$-dimensional complete  Riemannian manifold, let $i:N\hookrightarrow M$ be an $n$-dimensional closed submanifold and let $\Omega$ be a natural domain in $M$ with $\sup_{x\in \Omega}r(x)<+\infty$. In short,  $(M,g)$, $N$ and $\Omega$ satisfy  Assumption \ref{assufreecur1} and  $\sup_{x\in \Omega}r(x)<+\infty$.
 \end{assumption}

\begin{theorem}\label{mainnon-compacttheroem}
Let  $(M,g)$, $N$ and
$\Omega$ be as in Assumption \ref{assufreecur} and satisfy Condition (\textbf{C}).
 Thus,  for any $p>1$ and $\beta<-(m-n)$, there exists a constant $\mathcal {T}=\mathcal {T}(p,\beta,m-n)> 1$ such that for any $D\geq \mathcal {T}\sup_{x\in \Omega}r(x)$,
 \begin{align*}
\int_{{\Omega}} |\nabla u|^p r^{p+\beta} {\dvol}_g\geq |\delta|^p\int_{{\Omega}}  {|u|^p}{r^\beta} {\dvol}_g+\frac{p-1}{2p}|\delta|^{p-2}\int_\Omega  {|u|^p}{r^\beta}\log^{-2}\left( \frac{D}{r} \right){\dvol}_g ,\ \forall\,u\in C^\infty_0({\Omega_N}),
\end{align*}
where $\delta:=(m-n+\beta)/p$.
\end{theorem}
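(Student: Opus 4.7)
My plan is to combine a sharpened pointwise convexity estimate with a one-dimensional logarithmic Hardy inequality, reusing the substitution scheme that drives the proof of Lemma~\ref{refinedsequence}. Set $\delta := (m-n+\beta)/p$, $v := r^\delta u$, $X := \delta v\nabla r$ and $Y := r\nabla v$. The proof of Lemma~\ref{refinedsequence} already gives the identity
$$\mathscr{I}[u] \;=\; \int_\Omega \frac{|X-Y|^p - |X|^p}{r^{m-n}}\dvol_g,$$
together with the sign condition
$$\int_\Omega \frac{|X|^{p-2}\, g(X,Y)}{r^{m-n}}\dvol_g \;\le\; 0$$
coming from (\ref{basisdivergence}); this is precisely where Theorem~\ref{huperseacedata}, Corollary~\ref{inlalpace} and Remark~\ref{zerodimensinocase} (i.e.\ Condition~(\textbf{C})) enter. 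So the task is to produce a sharp lower bound for the first integral, modulo the (good-sign) cross term which will be discarded.

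For the second step I would invoke the refined pointwise convexity inequality
$$|X-Y|^p - |X|^p + p\,|X|^{p-2} g(X,Y) \;\ge\; \tfrac{p(p-1)}{2}\,|X|^{p-2}|Y|^2 \qquad (p\ge 2),$$
with the BFT-weighted analogue carrying the factor $(|X|+|Y|)^{p-2}$ for $1<p<2$ (cf. Lemma~\ref{refinedsequence}(a)). Since $|X| = |\delta||v|$ and $|Y| = r|\nabla v|$, and $|v|^{p-2}|\nabla v|^2 = \tfrac{4}{p^2}|\nabla w|^2$ with $w := |v|^{p/2}$, integrating and absorbing the sign term above reduces the theorem to the weighted $L^2$-inequality
$$\int_\Omega r^{2-(m-n)}|\nabla w|^2 \dvol_g \;\ge\; \tfrac14 \int_\Omega \frac{|w|^2}{r^{m-n}}\log^{-2}(D/r)\dvol_g,$$
the constants matching after the elementary computation $\tfrac{p(p-1)}{2}\cdot \tfrac{4}{p^2}\cdot \tfrac14 = \tfrac{p-1}{2p}$ and using $r^\beta |u|^p = r^{-(m-n)}|w|^2$.

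For the third step I would pull the displayed inequality back to Fermi coordinates via (\ref{2.0'fermico}). At each fixed $\mathbf n\in \mathcal V SN$ the angular and $N$-factors come out, and one is reduced to an inequality in $t\in(0,c_\mathcal V(\mathbf n))$ with weight $\det\mathcal A(t,\mathbf n)$ on both sides (which is positive by Lemma~\ref{Fermiseconlemma}(i)). Changing variable $s := \log(D/t)$ converts this radial inequality into a classical one-dimensional Hardy inequality of the form $\int |\partial_s f|^2 \rho\,ds \ge \tfrac14 \int s^{-2}|f|^2\rho\,ds$ on the interval $s\in(\log(D/c_\mathcal V(\mathbf n)),\infty)$, which is bounded away from $s=0$ because $D\ge\mathcal T \sup_\Omega r$ forces $s\ge\log\mathcal T$. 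This is the standard half-line Hardy inequality, and the boundary obstruction at $s=0$ disappears thanks to the gap $\log\mathcal T>0$.

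The main obstacle, I expect, is twofold. First, in the case $1<p<2$ the pointwise convexity estimate is weaker, carrying the factor $(|X|+|Y|)^{p-2}$; following \cite{BFT}, one splits $\Omega$ according to whether $|Y|\le |X|$ or $|Y|> |X|$ and estimates the second region via H\"older together with the non-improved Hardy inequality of Theorem~\ref{firsthadyonpunctured}. Second, lower-order remainders of order $O(\log^{-3}(D/r))$ inevitably appear both from the pointwise step and from the logarithmic change of variable (whose weight $\rho$ has logarithmic derivative $O(1/s)$); these are absorbed into the leading $\log^{-2}(D/r)$ term by requiring $\log\mathcal T$ sufficiently large, depending only on $p$, $\beta$ and $m-n$, which is precisely the threshold $D \ge \mathcal T \sup_\Omega r$ asserted in the statement.
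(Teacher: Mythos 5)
Your overall strategy (substitution $v=r^\delta u$, pointwise convexity, then a logarithmic $L^2$-Hardy inequality) departs from the paper's proof, which never passes through Lemma \ref{refinedsequence}: the paper instead runs the vector-field scheme (\ref{middleineqaulity}) with the logarithmic correction built directly into the field $X=\delta|\delta|^{p-2}r^{1-p}\bigl[1+\tfrac{p-1}{p\delta}\Psi(r)+a\Psi^2(r)\bigr]\nabla r$ and verifies the pointwise inequality (\ref{refineclaime}) by a Taylor expansion of an auxiliary one-variable function; the threshold $\mathcal T$ is chosen there so that $\Psi(r)\le\mathfrak T$ keeps $f'''>0$ and the bracket positive --- not, as you suggest, to remove a boundary obstruction at $s=0$.

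The genuine gap is in your third step. After discarding the cross term you need
\begin{align*}
\int_\Omega r^{2-(m-n)}|\nabla w|^2\,\dvol_g\;\ge\;\frac14\int_\Omega\frac{|w|^2}{r^{m-n}}\log^{-2}\Bigl(\frac{D}{r}\Bigr)\dvol_g
\end{align*}
for every $w$ arising from $u\in C^\infty_0(\Omega_N)$, and this inequality is false in the generality of the theorem. Take $M=\mathbb{T}^2$ flat, $N=\mathbb{S}^1\times\{o\}$ (so $m-n=1$, $\sup r=\pi$), and $w=g(\log(D/r))$ with $g\equiv1$ on $[a,T]$, linear down to $0$ on $[T,2T]$, where $a=\log(D/\pi)$: the left-hand side is $O(1/T)\to0$ while the right-hand side tends to a positive constant as $T\to\infty$. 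The reason is that in your one-dimensional reduction $s=\log(D/t)$ the function vanishes only as $s\to\infty$ (near $N$), not at the left endpoint $s=\log(D/c_{\mathcal V}(\mathbf n))$, because $u\in C^\infty_0(\Omega_N)$ need not vanish at the cut locus or where $r$ attains its supremum (e.g.\ when $\Omega=M$ is compact, which is precisely the case this paper is after); the half-line Hardy inequality fails without a vanishing condition at the finite endpoint, no matter how large $\log\mathcal T$ is. What you threw away is exactly what saves the statement: the cross term $-p\int|X|^{p-2}g(X,Y)r^{-(m-n)}\dvol_g$, read distributionally, carries a non-negative singular contribution supported on the cut locus which plays the role of the missing boundary term. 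Repairing your argument would require either keeping and quantifying that contribution, or, as the paper does, folding the logarithmic gain into the vector field so that every discarded term retains a favorable sign pointwise.
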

\begin{proof}
Let $X$ be a vector field on $\Omega$ and let ${\dmu}:=r^{p+\beta}\dvol_g$.
For any $u\in C^\infty_0(\Omega_N)$, we have
\begin{align*}
\di_\mu \left( |u|^p X \right)=|u|^p\di_\mu X+p|u|^{p-2} u\, g( \nabla u,X),
\end{align*}
which together with Stokes' theorem, H\"older's inequality  and  Young's inequality yields
\begin{align*}
\int_\Omega |u|^p\di_{\mu} X {\dmu}=&-p\int_\Omega |u|^{p-2} u\, g(\nabla u,X)  {\dmu}
\leq  p\left( \int_\Omega |\nabla u|^p{\dmu} \right)^{\frac1p}\left( \int_\Omega |X|^{\frac{p}{p-1}}|u|^p{\dmu} \right)^{\frac{p-1}{p}}\\
\leq & \int_\Omega|\nabla u|^p{\dmu}+(p-1)\int_\Omega |X|^{\frac{p}{p-1}}|u|^p{\dmu}.
\end{align*}
That is,
\[
\int_\Omega |\nabla u|^p {\dmu} \geq \int_\Omega \left[\di_{\mu} X-(p-1)|X|^{\frac{p}{p-1}}  \right]|u|^p{\dmu}.\tag{4.2}\label{middleineqaulity}
\]

Let $\mathcal {T}>1$ be a constant chosen later. Given any $D\geq \mathcal {T}\sup_{x\in \Omega} r(x)$,
set $\Psi(t):=[\log(D/t)]^{-1}$  and
\[
X(x):=\delta|\delta|^{p-2}\frac{\nabla r(x)}{r^{p-1}(x)}\left[ 1+\frac{p-1}{p\delta}\Psi\left( r(x) \right) +a\Psi^2\left(r(x) \right)  \right],
\]
where   $a$ is another constant chosen later such that
\[
1+\frac{p-1}{p\delta}\Psi\left(r(x)\right) +a\Psi^2\left( r(x) \right)>0,\ \forall\,x\in \Omega.\tag{4.3}\label{positivecosnta}
\]
Since  $u\in C^\infty_0(\Omega_N)$, the inequality (\ref{middleineqaulity}) is well-defined for this $X$. Furthermore, thanks to (\ref{middleineqaulity}), the theorem will be proved if we can show
\[
\di_{\mu} X-(p-1)|X|^{\frac{p}{p-1}}\geq \frac{|\delta|^p}{r^p}\left( 1+\frac{p-1}{2p\delta^2}\Psi^2(r) \right) \text{ in }\Omega_N.\tag{4.4}\label{refineclaime}
\]

In order to prove (\ref{refineclaime}), notice that
\[
\delta[r\Delta r+1-(m-n)]\geq 0,\ \nabla \Psi^{\alpha-1}(r)=(\alpha-1)\Psi^\alpha(r)r^{-1}\nabla r, \ \forall\,\alpha\neq1.\tag{4.5}\label{logdervative}
\]
Then a  cumbersome but direct calculation together with (\ref{divergncedef}) and (\ref{logdervative}) yields
\begin{align*}
\di_{\mu} X-(p-1)|X|^{\frac{p}{p-1}}
\geq\frac{|\delta|^{p}}{r^p}f\left( \Psi(r) \right),\tag{4.6}\label{midelcaim}
\end{align*}
where
\[
f(t):=p\left( 1+\frac{p-1}{p\delta}t+at^2  \right)+\frac{t^2}{\delta}\left( \frac{p-1}{p\delta}+2at \right)-(p-1)\left( 1+\frac{p-1}{p\delta}t+at^2 \right)^{\frac{p}{p-1}}.
\]
Given $t\geq0$, Taylor's formula furnishes
 \[
f(t)=f(0)+f'(0)t+\frac12 f''(\xi_t)t^2=1+\frac12 f''(\xi_t)t^2,\ 0\leq \xi_t\leq t.\tag{4.7}\label{Taylor}
\]
Note that $f'''(0)=\frac{6a}{\delta}-\frac{(2-p)(p-1)}{p^2\delta^3}>0$ if $a$ satisfies the following conditions:

\smallskip

\ \ \ \ (i) $a\in  \left(0, \frac{(2-p)(p-1)}{6p^2\delta^2} \right)$ if $1<p<2$;\ \ \ \
(ii) $a<\frac{(2-p)(p-1)}{6p^2\delta^2}\leq0$ if  $p\geq 2$.

\noindent Hence, we can find $\mathfrak{T}=\mathfrak{T}(p,\beta,m-n)>0$ and $a=a(p,\beta,m-n)$ such that $f'''(t)>0$ and $1+\frac{p-1}{p\delta}t+at^2>0$ for $t\in [0,\mathfrak{T}]$, which together with (\ref{Taylor}) implies
\[
f''(\xi_t)\geq f''(0)=\frac{p-1}{p\delta^2},\ \ \ \ f(t)\geq 1+\frac{p-1}{2p\delta^2}t^2,\ t\in [0,\mathfrak{T}].\tag{4.8}\label{festimates}
\]
On account of (\ref{midelcaim}) and (\ref{festimates}), one gets (\ref{refineclaime}) by choosing
$\mathcal {T}:=e^{\frac{1}{\mathfrak{T}}}$.
\end{proof}

\subsection{Sharpness}
In this subsection, we show that the Hardy inequality in Theorem \ref{mainnon-compacttheroem} is actually optimal. It is remarkable that all the estimates given here are {\it free of curvature}.

 Let $M,N$ and $\Omega$ be as in Assumption \ref{assufreecur}. Choose an arbitrary point $x_0\in \overline{\Omega}\cap N$. Particularly, we require $x_0\in \Omega\cap N$ if $m-n\geq 2$.  An  argument similar to the one of  (\ref{estimtevoluem}) yields a small $\eta\in (0,1)$ such that
 \[
2^{-1}t^{m-n-1}\leq \det \mathcal {A}(t,\mathbf{n})\leq 2 t^{m-n-1},\ \forall (t,\mathbf{n})\in[0,\eta)\times \mathcal {V}S(N\cap B_{2\eta}(x_0)),\tag{4.9}\label{detestimate}
\]
where $B_{2\eta}(x_0)$ is the open ball of radius $2\eta$ centered at $x_0$. And the triangle inequality implies
\[
B_{\eta}(x_0)\subset E\left( [0,\eta)\times \mathcal {V}S(N\cap B_{2\eta}(x_0))\right).
\]

Let $\phi\in C^\infty_0(M)$ be a cut-off function  with  $\phi(x)=1$ if $x\in B_{\eta/2}(x_0)$ and $\phi(x)=0$ if $x\notin B_{\eta}(x_0)$.
Choose an arbitrary constant $D\geq \sup_{x\in \Omega}r(x)$. For any $p>1$, $\alpha\in \mathbb{R}$ and small $\epsilon>0$, set
\[
J_\alpha(\epsilon):=\int_\Omega \phi^p \, r^{-(m-n)+\epsilon p}\,\log^{\alpha}\left( \frac{D}{r}\right)\,\dvol_g=\int_{\Omega\cap B_{\eta}(x_0)} \phi^p \, r^{-(m-n)+\epsilon p}\,\log^{\alpha}\left( \frac{D}{r}\right)\,\dvol_g.
\]

\begin{lemma}\label{basisestima}Let $M,N$ and $\Omega$ be as in Assumption \ref{assufreecur}. Given $D\geq \sup_{x\in \Omega}r(x)$,
for small $\epsilon>0$, we have
\begin{itemize}
\item[(i)] $J_\alpha(\epsilon)=O_\epsilon(1)$, for $\alpha<-1$;

\smallskip

\item[(ii)] there exist two positive constants $c,C$  independent of $\epsilon$ such that
\[
c\,\epsilon^{-1-\alpha}\leq J_\alpha(\epsilon)\leq C\,\epsilon^{-1-\alpha}, \text{ for }\alpha>-1;
\]
\item[(iii)] $J_\alpha(\epsilon)=\frac{p\epsilon}{\alpha+1}J_{\alpha+1}(\epsilon)+O_\epsilon(1), \text{ for }\alpha>-1$.
\end{itemize}
\end{lemma}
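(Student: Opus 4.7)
My approach is to reduce $J_\alpha(\epsilon)$ to a purely radial integral by Fermi coordinates (and polar coordinates in the pole case $n=0$, via Remark \ref{zerodimensinocase}), then handle (i)--(ii) by a direct substitution and (iii) by one integration by parts. Since $\phi$ is supported in $B_\eta(x_0)\subset E([0,\eta)\times\mathcal{V}S(N\cap B_{2\eta}(x_0)))$, the decomposition (\ref{2.0'fermico}) together with the pinching $\tfrac12 t^{m-n-1}\leq\det\mathcal{A}(t,\mathbf{n})\leq 2t^{m-n-1}$ from (\ref{detestimate}) shows
\[
J_\alpha(\epsilon)\;\asymp\;\int_{N\cap B_{2\eta}(x_0)}\dvol_{i^*g}(x)\int_{\mathcal{V}S_xN}d\nu_x(\mathbf{n})\int_0^\eta\phi^p(\gamma_\mathbf{n}(t))\,t^{-1+\epsilon p}\log^\alpha(D/t)\,dt,
\]
with constants independent of $\epsilon$. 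Bounding $\phi^p\leq 1$ on top and using $\phi\equiv 1$ on $B_{\eta/2}(x_0)$ (so that $\pi(\mathbf{n})\in N\cap B_{\eta/4}(x_0)$ and $t<\eta/4$ force $\gamma_\mathbf{n}(t)\in B_{\eta/2}(x_0)$ by the triangle inequality), I would sandwich $J_\alpha(\epsilon)$ between positive constant multiples (uniform in $\epsilon$) of the radial integral $\Phi_a(\epsilon):=\int_0^a t^{-1+\epsilon p}\log^\alpha(D/t)\,dt$, with $a=\eta/4$ below and $a=\eta$ above.

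For (i) and (ii), substitute $s=\log(D/t)$ to rewrite
\[
\Phi_a(\epsilon)=D^{\epsilon p}\int_{\log(D/a)}^\infty s^\alpha e^{-\epsilon ps}\,ds.
\]
When $\alpha<-1$, the integrand is dominated by the $\epsilon$-free integrable function $s^\alpha$ on $(\log(D/\eta),\infty)$, so dominated convergence gives $\Phi_\eta(\epsilon)=O_\epsilon(1)$, which yields (i). When $\alpha>-1$, the further substitution $u=\epsilon ps$ recasts $\Phi_a(\epsilon)$ as $D^{\epsilon p}(\epsilon p)^{-1-\alpha}\int_{\epsilon p\log(D/a)}^\infty u^\alpha e^{-u}\,du$, whose integral tends to $\Gamma(\alpha+1)>0$ as $\epsilon\to 0^+$; the sandwich then delivers $c\,\epsilon^{-1-\alpha}\leq J_\alpha(\epsilon)\leq C\,\epsilon^{-1-\alpha}$, which is (ii).

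For (iii), I would exploit the pointwise identity
\[
(\alpha+1)\,t^{\epsilon p-1}\log^\alpha(D/t)=\epsilon p\,t^{\epsilon p-1}\log^{\alpha+1}(D/t)-\frac{d}{dt}\!\left[t^{\epsilon p}\log^{\alpha+1}(D/t)\right],
\]
multiply by $\phi^p(\gamma_\mathbf{n}(t))\det\mathcal{A}(t,\mathbf{n})$, integrate in $t$ over $[0,\eta]$, and then integrate over $\mathcal{V}S_xN$ and $N$. The first term immediately gives $\epsilon p\,J_{\alpha+1}(\epsilon)$. Integrating by parts against the second term produces a boundary contribution at $t=\eta$ that is uniformly bounded in $\epsilon$ (since $\log(D/\eta)$ is a fixed positive constant and $\eta^{\epsilon p}\leq\max\{1,\eta\}$), a vanishing boundary at $t=0$ (because $t^{\epsilon p}\log^{\alpha+1}(D/t)\to 0$ for every $\epsilon>0$), and an error integral $\int_0^\eta t^{\epsilon p}\log^{\alpha+1}(D/t)(\phi^p\det\mathcal{A})'\,dt$. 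By the second part of (\ref{smallsestimatedatA}), $(\det\mathcal{A})'$ grows at worst like $(m-n-1)t^{m-n-2}$ near $t=0$, so the error integrand is dominated, uniformly in $\epsilon$ small, by a fixed integrable multiple of $|\log(D/t)|^{|\alpha+1|}$, making the error $O_\epsilon(1)$. Collecting gives $(\alpha+1)J_\alpha(\epsilon)=\epsilon p\,J_{\alpha+1}(\epsilon)+O_\epsilon(1)$, equivalent to (iii).

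The main obstacle is bookkeeping in step three: both the boundary term at $t=\eta$ and the derivative-error term must remain strictly in the $O_\epsilon(1)$ class, rather than competing with the leading $\epsilon\,J_{\alpha+1}(\epsilon)\asymp\epsilon^{-1-\alpha}$. This is possible precisely because $\log(D/\eta)$ is a fixed positive constant (so there is no logarithmic blow-up at the boundary) and because the hypothesis $m-n\geq 1$ together with the expansion (\ref{smallsestimatedatA}) for $\det\mathcal{A}(t,\mathbf{n})$ rules out any non-integrable singularity of $(\det\mathcal{A})'$ at the origin.
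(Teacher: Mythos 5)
Your treatment of (i) and (ii) is correct and follows essentially the same route as the paper: localization to $B_\eta(x_0)$, the Fermi-coordinate formula (\ref{2.0'fermico}) with the pinching (\ref{detestimate}), and a change of variables in the resulting radial integral (the paper uses $t=Ds^{1/\epsilon}$, which is equivalent to your $s=\log(D/t)$ followed by $u=\epsilon ps$ and the Gamma function). The only point you gloss over is the lower bound in (ii): you must restrict to normal directions $\mathbf{n}$ whose geodesics actually enter $\Omega$ (the paper's set $\mathcal {V}S^-_xN_{\eta/4}$), which is not automatic when $N=\partial\Omega$; with that restriction the sandwich argument goes through.

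Part (iii), however, contains a concrete error in the setup. The radial integrand of $J_\alpha(\epsilon)$ in Fermi coordinates is $\phi^p\,t^{-(m-n)+\epsilon p}\log^\alpha(D/t)\det\mathcal {A}(t,\mathbf{n})$, whereas your identity concerns $t^{\epsilon p-1}\log^\alpha(D/t)$; multiplying it by $\phi^p\det\mathcal {A}$ and integrating therefore reproduces $(\alpha+1)J_\alpha(\epsilon)$ on the left only when $m-n=1$. For codimension $m-n\geq 2$ your computation yields a recursion for $\int\phi^p r^{\epsilon p-1}\log^\alpha(D/r)\,{\dvol}_g$, which is uniformly bounded in $\epsilon$, so the resulting statement is vacuous and says nothing about $J_\alpha(\epsilon)\asymp\epsilon^{-1-\alpha}$. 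The correct multiplier is $\phi^p\,t^{1-(m-n)}\det\mathcal {A}(t,\mathbf{n})$ (equivalently, the paper integrates the vector field $\phi^p r^{1-(m-n)+\epsilon p}\nabla r$ against $\nabla[\Psi^{-1-\alpha}(r)]$). With that correction your error analysis must also change: it is not enough to bound $(\det\mathcal {A})'$ crudely by $(m-n-1)t^{m-n-2}$, because the dangerous term is
\[
\frac{d}{dt}\left[t^{1-(m-n)}\det\mathcal {A}\right]=t^{1-(m-n)}\det\mathcal {A}\left[\frac{(\det\mathcal {A})'}{\det\mathcal {A}}-\frac{m-n-1}{t}\right],
\]
and one needs the exact cancellation of the $\frac{m-n-1}{t}$ singularity recorded in (\ref{smallsestimatedatA}) (equivalently (\ref{smallestimatelaplace}), $r\Delta r+1-(m-n)=-t\,\tr\mathfrak{A}^{\mathbf{n}}+o(t)$) to conclude that this bracket is $O(1)$ near $t=0$ and hence that the error integral is $O_\epsilon(1)$. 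Once the multiplier and this cancellation are in place, your integration by parts coincides with the paper's divergence-theorem argument (its terms $I_1$, $I_2$ and the limits (\ref{controllog})), and the boundary terms at $t=0$ and $t=\eta$ behave exactly as you describe.
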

\begin{proof} In what follows,  we use $C_1,C_2,\ldots$ to denote the constants independent of $\epsilon$.
Set $\Psi(t):=[\log(D/t)]^{-1}$.
Then (\ref{2.0'fermico}) together with  (\ref{detestimate}) yields
\begin{align*}
J_\alpha(\epsilon)
\leq  2c_{m-n-1}\vol_{i^*g}(N\cap B_{2\eta}(x_0)) \int^\eta_0 t^{-1+\epsilon p}\Psi^{-\alpha}\left( t\right)dt
=:  C_1 \int^\eta_0 t^{-1+\epsilon p}\Psi^{-\alpha}\left( t\right)dt.\tag{4.10}\label{estimateJbeta}
\end{align*}

\smallskip

(i) Suppose $\alpha<-1$. Then (\ref{estimateJbeta}) yields
\[
0\leq J_\alpha(\epsilon)\leq C_1\eta^{\epsilon p}\int^\eta_0 t^{-1}\Psi^{-\alpha}(t)dt=\frac{C_1\eta^{\epsilon p}}{|1+\alpha|} \left[\log\left( \frac{D}{\eta} \right)\right]^{\alpha+1}<+\infty,
\]
 which means $J_\alpha(\epsilon)=O_\epsilon(1)$.

\smallskip

(ii) Suppose $\alpha>-1$. We apply the change of variables $t=D s^{1/\epsilon}$ to (\ref{estimateJbeta}) and obtain
\begin{align*}
J_\alpha(\epsilon)\leq
C_1 D^{\epsilon p}\epsilon^{-1-\alpha}\int^{(\frac{\eta}{D})^\epsilon}_0 s^{p-1}\log^{\alpha}(s^{-1})ds\leq C \epsilon^{-1-\alpha},\tag{4.11}\label{speciallogestimate}
\end{align*}
where $C$ is a positive constant independent of $\epsilon$.

Now set $N_{\eta/4}:=B_{\eta/4}(x_0)\cap N$.
We consider a $\eta/4$-tubular neighbourhood of $N_{\eta/4}$ in $\Omega$, i.e.,
\[
T^\Omega_{\eta/4}(N_{\eta/4}):=\left\{x\in \Omega: d(N_{\eta/4},x)<\eta/4\right\}.
\]
The triangle inequality implies $T^\Omega_{\eta/4}(N_{\eta/4})\subset B_{\eta/2}(x_0)\cap \Omega$. Given $x\in N_{\eta/4}$, let
\[
\mathcal {V}S_{x}^-N_{\eta/4}:=\left\{\mathbf{n}\in \mathcal {V}S_xN:\, \exists \, t_0>0 \text{ such that }\gamma_\mathbf{n}(t)\in T^\Omega_{\eta/4}(N_{\eta/4}) \text{ for any }t\in (0,t_0)\right\}.
\]
By (\ref{2.0'fermico}) and (\ref{detestimate}) again, we have
\begin{align*}
J_\alpha(\epsilon)
\geq &\int_{N_{\eta/4}}{\dvol}_{i^*g}(x)\int_{\mathcal {V}S^-_xN_{\eta/4}}d\nu_x(\mathbf{n})\int_0^{\eta/4}\phi^p(\Exp(t\mathbf{n}))\, t^{-(m-n)+\epsilon p}\,\Psi^{-\alpha}\left( t\right)\, \det \mathcal {A}(t,\mathbf{n})dt\\
\geq&\frac12\int_{N_{\eta/4}}{\dvol}_{i^*g}(x)\int_{\mathcal {V}S^-_xN_{\eta/4}}d\nu_x(\mathbf{n})\int_0^{\eta/4}  t^{-1+\epsilon p}\,\Psi^{-\alpha}\left( t\right)dt,
\end{align*}
which together with an estimate analogous to
(\ref{speciallogestimate}) yields  a constant $c>0$ independent of $\epsilon$ such that
\[
J_\alpha(\epsilon)\geq c \epsilon^{-1-\alpha}.
\]

(iii) Given  any $l \in \mathbb{R}$, (\ref{2.0'fermico}) combined with  (\ref{detestimate}) and Lemma \ref{interglemaGa} furnishes
\begin{align*}
0\leq \lim_{s\rightarrow 0^+}\int_{T_s\cap B_\eta(x_0)}r^{-(m-n)+\epsilon p}\Psi^l(r)\dvol_g\leq C_2\lim_{s\rightarrow 0^+}\int^s_0 t^{-1+\epsilon p}\log^{-l}\left( \frac{D}t  \right)dt= 0.\tag{4.12}\label{controllog}
\end{align*}
It follows from (\ref{logdervative}), the  divergence theorem, (\ref{controllog}),  (\ref{smallestimatelaplace}) and Lemma \ref{interglemaGa} that
\begin{align*}
(\alpha+1)J_\alpha(\epsilon)=&-\lim_{s\rightarrow 0}\int_{\Omega\backslash T_s} g{\left( \nabla {\left[\Psi^{-1-\alpha}\left( r \right)\right]},\phi^p(x)\, r^{1-(m-n)+\epsilon p}\nabla r\right)} \dvol_g\\
=&\lim_{s\rightarrow 0}\int_{\Omega\backslash T_s} \di\left( \phi^p\, r^{1-(m-n)+\epsilon p} \nabla r \right) \Psi^{-1-\alpha}\left( r \right)\dvol_g=I_1+I_2,\tag{4.13}\label{sec5divd}
\end{align*}
where $\ds\ds I_1:=p\int_\Omega \phi^{p-1} r^{1-(m-n)+\epsilon p}\Psi^{-1-\alpha}\left( r \right)g( \nabla\phi, \nabla r) \dvol_g$ and
\[
I_2:=(1-(m-n)+\epsilon p)\int_\Omega \phi^p r^{-(m-n)+\epsilon p}\Psi^{-1-\alpha}\left( r\right)\dvol_g
+\int_\Omega \phi^p r^{1-(m-n)+\epsilon p}\Delta r \Psi^{-1-\alpha}\left( r \right)\dvol_g.
\]
On the one hand, (\ref{2.0'fermico}) together with (\ref{detestimate}) and Lemma \ref{interglemaGa} implies
\begin{align*}
|I_1|
\leq C_3\,\eta^{\epsilon p}\int^\eta_0  \Psi^{-1-\alpha}\left( t\right)dt<\infty\Longrightarrow I_1=O_\epsilon(1).\tag{4.14}\label{5.5}
\end{align*}
On the other hand, the same argument combined with (\ref{smallestimatelaplace}) furnishes
\begin{align*}
I_2
=\epsilon pJ_{\alpha+1}(\epsilon)+\int_\Omega \phi^p r^{-(m-n)+\epsilon p}\left[ r\Delta r+1-(m-n) \right]\Psi^{-1-\alpha}\left( r \right)\dvol_g=\epsilon pJ_{\alpha+1}(\epsilon)+O_\epsilon(1).\tag{4.15}\label{lastestimateJ}
\end{align*}
Consequently, Statement (iii) follows from (\ref{sec5divd})--(\ref{lastestimateJ}).
\end{proof}

Given $p>1$,  $\beta\neq-(m-n)$ and $D\geq \sup_{x\in \Omega}r(x)$,
for small $\epsilon>0$,  set
\[
u_\epsilon(x):=\left \{
\begin{array}{lll}
\phi(x)\cdot \omega_\epsilon(x),& \text{ if }x\in  \Omega, \\
\tag{4.16}\label{uespdefine}\\
0,& \text{ if }x\notin \Omega,
\end{array}
\right.
\]
where $\phi$ is the cut-off function defined as before and
\[
\omega_\epsilon(x):=r(x)^{-\delta+\epsilon}\,\Psi^{-\theta}\left(r(x)\right), \ \Psi(t):=\left[ \log\left( \frac{D}{t} \right) \right]^{-1},\  \delta:=\frac{m-n+\beta}p,\  \frac1p<\theta<\frac2p.
\]

\begin{lemma}\label{littleestiamteu}Given $p>1$, $\beta\neq-(m-n)$ and $D\geq \sup_{x\in \Omega}r(x)$, let $\mathscr{I}[\cdot]$ be the functional defined as in  (\ref{FunctionalI}) and let  $u_\epsilon(x)$ be the function defined as in (\ref{uespdefine}). Under Assumption \ref{assufreecur},
we have
\begin{itemize}
\item[(a)] $\mathscr{I}[u_\epsilon]\leq \frac{\theta(p-1)}{2}|\delta|^{p-2}J_{p\theta-2}(\epsilon)+O_\epsilon(1)$;

\smallskip

\item[(b)] $\int_{\Omega} |\nabla u_\epsilon|^p r^{p+\beta}{\dvol}_g\leq |\delta|^p J_{p\theta}(\epsilon)+O_\epsilon(\epsilon^{1-p\theta})$.
\end{itemize}
\end{lemma}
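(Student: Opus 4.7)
The plan is to split $\nabla u_\epsilon = \phi \nabla \omega_\epsilon + \omega_\epsilon \nabla \phi$, reduce both (a) and (b) to analyzing the single principal integral $\mathcal{J}(\epsilon) := \int_\Omega \phi^p |\nabla \omega_\epsilon|^p r^{p+\beta} \dvol_g$, and then extract the leading contributions via a Taylor expansion combined with the recursion in Lemma \ref{basisestima}(iii). Using $\Psi'(r) = \Psi^2(r)/r$ and $|\nabla r| = 1$ a.e., a direct calculation gives $|\nabla \omega_\epsilon| = r^{-\delta+\epsilon-1}\Psi^{-\theta}(r)\,|(-\delta+\epsilon)-\theta \Psi(r)|$ a.e.\ in $\Omega$, so that, using $\delta p = m-n+\beta$,
\[
\mathcal{J}(\epsilon) = \int_\Omega \phi^p\, r^{-(m-n)+\epsilon p}\, \Psi^{-p\theta}(r)\, \bigl|(-\delta+\epsilon)-\theta \Psi(r)\bigr|^p \dvol_g,
\]
and similarly $|\delta|^p\int|u_\epsilon|^p r^\beta\dvol_g = |\delta|^p J_{p\theta}(\epsilon)$. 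For definiteness I treat $\delta<0$ (so the absolute value equals $|\delta|+\epsilon-\theta\Psi$); the case $\delta>0$ is identical. The error between $\int|\nabla u_\epsilon|^p r^{p+\beta}\dvol_g$ and $\mathcal{J}(\epsilon)$ is $O_\epsilon(1)$: $|\nabla\phi|$ is bounded and supported in the annulus $B_\eta(x_0)\setminus B_{\eta/2}(x_0)$, and writing $|A+B|^p\le|A|^p+C_p(|A|^{p-1}|B|+|B|^p)$ with $A=\phi\nabla\omega_\epsilon$, $B=\omega_\epsilon\nabla\phi$, a check of exponents shows every term containing a factor of $\nabla\phi$ is dominated by $\int r^{-(m-n)+1+O(\epsilon)}\Psi^{O(1)}(r)\dvol_g$, which is finite and uniformly bounded by Lemma \ref{centerzeroinfite}(i).

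Next, Taylor-expand $F(\epsilon,\Psi):=(|\delta|+\epsilon-\theta\Psi)^p$ to second order at $(0,0)$, which is legitimate because $F$ is smooth on $\{|\delta|+\epsilon-\theta\Psi>0\}$ and both arguments are small on $\supp\phi$ (recall $\Psi(r)\le[\log(D/\eta)]^{-1}$ there). Integrating termwise produces
\begin{align*}
\mathcal{J}(\epsilon) =&\ |\delta|^p J_{p\theta}(\epsilon) + p|\delta|^{p-1}\bigl(\epsilon J_{p\theta}(\epsilon)-\theta J_{p\theta-1}(\epsilon)\bigr)\\
&+ \tfrac{p(p-1)}{2}|\delta|^{p-2}\bigl(\epsilon^2 J_{p\theta}(\epsilon)-2\theta\epsilon J_{p\theta-1}(\epsilon)+\theta^2 J_{p\theta-2}(\epsilon)\bigr)+O_\epsilon(1),
\end{align*}
where the second-order remainder is controlled by $J_{p\theta-3}(\epsilon)+\epsilon^3 J_{p\theta}(\epsilon)$, both $O_\epsilon(1)$: the first by Lemma \ref{basisestima}(i) since $p\theta-3<-1$, the second because it is of order $\epsilon^{2-p\theta}$ with $2-p\theta>0$ by $\theta<2/p$.

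Finally, apply Lemma \ref{basisestima}(iii) twice: it yields $\theta J_{p\theta-1}(\epsilon)=\epsilon J_{p\theta}(\epsilon)+O_\epsilon(1)$ and $\theta(p\theta-1)J_{p\theta-2}(\epsilon)=p\epsilon^2 J_{p\theta}(\epsilon)+O_\epsilon(1)$. Substituting, the first-order bracket vanishes identically, while the second-order bracket collapses, after substituting $\epsilon^2 J_{p\theta}(\epsilon) = \frac{\theta(p\theta-1)}{p}J_{p\theta-2}(\epsilon) + O_\epsilon(1)$, to $\frac{\theta}{p}J_{p\theta-2}(\epsilon)+O_\epsilon(1)$. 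This gives $\mathcal{J}(\epsilon) = |\delta|^p J_{p\theta}(\epsilon)+\frac{(p-1)\theta}{2}|\delta|^{p-2}J_{p\theta-2}(\epsilon)+O_\epsilon(1)$; part (a) follows upon subtracting $|\delta|^p J_{p\theta}(\epsilon)$, and (b) follows because $J_{p\theta-2}(\epsilon)=O_\epsilon(\epsilon^{1-p\theta})$ by Lemma \ref{basisestima}(ii). The main delicacy is precisely this arithmetic: the $O(\epsilon)$-corrections must cancel exactly, and the $O(\epsilon^2)$-terms must recombine (via two applications of the recursion) to produce the clean coefficient $\tfrac{(p-1)\theta}{2}$; notably no curvature estimates enter, matching the ``free of curvature'' character announced at the start of this subsection.
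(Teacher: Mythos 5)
Your proposal is correct and follows essentially the same route as the paper: the same splitting $\nabla u_\epsilon=\phi\nabla\omega_\epsilon+\omega_\epsilon\nabla\phi$ with the $\nabla\phi$-terms absorbed into $O_\epsilon(1)$, the same second-order Taylor expansion of $|\delta-(\epsilon-\theta\Psi)|^p$ with cubic remainder controlled by $\epsilon^3J_{p\theta}(\epsilon)+J_{p\theta-3}(\epsilon)$, and the same double application of the recursion in Lemma \ref{basisestima}/(iii) to collapse the second-order bracket to $\tfrac{\theta}{p}J_{p\theta-2}(\epsilon)+O_\epsilon(1)$. The only cosmetic differences are that you phrase the expansion as an asymptotic equality (the paper only records the one-sided inequality it needs) and that your "first-order bracket vanishes identically" should read "is $O_\epsilon(1)$"; neither affects the argument.
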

\begin{proof}In what follows, the constants $C_1,C_2,\ldots$ are independent of $\epsilon$.
Since there exists a positive constant $C_1=C_1(p)$ such that $|a+b|^p\leq  C_1 \left( |a|^{p-1}|b|+|b|^p \right)+|a|^p$,
 we have
\begin{align*}
\int_\Omega |\nabla u_\epsilon|^pr^{p+\beta}{\dvol}_g=\int_{B_{\eta}(x_0)\cap \Omega}| \phi \nabla\omega_\epsilon+\omega_\epsilon\nabla\phi |^pr^{p+\beta}{\dvol}_g\leq I_1+I_2+I_3,\tag{4.17}\label{threesums}
\end{align*}
where $\ds\ds I_1:=C_1 \int_{B_{\eta}(x_0)\cap \Omega}\phi^{p-1} |\nabla\omega_\epsilon|^{p-1}|\nabla\phi| |\omega_\epsilon|r^{p+\beta}{\dvol}_g$, $\ds\ds I_2:=C_1\int_{B_{\eta}(x_0)\cap \Omega}|\nabla\phi|^p|\omega_\epsilon|^pr^{p+\beta}{\dvol}_g$ and
\begin{align*}
 I_3:=\int_{B_{\eta}(x_0)\cap \Omega}\phi^p |\nabla\omega_\epsilon|^pr^{p+\beta}{\dvol}_g.
\end{align*}

Now we estimate $I_1$. In view of (\ref{logdervative}), one has
\[
\nabla\omega_\epsilon=r^{-\delta+\epsilon-1}\Psi^{-\theta}\left( r \right)\left( -\delta+\epsilon-\theta \Psi\left( r \right)  \right) \nabla r.\tag{4.18}\label{derivatofw}
\]
For small $\epsilon>0$, the same argument as in (\ref{5.5}) combined with (\ref{derivatofw}) yields
\begin{align*}
0\leq I_1&\leq C_2\int_{B_{\eta}(x_0)\cap \Omega} r^{1-(m-n)+\epsilon p}\Psi^{-\theta p}\left( r \right)\left| \delta-\left(\epsilon-\theta\Psi\left( r \right)\right)  \right|^{p-1}{\dvol}_g\\
&\leq C_3\int_{B_{\eta}(x_0)\cap \Omega} r^{1-(m-n)+\epsilon p}\Psi^{-\theta p}\left( r \right) {\dvol}_g\Longrightarrow I_1=O_\epsilon(1).\tag{4.19}\label{5.7}
\end{align*}
Similarly, one has $I_2=O_\epsilon(1)$,
which together with (\ref{threesums}) and (\ref{5.7}) indicates
\[
\int_\Omega |\nabla u_\epsilon|^pr^{p+\beta}{\dvol}_g\leq I_3+O_\epsilon(1).\tag{4.20}\label{Iafrom}
\]

Now we show Statement (a).
Combing (\ref{Iafrom}),    (\ref{FunctionalI}) and (\ref{derivatofw}), we have
\begin{align*}
\mathscr{I}[u_\epsilon]\leq I_3-|\delta|^p J_{p\theta}(\epsilon)+O_\epsilon(1)=I_A+O_\epsilon(1),\tag{4.21}\label{5.8}
\end{align*}
where $\ds\ds I_A:=\int_{B_{\eta}(x_0)\cap \Omega}\phi^p r^{-(m-n)+p\epsilon}\Psi^{-\theta p}\left( r \right)\left[ \left|\delta-\zeta\right|^p-|\delta|^p \right]{\dvol}_g$ and $\zeta:=\left(\epsilon-\theta\Psi\left( r\right)\right)$. By choosing a small $\eta>0$, we can suppose $|\zeta| \ll|\delta|$.
Thus  Taylor's expansion of $f(t)=|t|^p$ furnishes
\[
|\delta-\zeta|^p-|\delta|^p\leq -p |\delta|^{p-2}\delta\zeta+\frac12p(p-1)|\delta|^{p-2}\zeta^2+C_4|\zeta|^3,
\]
which implies
\begin{align*}
I_A\leq I_{A1}+I_{A2}+I_{A3},\tag{4.22}\label{5.9}
\end{align*}
where
\begin{align*}
I_{A1}:&=-p|\delta|^{p-2}\delta \int_{B_{\eta}(x_0)} \phi^p r^{-(m-n)+\epsilon p}\Psi^{-p\theta}\left( r\right)\left(\epsilon-\theta \Psi\left(r\right)  \right){\dvol}_g,\\
I_{A2}:&=\frac12p(p-1)|\delta|^{p-2}\int_{B_{\eta}(x_0)}\phi^p r^{-(m-n)+p\epsilon}\Psi^{-\theta p}(r)\left(\epsilon-\theta\Psi\left( r\right)\right)^2{\dvol}_g,\\
I_{A3}:&=C_4\int_{B_{\eta}(x_0)}\phi^p r^{-(m-n)+\epsilon p}\Psi^{-p\theta}\left( r \right)\left|\epsilon-\theta \Psi\left(r\right)  \right|^3{\dvol}_g.
\end{align*}
Lemma \ref{basisestima}/(iii) yields
\begin{align*}
I_{A1}=-p|\delta|^{p-2}\delta\left( \epsilon J_{p\theta}(\epsilon)-\theta J_{p\theta-1}(\epsilon)  \right)= O_\epsilon(1),\tag{4.23}\label{5.11}
\end{align*}
whereas  Lemma \ref{basisestima}/(i)-(ii) implies
\begin{align*}
I_{A3}\leq C_5\epsilon^3 J_{p\theta}(\epsilon)+C_6J_{p\theta-3}(\epsilon)\leq C_7 \epsilon^{2-p\theta}+O_\epsilon(1)=O_\epsilon(1).\tag{4.24}\label{5.12}
\end{align*}
Furthermore, by using Lemma \ref{basisestima}/(iii) twice, we have
\begin{align*}
I_{A2}
&=\frac12p(p-1)|\delta|^{p-2}\left( \epsilon^2 J_{p\theta}(\epsilon)-2\epsilon\theta J_{p\theta-1}(\epsilon)+\theta^2J_{p\theta-2}(\epsilon)  \right)\\
&=\frac12 p(p-1)|\delta|^{p-2}\left( \frac{\theta}{p} J_{p\theta-2}(\epsilon)\right)+O_\epsilon(1),%\tag{4.25}\label{5.13}
\end{align*}
which together with (\ref{5.8})--(\ref{5.12}) indicates Statement (a).
On the other hand,
Statement (a) combined with
Lemma  \ref{basisestima}/(ii) furnishes
\begin{align*}
\int_{\Omega}|\nabla u_\epsilon|^pr^{p+\beta} {\dvol}_g= \mathscr{I}[u_\epsilon]+|\delta|^p J_{p\theta}(\epsilon)\leq O_\epsilon(\epsilon^{1-p\theta}) + |\delta|^p J_{p\theta}(\epsilon).
\end{align*}
Thus, Statement (b) follows.
\end{proof}

In view of  Definitions \ref{Dsdef} and \ref{Hanshukongj}, we have the following result.
\begin{corollary}\label{basissolutionfornoncaompcat}Let $M,N$ and $\Omega$ be as in Assumption \ref{assufreecur} and let  $u_\epsilon(x)$ be as in (\ref{uespdefine}). Given $p>1$, $\beta\in \mathbb{R}$ and $D\geq \sup_{x\in \Omega}r(x)$,  the following statements hold:
\begin{itemize}

\item[(a)] if  $\beta<-(m-n)$, then $u_\epsilon\in D^{1,p}(\Omega_N,r^{p+\beta})$ for small $\epsilon>0$;

\item[(b)] if $p\in (1,m-n)$, $\beta=-p$ and $(M,g)$ is flat (i.e., $\mathbf{K}_M\equiv 0$), then $u_\epsilon\in W^{1,p}_0(\Omega_N)$ for small $\epsilon>0$.

\end{itemize}
\end{corollary}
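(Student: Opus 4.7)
The plan is to verify that $u_\epsilon$ has finite norm in the appropriate space and then exhibit an explicit approximating sequence in $C^\infty_0(\Omega_N)$. The argument splits according to the sign of $\delta=(m-n+\beta)/p$, which controls whether $u_\epsilon$ vanishes or blows up at $N$ and therefore dictates the shape of the approximation.

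For (a), since $\beta<-(m-n)$ one has $\delta<0$, so $r^{-\delta+\epsilon}\Psi^{-\theta}(r)\to 0$ as $r\to 0^+$ and $u_\epsilon$ extends continuously to $M$ with $u_\epsilon|_N\equiv 0$. A direct calculation gives $\int_\Omega|u_\epsilon|^p r^\beta\,\dvol_g=J_{p\theta}(\epsilon)$, which is finite by Lemma \ref{basisestima}(ii); together with Lemma \ref{littleestiamteu}(b) this shows $\|u_\epsilon\|_D<+\infty$. Following the template in the proof of Theorem \ref{firsthadyonpunctured}, set $u_{\epsilon,\iota}:=\max\{u_\epsilon-\iota,0\}$. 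Continuity of $u_\epsilon$ and $u_\epsilon|_N=0$ force $\{u_\epsilon\geq \iota\}$ to be a compact subset of $\Omega_N$ on which $u_\epsilon$ is smooth, hence $u_{\epsilon,\iota}$ is globally Lipschitz with compact support in $\Omega_N$. The standard argument cited in the proof of Theorem \ref{firsthadyonpunctured} (Hebey \cite{H}, Lemma 2.5, or Meng et al.\ \cite{MWZ}, Lemma A.1) then places $u_{\epsilon,\iota}$ in $D^{1,p}(\Omega_N,r^{p+\beta})$, and $u_{\epsilon,\iota}\to u_\epsilon$ in $\|\cdot\|_D$ follows from dominated convergence exactly as in (3.8): the integrands are pointwise dominated by the integrable functions $|u_\epsilon|^p r^\beta$ and $\chi_{\{u_\epsilon\leq \iota\}}|\nabla u_\epsilon|^p r^{p+\beta}$, and tend to zero a.e.\ (the latter because $\nabla u_\epsilon=0$ a.e.\ on $\{u_\epsilon=0\}$, up to the measure-zero boundary of $\mathrm{supp}(\phi)$).

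For (b), $\delta=(m-n-p)/p>0$ so $u_\epsilon$ blows up along $N$ and the truncation above no longer places the result inside $\Omega_N$. I use a tubular cut-off instead. First, $u_\epsilon\in W^{1,p}(\Omega_N)$: the gradient is controlled by Lemma \ref{littleestiamteu}(b) (with $p+\beta=0$), while (\ref{2.0'fermico}) combined with the uniform volume comparison (\ref{detestimate}) yields
\begin{align*}
\int_\Omega |u_\epsilon|^p\,\dvol_g\leq C\int_0^\eta t^{p-1+\epsilon p}\log^{\theta p}(D/t)\,dt<+\infty.
\end{align*}
Choose a smooth cut-off $\chi_s=\eta_s\circ r$ with $\eta_s:[0,\infty)\to[0,1]$ smooth, $\eta_s\equiv 0$ on $[0,s]$, $\eta_s\equiv 1$ on $[2s,\infty)$ and $|\eta_s'|\leq C/s$. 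In the flat setting $r$ is smooth off $N\cup\mathcal{V}\mathscr{C}(N)$, and $\chi_s$ is Lipschitz on $M$. Then $\chi_s u_\epsilon$ is Lipschitz with compact support in $\Omega_N$, so a standard mollification produces a $C^\infty_0(\Omega_N)$-sequence converging to it in $W^{1,p}$. To finish I show $\chi_s u_\epsilon\to u_\epsilon$ in $W^{1,p}(\Omega_N)$ as $s\to 0^+$: the $L^p$-convergence is immediate by dominated convergence, and splitting $\nabla(\chi_s u_\epsilon)-\nabla u_\epsilon=(\chi_s-1)\nabla u_\epsilon+u_\epsilon\nabla\chi_s$, the first gradient piece vanishes by dominated convergence, while
\begin{align*}
\int_\Omega |u_\epsilon\nabla\chi_s|^p\,\dvol_g\leq \frac{C}{s^p}\int_{T_{2s}\setminus T_s}|u_\epsilon|^p\,\dvol_g\leq C s^{-p}\int_s^{2s}t^{p-1+\epsilon p}\log^{\theta p}(D/t)\,dt\leq C s^{\epsilon p}\log^{\theta p}(D/s)\to 0.
\end{align*}

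The main obstacle I expect is (b): the polynomial blow-up $r^{-\delta+\epsilon}$ near $N$ and the logarithmic factor $\Psi^{-\theta}(r)$ must cooperate to produce genuine decay in $s$ in the cut-off estimate, and this is precisely what the extra factor $s^{\epsilon p}$ coming from $\epsilon>0$ supplies. Flatness is used to keep Fermi coordinates tractable up to the cut locus so that $r$ enjoys enough regularity for $\chi_s=\eta_s\circ r$ to satisfy the clean bound $|\nabla\chi_s|\leq C/s$ without curvature-dependent corrections, and so that the volume comparison (\ref{detestimate}) bounds the tubular integrals uniformly.
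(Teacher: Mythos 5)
Your proof is correct. For part (a) you follow essentially the paper's own route: truncate from below by $u_{\epsilon,\iota}=\max\{u_\epsilon-\iota,0\}$, which is Lipschitz with compact support in $\Omega_N$ because $\delta<0$ forces $u_\epsilon$ to vanish continuously on $N$, then pass to the limit as in (\ref{postivefunctionconvergence}). Part (b) is where you genuinely diverge. The paper handles the blow-up of $u_\epsilon$ along $N$ by truncating from above, $u_{\epsilon,\lambda}=\min\{u_\epsilon,\lambda\}$, which only lands in $W^{1,p}_0(\Omega)$; it then needs the separate identification $W^{1,p}_0(\Omega)=W^{1,p}_0(\Omega_N)$, obtained from a Sobolev-capacity argument exploiting that $(M,g)$ is locally Euclidean and that the $(m-p)$-dimensional Hausdorff measure of $N$ vanishes. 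You instead excise a tubular neighborhood of $N$ with the cut-off $\chi_s=\eta_s\circ r$ and verify $\chi_s u_\epsilon\to u_\epsilon$ in $W^{1,p}$ via the quantitative annulus estimate
\begin{equation*}
s^{-p}\int_{T_{2s}\setminus T_s}|u_\epsilon|^p\,{\dvol}_g\leq C\,s^{\epsilon p}\log^{\theta p}\!\left(\frac{D}{s}\right)\rightarrow 0,
\end{equation*}
which is correct: by (\ref{2.0'fermico}) and (\ref{detestimate}) the integrand contributes $t^{p-1+\epsilon p}\log^{\theta p}(D/t)$ on the fiber, and the surplus $\epsilon p$ in the exponent beats the $s^{-p}$ coming from $|\nabla\chi_s|\leq C/s$. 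This buys a self-contained proof that bypasses the capacity machinery (Kinnunen--Martio, Kilpel\"ainen et al.) entirely, at the price of working only for this specific $u_\epsilon$ rather than establishing the general identity $W^{1,p}_0(\Omega)=W^{1,p}_0(\Omega_N)$; since only the membership $u_\epsilon\in W^{1,p}_0(\Omega_N)$ is used downstream in Lemma \ref{sharpnessnon-compact}, that is enough. One small remark: flatness is not really what makes your cut-off estimate work --- $|\nabla r|=1$ a.e.\ and the two-sided bound (\ref{detestimate}) hold near any closed submanifold --- whereas the paper does invoke local flatness for its capacity argument; in your route the hypothesis $\mathbf{K}_M\equiv 0$ enters only through the quoted lemmas. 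Also make sure to record, as the paper does, that $\supp u_\epsilon\subset\overline{B_\eta(x_0)}\subset\Omega$ (possible since $m-n\geq 2$ in case (b)), so that $\chi_s u_\epsilon$ is compactly supported in $\Omega_N$ and not merely in $\overline{\Omega}\setminus N$.
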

\begin{proof}[Sketch of proof] (a) For any small $\iota>0$, set $u_{\epsilon,\iota}:=  \max\{u_\epsilon-\iota,0 \}$.
Clearly, $u_{\epsilon,\iota}$ is a Lipschitz function with compact support in $\Omega_N$. Since $u_\epsilon|_N=0$, a standard argument (cf. Hebey \cite[Lemma 2.5]{H}) yields  $u_{\epsilon,\iota}\in D^{1,p}(\Omega_N,r^{p+\beta})$.  On the other hand, a direct calculation together with
Lemma \ref{littleestiamteu}/(b) yields $\|u_\epsilon\|_{D}<+\infty$, which
combined with an argument analogous to (\ref{postivefunctionconvergence})  furnishes  $u_\epsilon\in D^{1,p}(\Omega_N,r^{p+\beta})$.

(b) In this case, $W_0^{1,p}(\Omega,r^{p+\beta})$ is  exactly the standard Sobolev space $W_0^{1,p}(\Omega)$.
Since $m-n>1$, we may assume $\supp u_\epsilon \subset \overline{B_\eta(x_0)}\subset \Omega$. Clearly, $u_\epsilon\geq 0$ is unbounded but $\|u_\epsilon\|_{p,\beta}<+\infty$.  Set $u_{\epsilon,\lambda}:=\min\{u_\epsilon,\lambda\}$ for any $\lambda>0$. Since $N$ has
 $\vol_g$-measure zero, a similar argument as above yields $u_{\epsilon,\lambda}\in W_0^{1,p}(\Omega)$ and $\lim_{\lambda\rightarrow\infty}\|u_{\epsilon,\lambda}-u_\epsilon\|_{p,\beta}=0$. Hence, $u_\epsilon\in  W_0^{1,p}(\Omega)$.

  On the other hand, note that $(M,g)$ is locally Euclidean. In particular, for any $x\in M$,  $\vol_g(B_s(x))=c_m s^m$ for  $0<s< \text{the injectivity radius at $x$}$. Moreover,  the $(m-p)$-dimensional Hausdorff measure of $N$ is zero
 (cf. Burago et al. \cite[Section 5.5]{DYS}). These facts together with
  a standard but cumbersome capacity argument (cf. Kinnunen and Martio \cite[Corollary 4.14]{KM} and Kilpel\"ainen et al. \cite[Remarks 4.2/(4), Theorem 4.6]{KKM}) furnish $ W^{1,p}_0(\Omega)=W^{1,p}_0(\Omega_N)$. Therefore, $u_\epsilon\in W^{1,p}_0(\Omega_N)$.
\end{proof}

\begin{lemma}\label{sharpnessnon-compact}Let $M,N$ and $\Omega$ be as in Assumption \ref{assufreecur}. Given $p>1$, $\beta\in \mathbb{R}$ and $D\geq \sup_{x\in \Omega}r(x)$, set $\delta := (m - n + \beta)/p$.
Suppose  one of the following conditions holds:
\begin{itemize}

\item[(a)] $\beta<-(m-n)$;

\item[(b)] $p\in (1,m-n)$, $\beta=-p$ and $(M,g)$ is flat.

\end{itemize}
Additionally suppose that for some constants $A>0$, $B\geq 0$ and $\gamma>0$, there holds
\[
\int_\Omega |\nabla u|^p r^{p+\beta} {\dvol}_g\geq A\int_\Omega  {|u|^p}{r^\beta}{\dvol}_g+B\int_\Omega  {|u|^p}{r^\beta}\log^{-\gamma}\left( \frac{D}{r}  \right){\dvol}_g,\ \forall\,u\in C^\infty_0(\Omega_N).\tag{4.25}\label{***5.13}
\]
Then we have
\begin{itemize}
\item[(i)] $A\leq |\delta|^p$;

\smallskip

\item[(ii)] if $A=|\delta|^p$ and $B>0$, then $\gamma\geq 2$;

\smallskip

\item[(iii)] if $A=|\delta|^p$ and $\gamma=2$, then $B\leq \frac{p-1}{2p}|\delta|^{p-2}$,
\end{itemize}

\end{lemma}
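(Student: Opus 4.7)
The plan is to plug the explicit test functions $u_\epsilon$ from \eqref{uespdefine} into the hypothesized inequality \eqref{***5.13} and read off the sharpness bounds by tracking the leading $\epsilon$-asymptotics via Lemmas \ref{basisestima} and \ref{littleestiamteu}. Since $u_\epsilon \notin C^\infty_0(\Omega_N)$ in general (its support touches $N$), I would first extend \eqref{***5.13} to $u_\epsilon$: Corollary \ref{basissolutionfornoncaompcat} places $u_\epsilon$ in $D^{1,p}(\Omega_N, r^{p+\beta})$ under hypothesis (a) and in $W^{1,p}_0(\Omega_N)$ under hypothesis (b), so $u_\epsilon$ is a norm-limit of a sequence $u_j \in C^\infty_0(\Omega_N)$. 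A Fatou-type passage parallel to the one used in the proof of Proposition \ref{reverRicinfty}---the gradient and weighted $L^p$ terms converge in norm, while Fatou handles the extra $\log^{-\gamma}$ term from below---then promotes \eqref{***5.13} to hold for $u_\epsilon$.

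Next, using $p\delta = m-n+\beta$, a direct substitution converts the relevant integrals into the auxiliary quantities $J_\alpha(\epsilon)$:
\[
\int_\Omega |u_\epsilon|^p r^\beta \dvol_g = J_{p\theta}(\epsilon), \qquad \int_\Omega |u_\epsilon|^p r^\beta \log^{-\gamma}\!\left(\frac{D}{r}\right) \dvol_g = J_{p\theta-\gamma}(\epsilon).
\]
Combining the extended \eqref{***5.13} with Lemma \ref{littleestiamteu}(b) yields the master estimate
\[
A\, J_{p\theta}(\epsilon) + B\, J_{p\theta-\gamma}(\epsilon) \;\leq\; |\delta|^p J_{p\theta}(\epsilon) + O_\epsilon(\epsilon^{1-p\theta}).
\]
For (i), I would divide by $J_{p\theta}(\epsilon) \sim \epsilon^{-1-p\theta}$ (Lemma \ref{basisestima}(ii)), note $\epsilon^{1-p\theta}/J_{p\theta}(\epsilon) = O(\epsilon^2)$, use $B \geq 0$, and let $\epsilon \to 0^+$ to conclude $A \leq |\delta|^p$.

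For (ii) and (iii), assuming $A = |\delta|^p$, the extended \eqref{***5.13} collapses to $B\, J_{p\theta-\gamma}(\epsilon) \leq \mathscr{I}[u_\epsilon]$. Pairing this with the finer estimate in Lemma \ref{littleestiamteu}(a) gives
\[
B\, J_{p\theta-\gamma}(\epsilon) \;\leq\; \frac{\theta(p-1)}{2}\, |\delta|^{p-2}\, J_{p\theta-2}(\epsilon) + O_\epsilon(1).
\]
For (ii), if $\gamma < 2$ then $p\theta - \gamma > -1$ for every $\theta \in (1/p, 2/p)$, and Lemma \ref{basisestima}(ii) yields $J_{p\theta-\gamma}(\epsilon)/J_{p\theta-2}(\epsilon) \sim \epsilon^{\gamma-2} \to \infty$; dividing by $J_{p\theta-\gamma}(\epsilon)$ and sending $\epsilon \to 0^+$ forces $B \leq 0$, contradicting $B > 0$, so $\gamma \geq 2$. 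For (iii), setting $\gamma = 2$ and dividing by $J_{p\theta-2}(\epsilon) \to \infty$ gives $B \leq \frac{\theta(p-1)}{2}\, |\delta|^{p-2}$; finally letting $\theta \to (1/p)^+$ recovers $B \leq \frac{p-1}{2p}|\delta|^{p-2}$.

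The main obstacle is the opening step: \eqref{***5.13} is postulated only for test functions in $C^\infty_0(\Omega_N)$, whereas $u_\epsilon$ is merely vanishing at (under (a)) or actually unbounded at (under (b)) the submanifold $N$. Legitimizing $u_\epsilon$ as an admissible test function is precisely the purpose of Corollary \ref{basissolutionfornoncaompcat}; once that density step is in hand, the whole argument reduces to the clean $\epsilon$-asymptotic accounting supplied by Lemmas \ref{basisestima} and \ref{littleestiamteu}.
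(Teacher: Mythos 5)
Your proposal is correct and follows essentially the same route as the paper's own proof: extend \eqref{***5.13} to $u_\epsilon$ via Corollary \ref{basissolutionfornoncaompcat} and the density/Fatou argument of Proposition \ref{reverRicinfty}, identify the integrals with $J_{p\theta}(\epsilon)$ and $J_{p\theta-\gamma}(\epsilon)$, and then read off (i)--(iii) from the asymptotics in Lemmas \ref{basisestima} and \ref{littleestiamteu}, letting $\theta\to(1/p)^+$ at the end. The $\epsilon$-bookkeeping (e.g.\ $\epsilon^{1-p\theta}/J_{p\theta}(\epsilon)=O(\epsilon^2)$ and $J_{p\theta-2}/J_{p\theta-\gamma}\sim\epsilon^{2-\gamma}$) matches the paper's computations exactly.
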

\begin{proof}Let $u_\epsilon$ be defined as in (\ref{uespdefine}) and set
\[
R_\gamma[u_\epsilon]:=\int_\Omega  {|u_\epsilon|^p}{r^{\beta}}\Psi^\gamma\left( r \right){\dvol}_g=\int_\Omega \phi^p r^{-(m-n)+\epsilon p}\Psi^{\gamma-\theta p}\left( r \right){\dvol}_g=J_{p\theta-\gamma}(\epsilon).
\]

\smallskip

(i) By a suitable modification to the proof of Proposition \ref{reverRicinfty}, one can show that
 \begin{itemize}
 \item (\ref{***5.13}) is valid for $D^{1,p}(\Omega_N, r^{p+\beta})$ if Condition (a) holds;

 \item (\ref{***5.13}) is valid for $W^{1,p}_0(\Omega_N, r^{p+\beta})=W^{1,p}_0(\Omega_N)$ if Condition (b) holds.
 \end{itemize}
Hence, Corollary \ref{basissolutionfornoncaompcat} together with (\ref{***5.13}) yields
\begin{align*}
A\leq \frac{\int_\Omega |\nabla u_\epsilon|^pr^{p+\beta}{\dvol}_g}{\int_\Omega  {|u_\epsilon|^p}{r^\beta} {\dvol}_g}=\frac{\int_{\Omega} |\nabla u_\epsilon|^pr^{p+\beta}{\dvol}_g}{J_{p\theta}(\epsilon)},
\end{align*}
which combined with Lemma \ref{littleestiamteu}/(b) and Lemma \ref{basisestima}/(ii) yields
\begin{align*}
A\leq |\delta|^p+\frac{O_\epsilon(\epsilon^{1-p\theta})}{J_{p\theta}(\epsilon)}\leq |\delta|^p+\frac{O_\epsilon(\epsilon^{1-p\theta})}{c\epsilon^{-1-p\theta}}\rightarrow |\delta|^p, \text{ as }\epsilon\rightarrow 0.
\end{align*}
Hence, Statement (i) follows.

\smallskip

(ii) If the assertion did not hold, then $-1<p\theta-\gamma$ and hence, Lemma \ref{littleestiamteu}/(a) combined with Lemma \ref{basisestima}/(ii) would furnish
\begin{align*}
0<B\leq \frac{\mathscr{I}[u_\epsilon]}{R_\gamma[u_\epsilon]}\leq \frac{\frac{\theta(p-1)}{2}|\delta|^{p-2}J_{p\theta-2}(\epsilon)+O_\epsilon(1)}{J_{p\theta-\gamma}(\epsilon)}\leq \frac{C\epsilon^{1-p\theta}+O_\epsilon(1)}{c\epsilon^{-1-p\theta+\gamma}}\rightarrow0, \text{ as }\epsilon\rightarrow 0,
\end{align*}
which is a contradiction. Hence, $\gamma\geq 2$.

\smallskip

(iii) If $A=|\delta|^p$ and $\gamma=2$, Lemma  \ref{basisestima}/(ii)
yields  $J_{p\theta-2}(\epsilon)=O_\epsilon(\epsilon^{1-p\theta})$, which
together with
Lemma \ref{littleestiamteu}/(a) furnishes
\begin{align*}
B\leq \frac{\mathscr{I}[u_\epsilon]}{R_2[u_\epsilon]}\leq \frac{\frac12\theta(p-1)|\delta|^{p-2}J_{p\theta-2}(\epsilon)+O_\epsilon(1)}{J_{p\theta-2}(\epsilon)}\rightarrow \frac12\theta(p-1)|\delta|^{p-2}, \text{ as }\epsilon\rightarrow 0.
\end{align*}
The proof is completed by letting $\theta\rightarrow (1/p)^+$.
\end{proof}

\begin{proof}[Proof of Theorem \ref{non-compactoptimalmainthe}]
Statement (a') (resp., Statement (b')) is a direct consequence of  Theorem \ref{keycompactHardy}/(a) (resp., Theorem \ref{mainnon-compacttheroem}) and Lemma \ref{sharpnessnon-compact}.
For Statement (c'), the same argument as in the proof of Proposition \ref{reverRicinfty} furnishes that both (\ref{strongimproveHardy}) and (\ref{strongimproveHardy2}) remain valid for $u\in W^{1,p}_0(\Omega_N,r^{p+\beta})$. Hence, they also hold for $u\in C^\infty_0(\Omega,N)$  due to
Lemma \ref{continuouslemma}, which  furnish respectively
\begin{align*}
\left|\frac{\beta+ k }{p}\right|^p&\leq\inf_{u\in C^\infty_0(\Omega, N)\backslash\{0\}}\frac{\int_{{\Omega}} |\nabla u|^pr^{p+\beta}{\dvol}_g}{\int_{{\Omega}} |u|^pr^\beta {\dvol}_g},\\%\tag{1.2}\label{firssharpconsnta}\\
\frac{p-1}{2p}\left|\frac{\beta+ k }{p}\right|^{p-2}&\leq\inf_{u\in C^\infty_0(\Omega, N)\backslash\{0\}}\frac{\int_{{\Omega}} |\nabla u|^pr^{p+\beta}{\dvol}_g-\left|\frac{\beta+ k }{p}\right|^p\int_{{\Omega}} |u|^pr^\beta {\dvol}_g}{\int_\Omega  {|u|^p}{r^\beta}\log^{-2}\left( \frac{D}{r} \right){\dvol}_g}.
\end{align*}
The reverse inequalities follow from (\ref{cosntantsharpnoimpH}), (\ref{strongnessconstantH}) and
 $C^\infty_0(\Omega_N)\subset C^\infty_0(\Omega,N)$.
\end{proof}

By a suitable modification to Barbatis et al. \cite[Theorem B]{BFT}, one can get another
  improved version of the weighed Hardy inequality.
\begin{proposition}
Let $(M,g)$, $N$ and $\Omega$ be as in Assumption \ref{assufreecur} and satisfy Condition (\textbf{C}).
For any $D> \sup_{x\in \Omega}r(x)$, $1\leq q<p$, $\gamma>1+q/p$ and $\beta<-(m-n)$, there exists a constant $c>0$ such that
\[
\int_{{\Omega}} |\nabla u|^pr^{p+\beta} {\dvol}_g\geq \left|\frac{\beta+ m-n }{p}\right|^p\int_{{\Omega}}  {|u|^p}{r^\beta} {\dvol}_g+ c\left( \int_\Omega |\nabla u|^q \, r^{\frac{(m-n)(q-p)+q(\beta+p)}p} \log^{-\gamma}\left(\frac{D}r\right)\dvol_g \right)^{\frac{p}q},
\]
for any $u\in C^\infty_0(\Omega_N)$. In particular,   $\gamma$ cannot be smaller than $1+q/p$.
\end{proposition}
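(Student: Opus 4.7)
I would follow closely the Barbatis--Filippas--Tertikas strategy adapted to the Riemannian setting, combining the two earlier refined estimates of the paper: the pointwise/integral refinement of Lemma~3.4 (which bounds $\mathscr{I}[u]$ from below in terms of $\int|\nabla v|^p r^{p-(m-n)}\dvol_g$, where $v=u\,r^{\delta}$ and $\delta=(m-n+\beta)/p$) and the logarithmically improved Hardy inequality of Theorem~4.1 (which bounds $\mathscr{I}[u]$ from below by $\tfrac{p-1}{2p}|\delta|^{p-2}\int|u|^p r^\beta \log^{-2}(D/r)\dvol_g$). Set
\[
\mathscr{J}[u]:=\int_\Omega |\nabla u|^q \, r^{\frac{(m-n)(q-p)+q(\beta+p)}{p}}\log^{-\gamma}\!\left(\tfrac{D}{r}\right)\dvol_g,
\]
and note that the exponent of $r$ equals $q(\delta+1)-(m-n)$. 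The desired inequality is equivalent to $\mathscr{I}[u]\geq c\,\mathscr{J}[u]^{p/q}$, so the task reduces to bounding $\mathscr{J}[u]$ from above by a suitable power of $\mathscr{I}[u]$.

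\textbf{Key estimate.} Starting from $v=ur^{\delta}$ one has $\nabla u = r^{-\delta}\nabla v-\delta u r^{-1}\nabla r$, whence $|\nabla u|\le r^{-\delta}|\nabla v|+|\delta|\,|u|r^{-1}$. Inserting this into $\mathscr{J}[u]$ and using $(a+b)^q\le 2^{q-1}(a^q+b^q)$ splits $\mathscr{J}[u]$ into two pieces:
\begin{align*}
\mathscr{J}_1[u]&\lesssim \int_\Omega |\nabla v|^q r^{q-(m-n)}\log^{-\gamma}(D/r)\dvol_g,\\
\mathscr{J}_2[u]&\lesssim |\delta|^q\int_\Omega |u|^q r^{q\delta-(m-n)}\log^{-\gamma}(D/r)\dvol_g.
\end{align*}
Apply H\"older with exponents $(p/q,p/(p-q))$ to each. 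For $\mathscr{J}_1$, pair $|\nabla v|^q r^{q(p-(m-n))/p}$ with the remainder; the residual integral reduces to $\int r^{-(m-n)}\log^{-\gamma p/(p-q)}\dvol_g$, which is finite iff $\gamma>1-q/p$. For $\mathscr{J}_2$, pair $|u|^q r^{q\beta/p}\log^{-2q/p}$ with $r^{(m-n)(q-p)/p}\log^{2q/p-\gamma}$; this time the residual integral is $\int r^{-(m-n)}\log^{(2q-\gamma p)/(p-q)}\dvol_g$, whose finiteness via Lemma~4.4/Fermi-coordinate computation requires $(2q-\gamma p)/(p-q)<-1$, i.e.\ exactly $\gamma>1+q/p$. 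Thus
\[
\mathscr{J}[u]^{p/q}\;\lesssim\; \int_\Omega|\nabla v|^p r^{p-(m-n)}\dvol_g \;+\; |\delta|^p\!\int_\Omega |u|^p r^\beta\log^{-2}(D/r)\dvol_g,
\]
and both right-hand summands are $\le C\,\mathscr{I}[u]$ by Lemma~3.4/(b) and Theorem~4.1 respectively. This completes the case $p\ge 2$. For $p\in(1,2)$ the same outline works, but one invokes Lemma~3.4/(a) and inserts an extra H\"older step with exponents $(2/q,2/(2-q))$ to trade the denominator $(|\delta v|+|r\nabla v|)^{2-p}$ for a manageable weight before proceeding; this is the technically most delicate point but involves only bookkeeping of exponents.

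\textbf{Sharpness of $\gamma$.} I would use the same family of test functions $u_\epsilon=\phi\cdot r^{-\delta+\epsilon}\Psi^{-\theta}(r)$ from (4.16), with $\theta\in(1/p,2/p)$ and $\Psi(t)=[\log(D/t)]^{-1}$. The asymptotics of Lemma~4.4/4.5 already yield $\mathscr{I}[u_\epsilon]\lesssim J_{p\theta-2}(\epsilon)\sim\epsilon^{1-p\theta}$. A direct computation (the gradient is dominated by the $-\delta r^{-\delta+\epsilon-1}\Psi^{-\theta}\nabla r$ term) gives $\mathscr{J}[u_\epsilon]\sim |\delta|^q\,\epsilon^{\gamma-q\theta-1}$ whenever $\gamma<1+q\theta$, and hence $\mathscr{J}[u_\epsilon]^{p/q}\sim \epsilon^{(\gamma-q\theta-1)p/q}$. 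Forming the ratio $\mathscr{I}[u_\epsilon]/\mathscr{J}[u_\epsilon]^{p/q}$ one finds its exponent of $\epsilon$ equals $1-p\theta-(\gamma-q\theta-1)p/q=(p+q-\gamma p)/q$, which is strictly positive precisely when $\gamma<1+q/p$. Thus for any such $\gamma$ the ratio tends to $0$ as $\epsilon\to 0^+$, forcing $c=0$; this rules out $\gamma<1+q/p$. The main obstacle will be the case $1<p<2$, where the weighted $L^p$-gradient bound from Lemma~3.4/(a) must be carefully H\"older-interpolated without picking up spurious $\int|u|^p r^\beta$ terms that would break the $\mathscr{I}[u]$-absorption; the logarithmic improvement of Theorem~4.1 is what ultimately makes this absorption possible.
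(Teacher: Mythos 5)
Your plan is exactly the route the paper intends: the paper offers no proof of this proposition beyond the remark that it follows ``by a suitable modification to Barbatis et al.\ [Theorem B]'', and your argument is precisely that modification --- split $|\nabla u|$ via $v=u\,r^{\delta}$, H\"older-interpolate the two pieces of $\mathscr{J}[u]$ with exponents $(p/q,\,p/(p-q))$ against the quantities controlled by Lemma \ref{refinedsequence} and Theorem \ref{mainnon-compacttheroem}, and test sharpness on the family $u_\epsilon$ of (\ref{uespdefine}) using the asymptotics of Lemma \ref{basisestima}. Your exponent bookkeeping is correct: both residual weights reduce to $r^{-(m-n)}$ times a power of the logarithm, and the second one converges precisely when $\gamma>1+q/p$, which is where the threshold comes from; the sharpness exponent $(p+q-\gamma p)/q$ is also right. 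Two small points worth recording: the residual H\"older integrals are finite only over the portion of $\Omega$ near $\supp u$ unless $\overline{\Omega}\cap N$ is compact (the same implicit restriction as in BFT, where $\Omega$ is bounded), and passing from $D\geq\mathcal{T}\sup_\Omega r$ in Theorem \ref{mainnon-compacttheroem} to an arbitrary $D>\sup_\Omega r$ requires the elementary comparability $\log(D'/r)\leq C\log(D/r)$ on $\Omega$; the $p\in(1,2)$ case you only sketch, but the extra H\"older step you indicate is indeed how BFT handle it.
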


\section{Hardy inequalities in the flat case}\label{flathardy}

In this section, we present two non-weighted Hardy inequalities
in the case when $M$ is a flat manifold (i.e., $\mathbf{K}_M\equiv0$) and $N$ is a minimal submanifold (i.e., $H\equiv0$), in which case the requirement $p>k$ in Theorems \ref{keycompactHardy} and \ref{non-compactoptimalmainthe} can be eliminated.

\begin{lemma}\label{flatcase}
Let $(M,g)$ be an $m$-dimensional complete flat Riemannian manifold and let $i:N\hookrightarrow M$ be an $n$-dimensional  minimal closed submanifold. For any $\mathbf{n}\in \mathcal {V}SN$,
\[
\det \mathcal {A}(t,\mathbf{n})=t^{m-n-1},\quad \Delta r|_{(t,\mathbf{n})}=\frac{m-n-1}{t}, \quad  \text{for } 0<t<c_\mathcal {V}(\mathbf{n}).
\]
\end{lemma}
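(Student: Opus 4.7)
The plan is to solve the matrix ODE (2.1) explicitly using flatness, compute the resulting determinant, and then use minimality to collapse the remaining shape-operator factor; the Laplacian identity then follows from (2.9) as a one-line computation.

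First, since $\mathbf{K}_M\equiv 0$ the full Riemann tensor vanishes, so the operator $\mathcal{R}(t,\mathbf{n})=\tau_{t;\mathbf n}^{-1}\circ R(\,\cdot\,,\dot\gamma_{\mathbf n})\dot\gamma_{\mathbf n}\circ\tau_{t;\mathbf n}$ is identically zero on $\mathbf{n}^\perp$. The ODE (2.1) then reduces to $\mathcal{A}''(t,\mathbf{n})=0$, and its unique solution matching the prescribed initial data is affine in $t$, decomposing block-diagonally with respect to the orthogonal, $\mathcal{A}$-invariant splitting $\mathbf{n}^\perp=T_xN\oplus(\mathcal{V}_xN\cap\mathbf{n}^\perp)$:
\[
\mathcal{A}(t,\mathbf{n})|_{T_xN}=\mathrm{Id}-t\,\mathfrak{A}^{\mathbf n},\qquad \mathcal{A}(t,\mathbf{n})|_{\mathcal{V}_xN\cap\mathbf{n}^\perp}=t\cdot\mathrm{Id}.
\]
Taking determinants block by block yields
\[
\det\mathcal{A}(t,\mathbf{n})=t^{m-n-1}\det\bigl(\mathrm{Id}-t\,\mathfrak{A}^{\mathbf n}\bigr)=t^{m-n-1}\prod_{\alpha=1}^{n}(1-t\lambda_\alpha),
\]
where $\{\lambda_\alpha\}_{\alpha=1}^n$ are the eigenvalues of the Weingarten map $\mathfrak{A}^{\mathbf n}$.

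Second, I will invoke minimality to simplify the product. Setting $\xi_\alpha:=1-t\lambda_\alpha$, positivity $\xi_\alpha>0$ on $(0,c_\mathcal{V}(\mathbf n))$ is inherited from $\det\mathcal{A}(t,\mathbf n)>0$ there, exactly as in the argument following (2.11) in the proof of Theorem \ref{huperseacedata}. The minimality hypothesis $\mathrm{tr}\,\mathfrak{A}^{\mathbf n}=g(H,\mathbf n)=0$ gives $\sum_\alpha\xi_\alpha=n$, and Newton's inequality (Lemma \ref{newtonlemma}) delivers $\prod_\alpha\xi_\alpha\leq 1$. Combining this with the matching Heintze--Karcher comparison (Theorem \ref{importantlemmahk} applied with $K=0$ and trace-free $\mathfrak{A}^{\mathbf n}$), which gives precisely $\det\mathcal{A}(t,\mathbf n)\leq t^{m-n-1}$, forces $\prod_\alpha\xi_\alpha=1$ throughout $(0,c_\mathcal{V}(\mathbf n))$ and hence $\det\mathcal{A}(t,\mathbf{n})=t^{m-n-1}$. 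The Laplacian formula is then an immediate consequence of (2.9): the logarithmic derivative of $t^{m-n-1}$ equals $(m-n-1)/t$, so $\Delta r|_{(t,\mathbf n)}=(m-n-1)/t$.

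The main delicate point is the upgrade from inequality to equality in the product step: minimality by itself only yields the AM--GM bound $\prod_\alpha(1-t\lambda_\alpha)\leq 1$, and closing the gap requires the combined strength of flatness and minimality, so that the Heintze--Karcher bound in Theorem \ref{importantlemmahk} is saturated along the geodesic. Once this is set up, the remainder of the proof is essentially bookkeeping through (2.1), (2.9), and the block-diagonal structure of $\mathcal{A}$.
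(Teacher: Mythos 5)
Your explicit solution of (\ref{defineJacbi}) in the flat case is correct, and indeed more careful than the paper's one-line citation of Chavel: flatness gives $\mathcal{A}''=0$, hence the block-diagonal affine solution $\mathcal{A}(t,\mathbf{n})|_{T_xN}=\id-t\,\mathfrak{A}^{\mathbf{n}}$, $\mathcal{A}(t,\mathbf{n})|_{\mathcal{V}_xN\cap\mathbf{n}^\perp}=t\,\id$, and therefore the \emph{exact} identity $\det\mathcal{A}(t,\mathbf{n})=t^{m-n-1}\prod_{\alpha}(1-t\lambda_\alpha)$. The problem is your final step. You hold two \emph{upper} bounds for the same quantity: AM--GM via minimality gives $\prod_\alpha(1-t\lambda_\alpha)\leq 1$, and Theorem \ref{importantlemmahk} with $K=0$ and $\tr\mathfrak{A}^{\mathbf{n}}=0$ gives $\det\mathcal{A}(t,\mathbf{n})\leq t^{m-n-1}$ --- which is literally the same inequality. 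Two upper bounds never combine to force equality; you would need a matching lower bound $\det\mathcal{A}(t,\mathbf{n})\geq t^{m-n-1}$, and none is available. Equality in AM--GM under the constraint $\sum_\alpha\lambda_\alpha=0$ holds if and only if every $\lambda_\alpha=0$, i.e. $\mathfrak{A}^{\mathbf{n}}=0$. Your own exact formula therefore shows that for a minimal but not totally geodesic $N$ the asserted identities fail: for a catenoid in $\mathbb{R}^3$ (so $m-n-1=0$) the principal curvatures at a point are $\pm\lambda$ with $\lambda\neq 0$, whence $\det\mathcal{A}(t,\mathbf{n})=1-t^2\lambda^2<1$ and, by (\ref{lapacianmeaning}), $\Delta r=-2t\lambda^2/(1-t^2\lambda^2)\neq 0$. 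The gap is not a missing trick; it cannot be closed from the stated hypotheses.

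For comparison, the paper's own proof invokes Chavel's model computation ``with $\lambda=0$'', i.e. it silently takes the Weingarten map to vanish, so it really establishes the equalities only for totally geodesic $N$ (which covers all the examples actually used: flat tori and the totally geodesic cross-sections of cylinders). Under mere minimality, what your computation legitimately yields --- and what the downstream applications such as Theorem \ref{flatcaseThe1112} actually require --- is the one-sided estimate $\det\mathcal{A}(t,\mathbf{n})\leq t^{m-n-1}$ together with $\Delta r\leq (m-n-1)/r$, i.e. Theorem \ref{huperseacedata} specialized to $K=0$. So either strengthen the hypothesis of the lemma to ``totally geodesic'' and keep the equalities, or keep ``minimal'' and state the inequalities; as written, your equality claim does not follow.
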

\begin{proof}Let $(t,\mathbf{n})$ denote Fermi coordinates, let $(e_\mathbbm{a})_{\mathbbm{a}=\alpha,\mathfrak{g}}$ be as in (\ref{defofea}) and set $J_\mathbbm{a}(t):=\tau_{t;\mathbf{n}}\mathcal {A}(t,\mathbf{n})e_\mathbbm{a}$. According to Chavel \cite[p.\,321]{IC} (by choosing $\lambda=0$ and $\mathfrak{s}_K(t)=t$), we have $J_\alpha(t)=
\tau_{t;\mathbf{n}}e_\alpha$ and $J_\mathfrak{g}(t)=
t\,\tau_{t;\mathbf{n}}e_\mathfrak{g}$,
which implies $\det \mathcal {A}(t,\mathbf{n})= t^{m-n-1}$. We conclude  the proof by (\ref{lapacianmeaning}).
\end{proof}

\begin{theorem}\label{flatcaseThe1112}
Let $(M,g)$ be an $m$-dimensional complete  flat Riemannian manifold, let $i:N\hookrightarrow M$ be an $n$-dimensional  minimal closed submanifold, and let $\Omega$ be a natural domain in $M$.

\begin{itemize}
\item[(i)]
For any $1<p\neq (m-n)$, there always holds
 \begin{align*}
\int_{{\Omega}} |\nabla u|^p{\dvol}_g\geq \left|\frac{m-n-p }{p}\right|^p\int_{{\Omega}} \frac{|u|^p}{r^p} {\dvol}_g, \ \forall\, u\in C^\infty_0({\Omega, N}).\tag{5.1}\label{firsthardpructreflatcase}
\end{align*}
 In particular, if $N$ is compact, then (\ref{firsthardpructreflatcase}) is optimal in the following sense
 \[
\left|\frac{m-n-p }{p}\right|^p=\inf_{u\in C^\infty_0(\Omega, N)\backslash\{0\}}\frac{\int_{{\Omega}} |\nabla u|^p {\dvol}_g}{\int_{{\Omega}} \frac{|u|^p}{r^p} {\dvol}_g}.%\tag{5.2}\label{firssharpconsntaflat}
\]

\smallskip

\item[(ii)] Additionally suppose  $\sup_{x\in \Omega}r(x)<+\infty$.
Thus,
for any $1<p\neq (m-n)$,
there always holds
 \begin{align*}
\int_{{\Omega}} |\nabla u|^p{\dvol}_g\geq \left|\frac{m-n-p }{p}\right|^p\int_{{\Omega}} \frac{|u|^p}{r^p} {\dvol}_g, \ \forall\, u\in C^\infty_0({\Omega, N}).\tag{5.2}\label{firsthardpructreflatcase2}
\end{align*}
 In particular,  (\ref{firsthardpructreflatcase2}) is optimal in the following sense
 \[
\left|\frac{m-n-p }{p}\right|^p=\inf_{u\in C^\infty_0(\Omega, N)\backslash\{0\}}\frac{\int_{{\Omega}} |\nabla u|^p {\dvol}_g}{\int_{{\Omega}} \frac{|u|^p}{r^p} {\dvol}_g}.\tag{5.3}\label{firssharpconsntaflat1}
\]
Moreover, there
exists a constant $\mathcal {T}=\mathcal {T}(p,m-n)> 1$ such that for any $D\geq \mathcal {T}\sup_{x\in \Omega}r(x)$,
 \begin{align*}
\int_{{\Omega}} |\nabla u|^p {\dvol}_g\geq& \left|\frac{m-n-p }{p}\right|^p\int_{{\Omega}}  \frac{|u|^p}{r^p} {\dvol}_g+\frac{p-1}{2p}\left|\frac{m-n-p }{p}\right|^{p-2}\int_\Omega  \frac{|u|^p}{r^p}\log^{-2}\left( \frac{D}{r} \right){\dvol}_g,\tag{5.4}\label{strongimproveHardyfaltacase}
\end{align*}
for any $u\in C^\infty_0({\Omega, N})$. In particular, (\ref{strongimproveHardyfaltacase}) is optimal in the sense of (\ref{firssharpconsntaflat1}) as well as
\begin{align*}
\frac{p-1}{2p}\left|\frac{m-n-p }{p}\right|^{p-2}=\inf_{u\in C^\infty_0(\Omega, N)\backslash\{0\}}\frac{\int_{{\Omega}} |\nabla u|^p{\dvol}_g-\left|\frac{m-n-p }{p}\right|^p\int_{{\Omega}} \frac{|u|^p}{r^p} {\dvol}_g}{\int_\Omega  \frac{|u|^p}{r^p}\log^{-2}\left( \frac{D}{r} \right){\dvol}_g}.\tag{5.5}\label{firssharpconsntaflat2}
\end{align*}
\end{itemize}
\end{theorem}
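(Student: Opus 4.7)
My plan is to split the statement into the range $p > m-n$, which I would derive directly from the earlier theorems, and the genuinely new range $p \in (1, m-n)$, which requires the sharpening afforded by Lemma \ref{flatcase}. When $p > m-n$ the choice $k = m-n$, $\beta = -p$ satisfies $\beta < -k$ and $p + \beta = 0 > -k$, so Theorem \ref{keycompactHardy}(b) gives (\ref{firsthardpructreflatcase}) together with its sharpness, and Theorem \ref{non-compactoptimalmainthe}(c') yields both (\ref{firsthardpructreflatcase2}) under $\sup_\Omega r < \infty$ and the improved inequality (\ref{strongimproveHardyfaltacase}) with the sharp constants (\ref{firssharpconsntaflat1})--(\ref{firssharpconsntaflat2}); flatness supplies $\mathbf K_M \geq 0$ and the minimality of $N$ places us in the third bullet of Condition (\textbf C). The key new ingredient for $p < m-n$ is that Lemma \ref{flatcase} upgrades the one-sided bound $r\Delta r + 1 - (m-n) \leq 0$ driving Sections \ref{basistheorem} and \ref{non-compacthary} to the exact equality $r\Delta r + 1 - (m-n) = 0$, which frees every argument from the sign constraint that previously forced $p > m-n$.

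For Part (i) in the range $p < m-n$, I would apply Lemma \ref{mainlemmforcr} with $\rho := r|_{\Omega_N}$, $\alpha := (p-(m-n))/(p-1)$ and $\beta := -p$. A direct computation yields $(\alpha-1)(p-1) = 1-(m-n)$, $c = \alpha[(\alpha-1)(p-1) - \beta - 1] = (p-(m-n))^2/(p-1) > 0$ and $\vartheta_{\alpha,\beta,p} = |m-n-p|/p$; Lemma \ref{flatcase} then forces $-\Delta_p(c\rho^\alpha) \equiv 0$ on $\Omega_N$, while the local integrability hypotheses are automatic since $r$ is bounded away from $0$ on every compact subset of $\Omega_N$. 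Thus Lemma \ref{mainlemmforcr} delivers (\ref{firsthardpructreflatcase}) for all $u \in C^\infty_0(\Omega_N)$. To pass from $C^\infty_0(\Omega_N)$ to $C^\infty_0(\Omega, N)$, I would invoke the capacity identification $W^{1,p}_0(\Omega) = W^{1,p}_0(\Omega_N)$ already exploited in Corollary \ref{basissolutionfornoncaompcat}(b): since $p < m-n$, $N$ has vanishing $(m-p)$-dimensional Hausdorff measure and hence vanishing $p$-capacity, so any $u \in C^\infty_0(\Omega, N) \subset W^{1,p}_0(\Omega) = W^{1,p}_0(\Omega_N)$ is approximable in the Sobolev norm by functions $u_i \in C^\infty_0(\Omega_N)$, and Fatou's lemma on a pointwise-a.e.~convergent subsequence transfers the inequality to $u$.

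For Part (ii), sharpness of the constants in (\ref{firssharpconsntaflat1}) and (\ref{firssharpconsntaflat2}) in the remaining case $p < m-n$ will follow from Lemma \ref{sharpnessnon-compact} applied to the test functions $u_\epsilon$ of (\ref{uespdefine}) with $\beta = -p$; Corollary \ref{basissolutionfornoncaompcat}(b) provides the required membership $u_\epsilon \in W^{1,p}_0(\Omega_N)$ to activate hypothesis (b) of that lemma. The logarithmically corrected inequality (\ref{strongimproveHardyfaltacase}) for $u \in C^\infty_0(\Omega_N)$ I would obtain by re-running the proof of Theorem \ref{mainnon-compacttheroem} verbatim: the only point at which Condition (\textbf C) entered was (\ref{basisdivergence}), whose integrand $\delta[r\Delta r + 1 - (m-n)]|v|^p/r^{m-n}$ is identically zero in the flat/minimal setting, so the sign of $\delta = (m-n-p)/p$ is irrelevant and no restriction on $p$ enters; the Taylor expansion of $f(t)$ and the choice $\mathcal T = e^{1/\mathfrak T}$ are unaffected, and the extension to $C^\infty_0(\Omega, N)$ is handled exactly as in the preceding paragraph. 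The one step I regard as a genuine obstacle, rather than a mechanical substitution of ``equality'' for ``inequality'' in the earlier proofs, is the capacity identification $W^{1,p}_0(\Omega) = W^{1,p}_0(\Omega_N)$ in the subcritical range $p < m-n$: it has to be quoted from the capacity literature on removable singularities of vanishing Hausdorff measure, and it is what underwrites the entire extension from $C^\infty_0(\Omega_N)$ to $C^\infty_0(\Omega, N)$.
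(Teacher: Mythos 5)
Your proposal follows essentially the same route as the paper's: for $p>m-n$ everything is quoted from Theorems \ref{keycompactHardy} and \ref{non-compactoptimalmainthe}, and for $p<m-n$ the earlier machinery is rerun using the exact identity $r\Delta r=m-n-1$ of Lemma \ref{flatcase}, with Lemma \ref{sharpnessnon-compact}/(b) and Corollary \ref{basissolutionfornoncaompcat}/(b) supplying optimality. The only substantive divergence is cosmetic: you pass from $C^\infty_0(\Omega_N)$ to $C^\infty_0(\Omega,N)$ via the removability identification $W^{1,p}_0(\Omega)=W^{1,p}_0(\Omega_N)$ (valid since $p<m-n$), whereas the paper invokes Lemma \ref{continuouslemma}, i.e.\ $C^\infty_0(\Omega,N)\subset W^{1,p}_0(\Omega_N,r^0)$; both tools are already established in the paper and both work here, your version even giving the inequality on all of $C^\infty_0(\Omega)$ in this range.

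Two small points you should tighten. First, you never address the sharpness claim of Part (i) in the range $p<m-n$ when $N$ is compact but $r$ is possibly unbounded: Lemma \ref{sharpnessnon-compact} is stated under $\sup_\Omega r<+\infty$, so you must either rerun the $v_\epsilon$-argument of Theorem \ref{firsthadyonpunctured} (which is what ``the same proof of Theorem \ref{keycompactHardy}'' amounts to, and the exponent checks in (3.7) do go through for $\beta=-p$, $p<m-n$), or observe that the test functions $u_\epsilon$ of (\ref{uespdefine}) are supported in a fixed ball about $x_0\in N$, so the bound $A\le|\delta|^p$ is purely local and the global boundedness of $r$ is irrelevant. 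Second, your assertion that the Taylor-expansion step of Theorem \ref{mainnon-compacttheroem} is ``unaffected'' is too quick: for $p<m-n$ one has $\delta=(m-n-p)/p>0$ rather than $\delta<0$, so the sign conditions determining the auxiliary constant $a$ (which force $f'''>0$ near $0$) must be reversed; this is precisely the ``easy modification'' the paper delegates to Barbatis et al. Neither point changes the architecture of the proof.
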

\begin{proof}[Sketch of proof]On account of Theorems \ref{keycompactHardy}$\&$\ref{non-compactoptimalmainthe}, it suffices to show the theorem when  $p<(m-n)$.  Statement (i) follows from the same proof of Theorem \ref{keycompactHardy} together with Lemma \ref{flatcase}.

Now we prove Statement (ii). Firstly, since $C^\infty_0(\Omega_N)\subset C_0^\infty(\Omega,N)$, it follows from (\ref{firsthardpructreflatcase}) and Lemma \ref{sharpnessnon-compact} that (\ref{firsthardpructreflatcase2}) is optimal. Secondly,
by  an easy modification to the last part of the proof of Theorem \ref{mainnon-compacttheroem} (see (a),(b) in Barbatis et al. \cite[p.2182]{BFT}), one can obtain a constant $\mathcal {T}=\mathcal {T}(p,m-n)> 1$ such that for any $D\geq \mathcal {T}\sup_{x\in \Omega}r(x)$, there holds
 \begin{align*}
\int_{{\Omega}} |\nabla u|^p {\dvol}_g\geq  \left|\frac{m-n-p }{p}\right|^p\int_{{\Omega}}  \frac{|u|^p}{r^p} {\dvol}_g+\frac{p-1}{2p}\left|\frac{m-n-p }{p}\right|^{p-2}\int_\Omega  \frac{|u|^p}{r^p}\log^{-2}\left( \frac{D}{r} \right){\dvol}_g,\tag{5.6}\label{flatstrongimproveHardy2}
\end{align*}
for any $u\in C^\infty_0({\Omega_N})$. On the one hand, Fatou's lemma indicates that (\ref{flatstrongimproveHardy2}) holds for $W^{1,p}_0(\Omega_N)$, which together with Lemma \ref{continuouslemma} yields (\ref{strongimproveHardyfaltacase}).
On the other hand, Lemma \ref{sharpnessnon-compact} implies that  (\ref{flatstrongimproveHardy2}) is optimal with respect to $C^\infty_0(\Omega_N)$.    Since $C^\infty_0(\Omega_N)\subset C_0^\infty(\Omega,N)$, a similar argument as in the proof of Theorem \ref{non-compactoptimalmainthe}/(c') yields
 the sharpness of (\ref{strongimproveHardyfaltacase}).
\end{proof}

\section{Examples}\label{examplesection}
In this section, we establish several Hardy inequalities on cylinders, hemispheres and tori, which verify the validity of Theorems  \ref{keycompactHardy}, \ref{non-compactoptimalmainthe} and \ref{flatcaseThe1112}.
In order to do this, we need two lemmas. The first one follows from
a direct computation (cf. D'Ambrosio \cite[p.\,460f]{D}).
\begin{lemma}\label{standardcalculation} Let $(M,g)$ be a complete Riemannian manifold. Suppose that $\rho$ is a  non-negative function such that $\rho>0$ a.e. and $\nabla \rho$ exists a.e.. Given any $p\in [2,+\infty)$, $\alpha\in \mathbb{R}\backslash\{0\}$ and
 $u\in C^\infty_0(M)$, there holds
\[
|\nabla u|^p\geq |\gamma|^p\frac{|u|^p}{\rho^p}|\nabla\rho|^p+\left(\frac{p-1}{p}\right)^{p-1}g{\left( \nabla |v|^p, |\nabla \rho^\alpha|^{p-2}\nabla \rho^\alpha \right)}+\frac{2}{p}|\gamma|^{p-2} \rho^{(\alpha-1)(p-1)+1}|\nabla \rho|^{p-2}\left| \nabla |v|^{\frac{p}2} \right|^2,
\]
where $v:=\rho^{-\gamma}u$ and $\gamma:=\alpha\frac{p-1}{p}$.
\end{lemma}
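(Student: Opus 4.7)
The plan is to reduce the lemma to a single pointwise algebraic inequality via the substitution $u=\rho^{\gamma}v$. Wherever $\rho>0$ and $\nabla\rho$ exists (i.e.\ almost everywhere on $M$), the Leibniz rule gives
\[
\nabla u \;=\; X + Y, \qquad X:=\gamma\,\rho^{\gamma-1}v\,\nabla\rho, \qquad Y:=\rho^{\gamma}\nabla v, \qquad v:=\rho^{-\gamma}u.
\]
Thus, it will suffice to prove the pointwise vector inequality
\[
|X+Y|^{p} \;\geq\; |X|^{p} \;+\; p|X|^{p-2}g(X,Y) \;+\; \tfrac{p}{2}|X|^{p-2}|Y|^{2}, \qquad p\geq 2,
\]
and then match each of its three summands to the three summands in the statement via bookkeeping of the exponents of $\rho$, $v$, and $|\nabla\rho|$.

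The pointwise inequality would be proved by scalar convexity. Since $p\geq 2$, the function $\phi(t)=t^{p/2}$ is convex on $[0,\infty)$, so the tangent-line inequality $\phi(t)\geq \phi(t_{0})+\phi'(t_{0})(t-t_{0})$ holds for all $t,t_{0}\geq 0$. Plugging in
\[
t_{0}=|X|^{2}, \qquad t=|X+Y|^{2}=|X|^{2}+2g(X,Y)+|Y|^{2},
\]
and using $\phi'(t_{0})=(p/2)|X|^{p-2}$, yields precisely the desired inequality. (Equivalently, one could integrate the Hessian bound $D^{2}(|\xi|^{p})\geq p|\xi|^{p-2}\id$ along the segment $[X,X+Y]$, but the single convexity step above is shorter and gives the sharp constant.)

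The concluding step is pure algebra. From $u=\rho^{\gamma}v$ one reads off $|X|^{p}=|\gamma|^{p}\rho^{(\gamma-1)p}|v|^{p}|\nabla\rho|^{p}=|\gamma|^{p}|\nabla\rho|^{p}|u|^{p}/\rho^{p}$, which is the first summand. For the middle summand, expand
\[
p|X|^{p-2}g(X,Y)=p\,\gamma|\gamma|^{p-2}\rho^{(\gamma-1)(p-1)+\gamma}|v|^{p-2}v\,|\nabla\rho|^{p-2}g(\nabla\rho,\nabla v)
\]
and then invoke the two elementary identities $|\gamma|^{p-2}\gamma=(\tfrac{p-1}{p})^{p-1}|\alpha|^{p-2}\alpha$ and $(\gamma-1)(p-1)+\gamma=(\alpha-1)(p-1)$, both immediate from $\gamma=\alpha(p-1)/p$, together with $\nabla|v|^{p}=p|v|^{p-2}v\,\nabla v$ and $|\nabla\rho^{\alpha}|^{p-2}\nabla\rho^{\alpha}=|\alpha|^{p-2}\alpha\,\rho^{(\alpha-1)(p-1)}|\nabla\rho|^{p-2}\nabla\rho$; this rewrites the expansion as $(\tfrac{p-1}{p})^{p-1}g(\nabla|v|^{p},|\nabla\rho^{\alpha}|^{p-2}\nabla\rho^{\alpha})$. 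For the third summand, the identity $|\nabla|v|^{p/2}|^{2}=(p/2)^{2}|v|^{p-2}|\nabla v|^{2}$ (valid a.e.) together with the exponent equality $(\gamma-1)(p-2)+2\gamma=(\alpha-1)(p-1)+1$ converts $\tfrac{p}{2}|X|^{p-2}|Y|^{2}$ into $\tfrac{2}{p}|\gamma|^{p-2}\rho^{(\alpha-1)(p-1)+1}|\nabla\rho|^{p-2}|\nabla|v|^{p/2}|^{2}$, as required.

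The only genuinely nontrivial step is the pointwise vector inequality, and even that is a one-line consequence of convexity of $t\mapsto t^{p/2}$; all remaining work is exponent arithmetic. The only thing to watch is the a.e.\ character of the identities on $\{v=0\}$ and on $\{\rho=0\}$, but since the hypothesis of the lemma guarantees $\rho>0$ a.e.\ and the expressions $\nabla|v|^{p},\nabla|v|^{p/2}$ vanish a.e.\ where $v=0$ for $p\geq 2$, everything is well-defined and no separate argument is needed on these null sets.
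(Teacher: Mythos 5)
Your proof is correct, and it is essentially the computation the paper itself points to: the paper gives no proof of this lemma beyond citing it as "a direct computation (cf. D'Ambrosio \cite[p.\,460f]{D})", and your argument — the splitting $\nabla u = X+Y$ after substituting $u=\rho^{\gamma}v$, the tangent-line (convexity) inequality for $t\mapsto t^{p/2}$ with $p\geq 2$, and the exponent bookkeeping using $\gamma=\alpha(p-1)/p$ — is exactly that computation, with all identities checking out.
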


The second lemma can be proved by the same method as employed in Barbatis et al. \cite[Lemma 3.2]{BFT}.
\begin{lemma}\label{basisinequaltyfronon-compact}Let $(M,g)$, $N$ and $\Omega$ be as in Assumption \ref{assufreecur}.
Given $p\in(1,+\infty)$, $s\in \mathbb{R}$ and $D\geq \sup_{x\in \Omega}r(x)$, we have
\begin{align*}
 &\left(\frac{|s-1|}{p}\right)^{p-1}\int_\Omega \frac{|f|^p}{r^{m-n }} |r\Delta r+1-(m-n) |\,\log^{1-s}\left( \frac{D}{r} \right){\dvol}_g
+\int_\Omega \frac{|\nabla f|^p}{ r^{m-n-p}}  \log^{p-s}\left( \frac{D}{r} \right){\dvol}_g\\
\geq&\left(\frac{|s-1|}{p}\right)^p \int_\Omega \frac{|f|^p }{r^{m-n } }\log^{-s}\left( \frac{D}{r } \right){\dvol}_g,\ \forall\,f\in C^\infty_0(\Omega_N).
\end{align*}
\end{lemma}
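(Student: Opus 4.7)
The plan is to deduce the inequality from a single integration-by-parts identity against a carefully chosen radial vector field on $\Omega_N$. Set
\[
X := c\, r^{-(m-n-1)}\,\log^{1-s}(D/r)\,\nabla r,
\]
where $c\in\mathbb{R}$ will be optimised at the end. Since $|\nabla r|=1$ on $\Omega_N$, the product and chain rules give
\[
\di X \;=\; c(s-1)\,\frac{\log^{-s}(D/r)}{r^{m-n}} \;+\; c\,\frac{\log^{1-s}(D/r)}{r^{m-n}}\bigl[r\Delta r + 1-(m-n)\bigr].
\]
Because $f\in C^\infty_0(\Omega_N)$, $\supp f$ is a compact subset of $\Omega_N$, so both $r$ and $\log(D/r)$ are bounded above and below by positive constants there; every integral below is finite and the divergence theorem applied to $|f|^pX$ yields no boundary contribution.

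Integration by parts then gives
\[
\int_\Omega |f|^p\,\di X\,{\dvol}_g \;=\; -p\int_\Omega |f|^{p-2}f\,g(X,\nabla f)\,{\dvol}_g \;\le\; p\int_\Omega |f|^{p-1}|X||\nabla f|\,{\dvol}_g,
\]
and to the right-hand side I apply Young's inequality $pAB\le (p-1)A^{p/(p-1)} + B^p$ with
\[
A := |c|\,r^{-(m-n)(p-1)/p}\log^{-s(p-1)/p}(D/r),\qquad B := |\nabla f|\,r^{-(m-n-p)/p}\log^{(p-s)/p}(D/r),
\]
engineered so that $AB=|X|$ and so that $A^{p/(p-1)}$ and $B^p$ reproduce exactly the weights $|c|^{p/(p-1)}r^{-(m-n)}\log^{-s}(D/r)$ and $r^{-(m-n-p)}\log^{p-s}(D/r)$ appearing in the statement.

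Combining the two displays, then transferring the curvature-type summand of $\di X$ to the right-hand side and majorising it by its absolute value, produces
\[
\bigl[c(s-1)-(p-1)|c|^{p/(p-1)}\bigr]\int_\Omega |f|^p\,\frac{\log^{-s}(D/r)}{r^{m-n}}\,{\dvol}_g \;\le\; \int_\Omega \frac{|\nabla f|^p\log^{p-s}(D/r)}{r^{m-n-p}}\,{\dvol}_g + |c|\int_\Omega \frac{|f|^p\log^{1-s}(D/r)\,|r\Delta r+1-(m-n)|}{r^{m-n}}\,{\dvol}_g.
\]
Choosing $c:=\sgn(s-1)\,(|s-1|/p)^{p-1}$ maximises the scalar $c(s-1)-(p-1)|c|^{p/(p-1)}$; an elementary computation shows this maximum equals $(|s-1|/p)^p$, while $|c|=(|s-1|/p)^{p-1}$, which is precisely the pair of constants claimed. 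The case $s=1$ is trivial since one may take $c=0$.

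The only technical nuisance is that $|f|^p$ fails to be $C^1$ at $\{f=0\}$ when $1<p<2$; this is handled by the standard device of replacing $|f|^p$ by $(f^2+\varepsilon)^{p/2}$, performing integration by parts, and letting $\varepsilon\to 0^+$ with dominated convergence (justified by the two-sided bounds on $r$ and $\log(D/r)$ on $\supp f$). No curvature hypothesis enters the argument, consistent with the author's remark that these estimates are free of curvature; Assumption \ref{assufreecur} enters only through the ambient setup and to guarantee $\log(D/r)\ge 0$ throughout $\Omega$.
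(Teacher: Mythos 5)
Your argument is exactly the one the paper intends: the paper gives no proof of this lemma beyond the remark that it ``can be proved by the same method as employed in Barbatis et al.\ [Lemma 3.2]'', and that method is precisely your scheme --- test the divergence of the radial field $X=c\,r^{-(m-n-1)}\log^{1-s}(D/r)\nabla r$ against $|f|^p$, apply Young's inequality with the weights split as you do, and optimise over $c$. I checked the computation of $\di X$, the pairing $AB=|X|$, and the maximisation of $c(s-1)-(p-1)|c|^{p/(p-1)}$ at $c=\sgn(s-1)(|s-1|/p)^{p-1}$ with maximum $(|s-1|/p)^p$; all of this is correct, as is the regularisation $(f^2+\varepsilon)^{p/2}$ for $1<p<2$.

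There is, however, one step that is not justified as written and that can genuinely fail: the identity $\int_\Omega|f|^p\di X\,{\dvol}_g=-p\int_\Omega|f|^{p-2}f\,g(X,\nabla f)\,{\dvol}_g$, with $\di X$ computed pointwise by the chain rule. The distance function $r$ is smooth only on $\mathcal{V}\mathscr{D}(N)$, and $\supp f$ may meet the focal--cut locus $\mathcal{V}\mathscr{C}(N)$ (this happens in the paper's own applications, e.g.\ the cylinder and torus of Section 6). Integrating in Fermi coordinates over $0<t<c_{\mathcal V}(\mathbf{n})$ and integrating by parts in $t$ produces an extra boundary term
\[
\int_{N}\int_{\mathcal{V}S_xN}|f|^p\,\psi\,\det\mathcal{A}\big|_{t=c_{\mathcal V}(\mathbf{n})}\,d\nu_x\,{\dvol}_{i^*g},\qquad \psi:=c\,r^{-(m-n-1)}\log^{1-s}(D/r),
\]
on the right-hand side of your identity. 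For $s\le 1$ one has $c\le 0$, the extra term is non-positive, and your chain of inequalities survives. But for $s>1$ (the case $s=2$ actually used in the paper) the extra term is non-negative and is \emph{not} dominated by $p\int|f|^{p-1}|X||\nabla f|$; it must instead be absorbed by interpreting $\Delta r$ distributionally, so that the singular negative part of $\Delta r$ on the cut locus contributes to the term $|r\Delta r+1-(m-n)|$ on the left of the lemma. With the a.e.\ interpretation of $\Delta r$ that you (and the paper) use, the inequality can be violated by functions that are nonzero on the cut locus. So you should either state the lemma with the distributional Laplacian, or restrict to $f$ supported in $\mathcal{V}\mathscr{D}(N)$, or record the sign restriction $s\le 1$; as it stands this is a gap --- one the paper shares, since it defers entirely to the Euclidean reference.
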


Now we present the Hardy inequalities on cylinders.

\begin{example}\label{exmpale1} Given $n\in \mathbb{N}$ with $n\geq 1$,
let $M=\mathbb{R}\times \mathbb{S}^{n}$ be a cylinder equipped with the product metric.  Denote by
 $s_0$ (resp., $w_0$)   a fixed point  in $\mathbb{R}$ (resp., $\mathbb{S}^{n}$).

 \smallskip

\begin{itemize}

  \item[(a)] If $N=\{s_0\}\times\mathbb{S}^{n}$, then for any  $p>1$,
\begin{align*}
 \int_M |\nabla u|^p \dvol_g\geq \left( \frac{p-1}p \right)^p \int_M\frac{|u|^p}{r^p}\dvol_g, \ \forall\,u\in  C^\infty_0(M,N).\tag{6.1}\label{exampleforcompactcase}
 \end{align*}

\item[(b)] If $N=\mathbb{R}\times \{w_0\}$, then for any $p>n$,
 \begin{align*}
 \int_M |\nabla u|^p \dvol_g\geq  \left( \frac{p-n}{p} \right)^p\int_M\frac{|u|^p}{r^p}\dvol_g, \ \forall\,u\in  C^\infty_0(M,N).\tag{6.2}\label{firstHarydsubmsni}
 \end{align*}
 In particular, provided $n=1$ and $p=2$, then for any $D>\pi$, there holds
 \begin{align*}
\int_M|\nabla u|^2 \dvol_g\geq \frac14\int_M \frac{|u|^2}{r^2}\dvol_g+\frac14\int_M \frac{u^2 }{r^2 }\log^{-2}\left( \frac{D}{r } \right){\dvol}_g, \ \forall\,u\in   C^\infty_0(M,N).\tag{6.3}\label{longHardyineqex}
\end{align*}

\end{itemize}

\smallskip

Note that $M$ is of non-negative sectional curvature and $N$ is minimal in both (a) and (b) cases. Hence, (\ref{exampleforcompactcase}) follows from Theorem \ref{keycompactHardy} while (\ref{firstHarydsubmsni})--(\ref{longHardyineqex}) follow from Theorem \ref{non-compactoptimalmainthe} immediately. However, in the sequel, we prove these inequalities by another approach.
Let $(x^i)=(s,w^l)$ denote a local coordinate system of $M$, where $s$ (resp., $w^l$) denotes the coordinate of $\mathbb{R}$ (resp., local coordinates of $\mathbb{S}^{n}$).

\smallskip

(a) Without loss of generality, we assume $N=\{0\}\times\mathbb{S}^{n}$.
  Thus, for any $x=(s,w^l)$, we have $r(x)=|s|$ and hence, $\Delta r=0$ in $M\backslash N$.

Suppose  $p\in (1,2)$. Proceeding as in the proof of Lemma \ref{refinedsequence}/(a), one can easily get
 \[
 \int_M |\nabla u|^p \dvol_g\geq \left( \frac{p-1}p \right)^p \int_M\frac{|u|^p}{r^p}\dvol_g,\ u\in C^\infty_0({M\backslash N}).
 \] Then  (\ref{exampleforcompactcase}) follows from the same argument as in the proof of Proposition \ref{reverRicinfty}.

Suppose $p\geq 2$. Although
the method used above still work in this case, we prefer to prove (\ref{exampleforcompactcase}) by a direct calculation.
Given
$u\in C^\infty_0(M,N)$, Lemma \ref{standardcalculation} ($\rho:=r$ and $\alpha:=1$) yields
\begin{align*}
|\nabla u|^p\geq \left( \frac{p-1}{p} \right)^p \frac{|u|^p}{{{r}}^p}+\left( \frac{p-1}{p} \right)^{p-1}   g( \nabla |v|^p,\nabla{{r}})   \text{ in }{M\backslash N},\tag{6.4}\label{inequalityseefirst}
\end{align*}
where $v=u r^{\frac{1-p}p}$.
 On the other side, Taylor's expansion (w.r.t. the coordinate $s$) furnishes
  \[
   \left|u(x^i)\right|=\left|u(s,w^l)\right|= \left|s\,{\partial_s u} (0,w^l)+o(s)\right|= \left|  r \lim_{\eta\rightarrow 0}g\left(\nabla r|_{(\eta,w^l)}, \nabla u|_{(\eta,w^l)}\right)+o(r)\right|.
   \]
Then an easy computation yields
 a constant $C>0$ and a small $\epsilon>0$ such that
 \[
|g(\nabla u,\nabla r)|(x)\leq C,\  |u|^p(x)\leq Cr^p(x),\ |v|^p(x)\leq Cr(x),\  \left|g(\nabla |v|^p,\nabla r)\right|(x)\leq C,\ \text{ if }r(x)<\epsilon,%,\tag{6.5}\label{cylinedercompest}
 \]
which implies
\begin{align*}
\lim_{\eta\rightarrow0^+}\left[\left|\int_{T_\eta} g(\nabla |v|^p,\nabla r)\dvol_g\right|+ \left|\int_{\partial T_\eta}|v|^p dA\right|\right]\leq \lim_{\eta\rightarrow0^+}C \vol_g\left( T_\eta \right)+ 2 c_{n}\lim_{\eta\rightarrow0^+}\max_{\partial T_\eta} |v|^p=0.\tag{6.5}\label{divegencelemmaonSthird}
\end{align*}
Since $\Delta r=0$, the divergence theorem together with  (\ref{divegencelemmaonSthird})  yields
\begin{align*}
&\int_{M} g(\nabla |v|^p,\nabla r)\dvol_g=\lim_{\eta\rightarrow0^+}\left(\int_{M\backslash T_\eta} \di\left(|v|^p \nabla r\right) \dvol_g+\int_{T_\eta}g(\nabla |v|^p,\nabla r)\dvol_g\right)\\
=&\lim_{\eta\rightarrow0^+}\int_{\partial  T_\eta }|v|^p  dA+\lim_{\eta\rightarrow0^+}\int_{T_\eta } g(\nabla |v|^p,\nabla r)\dvol_g=0,
\end{align*}
which combined with (\ref{inequalityseefirst}) furnishes (\ref{exampleforcompactcase}).

(b) In this case, for any $x=(s,w)$,
we have $r(x)=d_{\mathbb{S}^{n}}(w,w_0)\leq \pi$,
 where $d_{\mathbb{S}^{n}}$ is the distance on $\mathbb{S}^{n}$. This fact together with a direct calculation in local coordinates then yields $\Delta r=(n-1)\cot r$ for $0<r<\pi$. If $p\in (1,2)$, the proof is almost the same as above. Alternatively, suppose $p\geq 2$. Since $p>n$,
\[
\Delta_p r^\alpha=\alpha^{p-1}\frac{(n-1)}{r^{n}}\left( r\cot r-1 \right)\leq 0.\tag{6.6}\label{cosumbmanifoldsphere}
\]
where $\alpha:= {(p-n)}/{(p-1)}$.
For any $u\in C^\infty_0(M,N)$,  Taylor's  expansion in Fermi coordinates yields a constant $C>0$ and $\epsilon>0$ such that
\[
|g(\nabla u,\nabla r)|(x)\leq C,\ |u|^p(x)\leq Cr^p(x),\ |v|^p(x)\leq Cr^{n}(x),\  \left|g(\nabla |v|^p,\nabla r )\right|(x)\leq C r^{n-1}(x), \text{ if } r(x)<\epsilon,
\]
where $v=ur^{-\alpha\frac{p-1}{p}}=u r^{ \frac{n-p}p}$. Since $v$ is compactly supported, a similar argument as above combined with the divergence theorem and (\ref{cosumbmanifoldsphere}) furnishes
\begin{align*}
&\int_M g{\left( \nabla |v|^p, |\nabla r^\alpha|^{p-2}\nabla r^\alpha \right)}\dvol_g=\\
&\lim_{\eta\rightarrow 0^+}\left(  \int_{\partial T_\eta} g\left(|v|^p |\nabla r^\alpha|^{p-2}\nabla r^\alpha, \nabla r\right)  d A- \int_{M\backslash T_\eta}|v|^p \Delta_p r^\alpha   \dvol_g+\int_{T_\eta} g{\left( \nabla |v|^p, |\nabla r^\alpha|^{p-2}\nabla r^\alpha \right)}\dvol_g \right)\geq 0,
\end{align*}
which together with Lemma \ref{standardcalculation} ($\rho:=r$) yields (\ref{firstHarydsubmsni}).

In order to prove (\ref{longHardyineqex}), for any $u\in C^\infty_0({M\backslash N})$, set $v:=ur^{-1/2}\in C^\infty_0({M\backslash N})$. Since $\Delta r=0$ (i.e., $n=1$),
  Lemma \ref{basisinequaltyfronon-compact}  ($s:=2=p$ and $f:=   v$) yields
\begin{align*}
\int_M |\nabla |v| |^2 r{\dvol}_g=\int_M |\nabla v |^2 r{\dvol}_g\geq\frac14 \int_M \frac{|v|^2 }{r }\log^{-2}\left( \frac{D}{r } \right){\dvol}_g=\frac14 \int_M \frac{u^2 }{r^2 }\log^{-2}\left( \frac{D}{r } \right){\dvol}_g,
\end{align*}
which together with Lemma \ref{standardcalculation} ($\alpha:=1$ and $\rho:=r$) and the divergence theorem furnishes
 \begin{align*}
\int_M|\nabla u|^2 \dvol_g\geq \frac14\int_M \frac{|u|^2}{r^2}\dvol_g+\frac14\int_M \frac{u^2 }{r^2 }\log^{-2}\left( \frac{D}{r } \right){\dvol}_g.\tag{6.7}\label{needsfatou}
\end{align*}
Recall
 $C^\infty_0(M,N)\subset W_0^{1,p}({M\backslash N},r^0)$ (cf.\,Lemma \ref{continuouslemma}).
Thus, (\ref{longHardyineqex}) follows from (\ref{needsfatou}) and the same argument as in the proof of Proposition \ref{reverRicinfty}.
 \end{example}

Now we turn to consider the Hardy inequalities on hemispheres.

\begin{example}\label{sphereexample}
Given $n\geq 2$, let $M:=\mathbb{S}^n\subset \mathbb{R}^{n+1}$. Denote by $\Omega$ the northern hemisphere and set $N:=\partial \Omega\cong\mathbb{S}^{n-1}$. Thus, for any $p>1$, there holds
\begin{align*}
\int_{\Omega}|\nabla u|^p\dvol_g\geq \left( \frac{p-1}{p} \right)^p\int_{\Omega}  \frac{|u|^p}{{r}^p} \dvol_g, \ \forall\,u\in C_0^\infty(\Omega,N). \tag{6.8}\label{finhardyonsphere}
\end{align*}

\smallskip
In fact, let $(\varrho,y)$ denote the polar coordinates about the north pole $o$. Thus, for any $x=(\varrho,y)\in \Omega$, we have $r(x)=\pi/2-\varrho$, which implies
\begin{align*}
\Delta r=-\Delta \varrho=-(n-1)\cot\rho=-(n-1)\tan r< 0.
\end{align*}
Consequently, (\ref{finhardyonsphere}) can be proved by the same method as employed in  Example \ref{exmpale1}/(a).
 Alternatively, according to do Carmo and Warner \cite[Theorem 1.1]{CW}, the boundary of a hemisphere is always  weakly mean convex (factually, totally geodesic). Thus, (\ref{finhardyonsphere}) follows from (\ref{sendhardnonprct}) immediately.
\end{example}

We end this section by investigating Hardy inequalities on flat tori.

\begin{example}\label{flattoriexample}
Given $m,n\in \mathbb{N}$ with $m\geq 2$ and $0\leq n\leq m-1$, let $M:=\mathbb{T}^m=\mathbb{S}^1\times \cdots \times \mathbb{S}^1$ be an $m$-dimensional flat torus and let $N:= \{w_1\}\times \cdots \times \{w_{m-n}\}\times \mathbb{T}^n\subset M$, where $w_l$, $1\leq l\leq m-n$, are fixed points in $\mathbb{S}^1$.
 Then for any $1<p\neq (m-n)$, there holds
\begin{align*}
\int_{M}|\nabla u|^p\dvol_g\geq \left| \frac{p-(m-n)}{p} \right|^p\int_{M}  \frac{|u|^p}{{r}^p} \dvol_g, \ \forall\,u\in C_0^\infty(M,N). \tag{6.9}\label{flatoruex}
\end{align*}

In fact, let $x=(x^i)=(x^1,\ldots,x^m)$, $x^i\in (-\pi,\pi)$ be the natural local coordinates of $M$ such that $x^l(w_{l})=0$ for $1\leq l\leq m-n$. Thus $r(x)=\sqrt{\sum_{l=1}^{m-n} (x^l)^2}$ and hence,
$r\Delta r=m-n-1$.
Owing to this fact, by choosing $\alpha=\frac{p-(m-n)}{p-1}$, the proof is similar to that of Example \ref{exmpale1}/(b).
By contrast, since $N$ is a minimal (actually, totally geodesic) submanifold of $M$,
 (\ref{flatoruex}) is a direct consequence of Theorem \ref{flatcaseThe1112}/(i).
\end{example}

\appendix

\section{Weighted Sobolev spaces}\label{Soblevspace}

In this section, we study the properties of weighted Sobolev spaces defined by Definition \ref{Hanshukongj}. In what follows, we always assume
\begin{itemize}
\item $(M,g)$, $N$ and $\Omega$ satisfy Assumption \ref{assufreecur1};

\item $p\in (1,+\infty)$ and $\beta\in \mathbb{R}$ satisfy    $p+\beta>-(m-n)$.
\end{itemize}

A standard argument together with   Lemma \ref{centerzeroinfite} yields the following result.
\begin{lemma}\label{nullmeaure}
Let ${\dmu}:=r^{p+\beta}{\dvol}_g$. Then ${\dmu}$ is a $\sigma$-finite measure on $M$. Moreover,
for any Borel set $E$,  $\mu(E)=0$ if and only if $\vol_g(E)=0$. In particular, $\mu(N)=0$.
\end{lemma}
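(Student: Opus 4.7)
The plan has three parts matching the three assertions: $\sigma$-finiteness of $\mu = r^{p+\beta}\dvol_g$, mutual absolute continuity with $\vol_g$, and $\mu(N)=0$.

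First I would establish $\sigma$-finiteness by covering $M$ with countably many relatively compact open sets $\{U_j\}_{j\geq 1}$, which is possible because $(M,g)$ is a second-countable complete Riemannian manifold. On each $\overline{U_j}$ I need $\int_{\overline{U_j}} r^{p+\beta}\dvol_g < \infty$. If $\overline{U_j}\cap N = \emptyset$, then $r$ is bounded below by a positive constant on $\overline{U_j}$, so $r^{p+\beta}$ is bounded and the integral is finite. If $\overline{U_j}\cap N \neq \emptyset$, I split
\[
\int_{\overline{U_j}} r^{p+\beta}\dvol_g = \int_{\overline{U_j}\cap T_\epsilon} r^{p+\beta}\dvol_g + \int_{\overline{U_j}\setminus T_\epsilon} r^{p+\beta}\dvol_g.
\]
Setting $l := -(p+\beta)$, the hypothesis $p+\beta > -(m-n)$ gives $l < m-n$, and Lemma \ref{centerzeroinfite}/(i) (and in fact the explicit Fermi-coordinate estimate from its proof) guarantees that the first integral is finite and tends to $0$ as $\epsilon\to 0^+$. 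The second integral is clearly finite since $r\geq \epsilon$ on the complement of $T_\epsilon$, bounding $r^{p+\beta}$, and $\overline{U_j}$ is compact.

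Second, for the equivalence $\mu(E)=0 \Leftrightarrow \vol_g(E)=0$, I would use that $\mu$ has the $\vol_g$-density $r^{p+\beta}$, which is strictly positive on $M\setminus N$. Since $N$ is an $n$-dimensional closed submanifold with $n\leq m-1$, it has $\vol_g$-measure zero, so $r^{p+\beta} > 0$ holds $\vol_g$-almost everywhere on $M$. The forward direction $\vol_g(E)=0 \Rightarrow \mu(E)=0$ is immediate. For the converse, $\mu(E)=0$ forces $r^{p+\beta} = 0$ $\vol_g$-a.e.\ on $E$, which in turn forces $\vol_g(E\setminus N)=0$; combined with $\vol_g(N)=0$, this yields $\vol_g(E)=0$.

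The statement $\mu(N)=0$ is then immediate from $\vol_g(N)=0$ together with the just-established absolute continuity (and can also be read off from Lemma \ref{centerzeroinfite}/(i) applied to any bounded neighborhood of a point of $N$, letting $\epsilon\to 0^+$). The only point requiring actual input is the control of $r^{p+\beta}$ near $N$ in the $\sigma$-finiteness step, but this is exactly what Lemma \ref{centerzeroinfite}/(i) was designed to provide; the rest is routine measure theory, so I do not anticipate any genuine obstacle.
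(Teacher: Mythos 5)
Your proposal is correct and follows exactly the route the paper intends: the paper dismisses this lemma with ``a standard argument together with Lemma \ref{centerzeroinfite}'', and your use of Lemma \ref{centerzeroinfite}/(i) with $l=-(p+\beta)<m-n$ for local integrability near $N$, plus the positivity of the density $r^{p+\beta}$ off the $\vol_g$-null set $N$ for mutual absolute continuity, is precisely that standard argument spelled out.
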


In the sequel, we prefer to use ${\dmu}$ rather than $\dvol_g$ to investigate weighted Sobolev spaces. Let $U\subset M$ be an open set.
 For any $s\in (1,\infty)$, we define
  $L^s({U},r^{p+\beta})$ (resp., $L^s(T{U},r^{p+\beta})$) as the completion of $C_0({U})$ (resp., $\Gamma_0(T{U})$, i.e., the space of  continuous tangent vector fields with compact support in ${U}$) under the norm
\begin{align*}
[u]_{s,\mu}:=\left( \int_{{{U}}}|u|^s{\dmu}\right)^{\frac1s}, \ \ \  \left(\text{ resp., }  [X]_{s,\mu}:=\left( \int_{{{U}}}|X|^s{\dmu}\right)^{\frac1s} \right).
\end{align*}

The standard theory yields the following result.
\begin{theorem}\label{LPreflexive}
Both $L^s({U},r^{p+\beta})$ and $L^s(T{U},r^{p+\beta})$ are reflexive. Hence, if a sequence $(u_i)$ (resp., $(X_i)$) is bounded in
$L^s({U},r^{p+\beta})$ (resp., $L^s(T{U},r^{p+\beta})$), then there exists a subsequence $(u_{i_k})$ (resp., $(X_{i_k})$) and $u\in L^s({U},r^{p+\beta})$ (resp., $X\in L^s(T{U},r^{p+\beta})$ ) such that $u_{i_k}\rightarrow u$
weakly in $L^s({U},r^{p+\beta})$ (resp., $X_{i_k}\rightarrow X$
weakly in $L^s(T{U},r^{p+\beta})$) as $k\rightarrow \infty$.
\end{theorem}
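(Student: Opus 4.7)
The plan is to reduce Theorem \ref{LPreflexive} to the classical reflexivity theory of $L^s$-spaces on a measure space, and then invoke Eberlein--\v{S}mulian to deduce the weak sequential compactness statement.

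First I would recast the scalar space $L^s(U,r^{p+\beta})$ as a standard Lebesgue space. By Lemma \ref{nullmeaure}, $d\mu=r^{p+\beta}\dvol_g$ is a $\sigma$-finite Borel (hence Radon) measure on the locally compact Hausdorff space $U$, absolutely continuous with respect to $\dvol_g$. The seminorm $[\,\cdot\,]_{s,\mu}$ on $C_0(U)$ is precisely the $L^s(U,d\mu)$-seminorm, and $C_0(U)$ is dense in $L^s(U,d\mu)$ by the Riesz representation theorem together with the Radon property of $\mu$. Therefore, the abstract completion of $C_0(U)$ under $[\,\cdot\,]_{s,\mu}$ is canonically isometrically isomorphic to $L^s(U,d\mu)$. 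The classical reflexivity of $L^s$-spaces for $1<s<\infty$ (via Clarkson's inequalities, which imply uniform convexity, combined with the Milman--Pettis theorem) then yields the reflexivity of $L^s(U,r^{p+\beta})$.

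For the bundle-valued space $L^s(TU,r^{p+\beta})$, I would cover $U$ by a countable locally finite family of relatively compact coordinate charts $\{(V_j,\varphi_j)\}$ with a subordinate smooth partition of unity $\{\chi_j\}$. On each chart the components of a tangent field $X$ in the coordinate frame are scalar functions, and the Riemannian norm $|X|_g$ is equivalent (on $V_j$) to the Euclidean norm of its coordinate vector. This yields a continuous linear injection
\[
\Phi\colon L^s(TU,r^{p+\beta})\hookrightarrow \bigoplus_{j}\bigoplus_{i=1}^{m} L^s(V_j,d\mu),\qquad X\mapsto \bigl(\chi_j^{1/s}\,X^i_{(j)}\bigr),
\]
whose image is closed by completeness. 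A closed subspace of a reflexive space is reflexive, so $L^s(TU,r^{p+\beta})$ is reflexive. Alternatively, one may argue directly: Clarkson's inequalities hold pointwise for the scalars $|X\pm Y|_g$, so integrating against $d\mu$ furnishes uniform convexity of the vector-valued norm $[\,\cdot\,]_{s,\mu}$, and Milman--Pettis again gives reflexivity.

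Finally, for the weak sequential compactness, any bounded sequence in a reflexive Banach space has a weakly convergent subsequence by Eberlein--\v{S}mulian (equivalently, Banach--Alaoglu applied in the reflexive setting combined with metrizability of the weak topology on bounded sets of a separable reflexive space, and separability of our spaces follows from the separability of $C_0(U)$ under $[\,\cdot\,]_{s,\mu}$). Applying this to $(u_i)$ in $L^s(U,r^{p+\beta})$ and to $(X_i)$ in $L^s(TU,r^{p+\beta})$ completes the proof. The only mild obstacle is the bundle-valued case, since the definition is via an abstract completion rather than a direct measure-space construction; the chart-and-partition-of-unity identification handles this cleanly.
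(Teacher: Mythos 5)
Your proof is correct and follows exactly the route the paper intends: the paper offers no argument beyond the remark that ``the standard theory yields the following result,'' and your identification of the abstract completion with $L^s(U,d\mu)$ (using Lemma \ref{nullmeaure} and density of $C_0$ for Radon measures), uniform convexity via Clarkson plus Milman--Pettis, and Eberlein--\v{S}mulian is precisely that standard theory spelled out. The only cosmetic caveat is that the chart embedding $\Phi$ needs charts with uniformly controlled metric coefficients to be bounded below (so that its image is closed), but your alternative direct uniform-convexity argument for the Hilbert-space-valued fibers already covers the bundle case cleanly.
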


Let ${W^{1,p}}({{U}},{r}^{p+\beta})$ be  as in Definition \ref{Hanshukongj}. Thus,  for any $u\in C^\infty({U})$, we have $\|u\|^p_{p,\beta}=[u]^p_{p,\mu}+[\nabla u]^p_{p,\mu}$.
\begin{remark}\label{smoothdivege}It is easy to check that the weak gradient in the sense of $W^{1,p}({U},r^{p+\beta})$ satisfies the following properties:
\begin{itemize}

\item[(a)] $\nabla(\lambda  u+\zeta  v)=\lambda\nabla  u+\zeta\nabla  v$ for any $\lambda,\zeta\in \mathbb{R}$ and $u,v\in W^{1,p}({U},r^{p+\beta})$;

\item[(b)] for any smooth vector field $X$ with compact support in ${U}\backslash N$, one has
\[
\int_{{{U}}} g( \nabla u ,X) {\dmu}=-\int_{{{U}}}u \di_\mu X {\dmu}.
\]
\end{itemize}
\end{remark}

In the following, $U$ is either $M$ or $\Omega_N$.
Since $p+\beta>-(m-n)$, there always exists some $q>1$ such that $q(p+\beta)>-(m-n)$. Then the same proof as in Meng et al. \cite[Lemma A.1]{MWZ} (by choosing $\rho:=r$ and ${\dmu}:=d\vol_g$) yields
\begin{lemma}\label{lipshccompact}
For
any globally Lipschitz function $u$  with compact support in $U$, we have
$u\in W_0^{1,p}(U,r^{p+\beta})$.
\end{lemma}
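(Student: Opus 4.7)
The plan is to approximate $u$ in the $\|\cdot\|_{p,\beta}$-norm by a sequence $u_\epsilon\in C^\infty_0(U)$ produced by standard local mollification, relying crucially on the fact that the weight $r^{p+\beta}$ is locally integrable with respect to $\dvol_g$—which is exactly what the hypothesis $p+\beta>-(m-n)$ buys through Lemma~\ref{centerzeroinfite}/(i).

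To construct $u_\epsilon$, I would fix the compact set $K:=\supp u\subset U$, choose a relatively compact open $V$ with $K\subset V\subset\overline{V}\subset U$, and set $\delta:=d(K,M\setminus V)>0$. Covering $\overline{V}$ by finitely many coordinate balls $\{(W_\alpha,\varphi_\alpha)\}$ with closures in $U$ and choosing a subordinate smooth partition of unity $\{\eta_\alpha\}$ summing to $1$ on $V$, each piece $\eta_\alpha u$ is Lipschitz with compact support in $W_\alpha$. I would pull $\eta_\alpha u$ back to $\varphi_\alpha(W_\alpha)\subset\mathbb{R}^m$, convolve with a Euclidean mollifier of radius $\epsilon<\delta$, push the result forward, and set $u_\epsilon:=\sum_\alpha(\eta_\alpha u)_\epsilon\in C^\infty_0(V)\subset C^\infty_0(U)$.

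The convergence $\|u_\epsilon-u\|_{p,\beta}\to 0$ would then follow by dominated convergence. By Rademacher's theorem $\nabla u$ exists a.e.\ and is essentially bounded; standard mollification theory in each chart yields $u_\epsilon\to u$ uniformly, $\nabla u_\epsilon\to\nabla u$ pointwise a.e., together with uniform bounds $\|u_\epsilon\|_\infty+\|\nabla u_\epsilon\|_\infty\leq C$. Consequently, both $|u_\epsilon-u|^p r^{p+\beta}$ and $|\nabla u_\epsilon-\nabla u|^p r^{p+\beta}$ are dominated by $C'\chi_{\overline{V}}\,r^{p+\beta}$. To verify this dominator is integrable, I would apply Lemma~\ref{centerzeroinfite}/(i) with $l:=-(p+\beta)<m-n$, giving $\int_{\overline{V}\cap T_\epsilon}r^{-l}\,\dvol_g\to 0$ as $\epsilon\to 0^+$, which combined with the boundedness of $r^{-l}$ on $\overline{V}\setminus T_{\epsilon_0}$ for a fixed small $\epsilon_0>0$ shows that $r^{p+\beta}$ is integrable over $\overline{V}$; dominated convergence then delivers the claim.

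The main obstacle is largely bookkeeping rather than analysis: I must ensure $\supp u_\epsilon\subset U$, which is where the choice $\epsilon<\delta$ is essential (with particular care when $U=\Omega_N$, so that the mollified supports remain disjoint from $M\setminus\Omega_N$, in particular from $N$), while everything analytical reduces to the local integrability of the weight near $N$, precisely encoded by the hypothesis $p+\beta>-(m-n)$. The auxiliary exponent $q>1$ with $q(p+\beta)>-(m-n)$, available by continuity, would give extra Hölder room in more delicate settings but is not needed here, since the Lipschitz bounds on $u$ already provide direct pointwise domination of the approximating integrands.
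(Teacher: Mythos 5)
Your argument is correct and follows essentially the same route as the paper, which simply outsources the proof to Meng et al.\ \cite[Lemma A.1]{MWZ}: the content in both cases is chartwise mollification of the Lipschitz function plus the local integrability of the weight $r^{p+\beta}$ near $N$, which is exactly Lemma \ref{centerzeroinfite}/(i) under the standing hypothesis $p+\beta>-(m-n)$. Your closing remark is also accurate: because $u$ and $\nabla u$ are essentially bounded with uniformly bounded mollifications, direct domination by $C\chi_{\overline{V}}\,r^{p+\beta}$ suffices, so the auxiliary exponent $q>1$ with $q(p+\beta)>-(m-n)$ that the paper introduces (to run the H\"older step in the cited argument) is indeed dispensable here.
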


Due to Lemma \ref{lipshccompact}, the following result can be proved in the same way as in Hebey \cite[Theorem 2.7]{H}.
\begin{theorem}\label{MandM0isequal}
$W^{1,p}(M,r^{p+\beta})=W_0^{1,p}(M,r^{p+\beta})$.
\end{theorem}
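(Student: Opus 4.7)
The plan is to follow the template of Hebey's density theorem, adapted to the weighted setting. By definition of $W^{1,p}(M,r^{p+\beta})$, it suffices to show that every $u\in C^\infty_{p,\beta}(M)$ lies in $W^{1,p}_0(M,r^{p+\beta})$; I would then approximate such a $u$ by a sequence of globally Lipschitz, compactly supported functions, and invoke Lemma \ref{lipshccompact}.

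First I would fix a basepoint $x_0\in M$ and, using the completeness of $(M,g)$, construct a Lipschitz cut-off family $\phi_k\colon M\to[0,1]$ with $\phi_k\equiv 1$ on $\overline{B_k(x_0)}$, $\phi_k\equiv 0$ outside $B_{2k}(x_0)$, and $|\nabla\phi_k|\le C/k$ almost everywhere; this is precisely the cut-off used in Hebey's argument and its construction relies on the fact that $d(x_0,\cdot)$ is $1$-Lipschitz together with a standard regularization. Set $u_k:=\phi_k u$. Since $\phi_k$ is bounded, Lipschitz, and compactly supported, and $u$ is smooth, $u_k$ is globally Lipschitz with compact support, so Lemma \ref{lipshccompact} gives $u_k\in W^{1,p}_0(M,r^{p+\beta})$.

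Next I would verify that $u_k\to u$ in $\|\cdot\|_{p,\beta}$. For the zeroth-order part, $|u-u_k|^p r^{p+\beta}=|1-\phi_k|^p|u|^p r^{p+\beta}$ is dominated by the integrable function $|u|^p r^{p+\beta}$ and tends pointwise to $0$, so dominated convergence yields $[u-u_k]_{p,\mu}\to 0$. For the gradient, write $\nabla u-\nabla u_k=(1-\phi_k)\nabla u-u\,\nabla\phi_k$; the first piece goes to zero in $L^p(M,r^{p+\beta})$ by dominated convergence, while the second is controlled by
\[
\int_M |u|^p|\nabla\phi_k|^p r^{p+\beta}\,\dvol_g \le \frac{C^p}{k^p}\int_{B_{2k}(x_0)\setminus B_k(x_0)}|u|^p r^{p+\beta}\,\dvol_g,
\]
which tends to $0$ since the integrand is globally in $L^1(M,r^{p+\beta}\dvol_g)$. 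Hence $u\in\overline{W^{1,p}_0(M,r^{p+\beta})}=W^{1,p}_0(M,r^{p+\beta})$, giving the reverse inclusion and thus equality.

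The main obstacle is the geometric one of producing a good Lipschitz cut-off at infinity on a general complete Riemannian manifold—this is where completeness is actually used—but it is identical to the construction in Hebey's monograph and requires no modification in the weighted setting, because the weight $r^{p+\beta}$ depends only on the fixed submanifold $N$ and not on the cut-off scale $k$, so it passes through the estimates as a passive measure. A secondary point to check is that $u\in C^\infty_{p,\beta}(M)$ really guarantees $|u|^p r^{p+\beta}$ and $|\nabla u|^p r^{p+\beta}$ are in $L^1$, but this is built into the definition of the norm $\|\cdot\|_{p,\beta}$, and the integrability of $r^{p+\beta}$ near $N$ has already been established via Lemma \ref{centerzeroinfite} and the hypothesis $p+\beta>-(m-n)$.
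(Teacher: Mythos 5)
Your proof is correct and follows essentially the same route as the paper, which simply invokes Hebey's density argument (cut-off by Lipschitz functions $\phi_k u$ plus Lemma \ref{lipshccompact}) and notes that the weight $r^{p+\beta}$ enters only as a passive measure. Your explicit verification of the convergence $\|u-\phi_k u\|_{p,\beta}\to 0$ via dominated convergence and the tail estimate on $B_{2k}\setminus B_k$ is exactly the intended adaptation.
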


Theorem \ref{MandM0isequal} combined with the same proof of Zhao \cite[Lemma B.4]{Z3} yields the following result.
\begin{corollary}\label{compwp}
If $u\in W^{1,p}(M,r^{p+\beta})$ with compact support in ${\Omega_N}$, then $u|_{\Omega_N}\in W_0^{1,p}({\Omega_N},r^{p+\beta})$.
\end{corollary}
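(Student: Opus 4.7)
The strategy is to combine Theorem \ref{MandM0isequal}, which already provides a sequence $u_i \in C^\infty_0(M)$ approximating $u$ in $W^{1,p}(M, r^{p+\beta})$, with a smooth cutoff that localizes the approximants inside $\Omega_N$. First I would let $K := \supp u$, a compact subset of the open set $\Omega_N = \Omega \setminus N$. Since $M \setminus \Omega_N = (M \setminus \Omega) \cup N$ is closed and disjoint from $K$, the geodesic distance $\delta := d(K, M \setminus \Omega_N)$ is strictly positive. Via a partition of unity subordinate to a finite cover of $K$ by small normal coordinate balls (or via a smoothing of $x \mapsto \max(1 - 2d(x,K)/\delta, 0)$), I would construct a cutoff $\phi \in C^\infty_0(M)$ with $0 \leq \phi \leq 1$, $\phi \equiv 1$ on a neighborhood of $K$, $\supp \phi$ compactly contained in $\Omega_N$, and $\|\nabla \phi\|_\infty < \infty$.

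With this $\phi$ in hand I would define $v_i := \phi u_i$. By construction $\supp v_i \subset \supp \phi$ is a compact subset of $\Omega_N$, so $v_i \in C^\infty_0(\Omega_N)$. It then remains to prove $v_i \to u|_{\Omega_N}$ in $W^{1,p}(\Omega_N, r^{p+\beta})$. Since $\phi \equiv 1$ on $\supp u$, the product $\phi u$ equals $u$ pointwise, and by Remark \ref{smoothdivege}(a) one also has $\nabla(\phi u) = \nabla u$ in the weak sense. The product rule then yields the clean expressions
\[
v_i - u = \phi(u_i - u), \qquad \nabla v_i - \nabla u = \phi(\nabla u_i - \nabla u) + (u_i - u)\nabla \phi,
\]
from which an elementary estimate gives
\[
\|v_i - u\|_{p,\beta,\Omega_N} \leq C(\phi)\, \|u_i - u\|_{p,\beta, M} \longrightarrow 0,
\]
where $C(\phi)$ depends only on $\|\phi\|_\infty$ and $\|\nabla \phi\|_\infty$. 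Hence $u|_{\Omega_N}$ lies in the closure of $C^\infty_0(\Omega_N)$ under $\|\cdot\|_{p,\beta}$, which is exactly $W^{1,p}_0(\Omega_N, r^{p+\beta})$.

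The only step I expect to require any genuine care is the construction of the cutoff $\phi$ with a uniformly bounded gradient: on a general complete Riemannian manifold one cannot simply write down a bump function as in Euclidean space. My plan is to cover the compact set $K$ by finitely many normal coordinate balls on which the standard Euclidean bump construction applies, then glue via a subordinate smooth partition of unity; a mollified distance function would serve equally well. Once $\phi$ is available the rest is essentially algebraic, hinging only on the factorization $v_i - u = \phi(u_i - u)$ enabled by $\phi u = u$.
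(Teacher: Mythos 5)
Your proof is correct and follows essentially the same route as the paper, which deduces the corollary from Theorem \ref{MandM0isequal} together with the cutoff argument of Zhao \cite[Lemma B.4]{Z3}: multiply the approximating sequence from $C^\infty_0(M)$ by a smooth cutoff equal to $1$ near $\supp u$ and compactly supported in $\Omega_N$. The only small slip is attributing $\nabla(\phi u)=\nabla u$ to Remark \ref{smoothdivege}/(a); this really rests on the product rule (Proposition \ref{usefulproweakdre}/(ii)) together with $u\nabla\phi\equiv 0$ a.e., or can be bypassed entirely by observing that $(v_i)=(\phi u_i)$ is Cauchy in $\|\cdot\|_{p,\beta}$ with $L^p$-limit $u$, so its $W^{1,p}_0(\Omega_N,r^{p+\beta})$-limit is $u|_{\Omega_N}$.
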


In order to show the reflexivity of $W^{1,p}(M,r^{p+\beta})$, we
recall Mazur's lemma (cf. Renardy and Rogers \cite[Lemma 10.19]{RR}).

\begin{lemma}[Mazur's lemma]\label{Marzurlemma} Assume that $X$ is a Banach space and that
$x_i\rightarrow x$ weakly in $X$ as $i \rightarrow \infty$. Then there exists a sequence of convex
combinations
\[
\tilde{x}_i=\sum_{j=i}^{m_i}a_{i,j}x_j,\ a_{i,j}\geq 0,\ \sum_{j=i}^{m_i}a_{i,j}=1,
\]
such that $\tilde{x}_i\rightarrow x$ strongly in $X$ as $i\rightarrow \infty$.
\end{lemma}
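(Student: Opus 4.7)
The plan is to derive Mazur's lemma from the Hahn--Banach separation theorem, exploiting the fact that in a locally convex space, weak closure and strong closure coincide for convex sets. Since the excerpt cites this as a standard fact from Renardy and Rogers, the sketch will just indicate the classical argument.

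First I would fix an index $i$ and consider the set
\[
C_i := \overline{\mathrm{conv}}\{x_j : j \geq i\},
\]
the norm-closed convex hull of the tail $\{x_i, x_{i+1}, \ldots\}$. The goal is to show that $x \in C_i$ for every $i$; once this is done, by definition of the closed convex hull there exist finite convex combinations of $x_i, x_{i+1}, \ldots$ of the form $\tilde{x}_i = \sum_{j=i}^{m_i} a_{i,j} x_j$ with $a_{i,j} \geq 0$ and $\sum_j a_{i,j} = 1$ that approximate $x$ in norm to within $1/i$, yielding the desired strongly convergent sequence.

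The main step is therefore to establish $x \in C_i$, and I would argue by contradiction. Suppose $x \notin C_i$. Since $\{x\}$ is compact convex and $C_i$ is closed convex in the Banach space $X$, the Hahn--Banach separation theorem produces a continuous linear functional $f \in X^*$ and constants $\alpha < \beta$ such that
\[
f(x) \leq \alpha < \beta \leq f(y), \quad \forall\, y \in C_i.
\]
In particular $f(x_j) \geq \beta$ for all $j \geq i$. But the weak convergence $x_j \rightharpoonup x$ means $f(x_j) \to f(x)$, forcing $f(x) \geq \beta > \alpha \geq f(x)$, a contradiction. Hence $x \in C_i$.

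The only mild subtlety is the translation from "$x$ belongs to the closed convex hull of a tail" into the explicit indexing $\tilde{x}_i = \sum_{j=i}^{m_i} a_{i,j} x_j$ with $\tilde{x}_i \to x$ strongly: for each $i$, pick a finite convex combination in $\mathrm{conv}\{x_j : j\geq i\}$ within distance $1/i$ of $x$, which exists by the density definition of the closed convex hull. The Hahn--Banach separation step is the real content; everything else is bookkeeping.
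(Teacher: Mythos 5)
Your argument is correct and is the classical proof of Mazur's lemma: the Hahn--Banach separation of the point $x$ from the norm-closed convex hull $C_i$ of the tail, combined with the fact that weak convergence forces $f(x_j)\to f(x)$, shows $x\in C_i$ for every $i$, and the explicit convex combinations follow from the definition of the closed convex hull. The paper itself offers no proof --- it simply cites Renardy and Rogers --- so there is nothing to compare against, but your argument is exactly the standard one and is sound (the only routine caveats being that for complex scalars one separates with the real part of a functional, and that the separation theorem applies because $\{x\}$ is compact and $C_i$ is closed and convex).
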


\begin{theorem}\label{weakconvergece}
$W^{1,p}({M},r^{p+\beta})$ is reflexive. That is, given  a bounded sequence $(u_i)$  in $W^{1,p}({M},r^{p+\beta})$, there exists a subsequence $(u_{i_k})$ and $u\in  W^{1,p}({M},r^{p+\beta})$ such that $u_{i_k}\rightarrow u$ weakly in $L^p({M},r^{p+\beta})$  and $\nabla u_{i_k}\rightarrow \nabla u$ weakly in $L^p(T{M},r^{p+\beta})$ as $k\rightarrow \infty$.
\end{theorem}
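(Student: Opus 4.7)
The plan is to exhibit $W^{1,p}(M,r^{p+\beta})$ as (isometrically identified with) a closed linear subspace of the reflexive product space
$\mathscr{X}:=L^{p}(M,r^{p+\beta})\oplus L^{p}(TM,r^{p+\beta})$
via the graph embedding $\iota(u)=(u,\nabla u)$, and then to run the standard Mazur-type extraction. Equip $\mathscr{X}$ with the norm $\|(v,Y)\|:=\bigl([v]_{p,\mu}^{p}+[Y]_{p,\mu}^{p}\bigr)^{1/p}$, so that Definition \ref{Hanshukongj} gives $\|\iota(u)\|=\|u\|_{p,\beta}$; by Theorem \ref{LPreflexive} the space $\mathscr{X}$ is reflexive.

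First I would extract weak limits: given a bounded sequence $(u_{i})\subset W^{1,p}(M,r^{p+\beta})$, both $(u_{i})\subset L^{p}(M,r^{p+\beta})$ and $(\nabla u_{i})\subset L^{p}(TM,r^{p+\beta})$ are bounded, so Theorem \ref{LPreflexive} furnishes a subsequence $(u_{i_{k}})$ together with $u\in L^{p}(M,r^{p+\beta})$ and $X\in L^{p}(TM,r^{p+\beta})$ such that $u_{i_{k}}\rightharpoonup u$ and $\nabla u_{i_{k}}\rightharpoonup X$. The goal is then to show $u\in W^{1,p}(M,r^{p+\beta})$ with $\nabla u=X$.

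Next I would upgrade weak to strong convergence via Mazur's lemma (Lemma \ref{Marzurlemma}), applied to the weakly convergent sequence $(u_{i_{k}},\nabla u_{i_{k}})\rightharpoonup(u,X)$ in $\mathscr{X}$. This yields convex combinations
$\tilde u_{j}=\sum_{k=j}^{m_{j}}a_{j,k}u_{i_{k}}$, with $a_{j,k}\geq 0$ and $\sum_{k}a_{j,k}=1$, such that $\tilde u_{j}\to u$ strongly in $L^{p}(M,r^{p+\beta})$ and, using linearity of the weak gradient (Remark \ref{smoothdivege}(a)), $\nabla\tilde u_{j}=\sum_{k}a_{j,k}\nabla u_{i_{k}}\to X$ strongly in $L^{p}(TM,r^{p+\beta})$. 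Therefore $(\tilde u_{j})$ is Cauchy in the norm $\|\cdot\|_{p,\beta}$; since $W^{1,p}(M,r^{p+\beta})$ is a Banach space by Definition \ref{Hanshukongj}, there exists $\bar u\in W^{1,p}(M,r^{p+\beta})$ with $\tilde u_{j}\to\bar u$ in $W^{1,p}$, and in particular $\nabla\bar u=X$. By uniqueness of $L^{p}$-limits $\bar u=u$ a.e., hence $u\in W^{1,p}(M,r^{p+\beta})$ and $\nabla u=X$, which is the desired conclusion.

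The main obstacle is essentially one of bookkeeping rather than analysis: I must make sure that the weak gradient, defined abstractly through a completion, genuinely commutes with finite convex combinations (guaranteed by Remark \ref{smoothdivege}(a)), and that the graph norm on $W^{1,p}(M,r^{p+\beta})$ agrees with the restriction of the product norm on $\mathscr{X}$, so that a Cauchy sequence in the product of $L^{p}$-spaces yields a Cauchy sequence in $W^{1,p}$. Once these identifications are in place the reflexivity follows by the standard principle that a closed subspace of a reflexive space is reflexive, and the stated simultaneous weak convergence of $u_{i_{k}}$ and $\nabla u_{i_{k}}$ is exactly what the Mazur-extraction has produced.
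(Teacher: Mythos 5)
Your proposal is correct and follows essentially the same route as the paper's own proof: extract weak limits $u$ and $X$ in the two $L^p$-factors via Theorem \ref{LPreflexive}, apply Mazur's lemma to the pair $(u_{i_k},\nabla u_{i_k})$ in the product space to obtain strongly convergent convex combinations, and conclude that these form a Cauchy sequence in $W^{1,p}(M,r^{p+\beta})$ whose limit identifies $u$ as an element of that space with $\nabla u=X$. The only cosmetic difference is your framing via the graph embedding and the closed-subspace principle, which the paper leaves implicit.
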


\begin{proof}
Since $(u_i)$ is a bounded sequence in $W^{1,p}({M},r^{p+\beta})$,  Theorem \ref{LPreflexive} yields    a subsequence $(u_{i_k})$ and $u\in L^p({M},r^{p+\beta})$ and a vector field $X\in  L^p(T{M},r^{p+\beta})$ such that $u_{i_k}\rightarrow u$ weakly in $L^p ({M},r^{p+\beta})$ and $\nabla u_{i_k}\rightarrow X$ weakly in $L^p(T {M},r^{p+\beta})$ as $k\rightarrow \infty$.
Consider the Banach space $L^p({M},r^{p+\beta})\times L^p(T{M},r^{p+\beta})$ endowed with the product metric. Obviously,
\[
(u_{i_k},\nabla u_{i_k})\rightarrow (u,X) \text{ weakly in }L^p({M},r^{p+\beta})\times L^p(T{M},r^{p+\beta}).
\]
Thus, Lemma \ref{Marzurlemma} yields a   sequence of convex combinations such that
\[
\sum_{k=l}^{m_l}a_{l,k}(u_{i_k},\nabla u_{i_k})\rightarrow (u,X) \text{ strongly in }L^p({M},r^{p+\beta})\times L^p(T{M},r^{p+\beta}), \text{ as }l\rightarrow \infty.
\]
Set $v_l:=\sum_{k=l}^{m_l}a_{l,k}u_{i_k}\in W^{1,p}({M},r^{p+\beta})$. Then both $v_l\rightarrow u$ strongly  and $\nabla v_l\rightarrow X$ strongly in the corresponding $[\cdot]_{p,\mu}$-norms. Hence, $(v_l)$ is a Cauchy sequence in $W^{1,p}({M},r^{p+\beta})$, which implies $\nabla u=X$ and $u\in W^{1,p}({M},r^{p+\beta})$.
\end{proof}

Theorem \ref{weakconvergece} together with the dominated convergence theorem yields the following  two corollaries. We omit the proofs because they are standard but cumbersome.

\begin{corollary}\label{maxminlenew}
If $u\in {W^{1,p}}({{M}},{r}^{\beta+p})$, then $u_+:=\max\{u,0\}$, $u_-:=-\min\{u,0\}$ and $|u|=u_+-u_-$ are all in ${W^{1,p}}({{M}},{r}^{\beta+p})$.  In particular, $\nabla|u|=\sgn(u) \nabla u$.
\end{corollary}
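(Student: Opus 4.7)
The plan is to combine the reflexivity theorem (Theorem \ref{weakconvergece}), Mazur's lemma (Lemma \ref{Marzurlemma}), and Lemma \ref{lipshccompact} to transfer a smooth approximating sequence for $u$ into one for $u_+$. First I would fix $u_i\in C^\infty_0(M)$ with $u_i\to u$ in $W^{1,p}(M,r^{p+\beta})$ and pass to a subsequence along which $u_i\to u$ and $\nabla u_i\to \nabla u$ hold $\vol_g$-a.e.\ (equivalently $\mu$-a.e.\ by Lemma \ref{nullmeaure}, where $\mu:=r^{p+\beta}\dvol_g$). Each $(u_i)_+$ is globally Lipschitz with compact support in $M$, hence belongs to $W^{1,p}_0(M,r^{p+\beta})$ by Lemma \ref{lipshccompact}, and Stampacchia's truncation rule applied in local charts (via smoothing $(u_i)_+$ by $\psi_\varepsilon\circ u_i$ for a standard $C^\infty$ approximation $\psi_\varepsilon$ of $t_+$) identifies its weak gradient pointwise as $\chi_{\{u_i>0\}}\nabla u_i$. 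The pointwise bounds $|(u_i)_+|\leq |u_i|$ and $|\nabla(u_i)_+|\leq |\nabla u_i|$ then make $((u_i)_+)$ bounded in $W^{1,p}(M,r^{p+\beta})$.

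Next I would invoke Theorem \ref{weakconvergece} to extract a further subsequence with $(u_i)_+\to v$ and $\nabla(u_i)_+\to X$ weakly in $L^p(M,r^{p+\beta})$ and $L^p(TM,r^{p+\beta})$, respectively. The inequality $|(u_i)_+-u_+|\leq |u_i-u|$ upgrades weak $L^p$-convergence to strong $L^p$-convergence and identifies $v=u_+$. To pin down $X$ pointwise, I would split $M$ into $\{u>0\}$, $\{u<0\}$ and $\{u=0\}$: on the first two sets the sign of $u_i$ stabilises for large $i$, giving $\chi_{\{u_i>0\}}\nabla u_i\to \chi_{\{u>0\}}\nabla u$ $\mu$-a.e.; on $\{u=0\}$ the classical fact $\nabla u=0$ $\mu$-a.e.\ makes $\chi_{\{u>0\}}\nabla u$ also vanish there. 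Applying Lemma \ref{Marzurlemma} in $L^p(M,r^{p+\beta})\times L^p(TM,r^{p+\beta})$ converts the weak convergence into strong convergence of suitable convex combinations, proving $u_+\in W^{1,p}(M,r^{p+\beta})$ with $\nabla u_+=\chi_{\{u>0\}}\nabla u$.

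Applying the same argument to $-u$ yields $u_-=(-u)_+\in W^{1,p}(M,r^{p+\beta})$ with $\nabla u_-=-\chi_{\{u<0\}}\nabla u$. Since $|u|=u_++u_-$ is then a sum of two elements of $W^{1,p}(M,r^{p+\beta})$, linearity of the weak gradient (Remark \ref{smoothdivege}/(a)) gives $|u|\in W^{1,p}(M,r^{p+\beta})$ with $\nabla|u|=\chi_{\{u>0\}}\nabla u-\chi_{\{u<0\}}\nabla u=\sgn(u)\nabla u$ $\mu$-a.e. The most delicate step is the identification of $X$ on $\{u=0\}$: I expect to establish the weighted Riemannian analogue of the Euclidean truncation identity $\nabla u=0$ a.e.\ on $\{u=0\}$ by localising to charts disjoint from $N$, on which $r^{p+\beta}$ is bounded between positive constants and the standard Euclidean argument applies, and then using $\mu(N)=0$ from Lemma \ref{nullmeaure} to control a neighbourhood of $N$.
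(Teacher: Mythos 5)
Your argument is correct and follows exactly the route the paper indicates for this corollary (whose proof it omits as ``standard but cumbersome''): approximate $u$ by $u_i\in C^\infty_0(M)$, observe that $((u_i)_+)$ is bounded in $W^{1,p}(M,r^{p+\beta})$ with $\nabla (u_i)_+=\chi_{\{u_i>0\}}\nabla u_i$, and then use Theorem \ref{weakconvergece}, Mazur's lemma and dominated convergence to pass to the limit and identify $\nabla u_+$ pointwise. Your handling of the delicate set $\{u=0\}$ — reducing to the Euclidean fact $\nabla u=0$ a.e.\ there via charts in $M\setminus N$ where the weight is locally comparable to $1$, together with $\mu(N)=0$ — is sound, so the identification $\nabla|u|=\sgn(u)\nabla u$ goes through.
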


\begin{corollary}\label{trunctionfun}
Given $u\in W^{1,p}(M,r^{p+\beta})$, we have $\max\{0,\min\{u,1\}\}\in W^{1,p}(M,r^{p+\beta})$. Moreover,
for any $\lambda>0$, set $u_\lambda:=\max\{-\lambda,\min\{u,\lambda\}\}$. Then $u_\lambda\rightarrow u$ in $ W^{1,p}(M,r^{p+\beta})$ as $\lambda \rightarrow \infty$.
\end{corollary}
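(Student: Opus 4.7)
The plan is to reduce both claims to Corollary~\ref{maxminlenew} via an approximation plus weak-compactness argument, and then to conclude the convergence by dominated convergence against the reference measure $d\mu = r^{p+\beta}\dvol_g$.

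First I would decompose
\[
\max\{0,\min\{u,1\}\} = u_+ - (u-1)_+ .
\]
The first summand lies in $W^{1,p}(M,r^{p+\beta})$ by Corollary~\ref{maxminlenew}, so the only issue is $(u-1)_+$; note that the naive translation $u \mapsto u-1$ is unavailable since the constant function $1$ need not belong to $W^{1,p}(M,r^{p+\beta})$. By Theorem~\ref{MandM0isequal} I would pick $u_i \in C^\infty_0(M)$ with $u_i \to u$ in $\|\cdot\|_{p,\beta}$. Each $(u_i-1)_+$ is globally Lipschitz and vanishes outside $\supp u_i$ (because $(0-1)_+=0$), so Lemma~\ref{lipshccompact} gives $(u_i-1)_+ \in W^{1,p}(M,r^{p+\beta})$. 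The classical chain rule for smooth $u_i$ yields the a.e.\ identity $\nabla(u_i-1)_+ = \chi_{\{u_i>1\}}\nabla u_i$, together with the pointwise bound $(u_i-1)_+ \leq |u_i|$, so the sequence $\{(u_i-1)_+\}$ is bounded in $W^{1,p}(M,r^{p+\beta})$. Theorem~\ref{weakconvergece} then extracts a weakly convergent subsequence, and the strong $L^p$-convergence $(u_i-1)_+ \to (u-1)_+$ (from the $1$-Lipschitzness of $t \mapsto (t-1)_+$) identifies that weak limit as $(u-1)_+$, giving $(u-1)_+ \in W^{1,p}(M,r^{p+\beta})$.

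For the second claim, writing
\[
u_\lambda = u - (u-\lambda)_+ + (-u-\lambda)_+ ,
\]
the same scheme applied to $(u-\lambda)_+$ and to $(-u-\lambda)_+$ (with $-u \in W^{1,p}$ in place of $u$) shows $u_\lambda \in W^{1,p}(M,r^{p+\beta})$ and yields the weak-gradient identity
\[
\nabla u_\lambda = \chi_{\{|u|\leq \lambda\}}\nabla u, \qquad \nabla(u - u_\lambda) = \chi_{\{|u|>\lambda\}}\nabla u \quad \text{a.e.}
\]
Consequently
\[
|u - u_\lambda|^p \leq |u|^p\,\chi_{\{|u|>\lambda\}}, \qquad |\nabla(u-u_\lambda)|^p = |\nabla u|^p\,\chi_{\{|u|>\lambda\}},
\]
both dominated by the $\mu$-integrable majorants $|u|^p$ and $|\nabla u|^p$ and both tending to $0$ pointwise $\mu$-a.e.\ as $\lambda \to \infty$ (since $u$ is finite $\mu$-a.e.). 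Dominated convergence then yields $\|u_\lambda - u\|_{p,\beta} \to 0$.

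The main obstacle is justifying the chain-rule identity $\nabla(u-c)_+ = \chi_{\{u>c\}}\nabla u$ a.e.\ in the \emph{weighted} weak sense when the constant $c$ itself need not lie in $W^{1,p}(M,r^{p+\beta})$, so one cannot directly reduce to Corollary~\ref{maxminlenew} applied to $u-c$. My remedy is the approximation just described: verify the identity classically for the smooth approximants $u_i$, pass to a weak limit in $W^{1,p}(M,r^{p+\beta})$ via Theorem~\ref{weakconvergece}, and identify that weak limit using the strong $L^p$-convergence coming from Lipschitz stability of $t\mapsto (t-c)_+$. Once this chain rule is in hand, both the membership assertion and the convergence $u_\lambda \to u$ follow by the routine dominated-convergence arguments above.
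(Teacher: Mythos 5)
Your argument is correct and follows precisely the route the paper indicates for this corollary (the paper omits the details, stating only that Theorem \ref{weakconvergece} together with the dominated convergence theorem yields the result): truncate the smooth approximants, exploit boundedness plus weak compactness to place the truncations in $W^{1,p}(M,r^{p+\beta})$ and identify the limit via strong $L^p$-convergence, then conclude by dominated convergence. The only point worth tightening is the exact identity $\nabla u_\lambda=\chi_{\{|u|\le\lambda\}}\nabla u$ on the level set $\{|u|=\lambda\}$, where the weak-limit argument directly yields only the bound $|\nabla(u-u_\lambda)|\le|\nabla u|\,\chi_{\{|u|\ge\lambda\}}$ a.e.; but this inequality is all that the final dominated convergence step requires.
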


\begin{proposition}\label{usefulproweakdre}Given $u,v\in W^{1,p}(M,r^{p+\beta})$, there following statements are true:
\begin{itemize}

\item[(i)] $\max\{u,v\}\in W^{1,p}(M,r^{p+\beta})$ and $|\nabla \max\{u,v\}|\leq \max\{|\nabla u |, |\nabla v | \}$;

\item[(ii)] if  $\|u\|_\infty+\|v\|_\infty<\infty$, then $uv\in W^{1,p}(M,r^{p+\beta})$ and $\nabla (uv)=u\nabla v+v\nabla u$.

\end{itemize}

\end{proposition}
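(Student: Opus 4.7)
For part (i), the plan is to exploit the identity $\max\{u,v\} = \tfrac{1}{2}(u+v) + \tfrac{1}{2}|u-v|$. By linearity of the weak gradient (Remark \ref{smoothdivege}(a)), $u-v \in W^{1,p}(M,r^{p+\beta})$, so Corollary \ref{maxminlenew} gives $|u-v| \in W^{1,p}(M,r^{p+\beta})$ with $\nabla|u-v| = \sgn(u-v)\nabla(u-v)$. Combining these yields $\max\{u,v\} \in W^{1,p}(M,r^{p+\beta})$ together with
\[
\nabla \max\{u,v\} = \tfrac{1}{2}(\nabla u + \nabla v) + \tfrac{1}{2}\sgn(u-v)(\nabla u - \nabla v),
\]
which equals $\nabla u$ on $\{u>v\}$, $\nabla v$ on $\{u<v\}$, and $\tfrac{1}{2}(\nabla u + \nabla v)$ on $\{u=v\}$. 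In each case the pointwise norm is at most $\max\{|\nabla u|,|\nabla v|\}$ (in the third case via the elementary inequality $|a+b|/2 \leq \max\{|a|,|b|\}$), delivering the claimed estimate.

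For part (ii), boundedness at once gives $uv \in L^p(M,r^{p+\beta})$ and $w := u\nabla v + v\nabla u \in L^p(TM,r^{p+\beta})$, so the real task is to identify $w$ as the weak gradient of $uv$. The plan is to produce a sequence in $C^\infty_0(M)$ approximating $uv$ in a form compatible with the classical product rule. Using Theorem \ref{MandM0isequal}, pick $u_i, v_i \in C^\infty_0(M)$ converging to $u,v$ in $W^{1,p}(M,r^{p+\beta})$, and by extracting a subsequence assume pointwise a.e.\ convergence as well. To enforce uniform pointwise bounds I will compose with a smooth cutoff: choose $\psi_u \in C^\infty(\mathbb{R})$ with $\psi_u(0)=0$, $|\psi_u'|\leq 1$, and $\psi_u(t)=t$ on $[-\|u\|_\infty,\|u\|_\infty]$, and define $\tilde u_i := \psi_u\circ u_i$; proceed analogously for $\tilde v_i$. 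The Lipschitz property of $\psi_u$ together with dominated convergence (noting $\psi_u'(u)=1$ a.e.\ since $|u|\leq \|u\|_\infty$ a.e.) yields $\tilde u_i \to u$ in $W^{1,p}(M,r^{p+\beta})$; similarly for $\tilde v_i$, with the additional benefit that $\|\tilde u_i\|_\infty$ and $\|\tilde v_i\|_\infty$ are uniformly bounded.

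Since $\tilde u_i,\tilde v_i \in C^\infty_0(M)$, the classical product rule gives $\nabla(\tilde u_i\tilde v_i) = \tilde u_i\nabla \tilde v_i + \tilde v_i\nabla \tilde u_i$. Splitting $\tilde u_i\tilde v_i - uv = (\tilde u_i-u)\tilde v_i + u(\tilde v_i-v)$ and invoking the uniform $L^\infty$-bounds shows $\tilde u_i\tilde v_i \to uv$ in $L^p(M,r^{p+\beta})$. The parallel four-term decomposition
\[
\nabla(\tilde u_i\tilde v_i) - w = \tilde u_i(\nabla\tilde v_i - \nabla v) + (\tilde u_i - u)\nabla v + \tilde v_i(\nabla\tilde u_i - \nabla u) + (\tilde v_i - v)\nabla u
\]
then provides the gradient convergence in $L^p(TM,r^{p+\beta})$, identifying $\nabla(uv) = w$. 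I expect the main obstacle to be the two cross terms $(\tilde u_i - u)\nabla v$ and $(\tilde v_i - v)\nabla u$, where one factor converges only pointwise a.e.\ while the other is a fixed $L^p$-vector field; these will be controlled by Lebesgue's dominated convergence applied to the integrands $|\tilde u_i - u|^p|\nabla v|^p r^{p+\beta}$, using the uniform bound on $|\tilde u_i - u|$ together with the integrability $|\nabla v|^p r^{p+\beta} \in L^1$.
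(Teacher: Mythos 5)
Your proof is correct and follows essentially the same route as the paper: part (i) via the identity $\max\{u,v\}=\tfrac12(u+v+|u-v|)$ together with Corollary \ref{maxminlenew} and Remark \ref{smoothdivege}/(a), and part (ii) by truncating the smooth approximating sequences to gain uniform sup-bounds and then passing to the limit in the product rule. The only (harmless) difference is that you truncate by composing with a smooth cutoff $\psi_u$, keeping the approximants in $C^\infty_0(M)$ so the classical product rule applies directly, whereas the paper truncates with $\max\{-\lambda,\min\{\cdot,\lambda\}\}$ and invokes Lemma \ref{lipshccompact} for the resulting Lipschitz products.
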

\begin{proof}
  Since $\max\{a,b\}=\frac12(a+b+|a-b|)$, Statement (i) follows from Corollary \ref{maxminlenew} and Remark \ref{smoothdivege}/(a) immediately.
In order to prove Statement (ii), by Theorem \ref{MandM0isequal} one gets two sequences $(u_i),(v_i)$ in $C^\infty_0(M)$ such that $u_i\rightarrow u$ and $v_i\rightarrow v$ under $\|\cdot\|_{p,\beta}$. Set $u^*_i:= \max\{-\lambda,\min\{u_i,\lambda\}\}$ and $v^*_i:= \max\{-\lambda,\min\{v_i,\lambda\}\}$, where $\lambda$ is a constant satisfying $\|u\|_\infty+\|v\|_\infty\leq \lambda$. Thus,  $u^*_iv^*_i$ is a Lipschitz function with compact  support, which together with Lemma \ref{lipshccompact} implies $u^*_iv^*_i\in W^{1,p}(M,r^{p+\beta})$.
    Moreover,
    it follows from Corollary \ref{trunctionfun}  that $u^*_i,v^*_i\in W^{1,p}(M,r^{p+\beta})$ and  $u^*_i\rightarrow u$, $v^*_i\rightarrow v$ under $\|\cdot\|_{p,\beta}$. Since  $\|u^*_i\|_\infty+\|v^*_i\|_\infty\leq 2\lambda$,
  it is easy to check $u^*_iv^*_i\rightarrow uv$ and $\nabla(u^*_iv^*_i)\rightarrow v\nabla u+u\nabla v$ under the corresponding $[\cdot]_{p,\beta}$-norms.
  Hence, $(u^*_iv^*_i)$ is a Cauchy sequence in $W^{1,p}(M,r^{p+\beta})$. We conclude the proof by $u^*_iv^*_i\rightarrow uv$ in $\|\cdot\|_{p,\beta}$.
  \end{proof}

\begin{theorem}\label{pointswisestrongly}
Suppose that $(u_i)$ is a bounded sequence in $W^{1,p}({M},r^{p+\beta})$ and $u_i\rightarrow u$ pointwise a.e. in ${M}$. Thus, $u\in W^{1,p}({M},r^{p+\beta})$, $u_i\rightarrow u$ weakly in $L^p({M},r^{p+\beta})$ and $\nabla u_i\rightarrow \nabla u$ weakly in $L^p(T{M},r^{p+\beta})$.
\end{theorem}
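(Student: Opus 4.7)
The plan is to combine the reflexivity provided by Theorem \ref{weakconvergece} with Mazur's lemma (Lemma \ref{Marzurlemma}) and the hypothesized pointwise convergence in order to identify the weak limit.

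First, I would reduce to a subsequential statement via the sub-subsequence principle for weak topologies: it suffices to show that every subsequence of $(u_i)$ admits a further sub-subsequence $(u_{i_k})$ with $u_{i_k}\rightharpoonup u$ in $L^p(M,r^{p+\beta})$ and $\nabla u_{i_k}\rightharpoonup \nabla u$ in $L^p(TM,r^{p+\beta})$, since each such subsequence remains bounded in $W^{1,p}(M,r^{p+\beta})$ and still converges pointwise a.e.\ to $u$. Fixing an arbitrary such subsequence, Theorem \ref{weakconvergece} produces a further sub-subsequence $(u_{i_k})$ and some $v\in W^{1,p}(M,r^{p+\beta})$ with $u_{i_k}\rightharpoonup v$ in $L^p(M,r^{p+\beta})$ and $\nabla u_{i_k}\rightharpoonup \nabla v$ in $L^p(TM,r^{p+\beta})$.

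The crux of the argument is to identify $v$ with $u$ almost everywhere. For this I would work in the product Banach space $L^p(M,r^{p+\beta})\times L^p(TM,r^{p+\beta})$, in which $(u_{i_k},\nabla u_{i_k})\rightharpoonup (v,\nabla v)$, and apply Mazur's lemma to obtain convex combinations
\[
\tilde u_l:=\sum_{k=l}^{m_l}a_{l,k}u_{i_k},\qquad a_{l,k}\geq 0,\quad \sum_{k=l}^{m_l}a_{l,k}=1,
\]
with $\tilde u_l\to v$ strongly in $L^p(M,r^{p+\beta})$. Passing to a further subsequence yields $\tilde u_l\to v$ pointwise $\mu$-a.e., hence $\vol_g$-a.e.\ by Lemma \ref{nullmeaure}. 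On the other hand, at every point $x$ where $u_{i_k}(x)\to u(x)$, the convexity of the weights forces $\tilde u_l(x)\to u(x)$ as $l\to\infty$, because $|\tilde u_l(x)-u(x)|\leq \max_{l\leq k\leq m_l}|u_{i_k}(x)-u(x)|$. Comparing the two pointwise limits gives $v=u$ $\vol_g$-a.e., which yields $u\in W^{1,p}(M,r^{p+\beta})$ with $\nabla u$ equal to the weak $L^p(TM,r^{p+\beta})$-limit of $\nabla u_{i_k}$, completing the subsequence step.

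The closing step is the standard weak-topology argument: were the full sequence $u_i$ not weakly convergent to $u$ in $L^p(M,r^{p+\beta})$, one could find $\ell\in L^p(M,r^{p+\beta})^{*}$, $\varepsilon>0$ and a subsequence satisfying $|\ell(u_{i_j})-\ell(u)|\geq\varepsilon$; but applying the previous paragraph to this subsequence extracts a sub-subsequence with $u_{i_{j_k}}\rightharpoonup u$, contradicting the choice of $\ell$. The identical reasoning on $L^p(TM,r^{p+\beta})$ handles the gradients. I do not anticipate any serious obstacle: the only mildly subtle point is ensuring the weak limits in both $L^p$-factors are simultaneously consistent, which is precisely why Mazur's lemma is applied in the product space; the pointwise hypothesis is invoked in exactly one place---to pin down $v=u$---without which the scheme would give subsequential weak convergence in $W^{1,p}(M,r^{p+\beta})$ but no identification of the limit.
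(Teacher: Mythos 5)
Your proposal is correct and follows essentially the same route as the paper's own proof: extract a weakly convergent sub-subsequence via Theorem \ref{weakconvergece}, use Mazur's lemma in the product space to produce convex combinations converging strongly (hence, along a further subsequence, pointwise $\mu$-a.e., and so $\vol_g$-a.e.\ by Lemma \ref{nullmeaure}), identify the weak limit $v$ with $u$ through the pointwise hypothesis, and conclude for the full sequence by the sub-subsequence principle. The only cosmetic difference is that you spell out the elementary estimate $|\tilde u_l(x)-u(x)|\leq\max_{l\leq k\leq m_l}|u_{i_k}(x)-u(x)|$, which the paper leaves implicit.
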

\begin{proof}Choose an arbitrary subsequence $(u_{i_k})$ of $(u_i)$.
According to Theorem \ref{weakconvergece},  there is a subsequence  of $(u_{i_k})$, still denoted by $(u_{i_k})$, and a function $v\in W^{1,p}({M},r^{p+\beta})$ such that $u_{i_k}\rightarrow v$ weakly in $L^p({M},r^{p+\beta})$ and $\nabla u_{i_k}\rightarrow \nabla v$ weakly in $L^p(T{M},r^{p+\beta})$. We now show $v=u$ a.e. in ${M}$.

  Lemma \ref{Marzurlemma} together with the proof of Theorem \ref{weakconvergece} furnishes a sequence of convex combinations with
\[
u^*_{k}:=\sum_{j=k}^{m_k}a_{k,j}u_{i_j}\rightarrow v,\ \ \nabla u^*_k=\sum_{j=k}^{m_k}a_{k,j}\nabla u_{i_j}\rightarrow \nabla v, \text{ as }k\rightarrow \infty,
\]
in the corresponding $[\cdot]_{p,\mu}$-norms. Thus, the standard theory of $L^p$-space together with Lemma \ref{nullmeaure} yields a subsequence of $(u^*_k)$ converging  pointwise  to $v$ a.e. in ${M}$. On the other hand, since $(u_j)$ converges to $u$ pointwise, we see that $(u^*_k)$ converges to $u$ pointwise as well. Consequently,
  $v=u$ a.e. and therefore $\nabla u=\nabla v$ a.e. in ${M}$.

From above, we have proved that for every subsequence of $(u_i)$, there is a further subsequence $(u_{i_k})$ such that  $u_{i_k}\rightarrow u$ weakly in $L^p({M},r^{p+\beta})$ and $\nabla u_{i_k}\rightarrow \nabla u$ weakly in $L^p(T{M},r^{p+\beta})$, which indicates that the original sequence
$(u_i)$ also satisfies such a property.
\end{proof}

\begin{corollary}\label{maxepcro}
 Given $u\in W^{1,p}(M,r^{p+\beta})$, we have $\max\{u-\epsilon,0\}\in  W^{1,p}(M,r^{p+\beta})$ for any $\epsilon>0$.
\end{corollary}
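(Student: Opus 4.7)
\textbf{Proof plan for Corollary \ref{maxepcro}.} The direct approach -- applying Corollary~\ref{maxminlenew} to the function $u-\epsilon$ -- is unavailable, because the constant $\epsilon$ generally fails to lie in $L^p(M,r^{p+\beta})$ whenever the weighted measure $\mu=r^{p+\beta}\dvol_g$ is infinite (for instance, when $M$ is non-compact). The plan is instead to truncate at the level of smooth compactly supported approximants and then pass to the limit via the weak-compactness criterion Theorem~\ref{pointswisestrongly}.

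Concretely, by Theorem~\ref{MandM0isequal} there is a sequence $(u_i)\subset C^\infty_0(M)$ with $u_i\to u$ in $\|\cdot\|_{p,\beta}$; by Lemma~\ref{nullmeaure} and standard $L^p$ theory, after extracting a subsequence we may also assume $u_i\to u$ pointwise $\mu$-a.e. Set $v_i:=\max\{u_i-\epsilon,\,0\}$. Each $v_i$ is a globally Lipschitz function whose support is contained in $\supp(u_i)$, hence compact, so Lemma~\ref{lipshccompact} places $v_i$ in $W^{1,p}_0(M,r^{p+\beta})\subset W^{1,p}(M,r^{p+\beta})$. To pin down the weak gradient of $v_i$ and thereby estimate it, I would choose $\eta_i\in C^\infty_0(M)$ equal to $1$ on $\supp(u_i)$ and write $v_i=(u_i-\epsilon\eta_i)_+$; since $u_i-\epsilon\eta_i\in C^\infty_0(M)\subset W^{1,p}(M,r^{p+\beta})$, Corollary~\ref{maxminlenew} together with the identity $w_+=(|w|+w)/2$ yields $\nabla v_i=\chi_{\{u_i>\epsilon\}}\nabla u_i$ a.e. Consequently $|v_i|\leq|u_i|$ and $|\nabla v_i|\leq|\nabla u_i|$ a.e., so $(v_i)$ is bounded in $W^{1,p}(M,r^{p+\beta})$.

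To conclude, continuity of the map $t\mapsto\max\{t-\epsilon,0\}$ combined with the pointwise convergence $u_i\to u$ gives $v_i\to \max\{u-\epsilon,0\}$ pointwise $\mu$-a.e. Theorem~\ref{pointswisestrongly} then identifies this pointwise limit with an element of $W^{1,p}(M,r^{p+\beta})$, finishing the proof. I do not anticipate any serious obstacle here; the only piece of bookkeeping that requires attention is the identification of the weak gradient of $v_i$ with $\chi_{\{u_i>\epsilon\}}\nabla u_i$ (needed to secure uniform $W^{1,p}$ bounds on the truncated sequence), and the auxiliary cutoff $\eta_i$ reduces this at once to Corollary~\ref{maxminlenew}.
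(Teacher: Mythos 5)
Your proposal is correct and follows essentially the same route as the paper: approximate $u$ by $u_i\in C^\infty_0(M)$ via Theorem \ref{MandM0isequal}, pass to a pointwise a.e.\ convergent subsequence, truncate to get the Lipschitz compactly supported functions $v_i=\max\{u_i-\epsilon,0\}\in W^{1,p}(M,r^{p+\beta})$ by Lemma \ref{lipshccompact}, obtain a uniform bound from $\|v_i\|_{p,\beta}\leq\|u_i\|_{p,\beta}$, and conclude with Theorem \ref{pointswisestrongly}. The extra cutoff argument you give to identify $\nabla v_i=\chi_{\{u_i>\epsilon\}}\nabla u_i$ is a harmless refinement of the bound the paper states directly.
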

\begin{proof}
Owing to Theorem \ref{MandM0isequal}, there exists a sequence $(u_i)$ in $C^\infty_0(M)$ such that $u_i\rightarrow u$ under $\|\cdot\|_{p,\beta}$ and $\|u_i\|_{p,\beta}\leq \|u\|_{p,\beta}+1$. By passing a subsequence, we may assume that $(u_i)$ converges to $u$ pointwise. Set $v_i:=\max\{u_i-\epsilon,0\}$. Since $v_i$ is a Lipschitz function with compact support, Lemma \ref{lipshccompact} yields $v_i\in W^{1,p}(M,r^{p+\beta})$. Moreover, note that $\|v_i\|_{p,\beta}\leq \|u_i\|_{p,\beta}\leq  \|u\|_{p,\beta}+1$ and
$(v_i)$ converges to $\max\{u-\epsilon,0\}$ pointwise, which together with Theorem \ref{pointswisestrongly} implies $\max\{u-\epsilon,0\}\in  W^{1,p}(M,r^{p+\beta})$.
\end{proof}

Now we introduce the capacity with respect to $\|\cdot\|_{p,\beta}$.

\begin{definition}The {\it Sobolev $(p,\beta)$-capacity} of a set $E\subset M$ is defined by
\begin{align*}
\Ca_{p,\beta}(E):=\inf_{u\in \mathscr{A}(E)}\|u\|^p_{p,\beta},
\end{align*}
where $\mathscr{A}(E)=\{ u\in {W^{1,p}}(M, {r}^{p+\beta}):\,u\geq 1 \text{ on  a neighborhood of }E \}$.
In particular, set $\Ca_{p,\beta}(E):=\infty$ if $\mathscr{A}(E)=\emptyset$.
\end{definition}

Proceeding as in the proof of Kinnunen and Martio \cite[Remark 3.1]{KM} and using Corollary \ref{trunctionfun}, we have the following result.

\begin{proposition}
Let $\mathscr{A}'(E):=\{u\in {W^{1,p}}(M, {r}^{p+\beta}):\,0\leq u\leq 1 \text{ and } u=1\text{ on  a neighborhood of }E \}$. Thus, we have
\[
\Ca_{p,\beta}(E)=\inf_{u\in \mathscr{A}'(E)}\|u\|^p_{p,\beta}.
\]
\end{proposition}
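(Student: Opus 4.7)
The plan is to prove the two inequalities separately. The easy direction, $\Ca_{p,\beta}(E) \leq \inf_{u \in \mathscr{A}'(E)} \|u\|^p_{p,\beta}$, is immediate from the inclusion $\mathscr{A}'(E) \subset \mathscr{A}(E)$, since the infimum of $\|\cdot\|^p_{p,\beta}$ over a smaller admissible class is at least as large.

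For the reverse inequality, I would take an arbitrary $u \in \mathscr{A}(E)$ and construct from it an element of $\mathscr{A}'(E)$ with no larger norm. Let $U$ be an open neighborhood of $E$ on which $u \geq 1$, and set $v := \max\{0, \min\{u, 1\}\}$. Corollary \ref{trunctionfun} yields $v \in W^{1,p}(M, r^{p+\beta})$. By construction $0 \leq v \leq 1$ pointwise, and on $U$ one has $\min\{u,1\} = 1$, hence $v \equiv 1$ on $U$; thus $v \in \mathscr{A}'(E)$. The remaining task is the norm bound $\|v\|_{p,\beta} \leq \|u\|_{p,\beta}$.

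To establish this bound I would approximate. By Theorem \ref{MandM0isequal} pick a sequence $(u_k) \subset C^\infty_0(M)$ with $u_k \to u$ in $W^{1,p}(M, r^{p+\beta})$, and after passing to a subsequence (using Lemma \ref{nullmeaure}) assume $u_k \to u$ pointwise a.e.\ on $M$. Define $v_k := \phi \circ u_k$, where $\phi(t) := \max\{0, \min\{t, 1\}\}$ is the piecewise linear $1$-Lipschitz truncation. Each $v_k$ is globally Lipschitz with compact support in $M$, hence belongs to $W^{1,p}(M, r^{p+\beta})$ by Lemma \ref{lipshccompact}. Since $\phi$ is $1$-Lipschitz with $\phi' \in \{0,1\}$ almost everywhere and $u_k$ is smooth, the chain rule for compositions of Lipschitz functions with smooth maps produces $|v_k| \leq |u_k|$ and $|\nabla v_k| \leq |\nabla u_k|$ pointwise a.e.; hence $\|v_k\|_{p,\beta} \leq \|u_k\|_{p,\beta}$.

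Finally, $v_k \to v$ pointwise a.e.\ by continuity of $\phi$, and $\|v_k\|_{p,\beta}$ is uniformly bounded in $k$ since $(u_k)$ converges in $W^{1,p}(M, r^{p+\beta})$. Theorem \ref{pointswisestrongly} then gives $v_k \to v$ weakly in $L^p(M, r^{p+\beta})$ and $\nabla v_k \to \nabla v$ weakly in $L^p(TM, r^{p+\beta})$. Lower semicontinuity of the norm under weak convergence yields $\|v\|_{p,\beta} \leq \liminf_k \|v_k\|_{p,\beta} \leq \lim_k \|u_k\|_{p,\beta} = \|u\|_{p,\beta}$, which completes the argument. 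The only subtle step is the pointwise gradient inequality $|\nabla v_k| \leq |\nabla u_k|$ for the smooth approximants; this reduces to the elementary observation that on the open set $\{0 < u_k < 1\}$ one has $v_k = u_k$ (so the gradients agree), while on its complement $v_k$ is locally constant almost everywhere (so $\nabla v_k = 0$ there in the Lipschitz sense).
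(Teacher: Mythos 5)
Your proof is correct and follows essentially the same route as the paper, which simply defers to Kinnunen--Martio \cite[Remark 3.1]{KM} together with Corollary \ref{trunctionfun}: replace $u\in\mathscr{A}(E)$ by the truncation $v=\max\{0,\min\{u,1\}\}\in\mathscr{A}'(E)$ and check that truncation does not increase $\|\cdot\|_{p,\beta}$. Your extra step of passing through smooth approximants and invoking Theorem \ref{pointswisestrongly} with weak lower semicontinuity is a reasonable way to justify the norm bound given that $W^{1,p}(M,r^{p+\beta})$ is defined as a completion.
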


\begin{theorem}\label{propoertyfocap}
The  Sobolev $(p,\beta)$-capacity is an outer measure, that is
\begin{itemize}
\item[(i)] $\Ca_{p,\beta}(\emptyset)=0$;
\item[(ii)] $\Ca_{p,\beta}(E_1)\leq \Ca_{p,\beta}(E_2)$ if $E_1\subset E_2$;
\item[(iii)]
$\Ca_{p,\beta}\left( \bigcup_{i=1}^\infty E_i  \right)\leq \sum_{i=1}^\infty\Ca_{p,\beta}(E_i)$;
\item[(iv)] $\Ca_{p,\beta}(\cdot)$ is outer regular, i.e., $\Ca_{p,\beta}(E)=\inf\left\{ \Ca_{p,\beta}(O):\, O \text{ is open with }E\subset O \right\}$;
%\item[(v)] for an arbitrary Borel set $E\subset M$, we have $\mu(E)\leq \Ca_{p,\beta}(E)$.
\end{itemize}
\end{theorem}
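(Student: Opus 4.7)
The plan is to verify the four properties in order, drawing on the weighted Sobolev machinery developed above. Properties (i) and (ii) are essentially formal: the zero function lies in $\mathscr{A}(\emptyset)$ (the defining condition ``$u\geq 1$ on a neighborhood of $\emptyset$'' is vacuous), hence $\Ca_{p,\beta}(\emptyset)=0$; and if $E_1\subset E_2$, then any $u$ which is $\geq 1$ on a neighborhood of $E_2$ is automatically $\geq 1$ on a neighborhood of $E_1$, so $\mathscr{A}(E_2)\subset \mathscr{A}(E_1)$ and monotonicity follows.

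For (iii), assume $\sum_{i}\Ca_{p,\beta}(E_i)<\infty$ (else the inequality is trivial). Given $\epsilon>0$, use the $\mathscr{A}'$-characterization to select, for each $i$, a function $u_i\in \mathscr{A}'(E_i)$ with $0\leq u_i\leq 1$ and $\|u_i\|_{p,\beta}^p<\Ca_{p,\beta}(E_i)+\epsilon/2^i$. Set $v_k:=\max\{u_1,\ldots,u_k\}$. Iterating Proposition \ref{usefulproweakdre}/(i) gives $v_k\in W^{1,p}(M,r^{p+\beta})$ with the pointwise bounds
\[
v_k^p\leq \sum_{i=1}^k u_i^p,\qquad |\nabla v_k|^p\leq \max_{1\leq i\leq k}|\nabla u_i|^p\leq \sum_{i=1}^k |\nabla u_i|^p,
\]
which integrate to $\|v_k\|_{p,\beta}^p\leq \sum_{i=1}^{\infty}\Ca_{p,\beta}(E_i)+\epsilon$ uniformly in $k$. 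Since $v_k$ is increasing and converges pointwise to $v:=\sup_{i}u_i$, Theorem \ref{pointswisestrongly} applied to this norm-bounded sequence yields $v\in W^{1,p}(M,r^{p+\beta})$ together with $v_k\rightharpoonup v$ in $L^p(M,r^{p+\beta})$ and $\nabla v_k\rightharpoonup \nabla v$ in $L^p(TM,r^{p+\beta})$; weak lower semicontinuity of the $L^p$-norms then gives $\|v\|_{p,\beta}^p\leq \sum_{i}\Ca_{p,\beta}(E_i)+\epsilon$. Each $u_i$ equals $1$ on an open set $O_i\supset E_i$, so $v\geq 1$ on $\bigcup_{i}O_i\supset \bigcup_{i}E_i$, which places $v$ in $\mathscr{A}(\bigcup_{i}E_i)$; consequently $\Ca_{p,\beta}(\bigcup_{i}E_i)\leq \|v\|_{p,\beta}^p\leq \sum_{i}\Ca_{p,\beta}(E_i)+\epsilon$, and letting $\epsilon\to 0^+$ concludes (iii).

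For (iv), the inequality $\Ca_{p,\beta}(E)\leq \inf\{\Ca_{p,\beta}(O):E\subset O,\ O\ \text{open}\}$ is just monotonicity. Conversely, fix $\epsilon>0$ and pick $u\in \mathscr{A}'(E)$ with $\|u\|_{p,\beta}^p<\Ca_{p,\beta}(E)+\epsilon$; by definition of $\mathscr{A}'(E)$, there is an open set $O\supset E$ on which $u\equiv 1$, so $u\in \mathscr{A}(O)$ and hence $\Ca_{p,\beta}(O)\leq \|u\|_{p,\beta}^p<\Ca_{p,\beta}(E)+\epsilon$. Sending $\epsilon\to 0^+$ gives the reverse inequality.

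The main obstacle is step (iii): one must pass from the finite maxima $v_k$ (which live in $W^{1,p}(M,r^{p+\beta})$ with a uniform norm bound) to the full supremum $v$, without any explicit handle on how $\nabla v$ is to be computed from $\nabla u_i$. This is precisely what Theorem \ref{pointswisestrongly} handles, upgrading pointwise a.e.\ convergence of a norm-bounded sequence to weak convergence of both the functions and their weak gradients, so that lower semicontinuity of the $L^p$-norms delivers the required control on $\|v\|_{p,\beta}$.
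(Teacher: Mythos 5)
Your proof is correct and follows essentially the same route as the paper: for (iii) both arguments take finite maxima $v_k=\max_{1\leq i\leq k}u_i$, bound $\|v_k\|_{p,\beta}^p$ by $\sum_i\|u_i\|_{p,\beta}^p$, and pass to $v=\sup_iu_i$ via Theorem \ref{pointswisestrongly} plus weak lower semicontinuity (the paper phrases this through Mazur's lemma). The only difference is that you spell out the outer-regularity argument for (iv), which the paper merely cites from Kinnunen--Martio; your version of it is correct.
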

\begin{proof}  Both (i) and (ii)  are obvious while (iv) follows from a similar argument to Kinnunen and Martio \cite[Remark 3.3]{KM}.
It remains to show (iii).
We may assume  $\sum_i\Ca_{p,\beta}(E_i)<\infty$. Given any $\epsilon>0$, for each $i\in \mathbb{N}$ there is  $u_i\in \mathscr{A}(E_i)$  such that
$\|u_i\|^p_{p,\beta}\leq \Ca_p(E_i)+{\epsilon}/{2^i}$.
Set $v:=\sup_iu_i$.

Now we show $v\in \mathscr{A}(\cup_i E_i)$.
In fact, let $v_k:=\max_{1\leq i\leq k}u_i$, $k\in \mathbb{N}$. Proposition \ref{usefulproweakdre}/(i) yields $v_k\in W^{1,p}(M,r^{p+\beta})$.
Moreover, since $|v_k|\leq \left|\sup_{i}u_i  \right|$ and $|\nabla v_k|\leq  \sup_i|\nabla u_i|$,
\begin{align*}
\|v_k\|^p_{p,\beta}
\leq  \int_M \sum_{i=1}^\infty|u_i|^p {\dmu}+\int_M \sum_{i=1}^\infty|\nabla u_i|^p {\dmu}=\sum_{i=1}^\infty\|u_i\|^p_{p,\beta}\leq \sum_{i=1}^\infty \Ca_{p,\beta}(E_i)+\epsilon<\infty.\tag{A.1}\label{estcapA.1}
\end{align*}
That is, $(v_k)$ is a bounded sequence in $W^{1,p}(M,r^{p+\beta})$.
Since $v_k\rightarrow v$ pointwise,  Theorem \ref{pointswisestrongly} implies $v\in W^{1,p}(M,r^{p+\beta})$. In particular,  it follows from Theorem \ref{pointswisestrongly}, Lemma \ref{Marzurlemma} and (\ref{estcapA.1}) that $\|v\|^p_{p,\beta}\leq \sum_{i=1}^\infty\|u_i\|^p_{p,\beta}\leq \sum_{i=1}^\infty \Ca_{p,\beta}(E_i)+\epsilon$.
On the other hand, because $u_i\in \mathscr{A}(E_i)$, there exists an open set $O_i\subset E_i$ such that $u_i\geq 1$ on $O_i$ and hence, $v=\sup_i u_i\geq 1$ in $\cup_{i=1}^\infty O_i$, which implies  $v\in \mathscr{A}(\cup_i E_i)$.  Therefore,
$\Ca_{p,\beta}\left( \cup_{i=1}^\infty E_i \right)\leq \|v\|^p_{p,\beta}\leq \sum_{i=1}^\infty \Ca_{p,\beta}(E_i)+\epsilon$,
which concludes the proof.
\end{proof}

%\begin{lemma}\label{zeromeasurebetween} For an arbitrary Borel set $E\subset M$, we have $\mu(E)\leq \Ca_{p,\beta}(E)$.
%\end{lemma}
%\begin{proof} The statement follows from a similar argument to Kinnunen et al. \cite[Lemma 4.1]{KM}.
%\end{proof}

\begin{definition}
We say that a property holds {\it $(p,\beta)$-quasieverywhere in a set $E$}, if it holds except for a subset of $(p,\beta)$-capacity zero in $E$.
In addition,
a function $u:M\rightarrow [-\infty,+\infty]$ is {\it $(p,\beta)$-quasicontinuous} if for any $\epsilon>0$, there is a set $E$ such that $\Ca_{p,\beta}(E)<\epsilon$ and the restriction  $u|_{M\backslash E}$ is continuous.
\end{definition}

\begin{theorem}\label{maintheoreminaapendix}
If $u\in W^{1,p}(M,r^{p+\beta})$ is $(p,\beta)$-quasicontinuous and $u=0$ $(p,\beta)$-quasieverywhere in $M\backslash {\Omega_N}$, then ${u|_{{\Omega_N}}}\in W^{1,p}_0({\Omega_N},r^{p+\beta})$.
\end{theorem}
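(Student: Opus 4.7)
The plan is to approximate $u|_{\Omega_N}$ in the $\|\cdot\|_{p,\beta}$-norm by functions in $W^{1,p}(M,r^{p+\beta})$ whose support is compact and lies in $\Omega_N$, after which Corollary \ref{compwp} places each approximant in $W^{1,p}_0(\Omega_N,r^{p+\beta})$ and closedness of this subspace yields the conclusion. I carry out four successive reductions.

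First, Corollary \ref{maxminlenew} shows that $u_\pm$ both belong to $W^{1,p}(M,r^{p+\beta})$ and inherit quasicontinuity together with $(p,\beta)$-q.e.\ vanishing on $M\setminus\Omega_N$, so I may assume $u\ge 0$. Corollary \ref{trunctionfun} then allows me to replace $u$ by $\min\{u,\lambda\}$ and let $\lambda\to\infty$, so I may further assume $u$ is bounded. For $\epsilon>0$ I set $v_\epsilon:=(u-\epsilon)_+$; Corollary \ref{maxepcro} gives $v_\epsilon\in W^{1,p}(M,r^{p+\beta})$, and because $\nabla u=0$ a.e.\ on $\{u=0\}$ (implicit in Corollary \ref{maxminlenew}) one has $\nabla v_\epsilon=\mathbbm{1}_{\{u>\epsilon\}}\nabla u$, whence dominated convergence gives $v_\epsilon\to u$ in $W^{1,p}(M,r^{p+\beta})$ as $\epsilon\to 0^+$.

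The decisive step is \emph{localization via capacity}. Fix $\delta>0$. Quasicontinuity of $u$ provides an open set $E_\delta$ with $\Ca_{p,\beta}(E_\delta)<\delta/2$ on whose complement $u$ is continuous. The q.e.\ vanishing of $u$ on $M\setminus\Omega_N$ together with outer regularity (Theorem \ref{propoertyfocap}/(iv)) furnishes an open set $G_\delta\supset\{u\neq 0\}\cap(M\setminus\Omega_N)$ with $\Ca_{p,\beta}(G_\delta)<\delta/2$; set $F_\delta:=E_\delta\cup G_\delta$, so $\Ca_{p,\beta}(F_\delta)<\delta$ by subadditivity. Next, pick $\omega_\delta\in W^{1,p}(M,r^{p+\beta})$ with $0\le\omega_\delta\le 1$, $\omega_\delta\equiv 1$ on an open neighborhood of $F_\delta$, and $\|\omega_\delta\|_{p,\beta}^p<2\delta$. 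Define
\[
\tilde v_{\epsilon,\delta}:=(1-\omega_\delta)\,v_\epsilon,
\]
which lies in $W^{1,p}(M,r^{p+\beta})$ by Proposition \ref{usefulproweakdre}/(ii). I claim $\supp\tilde v_{\epsilon,\delta}\subset\Omega_N$. Indeed, let $x\in M\setminus\Omega_N$. If $x\in F_\delta$, then $1-\omega_\delta\equiv 0$ on a neighborhood of $x$; if $x\in(M\setminus\Omega_N)\setminus F_\delta$, then $u(x)=0$ and continuity of $u$ on $M\setminus F_\delta$ produce a neighborhood $V$ of $x$ with $u<\epsilon$ on $V\setminus F_\delta$, hence $v_\epsilon\equiv 0$ on $V\setminus F_\delta$, while $1-\omega_\delta\equiv 0$ on $V\cap F_\delta$; in either case $\tilde v_{\epsilon,\delta}\equiv 0$ near $x$.

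Finally, multiplying $\tilde v_{\epsilon,\delta}$ by a Lipschitz cut-off $\chi_R$ compactly supported in a geodesic ball $B_R$ (or taking $\chi_R\equiv 1$ when $M$ is compact) yields a $W^{1,p}$-function with compact support inside $\Omega_N$, so Corollary \ref{compwp} gives $\chi_R\tilde v_{\epsilon,\delta}|_{\Omega_N}\in W^{1,p}_0(\Omega_N,r^{p+\beta})$. I then send $R\to\infty$ (dominated convergence with $|\nabla\chi_R|\le C/R$), afterwards $\delta\to 0$ (since $\|\omega_\delta v_\epsilon\|_{p,\beta}\to 0$ follows from $\|v_\epsilon\|_\infty<\infty$, $\|\omega_\delta\|_{p,\beta}\to 0$, and dominated convergence applied to $\omega_\delta\nabla v_\epsilon$ along a subsequence with $\omega_\delta\to 0$ pointwise a.e.\ granted by Lemma \ref{nullmeaure}), and finally $\epsilon\to 0$, all within the closed subspace $W^{1,p}_0(\Omega_N,r^{p+\beta})$; this yields $u|_{\Omega_N}\in W^{1,p}_0(\Omega_N,r^{p+\beta})$. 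The main obstacle is the localization step: one must simultaneously exploit quasicontinuity, outer regularity of capacity, and the boundary behaviour of $u$ near $M\setminus\Omega_N$ to force $\supp\tilde v_{\epsilon,\delta}\subset\Omega_N$, while keeping the corrective norm $\|\omega_\delta v_\epsilon\|_{p,\beta}$ under control—something made possible only by the $L^\infty$-reduction at the beginning.
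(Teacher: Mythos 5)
Your proof is correct and follows essentially the same strategy as the paper's: reduce to $u\ge 0$ bounded, form $(1-\omega_\delta)(u-\epsilon)_+$ with a capacitary function $\omega_\delta$ attached to the small-capacity set where quasicontinuity fails or where $u\neq 0$ off $\Omega_N$, check that the resulting function vanishes near every point of $M\backslash\Omega_N$, and pass to the limit in $\delta$ and $\epsilon$ inside the closed subspace $W^{1,p}_0(\Omega_N,r^{p+\beta})$. The only cosmetic difference is that the paper reduces to compactly supported $u$ at the outset via Theorem \ref{MandM0isequal}, whereas you defer this by multiplying with a Lipschitz cut-off $\chi_R$ at the end before invoking Corollary \ref{compwp}; both handle the compact-support requirement equally well.
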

\begin{proof}\textbf{Step 1.}
It suffices to show that $u$ can be approximated by the functions in $W^{1,p}_0({\Omega_N},r^{p+\beta})$.
Thanks to Corollary \ref{maxminlenew}, if we can find such a sequence for $u_+=\max\{u,0\}$, then we can also do it for $u_-=\max\{-u,0\}$. Thus, by $u=u_+-u_-$ we are done. For this reason, we may suppose $u\geq 0$.
Furthermore, since $W^{1,p}_0(M,r^{p+\beta})=W^{1,p}(M,r^{p+\beta})$ (see Theorem \ref{MandM0isequal}), we may assume that $u$ has a compact support in $M$. According to Corollary \ref{trunctionfun}, by considering truncation $u_\lambda=\min\{u,\lambda\}$, we may also assume that $u$ is bounded.

\smallskip

\noindent \textbf{Step 2.} Since $u$ is $(p,\beta)$-quasicontinuous, for any
$\delta>0$, Theorem \ref{propoertyfocap}/(iv) furnishes an open set $O\subset M$  such that $\Ca_{p,\beta}(O)<\delta$ and the restriction  $u|_{M\backslash O}$ is continuous. Denote
$E:=\{x\in M\backslash {\Omega_N}:\,u(x)\neq0  \}$.
The assumption implies $\Ca_{p,\beta}(E)=0$. Choose   a function $v_\delta\in \mathscr{A}'(O\cup E)$ satisfying $0\leq v_\delta\leq 1$ and $\|v_\delta\|^p_{p,\beta}\leq \Ca_{p,\beta}(O\cup E)+\eta$, where $\eta>0$ is a small constant with $\eta+\Ca_{p,\beta}(O)<\delta$. Thus,
\[
\|v_\delta\|^p_{p,\beta}\leq   \Ca_{p,\beta}(O)+\Ca_{p,\beta}(E)+\eta=\Ca_{p,\beta}(O)+\eta<\delta.\tag{A.2}\label{vbound}
\]
On the other hand, there is an open set $G\supset O\cup E$ such that $v_\delta=1$ in $G$. For any $\epsilon\in (0,1)$, define
\[
u_\epsilon(x):=\max\{u(x)-\epsilon,0 \}\in W^{1,p}(M,r^{p+\beta}).
\]
Given $x\in \partial {\Omega_N}\backslash G$, since $u(x)=0$, the continuity of  $u|_{M\backslash G}$ yields a $\rho_x>0$ such that  $u_\epsilon=0$ in $B_{\rho_x}(x)\backslash G$. Hence, $(1-v_\delta)u_\epsilon=0$ in $B_{\rho_x}(x)\cup G$ for every $x\in \partial{\Omega_N}\backslash G$. Now it follows from the compactness of  $\supp u$ that $(1-v_\delta)u_\epsilon$ is compactly supported in ${\Omega_N}$. Since $\|v_\delta\|_\infty+\|u_\epsilon\|_\infty<+\infty$, Proposition \ref{usefulproweakdre}/(ii) implies $-v_\delta u_\epsilon+u_\epsilon\in W^{1,p}(M,r^{p+\beta})$, which  together with Corollary \ref{compwp} yields  $((1-v_\delta)u_\epsilon)|_{{\Omega_N}} \in W^{1,p}_0({\Omega_N},r^{p+\beta})$. In the sequel, we show that $u$ can be approximated by this kind of functions  under $\|\cdot\|_{p,\beta}$.

\smallskip

\noindent \textbf{Step 3.}
By $\max\{a,b\}=\frac12(a+b+|b-a|)$ and Corollary \ref{maxminlenew},   one gets
\[
\nabla u_\epsilon=\nabla u \text{  a.e. in  }\{ x\in M:\, u(x)\geq \epsilon\};\quad \nabla u_\epsilon=0 \text{  a.e. in } \{ x\in M:\, u(x)\leq \epsilon\}.\tag{A.3}\label{weakgradientuesp}
\]
The triangle inequality furnishes
 \[
 \|u-(1-v_\delta)u_\epsilon\|_{p,\beta}\leq \|u-u_\epsilon\|_{p,\beta}+\|v_\delta u_\epsilon\|_{p,\beta}.\tag{A.4}\label{uv-1norem}
 \]
It follows from $u\geq 0$ that
$|u-u_\epsilon|=u-u_\epsilon\leq \epsilon$ and $\supp(u-u_\epsilon)\subset \supp u$. Hence,
 (\ref{weakgradientuesp}) and the dominated convergence theorem  yield
\begin{align*}
\|u-u_\epsilon\|_{p,\beta}\leq[u-u_\epsilon]_{p,\mu}+[\nabla u-\nabla u_\epsilon]_{p,\mu}\leq \epsilon [\chi_{\supp u}]_{p,\mu}+[\chi_{\{0\leq u\leq \epsilon\}}\nabla u]_{p,\mu}\rightarrow 0, \text{ as }\epsilon\rightarrow 0.\tag{A.5}\label{u-ue}
\end{align*}
On the other hand, since $u$ is bounded, Proposition \ref{usefulproweakdre}/(ii) and (\ref{vbound}) furnish
\begin{align*}
\|v_\delta u_\epsilon\|_{p,\beta}&\leq [v_\delta u_\epsilon]_{p,\mu}+[\nabla (v_\delta u_\epsilon)]_{p,\mu}\leq [v_\delta u_\epsilon]_{p,\mu}+[u_\epsilon \nabla v_\delta]_{p,\mu}+[v_\delta \nabla u_\epsilon]_{p,\mu}\\
&\leq \|u\|_\infty [v_\delta]_{p,\mu}+\|u\|_{\infty}[\nabla v_\delta]_{p,\mu}+[v_\delta \nabla u]_{p,\mu}\\
&\leq 2\|u\|_\infty \|v_\delta\|_{p,\beta}+[v_\delta \nabla u]_{p,\mu}\leq 2\delta^{\frac1p}\|u\|_\infty+[v_\delta \nabla u]_{p,\mu}.\tag{A.6}\label{vunorm}
\end{align*}
 Moreover, (\ref{vbound}) yields a subsequence $v_i:=v_{\delta_i}$ such that $v_i$ converges to $0$ pointwise a.e. (i.e., $\delta_i\rightarrow 0$) as $i\rightarrow \infty$. Owing to $\|v_\delta\|_\infty\leq 1$, the dominated convergence theorem then implies $\lim_{i\rightarrow \infty}[v_i \nabla u]_{p,\mu}=0$,
which together with (\ref{vunorm}) furnishes
\[
\lim_{i\rightarrow \infty}\|v_i u_\epsilon\|_{p,\beta}\leq\lim_{i\rightarrow \infty}\left( 2\delta_i^{\frac1p}\|u\|_\infty+[v_i \nabla u]_{p,\mu} \right)=0.\tag{A.7}\label{gradvu}
\]
Now it follows from (\ref{uv-1norem}), (\ref{u-ue}) and (\ref{gradvu}) that $ \|u-(1-v_i)u_\epsilon\|_{p,\beta}\rightarrow 0$ as $\epsilon\rightarrow 0$ and $i\rightarrow \infty$.
Since $((1-v_i)u_\epsilon)|_{{\Omega_N}}\in W^{1,p}_0({\Omega_N},r^{p+\beta})$, we have $u|_{{\Omega_N}}\in W^{1,p}_0({\Omega_N},r^{p+\beta})$.
\end{proof}

%Furthermore, by the method analogous to Kilpel\"ainen, Kinnunen, and Martio \cite[Theorem 4.6]{KKM}, one can
%show the following theorem.

%\begin{theorem}\label{spaceequalthe}
%Let $E$ be a closed subset in $M$. Thus, $W^{1,p}_0({\Omega_N},r^{p+\beta})=W^{1,p}_0({\Omega_N}\backslash E,r^{p+\beta})$ if and only if $\Ca_{p,\beta}(E)=0$. Here, $W^{1,p}_0({\Omega_N},r^{p+\beta})=W^{1,p}_0({\Omega_N}\backslash E,r^{p+\beta})$  means that every $u\in W^{1,p}_0({\Omega_N},r^{p+\beta})$ can be approximated by functions in $C^\infty_0({\Omega_N}\backslash E)$.
%\end{theorem}

\appendix
\noindent{\textbf{Funding.}}
This work was supported by National Natural Science Foundation of China (No. 11761058) and Natural Science Foundation of Shanghai (No. 19ZR1411700).

\smallskip

\appendix
\noindent{\textbf{Acknowledgements.}}
The third author is greatly indebted to Prof. A. Krist\'aly for many useful discussions and helpful comments.


\begin{thebibliography}{10}

%\bibitem{ACR} Adimurthi, N. Chaudhuri, N. Ramaswamy, \textsl{An improved Hardy Sobolev inequality and its applications}.
%Proc. Amer. Math. Soc. \textbf{130} (2002), 489--505.


\bibitem{BEL} A. A. Balinsky, W. D. Evans, R. T. Lewis, \textsl{Boundary Curvatures and the Distance Function}. In: \textsl{The Analysis and Geometry of Hardy's Inequality}, Springer, 2015.

\bibitem{AK} D.-H. Armitage, \"{U}. Kuran, \textsl{The convexity of a domain and the superharmonicity of the signed distance function},
Proc. Amer. Math. Soc. \textbf{93}(4) (1985) 598--600.


%\bibitem{BRS} D. Bao, C. Robles, Z. Shen,
%\textit{Zermelo navigation on Riemannian manifolds.}
%J. Differential Geom. \textbf{66} (2004), no. 3, 377--435.


\bibitem{BCC} P. Baras, L. Cohen, \textsl{Complete blow-up after $T_{\max}$ for the solution of a semilinear heat
equation}, {J. Funct. Anal}. \textbf{71} (1987) 142--174.



\bibitem{BFT} G. Barbatis, S. Filippas, A. Tertikas, \textsl{A unified approach to improved $L^p$ Hardy inequalities with
	optimal constants}, Trans. Amer. Math. Soc. \textbf{356} (2004), 2169--2196.


\bibitem{BGD} E. Berchio, D. Ganguly, G. Grillo,  \textsl{Sharp Poincar\'e-Hardy and Poincar\'e-Rellich inequalities on the hyperbolic space}, J. Funct. Anal. \textbf{272}(4) (2017), 1661--1703.




%\bibitem{BSU} Y. M. Berezansky, Z. G. Sheftel, G. F. Us,
%\textsl{Functional Analysis}, vol. II (Birkh\"auser Verlag, Basel, 1996).




\bibitem{BV} H. Brezis, J. L. V\'azquez, \textsl{Blowup solutions of some nonlinear elliptic problems}, Revista Mat.
Univ. Complutense Madrid \textbf{10} (1997), 443--469.

\bibitem{DYS}D. Burago, Y. Burago, S. Ivanov, \textsl{A course in metric
geometry}, American Mathematical Society, 2001.


\bibitem{CM} X. Cabr\'e, P. Miraglio, \textsl{Universal Hardy-Sobolev inequalities on hypersurfaces of Euclidean space}, arXiv:1912.09282v2


\bibitem{CM2} X. Cabr\'e, Y. Martel,
\textsl{Existence versus explosion instantan\'ee pour des \'equations de la
chaleur lin\'eaires avec potentiel singulier},  {C.R. Acad. Sci. Paris Ser. I Math}. \textbf{329} (1999)
973--978.



\bibitem{CW} M. P. do Carmo, F. W. Warner, \textsl{Rigidity and convexity of hypersurfaces in spheres}, J. Diff.
Geom. \textbf{4} (1970), 133--144.

\bibitem{Ca} G. Carron, \textsl{In\'egalit\'es de Hardy sur les vari\'et\'es riemanniennes non-compactes}, J. Math. Pures
Appl.  \textbf{76}(10) (1997),   883--891.



%\bibitem{CKN} L. Caffarelli, R. Kohn, L. Nirenberg, \textsl{First order interpolation inequalities with weight}. Compos.
%Math. \textbf{53} (1984), 259--275.



\bibitem{CG} J. Cheeger, D. Gromoll, \textsl{On the structure of complete manifolds of non-negative curvature}, Ann. of Math. \textbf{96} (1972), 413--443.


%\bibitem{CB} B.-Y. Chen, \textsl{Riemannian submanifolds}. In: Handbook of Differential Geometry, vol. I, North
%Holland, Amsterdam, (2000), 187--418.





\bibitem{CCL} S. S. Chern, W. Chen, K. Lam, \textsl{Lecutrues on differential geometry}, World Scientific, 2000.



%\bibitem{Chern} S. S. Chern, \textit{Finsler geometry
%	is just Riemannian
%	geometry without the
%	quadratic restriction}.  Notices Amer. Math. Soc. \textbf{43} (1996), no. 9, 959--963.

%\bibitem{CX} M. P. do Carmo, C. Xia, \textsl{Complete manifolds with non-negative Ricci curvature and the Caffarelli-Kohn-Nirenberg inequalities}. Compos. Math. \textbf{140} (2004), 818--826.


\bibitem {IC} I. Chavel, \textsl{Riemannian Geometry: A
modern introduction}, Cambridge Univ., 1993.


\bibitem{CLN} B. Chow, P. Lu, L. Ni, \textsl{Hamilton's Ricci flow}, Graduate studies in mathematics \textbf{77}, AMS, 2006.

\bibitem{D} L. D'Ambrosio, \textsl{
Hardy-type inequalities related to degenerate elliptic differential operators},
Ann. Sc. Norm. Super. Pisa Cl. Sci.  \textbf{IV} (2005), 451--586.

\bibitem{DD} L. D'Ambrosio, S. Dipierro, \textsl{Hardy inequalities on Riemannian manifolds and applications}, Ann. Inst. H. Poincar\'e Anal.
Non Lin\'eaire \textbf{31}(3) (2014),  449--475.


\bibitem{DA} E.-B.Davies, \textsl{A review of Hardy inequalities}, \textsl{Oper. Theory Adv. Appl}. \textbf{110} (1998),  55--67.




%\bibitem{EG} L. Evans, R. Gariepy, \textsl{Measure theory and fine properties of functions} (Revised edition), CRC Press, 2015.


%\bibitem{DD2} J. \'Davila, L. Dupaigne, \textsl{Hardy-type inequalities}, J. Eur. Math. Soc. \textbf{6}(2004), 335¨C-365.

%\bibitem{E} D. Egloff, \textsl{Uniform Finsler Hadamard manifolds}, Ann. Inst. Henri Poincar\'e, \textbf{66}(1997), 323-357.

%\bibitem{Esc} J.-H. Eschenburg,
%\textsl{Comparison theorems and hypersurfaces},  {Manuscripta Math}. \textbf{59}(1987),
%295--323.

%\bibitem{Esco} J. Escobar, A.Freire, \textsl{The spectrum of the Laplacian of manifolds of positive curvature}, \textsl{Duke Math}. \textbf{65}(1992), 1--21.


%\bibitem{Er} W. Erb, \textsl{Uncertainty principles on Riemannian manifolds}. PhD Dissertation, Technical University Munchen, 2009.



\bibitem{FKV} Cs. Farkas, A. Krist\'aly and Cs. Varga, \textsl{Singular Poisson equations on Finsler-Hadamard manifolds}, Calc. Var. Partial Differential Equations, \textbf{54}(2) (2015), 1219--1241.




\bibitem{F} C. Fefferman, \textsl{The uncertainty principle}, Bull. Amer. Math. Soc. (N.S.) \textbf{9}(2) (1983),
129--206.


\bibitem{FMT} S. Filippas, V. Maz'ya, A. Tertikas, \textsl{Critical Hardy-Sobolev inequalities}, J. Math. Pures Appl.  \textbf{87} (2007),
37--56.


\bibitem{FMT2} S. Filippas, L. Moschini, A. Tertikas, \textsl{Sharp two-sided heat kernel estimates for critical Schr\"odinger operators on
bounded domains}, Comm. Math. Phys. \textbf{273} (1) (2007), 237--281.


\bibitem{FMT3} S. Filippas, L. Moschini, A. Tertikas, \textsl{Sharp trace Hardy-Sobolev-Maz'ya inequalities and the fractional laplacian}, Arch. Ration.  Mech. An. \textbf{208}(1) (2013), 109--161.

%\bibitem{FT} S. Filippas, A. Tertikas, \textsl{Optimizing improved Hardy inequalities}. J. Funct. Anal. \textbf{192} (2002)
%186--233.




\bibitem{Fr} T. Frankel, \textsl{On the fundamental group of a compact minimal submanifold}, Ann. of Math. \textbf{83} (1966), 8--73.

%\bibitem{FM} P. Foulon, V.S. Matveev,  \textit{Zermelo deformation of Finsler metrics by Killing vector fields}. Electron. Res. Announc. Math. Sci. \textbf{25} (2018), 1--7.

\bibitem{G} A. Gray, \textsl{Tubes}, Progress in Mathematics 221, Birkh\"auser Verlag, Basel, 2004.


%\bibitem{GGE} F. Gazzola, H.-C. Grunau, E. Mitidieri, \textsl{Hardy inequalities with optimal constants and remainder terms}, Trans. Amer. Math. Soc. \textbf{356} (2004), 2149--2168.




%\bibitem{GM} N. Ghoussoub, A. Moradifam, \textsl{On the best possible remaining term in the Hardy inequality}. Proc.
%Natl. Acad. Sci. USA \textbf{105} (2008), no. 37, 13746--13751.

%\bibitem{GM2} N. Ghoussoub, A. Moradifam, \textsl{Bessel pairs and optimal Hardy and Hardy-Rellich inequalities}.
%Math. Ann. \textbf{349} (2011), 1--57.


\bibitem{HPL} G. Hardy, G. P\'olya, J. E. Littlewood, \textsl{Inequalities}, 2nd edition, Cambridge University,
 1952.



\bibitem{H} E. Hebey, \textsl{Sobolev Spaces on Riemannian Manifolds}, Springer, 1996.

%\bibitem{HKM} J. Heinonen, T. Kilpel\"ainen, O. Martio, \textsl{Nonlinear potential theory of degenerate elliptic equations}, Oxford university, 1993.

\bibitem{HK} E. Heintze, H. Karcher, \textsl{A general comparison theorem with applications to volume estimates for
submanifolds}, Ann. Sci. \'Ecole Norm. Sup.  \textbf{11}(4) (1978),  451--470.

%\bibitem{HM2011} L. Huang, X. Mo, \textsl{On geodesics of Finsler metrics via navigation problem}. Proc. Amer. Math. Soc.
%\textbf{139} (2011) 8, 3015--3024.


%\bibitem{HX} L. Huang, Q. Xue, \textit{Affine vector fields on Finsler manifolds}. Manuscripta Math., to appear.


%\bibitem{Kt} T. Kilpel\"ainen, \textsl{Weighted Sobolev spaces and capacity}, Ann. Acad. Sci. Fenn. Ser. AI Math, \textbf{19} (1994), 95-113.


\bibitem{KKM} T. Kilpel\"ainen, J. Kinnunen, O. Martio, \textsl{Sobolev spaces with zero boundary values on metric spaces}, Potential Anal. \textbf{12} (2000), 233--247.


\bibitem{KM} J. Kinnunen, O. Martio, \textsl{The Sobolev capacity on metric spaces}, Ann. Acad. Sci. Fenn-M.  \textbf{21} (1996), 367--382.

\bibitem{K} A. Krist\'aly, \textsl{Sharp uncertainty principles on Riemannian manifolds: the influence of curvature},
J. Math. Pures  Appl.   \textbf{119}(9) (2018), 326--346.





\bibitem{LLL} R. T. Lewis, J. Li, Y. Li,
\textsl{A geometric characterization of a sharp Hardy inequality},
J. Funct. Anal. \textbf{262} (2012), 3159--3185.




\bibitem{KO} I. Kombe, M. \"Ozaydin, \textsl{Improved Hardy and Rellich inequalities on Riemannian manifolds}, Trans.
Amer. Math. Soc. \textbf{361}(12) (2009),  6191--6203.


\bibitem{KO2} I. Kombe, M. \"Ozaydin, \textsl{Hardy-Poincar\'e, Rellich and uncertainty principle inequalities on Riemannian
	manifolds}, Trans. Amer. Math. Soc. \textbf{365}(10) (2013), 5035--5050.


	%\bibitem{Matsumoto} M. Matsumoto, {\it A slope of a mountain is a Finsler surface with respect
	%to a time measure}. {J. Math. Kyoto Univ.} \textbf{29} (1989), 17--25.


%\bibitem{LL} E.-H. Lieb, M. Loss, \textsl{Analysis}, volume 14 of Graduate Studies in Mathematics. AMS, second edition (2001).



\bibitem{MMP} M. Marcus, V. J. Mizel, Y. Pinchover, \textsl{On the optimal constant for Hardy¡¯s inequality in $\mathbb{R}^n$}, Trans. Amer. Math.
Soc. \textbf{350} (8) (1998), 3237--3255.



\bibitem{MS} T. Matskewich, P. Sobolevskii, \textsl{The best possible constant in a generalized Hardy¡¯s inequality for convex domains
in $\mathbb{R}^n$}, Nonlinear Anal. \textbf{28} (9) (1997), 1601--1610.


%\bibitem{MST} A. Mercaldo, M. Sano, F. Takahshi, \textsl{Finsler Hardy inequalities}, arXiv: 1806.04901v2.



\bibitem{MWZ} C. Meng, H. Wang, W. Zhao, \textsl{Hardy type inequalities on closed maniofolfds via Ricci curvature}, P. Roy. Soc. Edinb. A  (2020), DOI:10.1017/prm.2020.47





\bibitem{Ne} I. Newton, Arithmetica Universalis: Sive de Compositione et Resolutione Arithmetica Liber, 1707.
%\bibitem{MH2007} X. Mo, L. Huang, \textsl{On curvature decreasing property of a class of navigation problems}.
%Publ. Math. Debrecen. \textbf{71} (2007), 141--163.






\bibitem{ON} B. O'Neill, \textsl{Semi-Riemannian geometry}, Pure and Applied Mathematics \textbf{103}, Academic Press,
New York, 1983.

\bibitem{Ps} G. Psaradakis, \textsl{$L^1$-Hardy Inequalities with Weights}, J. Geom. Anal. \textbf{23}(4)  (2012), 1703--1728.



%\bibitem{RS1} M. Ruzhansky, D. Suragan,  \textsl{Hardy and Rellich inequalities, identities, and sharp remainders on homogeneous groups.} Adv. Math. 317 (2017), 799--822.

%\bibitem{RS2} M. Ruzhansky, D. Suragan, \textsl{Uncertainty relations on nilpotent Lie groups.} Proc. A. 473 (2017), no. 2201, 20170082, 12 pp.

%\bibitem{RS3} M. Ruzhansky, D. Suragan,  \textsl{Layer potentials, Kac's problem, and refined Hardy inequality on homogeneous Carnot groups.} Adv. Math. 308 (2017), 483--528.


\bibitem{RR} M. Renardy, R. Rogers,  \textsl{An introduction to partial differential equations}. Texts in Applied Mathematics \textbf{13} (Second ed.), New York: Springer-Verlag, 2004.

\bibitem{T}  E. Thiam, \textsl{Weighted Hardy inequality on Riemannian manifolds}, Commun. Contemp. Math. \textbf{18}(6) (2016), https://doi.org/10.1142/S0219199715500728

%\bibitem{Shen2003} Z. Shen, \textsl{Finsler metrics with $K=0$ and $S=0$}. Canadian J. Math. \textbf{55} (2003), 112--132.

%\bibitem{SL} W. Shih Shu Walter, and Y. Li, \textsl{Generalized Sharp Hardy Type and Caffarelli-Kohn-Nirenberg Type Inequalities on Riemannian Manifolds},  Tamkang J. Math.   \textbf{40} (2009), 401--413.


\bibitem{V} J. L. Vazquez, \textsl{Domain of existence and blowup for the exponential reaction-diffusion equation},
{Indiana Univ. Math. J}. \textbf{48} (1999),  677--709.



\bibitem{VZ} J. L.  Vazque, E. Zuazua, \textsl{The Hardy inequality and the asymptotic behavior of the
heat equation with an inverse-square potential}, {J. Funct. Anal}. \textbf{173} (2000),  103--153.





%\bibitem{WW} Z.-Q. Wang, M. Willem, \textsl{Caffarelli-Kohn-Nirenberg inequalities with remainder terms}. J. Funct.
%Anal. \textbf{203} (2003), 550--568.






\bibitem{X} C. Xia, \textsl{The Caffarelli-Kohn-Nirenberg inequalities on complete manifolds}, Math. Res. Lett. \textbf{14}(5)
(2007), 875--885.


%\bibitem{X2} C. Xia, \textsl{The Gagliardo-Nirenberg inequalities and manifolds of non-negative Ricci
%curvature}, J. Funct. Anal. \textbf{224} (2005), 230--241.

\bibitem{YSK} Q. Yang, D. Su, Y. Kong, \textsl{Hardy inequalities on Riemannian manifolds with negative curvature}. Commun. Contemp.
Math. \textbf{16}(2) (2014), https://doi.org/10.1142/S0219199713500430










%\bibitem{HKZ} L. Huang, A. Krist\'aly, W. Zhao, \textsl{Sharp uncertaninty principles on general Finsler manifolds},










\bibitem{Z3} W. Zhao, \textsl{Hardy inequalities with optimal constants on Finsler metric measure manifolds}, J. Geom. Anal.  (2019), https://doi.org/10.1007/s12220-019-00330-z

\end{thebibliography}
\end{document}